\documentclass{article}

\usepackage{graphicx} 
\usepackage[margin=1in]{geometry}
\usepackage[T1]{fontenc}
\usepackage{times}
\usepackage{amsmath,amsfonts,amssymb,amsthm}
\usepackage{stmaryrd,authblk}
\usepackage{hyperref,cleveref}
\hypersetup{colorlinks=true, breaklinks=true, urlcolor=black, linkcolor=black, citecolor=black}
\usepackage{enumerate,enumitem,csquotes,doi}
\usepackage{color,todonotes,comment}
\usepackage{bm,bbm}

\newtheorem{theorem}{Theorem}[section]
\newtheorem{lemma}[theorem]{Lemma}
\newtheorem{proposition}[theorem]{Proposition}
\newtheorem{corollary}[theorem]{Corollary}
\newtheorem{definition}[theorem]{Definition}
\theoremstyle{remark}

\newtheorem{remark}{Remark}

\newcommand{\bN}{\mathbb{N}}
\newcommand{\bZ}{\mathbb{Z}}
\newcommand{\bR}{\mathbb{R}}
\newcommand{\bx}{\textbf{x}}
\newcommand{\bC}{\mathbb{C}}
\newcommand{\bT}{\mathbb{T}}
\newcommand{\cO}{\mathcal{O}}
\newcommand{\cv}[1]{\,\underset{#1}{\longrightarrow}\,}
\newcommand{\abs}[1]{{\left|#1\right|}}

\newcommand{\dist}{\mathrm{dist}}
\newcommand{\bD}{\mathbb{D}}
\newcommand{\pairs}{{\rm B}}
\newcommand{\ba}{\textbf{a}}
\newcommand{\cT}{\mathcal{T}}
\newcommand{\cD}{\mathcal{D}}
\newcommand{\cM}{\mathcal{M}}
\newcommand{\cX}{\mathcal{X}}
\newcommand{\cI}{\mathcal{I}}
\newcommand{\cJ}{\mathcal{J}}
\newcommand{\cP}{\mathcal{P}}
\newcommand{\cE}{\mathcal{E}}
\newcommand{\cH}{\mathcal{H}}
\newcommand{\cC}{\mathcal{C}}
\newcommand{\wR}{\widetilde{R}}
\newcommand{\fS}{\mathfrak{S}}
\newcommand{\fT}{\mathfrak{T}}
\newcommand{\bc}{\mathbf{c}}
\newcommand{\bj}{\mathbf{j}}
\newcommand{\bn}{\mathbf{n}}
\newcommand{\wbc}{\widetilde{\bc}}
\newcommand{\wbj}{\widetilde{\bj}}
\newcommand{\wbn}{\widetilde{\bn}}
\newcommand{\wbF}{\widetilde{\mathbf{F}}}
\newcommand{\be}{\mathbf{e}}
\newcommand{\by}{{\mathbf{y}}}
\newcommand{\br}{{\mathbf{r}}}
\newcommand{\bz}{\textbf{z}}
\newcommand{\boldZ}{\textbf{Z}}
\newcommand{\bE}{\mathbb{E}}
\newcommand{\expec}[1]{\mathbb{E}{\left[#1\right]}}
\newcommand{\expecond}[2]{\mathbb{E}{\left[\left. #1 \,\right|\, #2 \right]}}
\newcommand{\bP}{\mathbf{P}}
\newcommand{\prob}[1]{\mathbf{P}{\left(#1\right)}}
\newcommand{\probcond}[2]{\mathbf{P}{\left(\left. #1 \,\right|\, #2 \right)}}
\newcommand{\bV}{\mathbb{V}}
\newcommand{\bU}{\mathbb{U}}
\newcommand{\var}[1]{\mathrm{Var}{\left[#1\right]}}

\newcommand{\Law}[1]{\mathcal{L}{\left(#1\right)}}
\newcommand{\Lawcond}[2]{\mathcal{L}{\left(\left.#1 \,\right\vert\, #2\right)}}
\newcommand{\One}[1]{\mathbbm{1}_{#1}}
\newcommand{\Unif}[1]{\mathrm{Unif}{\left(#1\right)}}
\newcommand{\Normal}[2]{\mathcal{N}{\left(#1,#2\right)}}
\newcommand{\BinomialDistribution}[2]{\mathrm{Bin}{\left(#1,#2\right)}}
\newcommand{\BetaDistribution}[2]{\mathrm{Beta}{\left(#1,#2\right)}}
\newcommand{\PoissonDirichlet}[2]{\mathrm{PD}{\left(#1,#2\right)}}

\newcommand{\fp}{{\mathfrak{p}}}
\newcommand{\des}[1]{{\mathrm{des}{\left(#1\right)}}}
\newcommand{\red}{\mathrm{red}}
\newcommand{\rec}{\mathrm{rec}}
\DeclareMathOperator{\discrete}{disc}
\DeclareMathOperator{\rk}{rk}
\DeclareMathOperator{\id}{id}
\DeclareMathOperator{\St}{St}
\DeclareMathOperator{\AncestralLine}{AL}

\title{\textsc{Limits of descent-biased trees}}

\author[$1,2$]{Victor Dubach}
\author[$3$]{Paul Th\'evenin}
\author[$1,4$]{Stephan Wagner}
\affil[$1$]{Department of Mathematics, Uppsala University, Sweden}
\affil[$2$]{Institut Camille Jordan, Université Lyon 1, France}
\affil[$3$]{Université d’Angers, CNRS, LAREMA, F-49000 Angers, France}
\affil[$4$]{Institute of Discrete Mathematics, TU Graz, Austria}

\date{}

\begin{document}

\maketitle

\begin{abstract}
    We investigate scaling and local limits of random trees biased according to their number of descents.
    A descent in a rooted labeled tree $t$ is a parent-child pair such that the label of the parent is greater than the label of the child, and the total number of descents is denoted by $\des{t}$.
    For $n\geq 1$ and $q_n\ge0$, we consider the probability measure on trees of size $n$ where each tree $t$ is chosen with a probability proportional to $q_n^\des{t}$.
    We study the resulting random tree $\cT_n^{(q_n)}$ properly rescaled as $n\to\infty$, and focus on two regimes for the bias parameter $q_n$.
    When $q_n = q \in (0,1]$ is fixed, we prove that $\cT_n^{(q)}$ converges in distribution to the Brownian Continuum Random Tree.
    When $q_n = a/n$ for some fixed $a>0$, we prove that $\cT_n^{(a/n)}$ converges in distribution to a random non-trivial dendron constructed from a Poisson--Dirichlet sequence.
    %The base tree of this dendron is a random infinite discrete tree, and its sampling measure is a sum of Dirac masses supported on the vertices of the base tree.
    We complement these results with a description of the Benjamini--Schramm local limit of $\cT_n^{(q_n)}$ in all regimes of parameters.
    Our proofs rely on analytic combinatorics to find the asymptotics of certain statistics of the tree, and on a probabilistic analysis of the structure of descent-biased random trees.
\end{abstract}

\begin{figure}[h]
    \centering
    \includegraphics[width=0.3\linewidth]{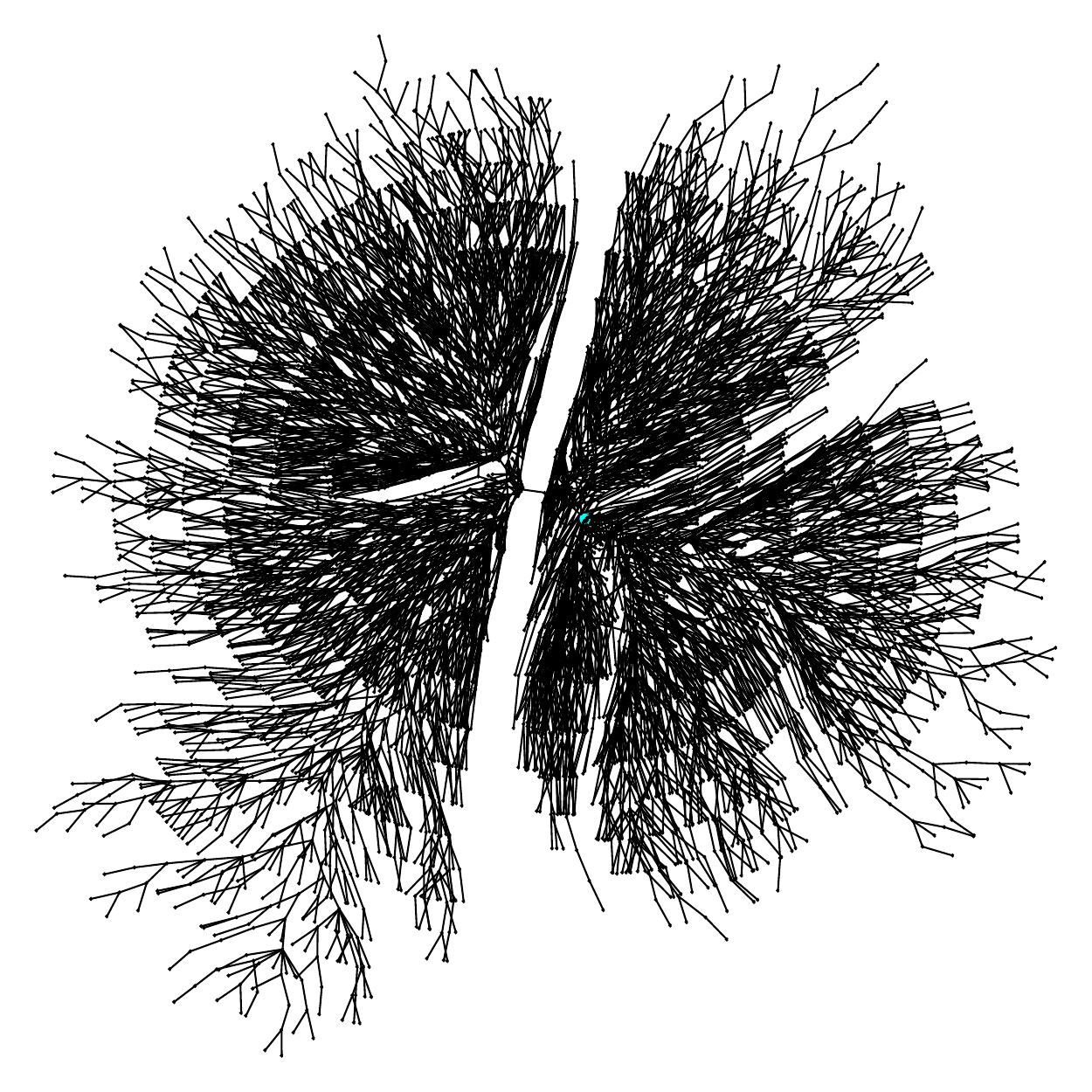}
    \includegraphics[width=0.3\linewidth]{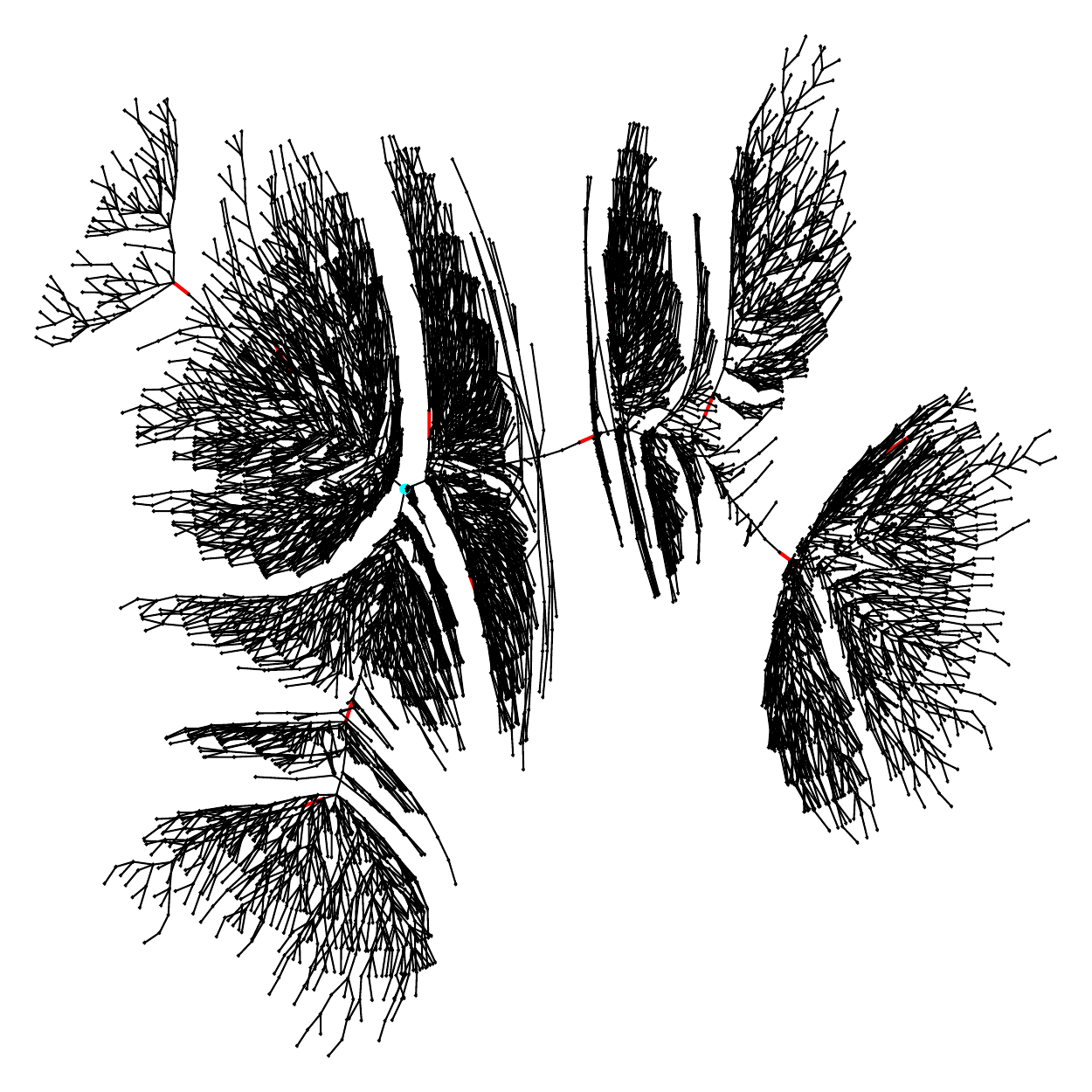}
    \includegraphics[width=0.3\linewidth]{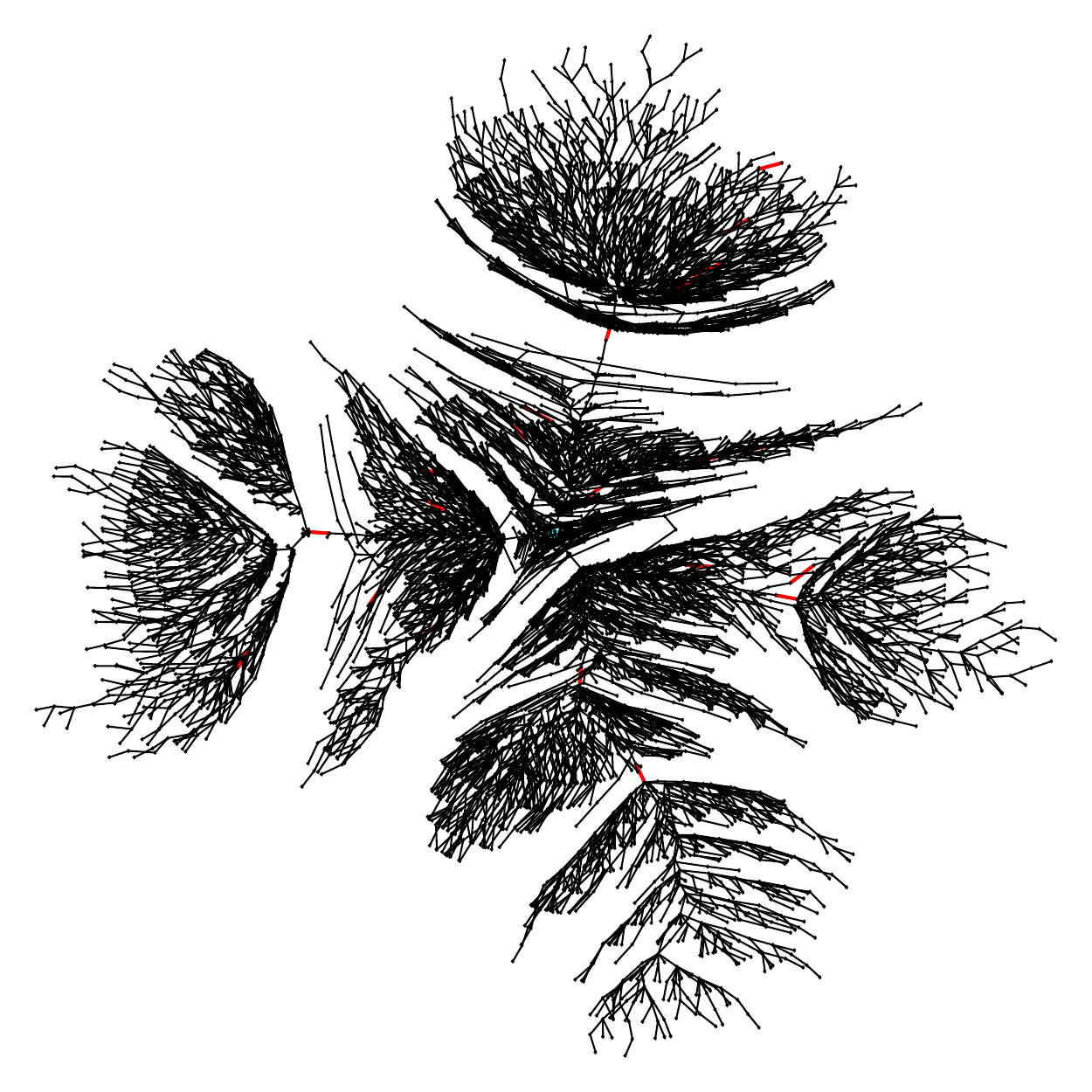}
    \includegraphics[width=0.3\linewidth]{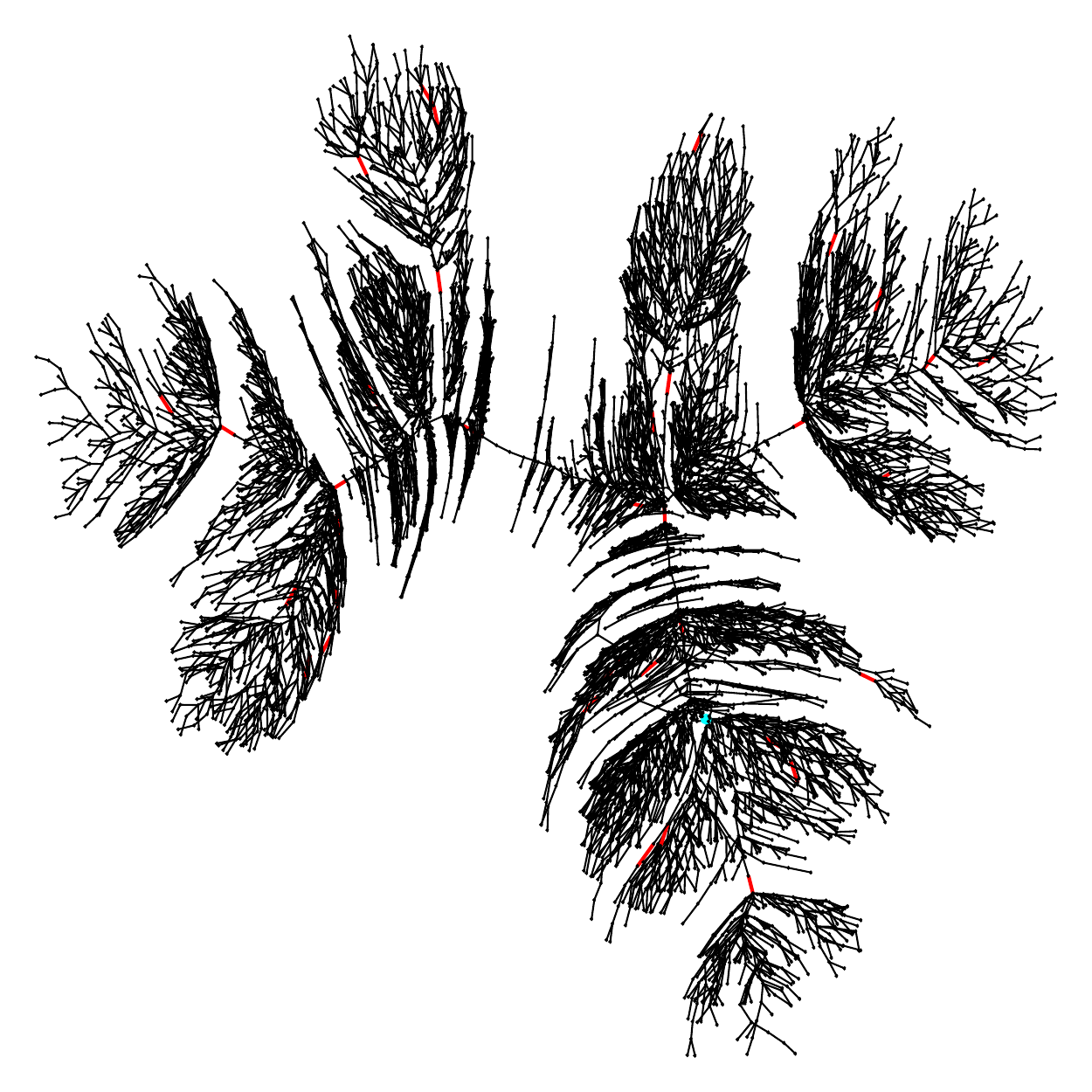}
    \includegraphics[width=0.3\linewidth]{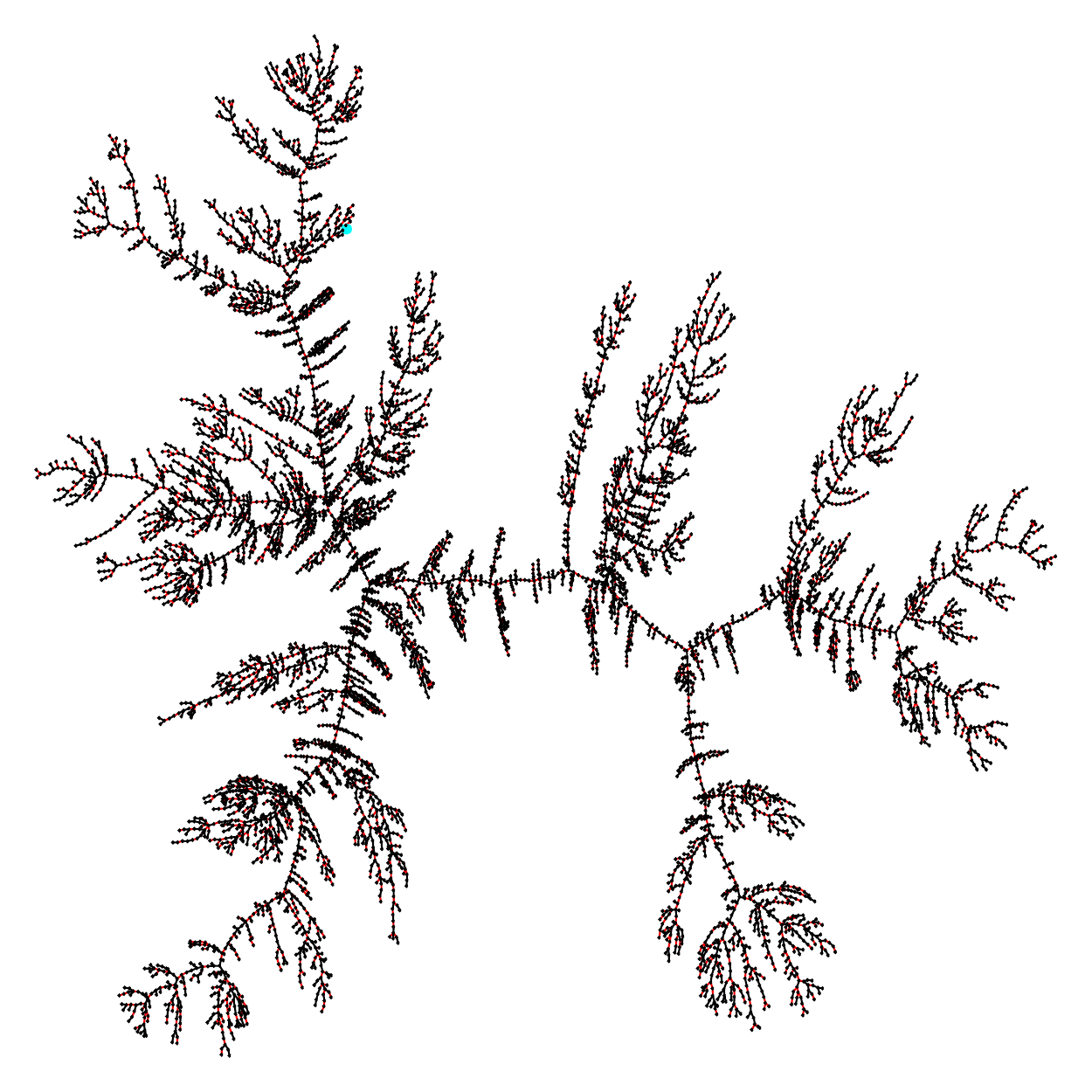}
    \includegraphics[width=0.3\linewidth]{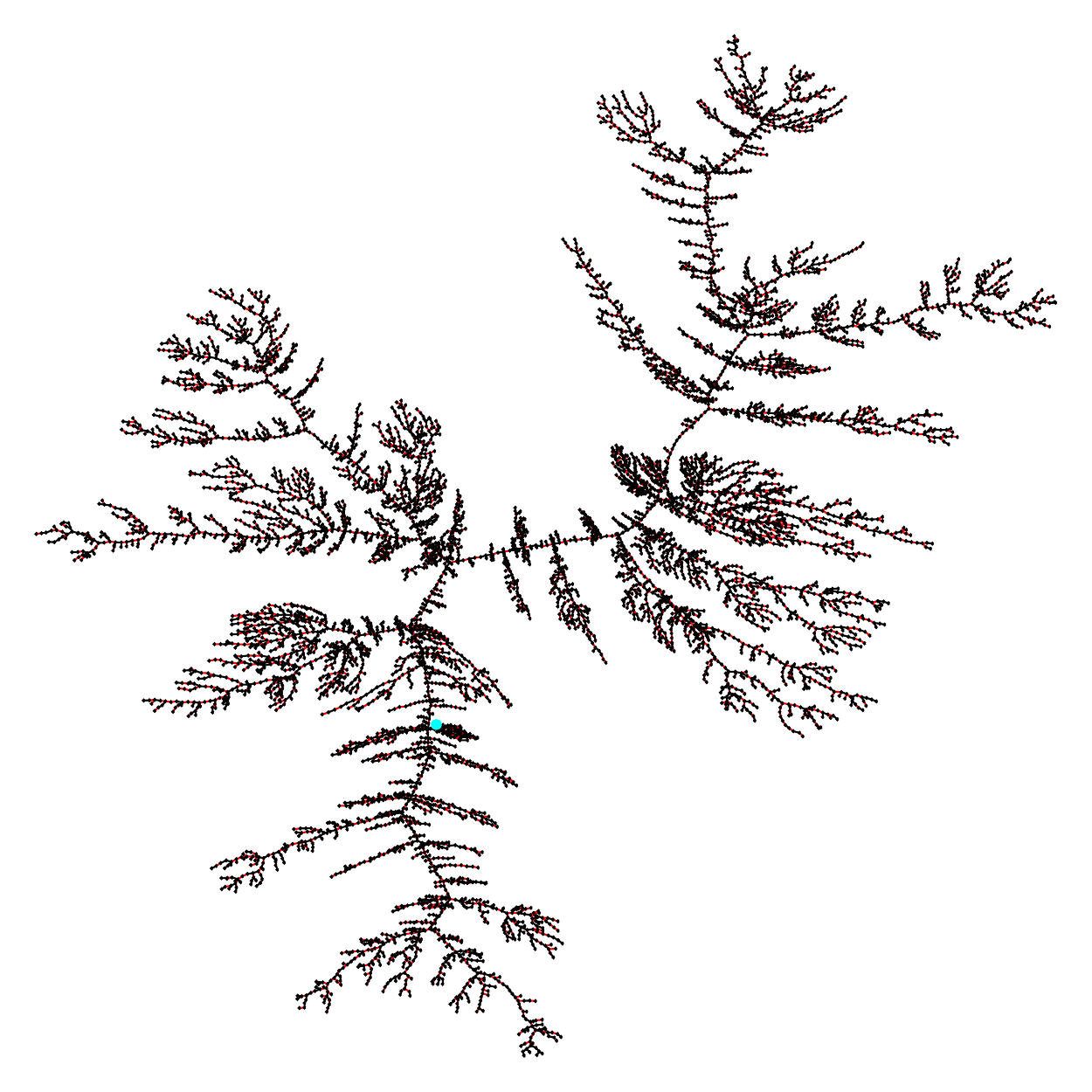}
    \caption{Simulations of descent-biased trees for $n=8000$ and varying values of the parameter $q$.
    From left to right and top to bottom: $q = 0, \frac{2}{n}, \frac{4}{n}, \frac{8}{n}, 0.5, 1$.
    The root is in blue and the descent edges are in red.
    }
    \label{fig: simulations}
\end{figure}

\newpage

\tableofcontents

\newpage

\section{Introduction}

\subsection{Background}

\paragraph{Our model.}
For $n\in\bN := \{1,2,\ldots \}$, we let $\bT_n$ denote the set of rooted unordered (also called non-plane) trees with $n$ vertices labeled from $1$ to $n$.
Recall that $\bT_n$ has cardinality $n^{n-1}$.
We say that a vertex $u$ is the parent of another vertex $v$ if there is an edge between $u$ and $v$, and $v$ is further away from the root than $u$.
A \emph{descent} in a tree $t\in\bT_n$ is a pair of vertices $(u,v)$ where $u$ is the parent of $v$ and the label of $u$ is greater than the label of $v$.
The total number of descents in $t$ is then denoted by $\des{t}$.
For $n\in\bN$ and $q\ge0$, we introduce a random tree $\cT_n^{(q)}$ defined as follows:
\begin{equation*}
    \text{for each } t\in\bT_n,\quad
    \prob{ \cT_n^{(q)} = t } = \frac{q^\des{t}}{Z_{n,q}}
\end{equation*}
where $Z_{n,q} := \sum_{t\in\bT_n} q^\des{t}$.
This model, called the \emph{$q$-descent-biased random tree of size~$n$}, was first introduced in \cite{Thevenin_Wagner_2023}.
From now on, we restrict our attention to $q\in[0,1]$.
This is because for any $q>0$, the tree obtained from $\cT_n^{(q)}$ by replacing each label $i$ with $n-i+1$ is distributed like a $(1/q)$-descent-biased random tree of size $n$.
Observe that the $q$-descent-biased distribution interpolates between random recursive trees (i.e., trees with increasing labels along the branches) for $q=0$ and rooted Cayley trees (i.e., uniformly random trees) for $q=1$.
%For $q<1$, the distribution of $\cT_n^{(q)}$ favors trees with fewer descents than the uniform model.
See \Cref{fig: simulations} for simulations of this model.

The weighted enumeration of these trees goes back to E\u{g}ecio\u{g}lu and Remmel \cite{ER86}, who proved the formula $Z_{n,q} = \prod_{k=1}^{n-1} (n-k+kq)$. 
The resulting random tree $\cT_n^{(q)}$ was introduced and studied in \cite{Thevenin_Wagner_2023} by the second and third author of this paper, who proved that $\cT_n^{(q)}$ admits a \enquote{one-ended} local limit at the root as $n\to\infty$ if $q\in(0,1]$ is fixed, see Remark \ref{rk:root local limit}.
In this paper, we are mainly interested in the dual problem of finding the scaling limit of $\cT_n^{(q)}$ as $n\to\infty$, when $q=q_n$ is allowed to depend on $n$.
In addition, we complete the local understanding by providing the Benjamini--Schramm local limit of $\cT_n^{(q)}$, that is, the local limit at a uniformly random vertex, in all regimes.

\paragraph{Scaling limits of random trees.}
Aldous proved in a seminal work \cite{Aldous_1993} that Cayley trees $(\cT_n^{(1)})_{n \geq 1}$
admit a scaling limit known as the Brownian Continuum Random Tree, or simply the CRT.
The CRT is a random \emph{real tree}, that is, a metric measure space with tree-like properties (see \Cref{sec: Gromov Prokhorov} for details on real trees).
The convergence of Cayley trees towards the CRT is known to hold in distribution for the so-called 
%Gromov--Prokhorov--Hausdorff topology.
%The convergence for the weaker 
\emph{Gromov--Prokhorov} (abbreviated GP; sometimes called Gromov--weak) topology,  
meaning that the pairwise distances of a finite collection of i.i.d.~random vertices in $\cT_n^{(1)}$ converge in distribution, after rescaling by $1/\sqrt n$, to the analog on the CRT.

Since the work of Aldous, the study of real trees and scaling limits of random trees has seen numerous developments.
Notably, the CRT has been found to be the scaling limit of many natural models of random trees, cementing it as a universal limit object for random trees.
Here, we contribute to this picture by proving that $\cT_n^{(q)}$ also admits the CRT as its scaling limit, as $n\to\infty$, when $q\in(0,1]$ is fixed.

On the other hand, the pairwise distances of a finite collection of i.i.d.~random vertices in a random recursive tree $\cT_n^{(0)}$ are known to 
behave in a very different way: 
most vertices are at distance $2\log n + \cO_\bP(\sqrt{\log n})$ from each other.
This entails that the scaling limit of random recursive trees cannot be a separable real tree.
To circumvent this, Elek and Tardos \cite{ET22} introduced the notion of \emph{dendron}, which consists of a base tree $T$ along with a mass measure on $T \times \mathbb{R}_+$. Informally, dendrons are like real trees, except that they allow for \enquote{clusters} at certain points that behave like recursive subtrees.
Any given discrete or real tree can be viewed as a dendron, and the GP topology can be extended to the space of dendrons.
In that broader framework, $\cT_n^{(0)}$ converges in probability, after rescaling the distances by $1/\log n$, to the trivial dendron $\Upsilon_{\delta_1}$ whose base tree is reduced to a unique vertex, and whose mass measure is a Dirac mass at height $1$ (using the notation of \cite{Janson21}).

The theory of dendrons is still quite young, and only a few non-trivial dendrons have so far been found as the scaling limits of random trees.
Here, we prove that, for fixed $a>0$, $\cT_n^{(a/n)}$ converges in distribution to a random non-trivial dendron $\cD_a$.
The base tree of this dendron is a random infinite discrete tree constructed according to a preferential attachment procedure (a $p$-tree with the terminology of \cite{Camarri_Pitman_2000,Blanc-Renaudie_2025}), and the sampling measure consists of a sum of Dirac masses at height $1$ on the vertices of this tree.
The Dirac masses coincide with the weights used in the preferential attachment procedure, which are given by a one-parameter Poisson--Dirichlet sequence.
To our knowledge, this is the first example of a limit dendron where the base tree is neither a point (e.g., for random recursive trees) nor a real tree with its sampling measure (e.g., for Cayley trees).

We refer the reader to \Cref{sec: Gromov Prokhorov} for more on real trees, dendrons, and the Gromov--Prokhorov topology.

\paragraph{Local limits of random trees.}
A sequence $(\cT_n)_{n \geq 1}$ of finite random trees converges locally in the Benjamini--Schramm sense if the finite neighborhoods of a uniform random point in $\cT_n$ converge in distribution. 
The Benjamini--Schramm limit of random recursive trees was characterized by Aldous \cite{Aldous_1991} and can be constructed from stopped Yule tree processes. 
On the other hand, the Benjamini--Schramm limit of uniform Cayley trees is called Kesten's tree, and was characterized by Kesten \cite{Kesten_1986}. 
One goal of this work is to fully characterize the Benjamini--Schramm limit of descent-biased trees in all regimes.
Namely, we show that the local limit of $\cT_n^{(q_n)}$ interpolates between the recursive limit when $q_n \to 0$ and Kesten's tree when $q_n \to 1$.
This complements both our scaling limit results, and the local limit at the root found in \cite{Thevenin_Wagner_2023}.

\subsection{Main results: the scaling limit}

We let $\cT_\be$ denote Aldous' Brownian Continuum Random Tree (CRT).
The most common way to construct the CRT is by using a Brownian excursion as its \enquote{contour function} 
(see, e.g., \cite{Aldous_1993})
but we shall not rely on that here.
The (random) metric, measure, and root of the CRT $\cT_\be$ are respectively denoted by $\dist$, $\mu$, and $\rho$.
If $T_n$ is a finite tree of size $n$, we let $\dist_n$ denote the graph distance on $T_n$, $\mu_n$ the uniform probability measure on its vertices, and $\rho_n$ its root.

Our first main result is that the descent-biased trees $\cT_n^{(q)}$, properly rescaled, converge in distribution towards $\cT_\be$ with respect to the GP topology (see \Cref{sec: Gromov Prokhorov} for precise definitions) if $q\in(0,1]$ is fixed and $n\to\infty$.

\begin{theorem}\label{th: CRT limit if q fixed}
    Fix $q\in(0,1]$.
    For each $n\in\bN$, let $\cT_n^{(q)}$ be a $q$-descent-biased random tree of size $n$.
    Let $W_{n,0} := \rho_n$ be the root of $\cT_n^{(q)}$ and $(W_{n,i})_{i\ge1}$ be a sequence of $\mu_n$-i.i.d.~random vertices of $\cT_n^{(q)}$.
    Also let $W_{0} := \rho$ be the root of $\cT_\be$ and $(W_{i})_{i\ge1}$ be a sequence of $\mu$-i.i.d.~random points on $\cT_\be$.
    Then, for any given $\ell\in\bN$, we have
    \begin{equation*}
        \left(\big. \frac{q-1}{\sqrt q \log q} \,\frac{1}{\sqrt n}
        \,\dist_n{\left( W_{n,i} , W_{n,j} \right)} \right)_{0\le i,j\le \ell}
        \cv{n\to\infty}
        \left(\big. \dist{\left( W_{i} , W_{j} \right)} \right)_{0\le i,j\le \ell}
    \end{equation*}
    in distribution and with joint moments.
    In particular,
    \begin{equation*}
        \frac{q-1}{\sqrt q \log q} \frac{1}{\sqrt n} \cdot \cT_n^{(q)}
        \cv{n\to\infty}
        \cT_\be
    \end{equation*}
    in distribution for the Gromov--Prokhorov topology.
    When $q=1$, we interpret the factor $\sqrt q \frac{\log q}{q-1}$ as $1$.
\end{theorem}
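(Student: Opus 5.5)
Since the Gromov--Prokhorov topology is determined by the joint law of distance matrices of i.i.d.\ sampled points, and the CRT law is determined by its moments, I will prove convergence of joint moments of the rescaled distance matrix and deduce convergence in distribution. A more structural alternative is to reduce $\cT_n^{(q)}$ to a uniform Cayley tree with i.i.d.\ geometric edge lengths. I will argue via the reduced tree spanned by the root and $\ell$ uniform vertices $W_{n,1},\dots,W_{n,\ell}$, and show that its shape and rescaled edge lengths converge jointly to the CRT's reduced subtree. For the CRT, this limit is known to have uniform binary shape with $2\ell-1$ edges and edge-length density proportional to $s\,e^{-s^2/2}$, where $s$ is the total length.

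The key combinatorial step is a counting formula. Fix a rooted tree shape $\tau$ with $\ell$ marked leaves (and possibly marked internal vertices) and paths of lengths $k_1,\dots,k_m$ replacing its edges. I will count, weighted by $q^{\des{t}}$, the labeled trees of size $n$ containing this skeleton, and divide by $Z_{n,q}=\prod_{k=1}^{n-1}(n-k+kq)$. I would obtain this through the generating-function approach of \cite{ER86,Thevenin_Wagner_2023}. Decompose the tree into the skeleton paths plus hanging subtrees. Descents along a path depend on the relative order of consecutive labels, and descents between a path vertex and its hanging subtree depend only on the root label of that subtree. The rescaling constant suggests that along a long path, consecutive labels behave like a Markov chain whose per-step weight, after summing over hanging subtrees, averages to $\int_0^1(\cdot)$ of something like $x+q(1-x)$. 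This yields the factor $\frac{\log q}{q-1}=\int_0^1\frac{dx}{1-x+qx}$ together with a $\sqrt q$ correction. Concretely, I expect the weight of a path of length $k$ to be asymptotically $c_q^{\,k}\exp(-\sigma^2 K^2/(2n))$ times polynomial factors, where $K=\sum k_i$ and $\sigma=\frac{q-1}{\sqrt q\log q}$. Summing over $k_i=x_i\sqrt n/\sigma$ then gives the Riemann sum converging to the CRT density $\propto K e^{-K^2/2}$ on the reduced tree. Non-binary shapes are negligible because of missing powers of $\sqrt n$.

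I expect the main obstacle to be uniform control of this path weight. It requires a singularity or saddle-point analysis of a trivariate generating function that tracks the labels at path endpoints, together with tail bounds that are uniform in $K$ up to $o(n)$ and beyond. Those tail bounds are needed both to justify the Riemann sum and to obtain convergence of joint moments via uniform integrability. For the tail, I would first try to bound $\prob{\dist_n(\rho_n,W_{n,1})\ge k}$ by $e^{-ck^2/n}$ using the exact product formula for $Z_{n,q}$ with $n$ replaced by $n-k$. The GP convergence then follows immediately from the finite-dimensional convergence of distance matrices. The case $q=1$ reduces to Aldous's theorem.
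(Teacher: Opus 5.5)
\paragraph{The gap.} Your strategy is a legitimate alternative to the paper's proof, but the proposal does not contain its decisive step. That strategy is a local limit theorem for the subtree spanned by the root and $\ell$ uniform vertices, as in Aldous's treatment of Cayley trees. The asymptotic weight of a skeleton with path lengths $k_1,\dots,k_{2\ell-1}$ is only conjectured, and the tool you name would not produce it.

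The obstruction is twofold. The weight of the forest hanging off the skeleton depends on the actual label of every skeleton vertex: an edge from a skeleton vertex $v$ to a hanging child is a descent or not according to $v$. In addition, the skeleton labels must be distinct. Generating functions built from the decomposition at the vertex labelled $1$ (as in \cite{Thevenin_Wagner_2023} and this paper) do not retain label values. So it is unclear what your ``trivariate generating function tracking the labels at path endpoints'' would be, or why it would admit a uniform saddle-point analysis. The tail bound ``via $Z_{n,q}$ with $n$ replaced by $n-k$'' is likewise not backed by any mechanism. Your heuristic also misplaces the origin of $\sigma=\frac{q-1}{\sqrt q\log q}$, which you attribute to a per-step factor along the path. (The aside about a Cayley tree with i.i.d.\ geometric edge lengths is unsupported and should be dropped.)

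\paragraph{How to close it.} The missing input can be supplied as follows. Use the all-minors matrix-tree theorem on the complete digraph on $[n]$, with weight $1$ on arcs $u\to v$ for $u<v$ and weight $q$ for $u>v$. Combine it with the closed-form determinant of a matrix with arbitrary diagonal and constant entries above and below it. This gives the weighted number $\Phi_n(R)$ of rooted forests on $[n]$ with root set $R$:
\[
\frac{\Phi_n(R)}{Z_{n,q}}=\frac{nq}{q-1}\Big(\prod_{v\in R}\frac{1}{v+(n-v)q}-\prod_{v\in R}\frac{1}{v-1+(n-v+1)q}\Big)\approx q\,n^{-\abs{R}}\prod_{v\in R}g(v/n)\sum_{v\in R}g(v/n),
\]
where $g(x):=1/(x+q(1-x))$. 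The approximation holds for $\abs{R}=o(n)$; at $q=1$ the formula is read as a limit and gives Cayley's $\abs{R}\,n^{-\abs{R}}$.

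The skeleton weight is $q^{\#\text{skeleton descents}}\,\Phi_n(R)$ summed over injective labellings. Here $g(x)q^{\One{y>x}}$ is exactly the stochastic kernel $\kappa_q$ of \Cref{def:t etoile q}, from a child with rescaled label $x$ to its parent with label $y$. So there is no exponential factor at all: your $c_q$ equals $1$.

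The Gaussian factor comes from the injectivity constraint. For $K$ skeleton labels, the collision-free probability is about $\exp\big(-\frac{K^2}{2n}\int_0^1\pi^2\big)$, where $\pi=\frac{q-1}{\log q}\,g$ is the stationary density of $\kappa_q$, and $\int_0^1\pi^2=\frac{(q-1)^2}{q\log^2q}=\sigma^2$. The factor $\sum_{v\in R}g$ yields the prefactor $q\big(\int_0^1\pi g\big)^2=\sigma^2$, and each of the $\ell-1$ branch points contributes another $\int_0^1\pi^2=\sigma^2$. After substituting $k_i=x_i\sqrt n/\sigma$, this gives exactly Aldous's density $s\,e^{-s^2/2}$ on each binary shape. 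The bound $\kappa_q\ge q$ gives collision-free probabilities at most $e^{-cK^2/n}$ uniformly in $K$, which is what the Riemann-sum and uniform-integrability steps need. With these ingredients your route closes; without them it only restates what must be proved.

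\paragraph{Comparison with the paper.} The paper never tracks labels along a skeleton. It encodes all distance matrices in $F_\ell(x,\by)$ and derives, from the decomposition at label $1$, a linear ODE for each factorial-moment series $f_{\ell,\br}$. Singularity analysis gives $f_{\ell,\br}\sim C_{\ell,\br}(1-x/x_0)^{\frac12-\ell-\frac12\sum\br}$. An induction then shows $C_{\ell,\br}(q)=q^{-1/2}\gamma(q)^{\sum\br}C_{\ell,\br}(1)$, so the limit is identified through Aldous's $q=1$ result \cite{Aldous_1993}. Joint-moment convergence comes out directly, with no shapes, Riemann sums or tail estimates.

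Your route, once completed, would be more explicit. It gives the reduced-tree density without using the $q=1$ case as a black box. It also explains $\sigma$ through the same label chain $\kappa_q$ that drives the local limit in \Cref{sec:local limit}. The price is uniform local estimates that the paper's moment method avoids.
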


Our second main result concerns the regime $q_n = a/n$ for fixed $a>0$.
In that regime, it can be seen that the depth of a typical vertex in $\cT_n^{(a/n)}$ behaves, asymptotically, like a \emph{discrete} random variable times $\log n$ (\Cref{prop: asymptotic coefficient typical depth in transition regime}).
Going beyond this behavior, we describe the full dendron limit of $\cT_n^{(a/n)}$.

\begin{theorem}\label{th: dendron limit if a/n}
    Fix $a>0$.
    For each $n\in\bN$, let $q_n=a/n$ and let $\cT_n^{(a/n)}$ be a $q_n$-descent-biased random tree of size $n$.
    There exists an explicit random dendron $\cD_a$ such that
    \begin{equation*}
        \frac{1}{\log n} \cdot \cT_n^{(a/n)}
        \cv{n\to\infty}
        \cD_a
    \end{equation*}
    in distribution for the dendron topology.
    The dendron $\cD_a$ is constructed in \Cref{def: size-biased dendron}.
\end{theorem}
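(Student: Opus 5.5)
We plan to follow the standard route for dendron convergence. In the Elek--Tardos/Janson framework, convergence in the dendron topology of rescaled finite trees is equivalent to convergence in distribution of the rescaled matrix of pairwise distances among the root and finitely many i.i.d.\ uniform vertices, together with the suitable \enquote{tree-shape} data; see \Cref{sec: Gromov Prokhorov}. So for each $\ell$ we will show that
\[
\Big(\tfrac{1}{\log n}\dist_n(W_{n,i},W_{n,j})\Big)_{0\le i,j\le\ell}
\]
converges to the corresponding sampled distances in $\cD_a$. In $\cD_a$ these are described as follows. Each sampled point sits on a vertex $v$ of the base $p$-tree, chosen with probability equal to its Poisson--Dirichlet weight, at height $1$. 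Two points on the same vertex are at distance $2$. Points on distinct vertices $u,v$ are at distance $2+\dist_T(u,v)$, since the rescaled contribution of the base tree edges is negligible relative to the clusters. We therefore need to identify the discrete combinatorial skeleton of $\cT_n^{(a/n)}$ and show that it converges to the $p$-tree with PD weights.

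\textbf{Step 1: decomposition along descents.} Deleting all descent edges from a tree $t$ splits it into maximal subtrees with increasing labels (recursive pieces). Contracting each piece to a vertex gives a skeleton tree whose edges are the descents. Under $q_n=a/n$, the Eğecioğlu--Remmel formula gives $Z_{n,q_n}=\prod_k(n-k+ka/n)$. Comparing it with $n^{n-1}$-type counts should show that the number of descents is tight, with a limit law of Poisson/Ewens type. We would make this precise with an explicit sampling procedure. The piece sizes, divided by $n$, should converge to $\PoissonDirichlet{0}{a}$, in analogy with the cycle structure of Ewens permutations with parameter $a$. The skeleton, with attachment of a piece to a vertex of another piece chosen with probability proportional to that piece's size, should converge to the $p$-tree.

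\textbf{Step 2: joint law of sizes and attachments.} We would obtain this by a direct weighted count. Fix a set partition into blocks of sizes $n_1,\dots,n_k$, a skeleton structure, and the requirement that each descent edge goes from a larger to a smaller label. Each block contributes $(n_i-1)!$ recursive trees. Each descent edge has about $n_j$ choices of parent and carries weight $a/n$. Summing over labelings and using symmetrization, the descent constraint should account for about a factor $1/2$ per edge, or an exact combinatorial factor. Normalizing should produce the Ewens-like stick-breaking weights together with the size-biased attachment of the $p$-tree. We expect this computation to be the main obstacle, in particular the exact accounting of the label constraint on descents. If the direct count is messy, we would instead use a sequential construction of $\cT_n^{(q)}$ that adds labels $n,n-1,\dots$ (as in \cite{Thevenin_Wagner_2023}) and read off a Chinese-restaurant-type dynamic.

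\textbf{Step 3: distances within and across pieces.} Conditionally on the skeleton and on the sizes, each piece is a uniform recursive tree. The attachment vertex of a descent is uniform within its piece, and the root of a child piece is a specific vertex. The vertex depth in a random recursive tree of size $m\asymp n$ concentrates at $\log n(1+o(1))$, and the distance between two independent uniform vertices concentrates at $2\log n$; we use the same estimates for the depth of the attachment point. Summing these contributions along the skeleton path gives $(2+\dist_T(u,v)+o(1))\log n$ for vertices in pieces $u,v$ with $u\ne v$, and $2\log n$ when $u=v$; root distances are $(1+\mathrm{depth}(v))\log n$. Tightness of the skeleton lets us truncate to finitely many pieces with high probability. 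Combining Steps 1--3 yields convergence of the distance matrices, and hence of the dendrons, to $\cD_a$ as constructed in \Cref{def: size-biased dendron}.
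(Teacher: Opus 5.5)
Your plan has a genuine gap, and it starts with a false premise in Step~1: the number of descents of $\cT_n^{(a/n)}$ is not tight. By Corollary~\ref{cor: CLT descents q=a/n}, $\des{\cT_n^{(a/n)}} = a\log n + \cO_\bP(\sqrt{\log n})$. So the skeleton you get by contracting all maximal increasing pieces has about $a\log n$ vertices, most of them tiny pieces rooted at large labels. The sentence in Step~3, ``tightness of the skeleton lets us truncate to finitely many pieces'', therefore fails. Truncating by mass to the $M$ largest pieces is legitimate. You would then have to prove that the part of the skeleton spanned by these pieces is tight and converges to the $M$-structure of the $p$-tree. Equivalently, the ancestral line of a typical vertex must cross only $\cO_\bP(1)$ of the $\sim a\log n$ descents. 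Nothing in your outline yields this. In the paper it is a separate, nontrivial ingredient: Proposition~\ref{prop:not many descents on the ancestral line}, proved by reshuffling the ancestral line according to a descent-biased permutation (Lemma~\ref{lem:structure_of_permutations}), followed by Lemma~\ref{lem:lemmerelou5bis}.

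Step~2 is where all the difficulty sits, and your heuristic for it does not hold. The label constraint is not a constant factor $\approx 1/2$ per descent. A piece whose root has label $r$ has roughly $n-r$ admissible attachment points, so the factor is about $1-r/n$. This is close to $1$ for the macroscopic pieces and varies with $r$ for the others. It also couples the skeleton to the labels, so that given the piece sizes the skeleton is not exactly a $p$-tree. Your outline contains no mechanism that actually produces the Ewens/PD law.

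The missing idea is the paper's truncation: cut only the edges from a parent with label $>k$ to a child with label $\le k$ (the $k$-reduced tree). Every such edge is automatically a descent, so the label constraint disappears. The joint law of $(\wbc,\wbj,\wbn)$ then has an exact product form (Lemma~\ref{lem:combinatorial formulas}). This gives three things:
\begin{enumerate}
\item Ewens$(a)$ component sizes on $[k]$;
\item a reduced tree that is uniform given its out-degrees, hence a $p$-tree in the limit via Foata--Fuchs;
\item the simultaneous approximation of $j_u/k$, $c_u/(a\log k)$ and $n_u/n$ by the same $\PoissonDirichlet{0}{a}$ vector.
\end{enumerate}
One then lets $k=k_n\to\infty$ slowly and uses the ancestral-line result to discard the descents ending at labels $>k_n$. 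Your Step~3 (recursive-tree depth estimates along the skeleton) matches the final part of the paper's argument. However, it needs this skeleton convergence as input, and Steps~1--2 do not supply it.
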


The topological space of dendrons is recalled in \Cref{sec: Gromov Prokhorov}.
The dendron $\cD_a$ is described in \Cref{ssec: limit dendron convergence}:
its base tree is a random infinite discrete tree, constructed with a preferential attachment procedure, and the sampling measure of $\cD_a$ is a sum of Poisson--Dirichlet Dirac masses at height $1$ on the vertices of this discrete tree.
Roughly speaking, this means that $\cT_n^{(a/n)}$ consists of random recursive trees with random sizes, grafted one onto another like a random \enquote{$p$-tree} (see, e.g., \cite{Camarri_Pitman_2000,Blanc-Renaudie_2025}).
The base tree of $\cD_a$ describes how these recursive ``components'' are assembled together, and the Poisson--Dirichlet masses describe their sizes.

We also show that the window $q_n=a/n$ for descent-biased trees pinpoints the transition between the recursive limit and the CRT limit, in the following sense.

\begin{theorem}\label{th: our dendron cv to CRT}
    The dendrons $\cD_a$, $a \in \bR_+^*$, interpolate between the trivial dendron $\Upsilon_{\delta_1}$ and the CRT $\cT_\be$. 
    More precisely, in the sense of dendrons, we have
    \begin{equation*}
        (i)\quad 
        \cD_a \cv{a\to0} \Upsilon_{\delta_1} 
        \qquad\text{and}\qquad
        (ii)\quad 
        \frac{1}{\sqrt a} \cdot \cD_a  \cv{a\to\infty} \cT_\be
    \end{equation*}
    in distribution.
\end{theorem}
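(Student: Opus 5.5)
We work directly with the explicit description of $\cD_a$ from \Cref{def: size-biased dendron}. Its base tree $T_a$ is the infinite discrete $p$-tree built by preferential attachment with weights $(P_1,P_2,\dots)\sim\PoissonDirichlet{0}{a}$, and its sampling measure is $\sum_k P_k\,\delta_{(v_k,1)}$. Dendron convergence (the extension of the GP topology) reduces to joint convergence in distribution of the matrix of pairwise dendron distances between finitely many i.i.d.\ sampled points together with the root. So for both limits we fix $\ell$, sample $W_1,\dots,W_\ell$ from the sampling measure, and study the joint law of the distance matrix. In $\cD_a$, if $W_i$ sits at base vertex $v_{K_i}$ at height $1$, the distance between $W_i\neq W_j$ is $d_{T_a}(v_{K_i},v_{K_j})+2$, and the distance from $W_i$ to the root is $d_{T_a}(\rho,v_{K_i})+1$, up to the exact conventions of the dendron metric. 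When $K_i=K_j$ the two points share a cluster; in dendron terms this means they are at distance $2$ through the recursive part.

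For (i), as $a\to0$ the Poisson--Dirichlet sequence satisfies $P_1\to1$ in probability: by stick-breaking $P_1=\beta_1$ with $\beta_1\sim\BetaDistribution{1}{a}$. Hence with probability tending to $1$ all sampled points lie in the first cluster. This cluster is attached at the root vertex $v_1$, or at a vertex at bounded distance from it, depending on the convention in \Cref{def: size-biased dendron}. If it is the root, all sampled points are at base vertex $\rho$ at height $1$, and the distance matrix equals that of $\Upsilon_{\delta_1}$ on this event. If the root is a separate vertex of zero mass, we instead check that with probability tending to $1$ no base edge separates $v_1$ from $\rho$ as $a\to0$. This point is the only detail that needs the precise definition.

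For (ii), we use the standard $p$-tree scaling result of Camarri--Pitman \cite{Camarri_Pitman_2000}, sharpened in \cite{Blanc-Renaudie_2025}. If $\sigma(p)^2:=\sum_k p_k^2\to0$, then $\sigma(p)\cdot T_p$ equipped with the measure $p$ converges in GP to the CRT $\cT_\be$. The sampled points are then $p$-distributed vertices, and the height-$1$ offsets, which add $O(1)$ to each distance, vanish after rescaling. The needed input is therefore $\sum_k P_k^2\sim 1/a$ in probability as $a\to\infty$, which gives the normalization $1/\sqrt a$ once the constant is checked. For $\PoissonDirichlet{0}{a}$ we have $\expec{\sum_k P_k^2}=1/(1+a)$, and the variance is $O(a^{-3})$ by the standard moment formula (Ewens sampling with two pairs). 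So $a\sum_k P_k^2\to1$ in probability, and conditionally on $(P_k)$ the Camarri--Pitman theorem applies.

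The main obstacle is in (ii). We must confirm that the preferential attachment construction in \Cref{def: size-biased dendron} is exactly the $p$-tree of \cite{Camarri_Pitman_2000}, including root conventions, and that the constant in their scaling matches, giving $\cT_\be$ rather than a multiple of it. We also need the conditional convergence to hold uniformly enough to integrate over the random $P$. We would handle the latter with the Skorokhod representation, choosing a coupling with $a\sum_k P_k^2\to1$ almost surely along any sequence $a\to\infty$. If the $p$-tree identification fails for the specific root convention, we would fall back on the birthday-problem description of $p$-trees: the distances between sampled points are given by repeat times of i.i.d.\ $P$-samples, and we would prove Rayleigh-type limits directly from $\max_k P_k\to0$ and $\sum_k P_k^2\sim1/a$.
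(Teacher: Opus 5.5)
Your overall route matches the paper's. For (i) you use the first Poisson--Dirichlet mass tending to $1$. For (ii) you apply the $p$-tree scaling theorem of Camarri--Pitman/Blanc-Renaudie to the base tree $T_a^{\discrete}$, combined with $a\sum_k P_k^2\to1$ and the fact that the height-$1$ offsets vanish after rescaling by $1/\sqrt a$.

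Part (i) is fine apart from two small points:
\begin{itemize}
\item $\PoissonDirichlet{0}{a}$ is the \emph{decreasing rearrangement} of the stick-breaking sequence. So you only have $P_1\ge\beta_1$, not $P_1=\beta_1$; this still gives $P_1\to1$.
\item The root is $v_{Z_1}$, which equals $v_1$ with conditional probability $P_1\to1$. This settles your convention question. Note that ``bounded distance'' from the root would not have sufficed; you need distance $0$.
\end{itemize}

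The genuine gap is in (ii): the criterion you quote is wrong. $\sigma(p)\to0$ alone does not make $\sigma(p)\cdot T_p$ converge to the Brownian CRT. In the Camarri--Pitman theory, if $p_i/\sigma(p)\to\theta_i$ with some $\theta_i>0$, the limit is an inhomogeneous CRT in which the heavy vertices become hubs. The condition for the limit $\cT_\be$ is $\max_k p_k/\sigma(p)\to0$. This in fact implies $\sigma(p)\to0$, since $\sigma(p)^2\le\max_k p_k$.

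Since $\sigma_a\sim a^{-1/2}$, what you actually need is $\sqrt a\,P_1\to0$ in probability. Neither $\sum_k P_k^2\sim1/a$ nor $\max_k P_k\to0$ (your fallback) gives this. This is precisely the extra estimate $aX_{a,1}^2\to0$ in the paper's lemma, which it proves from the Arratia--Barbour--Tavar\'e formula for $\expec{X_{a,1}^2}$ and dominated convergence.

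It is easy to supply. For instance, the atoms of $\PoissonDirichlet{0}{a}$ have intensity $a x^{-1}(1-x)^{a-1}\,\mathrm dx$, so
\[
\prob{P_1>\varepsilon a^{-1/2}}\le\int_{\varepsilon/\sqrt a}^1 a x^{-1}(1-x)^{a-1}\,\mathrm dx\le\frac{\sqrt a}{\varepsilon}\,e^{-\varepsilon\sqrt a}\cv{a\to\infty}0 .
\]
Without this step, your appeal to the $p$-tree theorem is not justified.
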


Note, however, that we do not state a scaling limit result for $\cT_n^{(q_n)}$ in the regime $q_n\to0$, $nq_n\to\infty$.
We conjecture that the scaling limit in that regime, with the same rescaling as in \Cref{th: CRT limit if q fixed}, should be the CRT; see \Cref{sec: Discussion}.

\subsection{Complementary results: the Benjamini--Schramm local limit}

Let us recall the mode of convergence introduced by Benjamini and Schramm in \cite{Benjamini_Schramm_2001} in the case of random trees.
For a locally finite tree $T$ (that is, a tree whose degrees are all finite), a vertex $u$ of $T$ and $r \geq 0$, we let $B_r(T,u)$ denote the ball of radius $r$ around $u$ with respect to the graph distance in $T$ (where all edges have length $1$).

\begin{definition}
\label{def:bs local limit}
Let $(\cT_n)_{n \geq 1}$ be a sequence of random finite trees, and let $u_n$ denote a uniformly random vertex in $\cT_n$. 
Let $(\cT_*,\rho_*)$ be a random rooted --- possibly infinite --- locally finite tree. 
We say that $\cT_n$ converges locally to $(\cT_*,\rho_*)$ if, for every fixed $r \geq 0$, we have the convergence in distribution
\begin{align*}
B_r(\cT_n,u_n) \overset{(d)}{\cv{n\to\infty}} B_r(\cT_*,\rho_*) \,.
\end{align*}
\end{definition}

The Benjamini--Schramm local limits of Cayley trees $\cT_n^{(1)}$ and of random recursive trees $\cT_n^{(0)}$ are well-known.
In the recursive case, the limit $\cT_*^{(0)}$ can be constructed from stopped Yule processes, and was first characterized by Aldous in \cite{Aldous_1991} (see, also, \cite[Example 6.1]{Holmgren_Janson_2017} and \cite{Contat_Laulin_2025}). It was also described in a more combinatorial way in \cite[Appendix A]{Dadedzi_Wagner_2024}.
In the Cayley case, the limit $\cT_*^{(1)}$ is known as Kesten's tree \cite{Kesten_1986} and consists of an infinite spine to which random i.i.d.\ finite trees are grafted, each one distributed as a $\mathrm{Poi}(1)$ Bienaymé--Galton--Watson tree.
We unify these two results by introducing a random rooted tree $(\cT_*^{(q)}, \rho_*^{(q)})$ for $q\in[0,1]$, which naturally interpolates between the previous two cases.

\begin{definition}%[The grafted trees $T_c, c \in {[0,1]}$]
\label{def:grafted trees}
    For any $q\in[0,1]$ and $\chi \in [0,1]$, define a random variable $Y_{q,\chi}$ taking values in $\bN^2$, such that for all $m \geq i \geq 1$,
    \begin{align*}
    \prob{Y_{q,\chi} = (m,i)} = f_{m,i}^{(q)}(\chi)
    \end{align*}
    where the function $f_{m,i}^{(q)}$ is defined later by \eqref{eq: function fmi}.
    For all $M\ge i \ge 1$, let $\cT^{(q)}_{M,i}$ denote a $q$-descent-biased tree conditioned to have size $M$ and root label $i$. 
    Finally, define the (a.s.\ finite) random tree $\cT^{(q)}_\chi$ as the mixture of conditioned $q$-descent-biased trees:
    \begin{align*}
    \Lawcond{\cT^{(q)}_\chi }{ Y_{q,\chi}=(M,i) } = \Law{ \cT^{(q)}_{M,i} } \,.
    \end{align*}
\end{definition}

We can now define $\cT_*^{(q)}$.

\begin{definition}%[The tree $\cT_*^{(q)}$]
\label{def:t etoile q}
    For $q \in [0,1]$, consider the kernel 
    \[
        \kappa_q(\mathrm dy|x)=\frac{1}{x+q(1-x)} \mathbbm{1}_{y \in [0,x]} \mathrm dy + \frac{q}{x+q(1-x)} \mathbbm{1}_{y \in [x,1]} \mathrm dy \,.
    \]
    Let $(X_k)_{k \geq 0}$ be the Markov chain on $[0,1]$ with Markov kernel $\kappa_q$, started at a uniform variable $X_0 \sim \Unif{[0,1]}$. 
    We define the tree $\cT_*^{(q)}$ as the random discrete infinite tree obtained by starting from a infinite spine $(v_0,v_1,\ldots)$, rooted at $\rho_*^{(q)} := v_0$, and grafting an independent copy of $\cT_{X_i}^{(q)}$ onto $v_i$ (identifying the root of $\cT_{X_i}^{(q)}$ with $v_i$, and forgetting the labels). 
\end{definition}

\begin{remark}
\label{rk:kernels}
    The Markov chain $(X_k)_{k\ge0}$ intuitively describes the rescaled labels of the first vertices on the path from a uniform vertex to the root.
    For $q=0$, 
    %it holds that $f_{m,i}^{(0)}(\chi) = \chi (1-\chi)^{m-1} \One{i=1}$, 
    the law of $X_{k+1}$ given $X_k$ is uniform on $[0,X_k]$, and given the Markov chain $(X_k)_{k\ge0}$, the grafted trees $\cT_{X_k}^{(0)}$ are independent random recursive trees with independent $\mathrm{Geom}(X_k)$ sizes.
    For $q=1$, the sequence $(X_k)_{k\ge0}$ is i.i.d.~uniform,
    %it holds that f_{m,i}^{(1)}(\chi) = \frac{e^{-m} m^{m-2}}{(m-1)!} \binom{m-1}{i-1} \chi^{i-1} (1-\chi)^{m-i}
    %$(X_k)_{k\ge0}$ is a sequence of i.i.d.\ $\Unif{[0,1]}$ random variables, and given the Markov chain $(X_k)_{k\ge0}$, 
    and the grafted trees $\cT_{X_k}^{(1)}$ are i.i.d.\ random Cayley trees with i.i.d.\ random sizes given by the probability mass function $m\in\bN \mapsto e^{-m} m^{m-1} / m!$.
\end{remark}

\begin{theorem}\label{th: BS local limit unified}
    Let $(q_n) \in [0,1]^\bN$ be a sequence such that $q_* := \lim_{n \to \infty} q_n \in [0,1]$ exists.
    Then, $\cT_n^{(q_n)}$ converges locally in the Benjamini--Schramm sense to the random rooted tree $(\cT_*^{(q_*)},\rho_*^{(q_*)})$.
\end{theorem}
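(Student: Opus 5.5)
The plan is to describe exactly, at finite $n$, the law of a ball around a uniform vertex $u_n$ of $\cT_n^{(q_n)}$. We do this by decomposing the tree along the ancestral line of $u_n$, and then pass to the limit term by term.

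\emph{Step 1: a decomposition.} Let $u_n = w_0, w_1, \dots$ be the ancestors of $u_n$, up to the root. Cutting the edge $\{w_k,w_{k+1}\}$ detaches the fringe subtree of $w_k$ with $w_{k+1}$'s branch removed. Concretely, $B_r(\cT_n,u_n)$ is determined by the first $r+1$ ancestors $w_0,\dots,w_r$, by the "side" subtrees hanging from each $w_k$ (the subtree of $w_k$ minus the branch containing $w_{k-1}$), and by whether the path reaches the root before step $r$.

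The key combinatorial input is the product formula $Z_{n,q}=\prod_{k=1}^{n-1}(n-k+kq)$ of E\u{g}ecio\u{g}lu--Remmel, together with its refinement by root label. Let $Z_{m,i,q}$ be the weighted count of trees of size $m$ with root label $i$. Fixing the label sets (after relabelling by order) and the sizes $m_0,\dots,m_r$ and root ranks $i_0,\dots,i_r$ of the side subtrees, the total weight factorises. One factor is a product of the $Z_{m_k,i_k,q}$. Another is a factor $q^{\One{\text{label}(w_{k+1})>\text{label}(w_k)}}$ for each ancestral edge. The last is the weighted count of the remaining tree of size $n-\sum m_k$ with a marked vertex $w_{r+1}$ carrying the rest; by exchangeability this is $Z$ of the remaining tree times an explicit factor depending on the relative rank of the attachment label. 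The function $f^{(q)}_{m,i}(\chi)$ of \eqref{eq: function fmi} should be exactly the limit of the probability that a side subtree has size $m$ and root rank $i$ when its root label is $\approx \chi n$. The Markov kernel $\kappa_q$ comes from the ancestral-edge factor $q^{\One{y>x}}$, normalised by $\int_0^1 (\One{y\le x}+q\One{y>x})\,\mathrm dy = x+q(1-x)$.

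\emph{Step 2: asymptotics.} For fixed $r$ and fixed sizes, compute the probability of a given configuration (shapes of the side trees, rescaled labels $x_k=\text{label}(w_k)/n$ in small intervals) as $n\to\infty$ with $q_n\to q_*$. Taking ratios of the product formula, $Z_{n-m,q_n}/Z_{n,q_n}$ behaves like $\prod(\text{local factor})$; quantities like $\prod_{k}(n-k+kq_n)$ evaluated at shifted indices give factors $(x+q_*(1-x))^{-1}$ and powers involving $\chi$. These should match $f^{(q_*)}_{m,i}(x_k)$ times the density $\kappa_{q_*}(\mathrm dx_{k+1}|x_k)$, with $X_0$ uniform because $u_n$ is uniform. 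Conditionally on the sizes and root ranks, the side subtrees are exactly $q_n$-descent-biased trees $\cT^{(q_n)}_{m,i}$. For fixed $m$, their law depends continuously on $q$, so it converges to $\cT^{(q_*)}_{m,i}$. This step includes the boundary cases $q_*=0$ (where the $q$ factor kills upward steps) and $q_n\to0$ at any speed, since only fixed-size objects appear; uniform convergence of the finite-$n$ factors in $q\in[0,1]$ handles all regimes at once.

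\emph{Step 3: tightness and conclusion.} We need the limiting configuration probabilities to sum to one, i.e.\ that no mass escapes. There are two ways it could escape: side subtrees of unbounded size, and the path hitting the root within $r$ steps. The second has probability $O(r/n)$ by an exchangeability/label argument, or just because the root is one vertex among the $\approx$ typical ancestors. The first follows from Fatou plus the fact that $\sum_{m,i} f^{(q)}_{m,i}(\chi)=1$, which is what Definition~\ref{def:grafted trees} requires ($Y_{q,\chi}$ is a probability law). Scheffé's lemma then upgrades pointwise convergence of configuration probabilities to convergence in distribution of $B_r(\cT_n,u_n)$ to $B_r(\cT_*^{(q_*)},\rho_*^{(q_*)})$.

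The main obstacle is Step 1–2: obtaining an exact, tractable formula for the refined partition function $Z_{m,i,q}$, including the correct weight for the "rest of the tree" with a marked attachment label, and checking that its limit matches \eqref{eq: function fmi} uniformly in $q$. I would first try to prove a root-label-refined version of the E\u{g}ecio\u{g}lu--Remmel formula by a bijective or recursive (vertex-removal) argument. I would then derive the needed ratio asymptotics from it.
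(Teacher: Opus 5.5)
Your architecture is the paper's argument for $q_*>0$: you cut along the ancestral line of $u_n$ into side subtrees of size $m_k$ and root rank $i_k$, put a factor $q$ on each ascending ancestral edge, weight the rest of the tree by the admissible parents of the top ancestor, and pass to a Riemann sum. Using the product formula to get $\frac{n!}{(n-m)!}\,Z_{n-m,q_n}/Z_{n,q_n}\to x_0(q_*)^m$ uniformly in $q\in[0,1]$ is a sound way to treat all regimes at once. The paper instead uses singularity analysis for fixed $q$ and a separate no-descent counting argument for $q_n\to0$. Two corrections are needed:
\begin{enumerate}
\item That ratio produces $x_0(q_*)^m$. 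The factor $x+q_*(1-x)$ comes from counting the possible parents of the top ancestor $w_r$; the other such factors are only a renormalisation absorbed into $f^{(q_*)}_{m,i}$.
\item No closed formula for $\sum_d\abs{\bU_{m,d,i}}q^d$ is needed. Here $m$ is fixed and $f^{(q)}_{m,i}$ is defined through this polynomial, so the ``main obstacle'' you name is not the real one.
\end{enumerate}

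\textbf{The real gap is Step 3, which is circular as written.} You take $\sum_{m\ge i\ge1}f^{(q)}_{m,i}(\chi)=1$ as guaranteed by \Cref{def:grafted trees}, but the paper only justifies that definition a posteriori. \Cref{cor:sums at 1} is deduced \emph{from} the uniform tightness $\limsup_n\sup_k\prob{\abs{\cT_n^{(q)}(k)}\ge M}\to0$ of \Cref{prop:stochastic bound for size subtree}. That is precisely the no-escape-of-mass statement you want to derive from the normalization. Without an independent input, Fatou only gives $\liminf\ge$ configuration by configuration, and mass may leak into large side subtrees. For $q_*\in\{0,1\}$ normalization is an explicit check (geometric, resp.\ Borel law), but for $q_*\in(0,1)$ it must be proved.

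You can close the gap in either of two ways:
\begin{enumerate}
\item Follow the paper. Use the exact limit law $\prob{\abs{\cT_n(1)}=M}\to qx_0^M[x^{M-1}]e^{A(x,q)}$, of total mass $qx_0e^{A(x_0,q)}=1$. Then get monotonicity in $k$ via the coupling that swaps labels $k$ and $k+1$.
\item Prove it analytically. Set $W(x,\chi):=\sum_{m\ge i\ge1}x^m\frac{\chi^{i-1}(1-\chi)^{m-i}}{(i-1)!(m-i)!}\sum_d\abs{\bU_{m,d,i}}q^d$, so that $\sum_{m,i}f^{(q)}_{m,i}(\chi)=(\chi+q(1-\chi))W(x_0,\chi)$. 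A root decomposition, with i.i.d.\ uniform values in place of labels, gives $W(x,\chi)=x\exp\bigl(\int_\chi^1W(x,y)\,\mathrm dy+q\int_0^\chi W(x,y)\,\mathrm dy\bigr)$. Hence $\partial_\chi W=-(1-q)W^2$ and $1/W(x,\chi)=c(x)+(1-q)\chi$. Since $\int_0^1W(x,\chi)\,\mathrm d\chi=A(x,q)$, you get $c(x)=(1-q)/(e^{(1-q)A(x,q)}-1)$. Letting $x\uparrow x_0$ with $A(x_0,q)=\frac{\log q}{q-1}$ gives $c=q$, i.e.\ $(\chi+q(1-\chi))W(x_0,\chi)=1$.
\end{enumerate}

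Finally, your $O(r/n)$ bound for the path reaching the root is false: for recursive trees it is of order $(\log n)^r/n$. It is $o(1)$, and in fact unnecessary once normalization holds.
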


We emphasize that, in the case $q=0$, this provides an elegant new representation of the local limit $\cT_*^{\rec}$.

\begin{remark}
\label{rk:root local limit}
    The second and third authors used a slightly different notion of local convergence in \cite{Thevenin_Wagner_2023}, and proved the convergence in distribution of $B_r(\cT_n^{(q)}, \rho_n)$ for constant $q$, where $\rho_n$ is the root of the $q$-descent-biased tree $\cT_n^{(q)}$.
\end{remark}

\subsection{Organization of the paper}
\label{sec: Organization of the paper and proof strategy}

We start in Section \ref{sec:asymptotics} by studying the exponential generating function of descent-biased trees, and the asymptotics of its coefficients in different regimes. 
These results are mainly obtained by singularity analysis.
We use these asymptotics in Section \ref{sec:scaling limit supercritical} to prove \Cref{th: CRT limit if q fixed}, which states that $\cT_n^{(q)}$ converges to the CRT whenever $q \in (0,1]$ is fixed.
In Section \ref{sec:dendron limit}, we show the convergence of $\cT_n^{(q_n)}$ towards a limit dendron when $nq_n$ converges to some $a>0$.
This relies again on the asymptotics of \Cref{sec:asymptotics}, together with a thorough study of a decomposition of our trees into ``components''.
Then, our results about the local limit of descent-biased trees in all regimes are gathered in Section \ref{sec:local limit}.
Finally, in Appendix \ref{sec: Gromov Prokhorov}, we recall the framework of real trees and dendrons, as well as the Gromov--Prokhorov topology which we use here. 
%Finally, Appendix \ref{sec: Markov chain simulation} is devoted to the description of a Markov chain with which we draw simulations of large descent-biased trees.

\subsection{Discussion}
\label{sec: Discussion}

\paragraph{Open questions.}
In this work, we only deal with the Gromov--Prokhorov topology.
In the context of real trees, however, it is also common to consider the \emph{Gromov--Hausdorff} (GH) topology.
Together with the GP topology, these combine into the stronger \emph{Gromov--Hausdorff--Prokhorov} (GHP) topology.
Extending the convergence of \Cref{th: CRT limit if q fixed} to the GHP topology was out of reach for us, and this remains an open question.
The main challenge lies in establishing bounds on global statistics, such as height, of descent-biased trees.
Note that this question would not make sense in the context of \Cref{th: dendron limit if a/n}, as there is no suitable extension of the GH topology to the space of dendrons.

Another natural question concerns the intermediate range $q_n\to0, nq_n\to\infty$.
Our belief is that, in this regime, the random trees $\cT_n^{(q_n)}$ rescaled by $\frac{q_n-1}{\sqrt{nq_n} \log q_n}$ converge in distribution towards the CRT.
One potential approach would be to extend our proof in the $q\in(0,1]$ regime to allow for a dependency on $n$;
however, this would prevent direct applications of singularity analysis, on which our current proof relies.
Instead, more extensive computation would be required to better understand the behavior of the generating function $A(x,q_n)$ and its coefficients, and this would raise new difficulties.
Another potential approach would be to rely on a weighted version of the well-known Wilson algorithm, which produces a random weighted spanning tree (see, e.g., \cite[Section~9.7]{Lawler_Limic_2010}).
Putting weights $1$ and $q$ on the edges of the directed complete graph, this algorithm returns a $q$-descent-biased tree.
In addition to providing an efficient algorithm for simulation,
this approach has proved useful for studying the scaling limits of uniform spanning trees; see, e.g., \cite{Archer_Shalev_2024,Archer_Nachmias_Shalev_2024}.

\paragraph{Related work.}
Let us mention some related work.
Our model is an \emph{exponential family}, which is a popular type of model in discrete probability.
Recently, the authors of \cite{Durhuus_Unel_2023,Addario-Berry_Corsini_Maitra_Unel_2025} studied another exponential model of random trees.
Specifically, they investigated random rooted plane trees biased by their \emph{height}.
It was shown in \cite{Durhuus_Unel_2023} that these trees admit a local limit at the root which is either single-spine or multi-spine, depending on the bias parameter.
Then, the authors of \cite{Addario-Berry_Corsini_Maitra_Unel_2025} proved that the scaling limit of this model is, in the right regime, a height-biased variant of the CRT.
\medskip

To our knowledge, not many papers deal with the dendron limits of random tree models.
In \cite{Janson21}, Janson exhibits a wide class of random trees with logarithmic height that converge to the same limit $\Upsilon_{\delta_1}$ as recursive trees, up to rescaling.
Other dendron limits have recently been found in the works \cite{Bellin_Blanc-Renaudie_Kammerer_Kortchemski_2025,Bellin_Blanc-Renaudie_Kammerer_Kortchemski_2025_scaling}, which are concerned with a model of random trees constructed by uniform attachment with freezing. 
These are constructed with the same attachment procedure as recursive trees, except that vertices may randomly freeze:
then, new vertices cannot be attached to a frozen vertex for the rest of the procedure.
It was shown in \cite{Bellin_Blanc-Renaudie_Kammerer_Kortchemski_2025} that, if the number of non-frozen vertices is of order $cn$ for some $c>0$ after $n$ steps, then these trees converge after rescaling by $\log n$ to the dendron $\Upsilon_{\delta_{(c+1)/(2c)}}$ (using the notation of \cite{Janson21}).
Informally, this means that most vertices are at distance $\frac{c+1}{2c} \log n$ from the root and $\frac{c+1}{c} \log n$ from each other.
When $c=1$, this is the classical dendron limit $\Upsilon_{\delta_1}$ of recursive trees.
In \cite{Bellin_Blanc-Renaudie_Kammerer_Kortchemski_2025_scaling}, it was proved that if the number of active vertices is of order $n^\alpha$ for some $\alpha\in(1/2,1)$ after $n$ steps, then these trees converge after rescaling by $n^{1-\alpha}$ to a dendron $\Upsilon_\nu$ with a trivial base tree and an explicit diffuse measure.
\medskip

In a different direction, Duquesne and Winkel \cite{DuquesneWinkel2026} investigate another way to define limits which cannot be captured in the classical Gromov–Prokhorov setting, addressed through the notion of mass erasure of real trees. The mass erasure convergence is strictly weaker than the Gromov-Prokhorov one, as it allows mass going to infinity. Investigating the connection between these two notions of convergence in more detail will be the object of future work.
\medskip

Finally, let us mention the recent preprint \cite{stufler2026poissondirichletgraphonspermutons}.
Therein, Stufler introduced new classes of random graphons and permutons, and coined the terms Poisson--Dirichlet graphons and permutons.
The theory of graphons and permutons is in a sense analogous to the theory of dendrons, since a graphon (resp., permuton) is uniquely determined by the finite subgraphs (resp., subpermutations) that it randomly induces.
In the same way, a dendron is uniquely determined by its random distance matrices, and convergence of finite trees towards a given dendron corresponds to the convergence in distribution of its randomly spanned subtrees.

Let us focus on the graphon case, as the permuton case is similar.
The Poisson--Dirichlet graphons of \cite{stufler2026poissondirichletgraphonspermutons} are defined by a \enquote{head and components} structure, where the head describes how to assemble the components into a single object.
The head is a random graphon, and the components are i.i.d.~random graphons with random \enquote{sizes} given by a Poisson--Dirichlet sequence.
Stufler proved that these graphons are the limits of a certain class of random graphs constructed with a similar \enquote{head} and \enquote{components} structure, with a bias on the number of components.

There is a strong similarity between these objects and ours.
Indeed, the $q$-descent-biased tree may be seen as a combination of recursive trees, grafted one onto another, with a bias on the number of such components.
Then, consider the random dendron $\cD_a$.
Its \enquote{head} is the base random infinite discrete tree (describing how the recursive components are grafted one onto another), its \enquote{components} are trivial dendrons $\Upsilon_{\delta_1}$ (the limits of the recursive components), and the masses are given by a Poisson--Dirichlet distribution.
Although the similarities with \cite{stufler2026poissondirichletgraphonspermutons} are striking, we do not rely exactly on this head-component structure for our proofs (see \Cref{sec:dendron limit} for details).

\subsection{Notation}

In the paper, $\bN := \{1,2,\ldots\}$ denotes the set of positive integers, and $\bN_0 := \bN \cup \{0\}$ the set of nonnegative integers.
For $n\in\bN_0$, we set $[n] := \{1,\dots,n\}$.
For $r\in\bN_0$, the $r$-th falling factorial of $n$ is $(n)_r := n(n-1)\cdots(n-r+1)$, with the convention that $(n)_r := 1$ if $r=0$.
If $0\le i\le j$ are two integers, we set $[i,j] := \{i,i+1,\dots,j\}$.

For $x \in \bR$, we set $\bR_{\ge x}:=[x,\infty)$ and define $\bR_{\le x}, \bR_{> x}, \bR_{< x}$ similarly.
We sometimes write $\bR_+ := \bR_{\ge 0}$ and $\bR_+^* := \bR_{> 0}$.
For $z\in\bC$ and $r\in\bR_{\ge0}$, we write $\bD(z,r)$ for the complex open disk with center $z$ and radius $r$.

The set of all rooted labeled trees is $\bT := \bigcup_{n\ge1} \bT_n$.
The size $\abs{t}$ of a tree $t$ is its number of vertices, i.e., $\abs{t}=n$ if $t\in\bT_n$.
The root of a tree $t$ is denoted by $\rho(t)$, or simply $\rho$ if there is no ambiguity.

The $n$-th coefficient of a power series $A(x) = \sum_{k\ge0} a_k x^k$ is $[x^n] A(x) := a_n$.
The exponential generating function of descent-biased trees is
\begin{equation}\label{eq: def of generating function Axq}
    A(x,q) := \sum_{t\in\bT} q^\des{t} \frac{x^\abs{t}}{\abs{t}!}
    = \sum_{n\ge1} Z_{n,q} \frac{x^n}{n!},
\end{equation}
where we recall that $Z_{n,q} := \sum_{t \in \bT_n} q^\des{t}$.
Note that $Z_{n,0} = (n-1)!$ and $Z_{n,1} = n^{n-1}$ for each $n\in\bN$.
Correspondingly, the exponential generating functions of recursive trees and of Cayley trees are characterized by $A(x,0) = -\log(1-x)$ and $A(x,1) = x e^{A(x,1)}$, respectively.

If $\cX$ is a topological space, we let $\cM_1(\cX)$ denote the space of Borel probability measures on $\cX$, endowed with the weak topology.
If $X$ is a random variable taking values in $\cX$, we let $\Law{X} \in \cM_1(\cX)$ denote its law.

Finally, let 
\[
    \Delta := \Big\{ (x_1,x_2, \dots) \in \bR_+^{\bN} : \sum_{i\ge1} x_i = 1 \Big\}
\]
denote the infinite-dimensional simplex, endowed with the product topology.
In order to define the dendron limit of descent-biased trees, we use a Poisson--Dirichlet distribution $\PoissonDirichlet{0}{a} \in \cM_1(\Delta)$ for $a \in (0,\infty)$. 
Recall that $X=(X_n)_{n\ge1}$ follows the distribution $\PoissonDirichlet{0}{a}$ if $X$ is the decreasing reordering of the sequence $\big( Y_n \prod_{k=1}^{n-1}(1-Y_k) \big)_{n \geq 1}$, where $(Y_k)_{k \geq 1}$ are i.i.d.~$\BetaDistribution{1}{a}$ random variables.

\section*{Acknowledgments}

The authors would like to thank Arthur Blanc-Renaudie for pointing to references \cite{Camarri_Pitman_2000,Blanc-Renaudie_2025} on $p$-trees, which helped prove our \Cref{th: our dendron cv to CRT}. They are also grateful to Eleanor Archer for suggesting the use of Wilson's algorithm, in particular for simulations.

VD was supported partly by the French PIA project ``Lorraine Université d’Excellence'', reference ANR-15-IDEX-04-LUE;
by the %ANR 
project LOUCCOUM 
%(Large Objects Under Combinatorial Constraints and Outside Uniform Models, 
(reference ANR-24-CE40-7809);
by the Knut and Alice Wallenberg Foundation;
and by Ragnar Söderberg's Foundation.

PT was supported by the project ANR-24-CPJ1-0032-01, and the project "Rawabranch" number ANR-23-CE40-0008.

SW was supported by the Swedish research council (Vetenskapsr{\aa}det), grant 2022-04030.

\section{Asymptotics of generating functions and their coefficients}
\label{sec:asymptotics}

This section contains the main tools from analytic combinatorics that we shall use.
The principal object of study is the generating function $A(x,q)$ of descent-biased trees; see \eqref{eq: def of generating function Axq}.
In \Cref{sec: Generating functions related to descent-biased trees}, we recall some analytical properties of $A(x,q)$ that were proven in \cite{Thevenin_Wagner_2023}.
We show in Lemmas~\ref{lem:trees with smallest root} and~\ref{lem: generating function typical depth} how to compute two other useful generating functions:
that of descent-biased trees with root label $1$, and that for the depth of a typical vertex in the tree.
In Sections \ref{sec: Asymptotics for fixed q} and \ref{sec: Asymptotics for q=a/n}, we use standard tools from analytic combinatorics (see, e.g., \cite{Flajolet_Sedgewick_2009} and \cite[Corollary~2.16]{Drmota_2009} for references) to derive the asymptotic behavior of our generating functions in the two regimes $q_n \equiv q\in(0,1]$ and $q_n = a/n$, $a>0$.
Specifically, Equation \eqref{eq: asymptotic coefficient A for fixed q} and Lemma~\ref{lem: asymptotic coefficient partition function in transition regime} treat the function $A(x,q_n)$, whereas Propositions~\ref{prop: Rayleigh limit of typical depth when q fixed} and~\ref{prop: asymptotic coefficient typical depth in transition regime} concern the depth of a typical vertex.
These propositions can be seen as first steps towards \Cref{th: CRT limit if q fixed,th: dendron limit if a/n}, which investigate the pairwise distances of finitely many typical vertices.
As an easy byproduct of the asymptotics of $A(x,q_n)$, we provide central limit theorems for the number of descents in $\cT_n^{(q_n)}$ in Corollaries~\ref{cor: CLT descents q fixed} and~\ref{cor: CLT descents q=a/n}.

\begin{remark}
    We believe that similar results could be obtained across the entire $q_n \to0$, $nq_n\to\infty$ range, although this might require heavier computation.
\end{remark}

\subsection{Generating functions related to descent-biased trees}
\label{sec: Generating functions related to descent-biased trees}

When dealing with generating functions related to descent-biased trees, an approach that we extensively use is to decompose a given tree $t\in \bT_n$ according to the vertex $v_1$ with label $1$.
If $v_1$ is the root of $t$, then the tree is decomposed into a family of subtrees $t_1, \dots, t_p$, rooted at the children of $v_1$.
Otherwise, $t$ is decomposed into a tree $t_0$ together with a vertex $v_0\in t_0$ to which $v_1$ is attached, and a family of subtrees $t_1, \dots, t_p$ rooted at the children of $v_1$.
In the first case, $\des{t} = \des{t_1} + \cdots + \des{t_p}$, and in the second case, $\des{t} = 1 + \des{t_0} + \des{t_1} + \cdots + \des{t_p}$.
Expressing this decomposition in terms of the generating function $A(x,q)$, we get the differential equation
\begin{equation}\label{eq: differential equation for A}
    \partial_x A(x,q) = e^{A(x,q)} + q x e^{A(x,q)} \partial_x A(x,q) ,
    \quad\text{or equivalently,}\quad
    (\partial_x A(x,q)) ( e^{-A(x,q)} - qx ) = 1 \,,
\end{equation}
leading to the implicit equation 
\begin{equation}\label{eq: implicit equation for A}
    x = \frac{e^{-q A(x,q)} - e^{-A(x,q)}}{1-q} \,,
\end{equation}
see \cite{Thevenin_Wagner_2023}.
As was observed in \cite[Lemma~21]{Thevenin_Wagner_2023}, this entails that for any fixed $q>0$, $A(x,q)$ is amenable to singularity analysis with a square root singularity at 
\begin{equation}
    x_0(q) := q^{q/(1-q)} \,.
\end{equation}
In particular, $A(x,q)$ is analytic on a complex domain of the form $\bD_A := \bD(0,x_0(q)+\delta) \setminus \bR_{\ge x_0(q)}$ for some $\delta>0$. In fact, it can be shown that $A(x,q)$ is analytic in the slit plane $\bC \setminus \bR_{\ge x_0(q)}$, but we will not need this.

The same decomposition method can be used to find expressions for other generating functions.
For example, this was done in \cite[Section~6.3]{Thevenin_Wagner_2023} for the mean path length of the tree.
The following two lemmas concern the generating functions of descent-biased trees with root labeled $1$, and of the typical depth of a vertex in the tree.

\begin{lemma}
\label{lem:trees with smallest root}
    Let $\cE_{1}$ denote the set of trees $t \in \bT$ with root labeled $1$, and
    \begin{align*}
        A^{(1)}(x,q) := \sum_{t\in \bT} q^\des{t} \frac{x^{|t|}}{|t|!}  \One{t \in \cE_1}
    \end{align*}
    denote the (exponential) generating function of descent-biased trees in $\cE_1$.
    Then, we have
    \begin{align*}
        \partial_x A^{(1)}(x,q) = e^{A(x,q)}. 
    \end{align*}
\end{lemma}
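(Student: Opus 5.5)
We will prove the identity by decomposing a tree $t\in\cE_1$ at its root, in the same way as the derivation of \eqref{eq: differential equation for A}, but keeping only the first case of that decomposition. Since the root is labeled $1$, the smallest label, no edge leaving the root can be a descent. Hence $t$ is the root together with an unordered set of subtrees $t_1,\dots,t_p$ hanging from its children, and $\des{t}=\des{t_1}+\cdots+\des{t_p}$. The labels $\{2,\dots,n\}$ are split among the subtrees, and each subtree is relabeled in an order-preserving way. Relabeling preserves the number of descents, because descents depend only on the relative order of labels.

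To turn this into generating functions, we use that $\partial_x$ of an exponential generating function corresponds to removing one atom: it deletes the vertex with the smallest label and shifts the remaining labels down by one. Concretely, $n\,[x^n]A^{(1)}(x,q)\cdot (n-1)!/n!$ should be read as the total weight, over $\cE_1\cap\bT_n$, of trees in which the root has been deleted. Removing the root turns a tree of $\cE_1\cap\bT_n$ into an unordered forest on $n-1$ labeled vertices. This map is a bijection onto the set of all rooted labeled forests on $[n-1]$ after relabeling $\{2,\dots,n\}\to[n-1]$, and it preserves the weight $q^{\des{\cdot}}$.

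By the exponential formula, the EGF of such forests weighted by descents is $\exp(A(x,q))$. Equating coefficients gives
\[
    \frac{Z^{(1)}_{n}}{(n-1)!}=[x^{n-1}]e^{A(x,q)},
\]
where $Z^{(1)}_n:=\sum_{t\in\cE_1\cap\bT_n}q^{\des{t}}$. The left-hand side is exactly $[x^{n-1}]\partial_xA^{(1)}(x,q)$, which proves the lemma.

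There is no serious obstacle. The one point to check carefully is that the bijection between $\cE_1\cap\bT_n$ and forests on $n-1$ labeled vertices is weight-preserving. This holds because the root's edges contribute no descents, and because order-preserving relabelings preserve descents.
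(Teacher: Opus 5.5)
Your proof is correct and is essentially the paper's argument. The paper decomposes according to the vertex labeled $1$ and keeps only the case where it is the root, the first case of the decomposition behind \eqref{eq: differential equation for A}. Since the root's edges create no descents, what remains is a descent-weighted set of subtrees, whose generating function is $\sum_{p\ge0}A(x,q)^p/p!=e^{A(x,q)}$. This is exactly your root-removal bijection combined with the exponential formula.

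One small normalization slip: the total weight of $\cE_1\cap\bT_n$ is $n!\,[x^n]A^{(1)}(x,q)=Z^{(1)}_n$. The expression you wrote, $n\,[x^n]A^{(1)}(x,q)\cdot(n-1)!/n!$, equals $Z^{(1)}_n/n!$ instead. This does not affect the conclusion, because your displayed identity $Z^{(1)}_n/(n-1)!=[x^{n-1}]e^{A(x,q)}$ is right.
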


\begin{proof}
    This is a straightforward application of our decomposition.
\end{proof}

\begin{lemma}
\label{lem: generating function typical depth}
    For a tree $t\in \bT$, recall that $\rho(t)$ denotes its root and $\dist$ denotes its graph metric. Let
    \begin{equation*}
        F_1(x,y,q) := \sum_{t\in\bT} q^\des{t} \frac{x^\abs{t}}{\abs{t}!} \sum_{w\in t} y^{\dist(\rho(t), w)}
    \end{equation*}
    be the generating function of the depth of a typical vertex in a descent-biased tree.
    Then, we have the differential equation
    \begin{align}\label{eq: equadiff on F1}
        \partial_x F_1(x,y,q) 
        = \partial_x A(x,q) \left(
        1 + (1+q)y F_1(x,y,q) + qy^2 F_1(x,y,q)^2 
        \right) \,,
    \end{align}
    with the solution
    \begin{equation}\label{eq: explicit formula for generating function of typical depth}
        F_1(x,y,q) = \frac{ e^{(1-q)yA(x,q)} - 1 }{ y - qye^{(1-q)yA(x,q)} } \,.
    \end{equation}
\end{lemma}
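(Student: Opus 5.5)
The plan is to derive the differential equation \eqref{eq: equadiff on F1} from the same decomposition at the vertex $v_1$ labeled $1$ that gave \eqref{eq: differential equation for A}. Then I will solve it as a separable equation in the variable $A=A(x,q)$.

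\textbf{Combinatorial step.} Recall that $\partial_x$ of an exponential generating function counts structures in which the vertex labeled $1$ is singled out and removed, the other labels being relabeled. Also, $x\partial_x$ marks an arbitrary vertex. The function $F_1$ counts pairs $(t,w)$, where $w$ is a vertex of $t$, weighted by $q^{\des{t}}y^{\dist(\rho,w)}$. I will split $\partial_x F_1$ according to whether $v_1$ is the root, and according to where the marked vertex $w$ lies.

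\emph{Case 1: $v_1$ is the root.} Its children subtrees form a set, contributing $e^{A}$.
\begin{itemize}
\item If $w=v_1$, the term is $e^A$.
\item If $w$ lies in one of the subtrees, we pick that subtree and mark $w$ in it, gaining one extra unit of depth. The term is $yF_1e^A$.
\end{itemize}

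\emph{Case 2: $v_1$ is attached below a vertex $v_0$ of a tree $t_0$.} This edge is a descent and contributes a factor $q$.
\begin{itemize}
\item If $w$ is $v_1$ or lies in a subtree of $v_1$, then $\dist(\rho,w)=\dist(\rho,v_0)+1+(\text{$0$, or $1+$ the depth inside the subtree})$. Marking $v_0$ in $t_0$ with weight $y^{\dist(\rho,v_0)}$ is exactly what $F_1$ counts, so the term is $q\,yF_1(1+yF_1)e^A$.
\item If $w$ lies in $t_0$, then $t_0$ carries two marks: $w$, weighted by $y^{\text{depth}}$, and an unweighted $v_0$. This is $x\partial_xF_1$, so the term is $qx\,e^A\partial_xF_1$.
\end{itemize}

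Summing the four terms gives
\[
\partial_xF_1\,(1-qxe^{A}) = e^{A}\bigl(1+(1+q)yF_1+qy^2F_1^2\bigr).
\]
By \eqref{eq: differential equation for A} we have $e^A/(1-qxe^A)=\partial_xA$, and this yields \eqref{eq: equadiff on F1}.

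\textbf{Analytic step.} Since $A(x,q)$ has zero constant term and $A'(0)=1$, the map $x\mapsto A$ is invertible near $0$. So I can view $F_1$ as a function of $A$. It then satisfies the Riccati-type but separable equation $dF_1/dA=(1+yF_1)(1+qyF_1)$ with $F_1=0$ at $A=0$.

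Partial fractions give
\[
\frac{1}{(1+yF)(1+qyF)}=\frac{1}{1-q}\Bigl(\frac{1}{1+yF}-\frac{q}{1+qyF}\Bigr).
\]
Integrating, we get $\frac{1}{(1-q)y}\log\frac{1+yF_1}{1+qyF_1}=A$. Solving for $F_1$ gives exactly \eqref{eq: explicit formula for generating function of typical depth}.

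Finally, I will check the right-hand side of \eqref{eq: explicit formula for generating function of typical depth} directly. It vanishes at $A=0$ and satisfies the ODE, so by uniqueness of formal power series solutions the identity holds as formal series in $x$. The case $q=1$ is handled by taking the limit.

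The main point needing care is the last combinatorial term, $w\in t_0$. One must check that the double marking is correctly encoded by $x\partial_xF_1$, and that the relabelings in the derivative construction are consistent, so that no factor of $x$ or factorial is lost.
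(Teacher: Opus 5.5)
Your proposal is correct and matches the paper's proof: you use the same decomposition at the vertex labeled $1$, obtain the same four contributions $e^{A}$, $yF_1e^{A}$, $q\,yF_1(1+yF_1)e^{A}$ and $qx\,e^{A}\partial_xF_1$, and conclude with $\partial_xA\,(e^{-A}-qx)=1$. Your separation of variables in $A$, via the factorization $(1+yF_1)(1+qyF_1)=1+(1+q)yF_1+qy^2F_1^2$, simply spells out the ``standard methods'' the paper invokes to solve the Riccati equation.
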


\begin{proof}
    For $t\in\bT$, write 
    \[
        G(t,y) := \sum_{w\in t} y^{\dist(\rho(t), w)} \,.
    \]
    Consider a tree $t\in\bT_{n+1}$, let $v_1$ denote its vertex labeled $1$, let $p\ge0$ be the number of children of $v_1$, and let $t_1, \dots, t_p$ be the subtrees of $t$ rooted at the children of $v_1$ (ordered in an arbitrary manner).
    %, e.g., using their respective root labels).
    If $v_1 = \rho(t)$, then
    \[
        G(t,y) = 1 + y \sum_{i=1}^p G(t_i,y) \,.
    \]
    If $v_1 \ne \rho(t)$, then we let $t_0$ denote the tree $t$ minus $v_1$ and its descendants, let $t_1, \dots, t_p$ be the subtrees rooted at the children of $v_1$, and we let $v_0\in t_0$ denote the parent of $v_1$ in $t$.
    In that case,
    \[
        G(t,y) = G(t_0,y) + y^{1+\dist(\rho(t_0), v_0)} \left( 1 + y \sum_{i=1}^p G(t_i,y) \right) \,.
    \]
    From this analysis, we deduce that
    \begin{align*}
        \partial_x F_1(x,y,q)
        &= \sum_{n\ge0} \sum_{t\in \bT_{n+1}} q^\des{t} \frac{x^n}{n!} G(t,y)
        \\&= \sum_{p\ge0} \sum_{t_1, \dots, t_p\in \bT} 
        \frac{1}{p!} \binom{\abs{t_1}+\dots+\abs{t_p}}{\abs{t_1}, \dots, \abs{t_p}}
        q^{\des{t_1}+\dots+\des{t_p}} \frac{x^{\abs{t_1}+\dots+\abs{t_p}}}{(\abs{t_1}+\dots+\abs{t_p})!}
        \left( 1 + y \sum_{i=1}^p G(t_i,y) \right)
        \\&\hspace{1em} + \sum_{p\ge0} \sum_{t_0,t_1, \dots, t_p\in \bT} \sum_{v_0\in t_0}
        \frac{1}{p!} \binom{\abs{t_0}+\dots+\abs{t_p}}{\abs{t_0}, \dots, \abs{t_p}}
        q^{1+\des{t_0}+\dots+\des{t_p}} \frac{x^{\abs{t_0}+\dots+\abs{t_p}}}{(\abs{t_0}+\dots+\abs{t_p})!}
        \\&\hspace{11em} \cdot \left( G(t_0,y) + y^{1+\dist(\rho(t_0), v_0)} \left( 1 + y \sum_{i=1}^p G(t_i,y) \right) \right)
        \\&= \sum_{p\ge0} \frac{1}{p!} \left( 
        A(x,q)^p + y p F_1(x,y,q) A(x,q)^{p-1}
        \right)
        \\&\hspace{1em} + q \sum_{p\ge0} \frac{1}{p!} \left(
        x \partial_x F_1(x,y,q) A(x,q)^p + y F_1(x,y,q) A(x,q)^p + y^2 p F_1(x,y,q)^2 A(x,q)^{p-1}
        \right) \,,
    \end{align*}
    and thus
    \begin{equation*}
        e^{-A(x,q)} \partial_x F_1(x,y,q) = 
        1
        + y F_1(x,y,q) 
        + q x \partial_x F_1(x,y,q) 
        + q y F_1(x,y,q) 
        + q y^2 F_1(x,y,q)^2 \,.
    \end{equation*}
    Regrouping the terms containing $\partial_x F_1(x,y,q)$, multiplying by $\partial_x A(x,q)$, and recalling \eqref{eq: differential equation for A}, we get \eqref{eq: equadiff on F1}.
    Observing that this is a Riccati equation, \eqref{eq: explicit formula for generating function of typical depth} follows by standard methods.
\end{proof}

\subsection{Asymptotics for fixed $q\in(0,1]$}
\label{sec: Asymptotics for fixed q}

In this section, we consider a fixed $q_n \equiv q \in (0,1]$.
Then, by \cite[Lemma~21]{Thevenin_Wagner_2023}, we have
\begin{equation}\label{eq: asymptotic A close to singularity for fixed q}
    A(x,q) = \frac{\log q}{q-1} - \sqrt{\frac{2}{q}} \sqrt{1 - \frac{x}{x_0(q)}} + \cO(\abs{x-x_0(q)})
\end{equation}
as $x\to x_0(q)$ in $\bD_A$.
This holds uniformly for $q$ in any compact subinterval of $(0,1]$.
By singularity analysis (see \cite[Theorem VI.4]{Flajolet_Sedgewick_2009}), this entails that
\begin{equation}\label{eq: asymptotic coefficient A for fixed q}
    \frac{1}{n!} Z_{n,q} := [x^n] A(x,q) \sim \frac{1}{\sqrt{2\pi q}} n^{-3/2} x_0(q)^{-n}
\end{equation}
as $n\to\infty$, uniformly for $q$ in any compact subinterval of $(0,1]$.
As a more involved application of singularity analysis, we show that the depth of a typical vertex in $\cT_n^{(q)}$ follows a Rayleigh distribution after rescaling, indicative of the CRT scaling limit.
This is the first step towards \Cref{th: CRT limit if q fixed}, whose full proof is conducted in \Cref{sec:scaling limit supercritical}.

\begin{proposition}\label{prop: Rayleigh limit of typical depth when q fixed}
    Recall the function $F_1$ of Lemma~\ref{lem: generating function typical depth}.
    For $r\in\bN_0$, we let $f_{1,r}(x,q)$ denote the $r$-th derivative of $F_1(x,y,q)$ with respect to $y$, evaluated at $y=1$.
    Then, for fixed $q\in(0,1]$, we have
    \begin{equation}\label{eq: claim singularity f1r}
        f_{1,r}(x,q) \sim \frac{r!}{\sqrt{2q}} \left( \sqrt{\frac{q}{2}} \frac{\log q}{q-1} \right)^r \left( 1 - \frac{x}{x_0(q)} \right)^{-1/2-r/2}
    \end{equation}
    as $x\to x_0(q)$.
    As a consequence, if $u_n$ denotes a uniformly random vertex of $\cT_n^{(q)}$ and $\dist(\rho,u_n)$ its depth, then
    \begin{equation*}
        \frac{q-1}{\sqrt{q} \log q} \frac{1}{\sqrt n} \dist(\rho,u_n) 
        \cv{} \mathrm{Rayleigh}(1)
    \end{equation*}
    in distribution and with convergence of all moments as $n\to\infty$.
\end{proposition}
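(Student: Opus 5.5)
The plan is to read off the singular behaviour of $f_{1,r}$ directly from the explicit formula \eqref{eq: explicit formula for generating function of typical depth}, transfer it to coefficients by singularity analysis, and identify the resulting moments as those of the Rayleigh law. Write $L := \frac{\log q}{q-1}$, $s := \sqrt{1-x/x_0(q)}$, $N(y) := e^{(1-q)yA(x,q)}-1$ and $D(y) := 1-q e^{(1-q)yA(x,q)}$, so that $F_1 = N(y)/(yD(y))$. For $q<1$, since $(1-q)L = -\log q$, we get $e^{(1-q)A}\to 1/q$ as $x\to x_0(q)$. Hence $N(1)\to (1-q)/q$ while $D(1)\to 0$. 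Using \eqref{eq: asymptotic A close to singularity for fixed q},
\[
D(1) = 1-e^{(1-q)(A-L)} = (1-q)(L-A) + \cO(|L-A|^2) = (1-q)\sqrt{2/q}\, s + \cO(s^2).
\]
The case $q=1$, where $F_1 = A/(1-yA)$, is handled the same way, or by continuity of the constants.

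\textbf{Step 1: expansion of $f_{1,r}$.} Differentiate $r$ times in $y$ at $y=1$ using Leibniz's rule. Each $y$-derivative of $1/D$ produces one extra factor $1/D$ together with a bounded coefficient. The dominant term is therefore $r!\,(-D'(1))^r/D(1)^{r+1}$ times $N(1)$, with $-D'(1) = q(1-q)Ae^{(1-q)A} \to (1-q)L$. Every other term (derivatives falling on $N$ or on $1/y$, or higher derivatives of $D$) is $\cO(s^{-r})$. This gives
\[
f_{1,r} \sim \frac{1-q}{q}\, r!\, \frac{((1-q)L)^r}{\big((1-q)\sqrt{2/q}\big)^{r+1}}\, s^{-r-1},
\]
which simplifies to \eqref{eq: claim singularity f1r}. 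To apply \cite[Theorem VI.4]{Flajolet_Sedgewick_2009} I also need $f_{1,r}$ to be analytic in the $\Delta$-domain $\bD_A$ (possibly shrunk) with this expansion holding there. Analyticity reduces to showing $D(1)\ne0$ on $\bD_A\setminus\{x_0\}$, and $D(1)$ vanishes exactly when $A(x,q)=L$ (up to the periodicity of $e^{(1-q)A}$). Near $x_0$ the local square-root expansion shows $A\ne L$ unless $x=x_0$. Away from $x_0$, the implicit equation \eqref{eq: implicit equation for A} maps the value $A=L$ to $x=x_0(q)$, and the other branches $A = L + 2\pi i k/(1-q)$ can be excluded on a small enough domain by continuity/boundedness of $A$. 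I expect this $\Delta$-analyticity and uniform error control to be the main technical point, though it is routine.

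\textbf{Step 2: transfer and moments.} Singularity analysis gives
\[
[x^n]f_{1,r} \sim \frac{r!}{\sqrt{2q}}\left(\sqrt{q/2}\,L\right)^r \frac{n^{(r-1)/2}}{\Gamma((r+1)/2)}\, x_0^{-n}.
\]
By definition of $F_1$, the ratio $n![x^n]f_{1,r}/(nZ_{n,q})$ is the factorial moment $\expec{(\dist(\rho,u_n))_r}$. Dividing by \eqref{eq: asymptotic coefficient A for fixed q} gives
\[
\expec{(\dist(\rho,u_n))_r} \sim \sqrt\pi\, r!\, (q/2)^{r/2} L^r\, \frac{n^{r/2}}{\Gamma((r+1)/2)}.
\]
Since $r$-th moments and factorial moments agree to leading order $n^{r/2}$, after rescaling by $1/(\sqrt q\, L\sqrt n)$ the $r$-th moment tends to $\sqrt\pi\, r!\,2^{-r/2}/\Gamma((r+1)/2)$. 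By the duplication formula this equals $2^{r/2}\Gamma(1+r/2)$, the $r$-th moment of $\mathrm{Rayleigh}(1)$. The Rayleigh distribution has Gaussian-type tails and is therefore determined by its moments, so the method of moments yields convergence in distribution together with convergence of all moments.
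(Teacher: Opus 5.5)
Your proof is correct, but it takes a different route from the paper. The paper never differentiates the closed form \eqref{eq: explicit formula for generating function of typical depth}. Instead it proceeds as follows:
\begin{itemize}
\item it differentiates the Riccati equation \eqref{eq: equadiff on F1} $r$ times in $y$, which gives a linear ODE $\partial_x f_{1,r} = G_1' f_{1,r} + R_{1,r}$;
\item it solves that ODE by variation of parameters;
\item it obtains the exponent $-\frac12-\frac r2$ by induction on $r$, taking $r=0$ from $f_{1,0}=x\partial_xA$ and importing $r=1$ from \cite{Thevenin_Wagner_2023}, because for $r=1$ the quadratic sum in $R_{1,r}$ is empty.
\end{itemize}

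Your direct Leibniz/Fa\`a di Bruno computation on $N/(yD)$ has several advantages. It is shorter and self-contained. It treats every $r$ uniformly, including $r=1$. The constant appears at once as $N(1)\,r!\,(-D'(1))^r/D(1)^{r+1}$, with no bookkeeping of the dominant part of $R_{1,r}$. Because the computation is purely algebraic in $A$ and $e^{(1-q)A}$, the expansion holds throughout the $\Delta$-domain automatically. The ODE route, by contrast, implicitly relies on singular expansions being preserved under integration.

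What the paper's route buys is reusability. The multi-point functions $F_\ell$ of Section~3 have no closed form, and the same ODE and variation-of-parameters induction is the engine there, so the $\ell=1$ proof serves as its template. Your Step~2 (transfer, factorial moments, duplication formula, method of moments) coincides with the paper's.

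Your $\Delta$-analyticity step can be made simpler than the branch-exclusion argument. Substituting \eqref{eq: implicit equation for A} gives $e^{-A}-qx = e^{-A}D(1)/(1-q)$. Combined with \eqref{eq: differential equation for A}, this yields
\[
D(1) = \frac{(1-q)\,e^{A}}{\partial_x A},
\]
and $D(1) = 1-A = e^{A}/\partial_xA$ when $q=1$. Hence $1/D(1)$ is analytic on all of $\bD_A$, and so is $f_{1,r}$. You then need neither to shrink the domain nor to rule out the points where $A = L + 2\pi i k/(1-q)$.
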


\begin{proof}
    Let us skip the dependence on $q$ in notation.
    We first prove \eqref{eq: claim singularity f1r} by induction on $r$.
    For $r=0$, we simply have $f_{1,0}(x) = x\partial_x A(x)$.
    By known closure properties of functions amenable to singularity analysis \cite[Section VI.10]{Flajolet_Sedgewick_2009}, we can differentiate~\eqref{eq: asymptotic A close to singularity for fixed q} to get
    \begin{equation}\label{eq: singularity for single derivative of A and for f10}
    \partial_x A(x) = \frac{1}{x_0\sqrt{2q}} (1-x/x_0)^{-1/2} + \cO(1) ,\quad
    \text{ and thus,}\quad
    f_{1,0}(x) = \frac{1}{\sqrt{2q}} (1-x/x_0)^{-1/2} + \cO(1)    
    \end{equation}
    as $x\to x_0$.
    For $r=1$, observe that $f_{1,1}(x)$ is the generating function for the mean path length of $\cT_n^{(q)}$.
    It is thus the function $B$ of \cite{Thevenin_Wagner_2023}, for which \eqref{eq: claim singularity f1r} was proved in \cite[above Proposition~25]{Thevenin_Wagner_2023}.
    Now consider $r\ge2$, and assume that \eqref{eq: claim singularity f1r} holds for all orders up to $r-1$.
    Let us differentiate equation \eqref{eq: equadiff on F1} of Lemma~\ref{lem: generating function typical depth} a total of $r$ times with respect to $y$:
    \begin{align*}
        \partial_x \partial_y^r F_1(x,y) = \partial_x A(x) \Bigg(
        &(1+q)y \partial_y^r F_1(x,y) 
        + r(1+q) \partial_y^{r-1} F_1(x,y) 
        + qy^2\sum_{k=0}^r \binom{r}{k} \partial_y^k F_1(x,y) \partial_y^{r-k} F_1(x,y) 
        \\&+ 2rqy\sum_{k=0}^{r-1} \binom{r-1}{k} \partial_y^k F_1(x,y) \partial_y^{r-1-k} F_1(x,y) 
        \\&+ r(r-1)q\sum_{k=0}^{r-2} \binom{r-2}{k} \partial_y^k F_1(x,y) \partial_y^{r-2-k} F_1(x,y)
        \Bigg) \,.
    \end{align*}
    At $y=1$, this becomes
    \begin{align*}
        \partial_x f_{1,r}(x) = \partial_x A(x) \Bigg(
        &(1+q) f_{1,r}(x) 
        + r(1+q) f_{1,r-1}(x) 
        + q\sum_{k=0}^r \binom{r}{k} f_{1,k}(x) f_{1,r-k}(x) 
        \\&+ 2rq\sum_{k=0}^{r-1} \binom{r-1}{k} f_{1,k}(x) f_{1,r-1-k}(x) 
        + r(r-1)q\sum_{k=0}^{r-2} \binom{r-2}{k} f_{1,k}(x) f_{1,r-2-k}(x)
        \Bigg) \,.
    \end{align*}
    Isolating the terms containing $f_{1,r}$ on the right-hand side, we can write 
    \begin{equation*}
        \partial_x f_{1,r}(x) = f_{1,r}(x) G_1'(x) + R_{1,r}(x)
    \end{equation*}
    where $G_1'(x) := \partial_x A(x) \left(\big. (1+q) + 2qf_{1,0}(x) \right)$, and $R_{1,r}$ contains only $f_{1,\alpha}$'s with $\alpha<r$.
    The function $G_1$ is defined as an appropriate primitive of $G_1'$, to be specified later.
    We solve this linear ODE using variation of parameters to obtain
    \begin{equation}\label{eq: solution f1r with variation of constant}
        f_{1,r}(x) = e^{G_1(x)} \int_0^x R_{1,r}(u) e^{-G_1(u)} \mathrm{d}u \,.
    \end{equation}
    From \eqref{eq: singularity for single derivative of A and for f10}, we get
    \begin{equation*}
        G_1'(x) = \frac{1}{x_0} (1-x/x_0)^{-1} + \cO\left( \abs{x-x_0}^{-1/2} \right) ,
        \quad\text{and thus}\quad
        G_1(x) = -\log(1-x/x_0) + \cO\left( \abs{x-x_0}^{1/2} \right)
    \end{equation*}
    as $x\to x_0$, for a unique adequate choice of $G_1$.
    This implies
    \begin{equation}\label{eq: singularity for expG1}
        e^{G_1(x)} \sim (1-x/x_0)^{-1} 
    \end{equation}
    as $x\to x_0$.
    Next, using the induction hypothesis and the fact that $r\ge2$, it is straightforward to find the dominant terms inside $R_{1,r}$:
    \begin{equation*}
        R_{1,r}(x) 
        \;\sim\; q \partial_x A(x)
        \sum_{k=1}^{r-1} \binom{r}{k} f_{1,k}(x) f_{1,r-k}(x)
        \;\sim\; \frac{r!}{\sqrt{2q}} \left( \sqrt{\frac{q}{2}} \frac{\log q}{q-1} \right)^r \frac{r-1}{2x_0} (1-x/x_0)^{-3/2 - r/2}
    \end{equation*}
    as $x\to x_0$.
    Therefore, with \eqref{eq: singularity for expG1},
    \begin{equation*}
        \int_0^x R_{1,r}(u) e^{-G_1(u)} \mathrm{d}u 
        \;\sim\; \frac{r!}{\sqrt{2q}} \left( \sqrt{\frac{q}{2}} \frac{\log q}{q-1} \right)^r (1-x/x_0)^{1/2 - r/2} \,,
    \end{equation*}
    and thus, using \eqref{eq: solution f1r with variation of constant},
    \begin{equation*}
        f_{1,r}(x) \sim \frac{r!}{\sqrt{2q}} \left( \sqrt{\frac{q}{2}} \frac{\log q}{q-1} \right)^r (1-x/x_0)^{-1/2 - r/2}
    \end{equation*}
    as $x\to x_0$.
    This completes the proof of \eqref{eq: claim singularity f1r}.
    Now, for the second claim, observe that
    \begin{align*}
        \expec{ \left( \dist(\rho,u_n) \right)_r } = \frac{[x^n] f_{1,r}(x)}{n [x^n] A(x)} \,,
    \end{align*}
    where $(.)_r$ denotes the $r$-th falling factorial.
    Using \eqref{eq: asymptotic coefficient A for fixed q}, \eqref{eq: claim singularity f1r} and singularity analysis, we get
    \begin{align*}
        \expec{ \left( \dist(\rho,u_n) \right)_r } 
        \;\sim\; \frac{r! \sqrt\pi}{\Gamma(1/2+r/2)} \left( \sqrt{\frac{q}{2}} \frac{\log q}{q-1} \right)^r n^{r/2}
    \end{align*}
    as $n\to\infty$.
    Therefore, the same holds for the expectation of the $r$-th power of $\dist(\rho,u_n)$, and thus
    \begin{align*}
        \expec{ \left( \frac{q-1}{\sqrt q \log q} \frac{1}{\sqrt n} \dist(\rho,u_n) \right)^r } 
        \cv{} \frac{r! \sqrt\pi}{2^{r/2}\Gamma(1/2+r/2)} 
        = 2^{r/2} \Gamma(1+r/2)
    \end{align*}
    as $n\to\infty$, using Legendre's duplication formula.
    The right-hand side represents the moments of a standard Rayleigh distribution.
    Since this distribution is characterized by its moments, this concludes the proof of the proposition.
\end{proof}
    
Before moving on to the next regime, let us derive a central limit theorem (CLT) for the number of descents from~\eqref{eq: asymptotic coefficient A for fixed q}.
Although we will not need it in the rest of the paper, we choose to include it since it follows fairly easily from what we have already proved.

\begin{corollary}\label{cor: CLT descents q fixed}
    Let $q\in(0,1]$ be fixed.
    Then,
    \begin{equation*}
        \frac{\des{\cT_n^{(q)}} - \mu(q) n}{\sqrt{\nu(q) n}}
        \cv{}
        \Normal{0}{1}
    \end{equation*}
    in distribution as $n\to\infty$, where $\mu(q) = \frac{q}{q-1}\left( 1 - \frac{\log q}{q-1} \right)$ and $\nu(q) = \frac{q(1+q)\log q}{(q-1)^3} -\frac{2q}{(q-1)^2}$ (for $q=1$, these are to be interpreted as their respective limits $\frac12$ and $\frac16$).
\end{corollary}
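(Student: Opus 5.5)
The plan is to read off the probability generating function of $\des{\cT_n^{(q)}}$ from the partition function and apply a quasi-powers argument. For $s$ close to $1$,
\[
\expec{ s^{\des{\cT_n^{(q)}}} } = \frac{Z_{n,qs}}{Z_{n,q}} = \frac{[x^n]A(x,qs)}{[x^n]A(x,q)} .
\]
By \eqref{eq: asymptotic coefficient A for fixed q}, which holds uniformly for the parameter in a compact subinterval of $(0,1]$, the right-hand side behaves like
\[
\sqrt{q/(qs)}\,\bigl(x_0(q)/x_0(qs)\bigr)^n\,(1+o(1)).
\]

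There is one technical point. For $q=1$ we need $qs>1$, which leaves $(0,1]$. This is harmless, because of the symmetry $Z_{n,q}=q^{n-1}Z_{n,1/q}$ (relabel $i\mapsto n-i+1$). Alternatively, the analysis of \cite[Lemma~21]{Thevenin_Wagner_2023} applies verbatim to any $q>0$, since $x_0(q)=q^{q/(1-q)}$ is analytic in $q>0$.

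To get a genuine CLT we need the asymptotic to hold uniformly for complex $s$ in a small neighbourhood of $1$; real $s$ only gives moment generating functions. The cleanest approach is Hwang's quasi-power theorem. An alternative bypasses complex $s$: uniform real-$s$ asymptotics give convergence of the moment generating function of $(\des{\cT_n^{(q)}}-\mu n)/\sqrt{n}$ for real arguments in a neighbourhood of $0$. That already implies convergence in distribution, by Curtiss' theorem. So I would take the real route, using uniformity of \eqref{eq: asymptotic coefficient A for fixed q} on compact $q$-intervals. This gives
\[
\expec{ e^{\theta \des{\cT_n^{(q)}}} } = \exp\bigl(n\,(\phi(qe^\theta)-\phi(q))+O(|\theta|)\bigr)(1+o(1)),
\qquad \phi(u):=-\log x_0(u)=\frac{u\log u}{u-1}.
\]

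Next, set $\theta=t/\sqrt n$ and expand $h(\theta):=\phi(qe^\theta)$ to second order. The mean is $\mu(q)=h'(0)=q\phi'(q)$ and the variance is $\nu(q)=h''(0)=q\phi'(q)+q^2\phi''(q)$. Then $\expec{\exp\bigl(t(\des{\cT_n^{(q)}}-\mu n)/\sqrt{n}\bigr)}\to e^{\nu t^2/2}$. Dividing by $\sqrt{\nu}$ gives the claim, provided $\nu(q)>0$.

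What remains is the routine calculation that these derivatives match the stated formulas. We have
\[
\phi'(u)=\frac{(u-1)(\log u+1)-u\log u}{(u-1)^2}=\frac{u-1-\log u}{(u-1)^2},
\]
so $q\phi'(q)=\frac{q}{q-1}\bigl(1-\frac{\log q}{q-1}\bigr)=\mu(q)$, as required. Differentiating once more gives $\nu$ as stated. The limits as $q\to1$ are $1/2$ and $1/6$. Positivity of $\nu$ follows from the log-convexity of the moment generating function, or by direct check.

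The main obstacle is the uniformity needed to substitute the moving parameter $qe^{t/\sqrt n}$ into \eqref{eq: asymptotic coefficient A for fixed q}, including the crossing of $q=1$ in the boundary case. I would handle this by invoking the uniform version of singularity analysis, since the expansion \eqref{eq: asymptotic A close to singularity for fixed q} is uniform in $q$ on compacts of $(0,\infty)$.
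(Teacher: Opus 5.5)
Your proposal follows the paper's proof: the paper also writes $\expec{u^{\des{\cT_n^{(q)}}}}=[x^n]A(x,qu)/[x^n]A(x,q)$ and uses the uniform coefficient asymptotics for real $u$ near $1$ to get $e^{n\alpha(u)+\beta(u)}(1+o(1))$, with $\alpha(u)=\phi(qu)-\phi(q)$ in your notation, before concluding by Curtiss's theorem. Your symmetry $Z_{n,q}=q^{n-1}Z_{n,1/q}$ correctly handles $qu>1$ when $q=1$, a point the paper passes over; for $\nu(q)>0$, use the direct check (for $q\in(0,1)$ it amounts to $\log q<2(q-1)/(q+1)$), since log-convexity alone only gives $\nu\ge 0$.
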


\begin{proof}
    Uniformly for $u$ in a (real) neighborhood of $1$, we have (by \eqref{eq: asymptotic coefficient A for fixed q})
    \begin{align*}
        \expec{u^{\des{\cT_n^{(q)}}}}
        = \frac{[x^n] A(x,qu)}{[x^n] A(x,q)}
        \sim \frac{1}{\sqrt u} \left( \frac{x_0(qu)}{x_0(q)} \right)^{-n}
        = e^{n\cdot \alpha(u) + \beta(u)} ( 1 + o(1) )
    \end{align*}
    as $n\to\infty$, where $\alpha(u) = q\frac{\log q}{1-q} - qu\frac{\log qu}{1-qu}$ and $\beta(u) = -\frac{1}{2}\log u$.
    The result follows from a standard application of Curtiss's theorem \cite{Curtiss_1942}.%, or of the Quasi-Power Theorem (see, e.g., \cite[Theorem~2.22]{Drmota_2009}) with $\mu = \alpha'(1)$ and $\nu = \alpha'(1) + \alpha''(1)$.
    % \begin{align*}
    %     &\alpha'(u) &= 
    %     &\frac{-q}{1-qu} \left( 1 + \log(qu) + qu\log(qu)\frac{1}{1-qu} \right)
    %     \\& \alpha''(u) &= 
    %     &-2\left( \frac{q}{1-qu} \right)^2 \left( 1 + \log(qu) + qu\log(qu)\frac{1}{1-qu} \right) - \frac{q/u}{1-qu}
    %     = 2\frac{q}{1-qu} \alpha'(u) - \frac{q/u}{1-qu}
    % \end{align*}
\end{proof}

\subsection{Asymptotics for $q_n=a/n$}
\label{sec: Asymptotics for q=a/n}

Now, we consider $q_n = a/n$ for some fixed $a\ge0$.
In this regime where $q_n$ depends on $n$, singularity analysis is no longer directly applicable.
Nonetheless, we can adapt the approach to get the coefficient asymptotics of several generating functions by means of contour integration.

\begin{lemma}\label{lem: asymptotic coefficient partition function in transition regime}
    Fix $M>0$.
    Then, uniformly for $a\in[0,M]$, we have
    \begin{equation*}
        [x^n] A(x,a/n) \sim \frac{n^{a-1}}{e^a \Gamma(a+1)}
    \end{equation*}
    as $n\to\infty$.
\end{lemma}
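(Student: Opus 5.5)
Rather than using contour integration, I would work directly from the explicit product formula of E\u{g}ecio\u{g}lu and Remmel recalled in the introduction, $Z_{n,q}=\prod_{k=1}^{n-1}(n-k+kq)$. Since $[x^n]A(x,a/n)=Z_{n,a/n}/n!$, the claim reduces to an elementary asymptotic evaluation of a finite product.

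\emph{Step 1: reindex the product.} Substituting $j=n-k$, each factor becomes $j+(n-j)a/n = j+a-ja/n$. Hence
\begin{equation*}
    Z_{n,a/n}=\prod_{j=1}^{n-1}(j+a)\Bigl(1-\frac{ja}{n(j+a)}\Bigr)
    =\frac{\Gamma(n+a)}{\Gamma(1+a)}\prod_{j=1}^{n-1}\Bigl(1-\frac{ja}{n(j+a)}\Bigr).
\end{equation*}

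\emph{Step 2: the Gamma ratio.} By Stirling's formula, $\Gamma(n+a)/\Gamma(n+1)\sim n^{a-1}$ uniformly for $a\in[0,M]$. So $\Gamma(n+a)/(n!\,\Gamma(1+a))\sim n^{a-1}/\Gamma(a+1)$ uniformly.

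\emph{Step 3: the correction product tends to $e^{-a}$ uniformly.} Each factor satisfies $0\le \frac{ja}{n(j+a)}\le a/n\le M/n$. For $n\ge 2M$, we may use $\log(1-\varepsilon)=-\varepsilon+\cO(\varepsilon^2)$, which gives
\begin{equation*}
    \log\prod_{j=1}^{n-1}\Bigl(1-\frac{ja}{n(j+a)}\Bigr)
    =-\frac{a}{n}\sum_{j=1}^{n-1}\frac{j}{j+a}+\cO\Bigl(\frac{M^2}{n}\Bigr).
\end{equation*}
Writing $\frac{j}{j+a}=1-\frac{a}{j+a}$, we get $\sum_{j=1}^{n-1}\frac{j}{j+a}=n-1-\cO(M\log n)$. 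The logarithm therefore equals $-a+\cO\bigl(M^2\log n/n\bigr)$, uniformly in $a\in[0,M]$.

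Combining Steps 2 and 3 yields $[x^n]A(x,a/n)\sim n^{a-1}/(e^a\Gamma(a+1))$ uniformly for $a\in[0,M]$. The only step needing care is the uniformity in Step 2. This follows from the standard uniform expansion $\log\Gamma(n+a)-\log\Gamma(n+1)=(a-1)\log n+\cO(1/n)$ for $a$ in a compact set. Alternatively, it follows from monotonicity in $a$ together with Gautschi-type bounds.
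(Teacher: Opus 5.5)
Your proof is correct, but it takes a different route from the paper's. The paper never uses the E\u{g}ecio\u{g}lu--Remmel product for this lemma. Instead it writes $[x^n]A(x,a/n)$ as a Cauchy integral and changes variables twice: first $z=A(x,q)$ via the implicit equation \eqref{eq: implicit equation for A}, then $u=ne^{-z}$. It then deforms the contour into a truncated Hankel contour closed by an arc of radius about $n^2$, bounds the integrand separately for small, medium and large $\abs{u}$, and identifies the limit through Hankel's integral for $1/\Gamma(a+1)$.

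All of your steps check out:
\begin{itemize}
\item the factorization $j+a-ja/n=(j+a)\bigl(1-\frac{ja}{n(j+a)}\bigr)$;
\item the identity $\prod_{j=1}^{n-1}(j+a)=\Gamma(n+a)/\Gamma(1+a)$;
\item the bound $\frac{ja}{n(j+a)}\le M/n$, which makes the quadratic error $\cO(M^2/n)$;
\item the estimate $\frac{a^2}{n}\sum_{j<n}\frac{1}{j+a}=\cO(M^2\log n/n)$.
\end{itemize}
Uniformity is unproblematic, since $\Gamma(1+a)$ stays bounded away from $0$ and $\infty$ on $[0,M]$.

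Your route buys brevity and an explicit relative error $\cO(M^2\log n/n)$. It uses a formula the paper already cites and itself exploits later, in Lemma~\ref{lem: ratio of partition functions when q to 0}.

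The paper's route buys a reusable template. The same contour and the same three regime estimates are recycled for $[x^n]F_1(x,y^{1/\log n},a/n)$ in Proposition~\ref{prop: asymptotic coefficient typical depth in transition regime}. There the Hankel representation directly produces the series $\sum_{k}ka^{k-1}y^k/\Gamma(a+k+1)$, and no comparably simple closed form is at hand.

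Your approach would also cover Lemma~\ref{lem: asymptotic coefficient exponentials in transition regime}. Lagrange inversion applied to \eqref{eq: implicit equation for A} gives $[x^m]e^{jA(x,q)}=\frac{j}{m!}\prod_{i=1}^{m-1}\bigl(j+i+q(m-i)\bigr)$. The same computation as yours then yields $\frac{j\,n^{a+j-1}}{e^a\Gamma(a+j+1)}$.
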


\begin{lemma}\label{lem: asymptotic coefficient exponentials in transition regime}
    Fix $M>0$ and $j\ge0$.
    Then, uniformly for $a\in[0,M]$, we have
    \begin{equation*}
        [x^n] e^{j A(x,a/n)} \sim  \frac{j n^{a+j-1}}{e^{a} \Gamma(a+j+1)}
    \end{equation*}
    as $n\to\infty$.
    As a consequence, the probability that the root of $\cT_n^{(a/n)}$ has label $1$ is asymptotically equal to $\frac{1}{a+1}$.
\end{lemma}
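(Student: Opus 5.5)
We extract $[x^n] e^{jA}$ by Cauchy's formula after changing variables to $w = A(x,q)$, using the implicit equation \eqref{eq: implicit equation for A}. Throughout, $q=a/n$. Write
\[
  x = \phi(w) := \frac{e^{-qw}-e^{-w}}{1-q},
\]
so that $A$ is the compositional inverse of $\phi$ near $0$, with $\phi'(0)=1$. By the Lagrange–Bürmann formula (equivalently, Cauchy's formula with the substitution $x=\phi(w)$), for $j\ge1$,
\[
  [x^n] e^{jA(x,q)} = \frac{1}{2\pi i}\oint \frac{j e^{jw}}{n\,\phi(w)^{n}}\,\mathrm dw ,
\]
where the integral is over a small circle around $w=0$. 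For $j=0$ the statement reads $0\sim0$, which is trivial (for $n\ge1$).

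Now factor
\[
  \phi(w)=\frac{1-e^{-(1-q)w}}{1-q}\,e^{-qw}.
\]
Substituting $s=(1-q)w$, which changes $q$ only by $O(1/n)$ and so is harmless, gives
\[
  \phi(w)^{-n} = (1-q)^n\, e^{qnw}\,(1-e^{-s})^{-n} \approx e^{-a}\, e^{aw}\,(1-e^{-w})^{-n}.
\]
The integral therefore becomes, up to factors $1+O(1/n)$ and $(1-q)^n\to e^{-a}$,
\[
  \frac{j e^{-a}}{n}\,[w^{n-1}]\Big( e^{(a+j)w}\,(1-e^{-w})^{-n}\Big).
\]
Next substitute $u=1-e^{-w}$, so that $w=-\log(1-u)$, $e^{w}=1/(1-u)$ and $\mathrm dw = \mathrm du/(1-u)$. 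This turns the contour integral into
\[
  \frac{j e^{-a}}{n}\cdot\frac{1}{2\pi i}\oint \frac{(1-u)^{-(a+j)-1}}{u^{n}}\,\mathrm du
  = \frac{j e^{-a}}{n}\,[u^{n-1}](1-u)^{-(a+j+1)} .
\]
The coefficient is the binomial coefficient
\[
  \binom{n+a+j-1}{n-1}\sim \frac{n^{a+j}}{\Gamma(a+j+1)}.
\]
Combining the pieces gives exactly
\[
  [x^n] e^{jA(x,a/n)} \sim \frac{j\, n^{a+j-1}}{e^{a}\,\Gamma(a+j+1)}.
\]
As a sanity check, at $a=0$ this is $j\binom{n+j-1}{n-1}/n$, the correct coefficient of $(1-x)^{-j}$.

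The main obstacle is making the "$\approx$" rigorous, uniformly for $a\in[0,M]$. The exact integrand is
\[
  (1-q)^n\, e^{jw}\,e^{nqw}\,\big(1-e^{-(1-q)w}\big)^{-n},
\]
and it must be compared with the model $e^{-a}e^{(a+j)w}(1-e^{-w})^{-n}$. I would do this directly in the $u$-variable. Set $u=1-e^{-(1-q)w}$, so that $w=-\log(1-u)/(1-q)$. The exact integrand is then
\[
  j\,(1-q)^{n-1}\,u^{-n}\,(1-u)^{-(a+j)/(1-q)-1}.
\]
Note that $(1-q)^{n-1}\to e^{-a}$ uniformly in $a\in[0,M]$, and the exponent $(a+j)/(1-q)$ differs from $a+j$ by $O(1/n)$. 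So the exact coefficient is precisely
\[
  \frac{j(1-q)^{n-1}}{n}\binom{n+\gamma-1}{n-1},\qquad \gamma := \frac{a+j}{1-q}.
\]
Since $n^{\gamma}/n^{a+j}\to1$ and $\Gamma$ is continuous, this establishes the asymptotic uniformly in $a$, with no contour estimates needed at all. The only point to check carefully is that the Lagrange–Bürmann / residue manipulation is valid, i.e.\ that both $w\mapsto u$ and $x\mapsto w$ are local biholomorphisms at $0$; this holds because each map has derivative $1$ (respectively $1-q$) at the origin.

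For the consequence about the root label, decompose at vertex $1$ as in Lemma~\ref{lem:trees with smallest root}. The weighted number of trees of size $n$ whose root is labelled $1$ is
\[
  n!\,[x^n]A^{(1)} = (n-1)!\,[x^{n-1}]e^{A(x,q)}.
\]
Apply the result with $j=1$, evaluated at $n-1$, where $q=a/n = a'/(n-1)$ with $a'\to a$; uniformity in $a$ handles this shift. Dividing by $n![x^n]A(x,a/n)$ and using Lemma~\ref{lem: asymptotic coefficient partition function in transition regime} gives
\[
  \frac{(n-1)^{a}/\Gamma(a+2)}{n\cdot n^{a-1}/\Gamma(a+1)}\;\to\;\frac{1}{a+1}.
\]
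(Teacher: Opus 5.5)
Your proof is correct, and it takes a genuinely different and sharper route than the paper. Both arguments start from Cauchy's formula and the substitution $w=A(x,a/n)$. The paper then sets $u=ne^{-w}$, deforms the contour into the ``left-pacman'' shape, bounds the integrand separately for small, medium and large $\abs{u}$, and passes to Hankel's integral for $1/\Gamma(a+j+1)$; uniformity in $a$ comes from tracking every error term.

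Your substitution $u=1-e^{-(1-q)w}$ instead absorbs $e^{jw}$, together with the factor $e^{nqw}$ coming from $\phi(w)^{-n}$, into a single power of $1-u$. The coefficient is therefore evaluated exactly:
\[
[x^n]e^{jA(x,q)}=\frac{j(1-q)^{n-1}}{n}\binom{n-1+\gamma}{n-1}=\frac{j}{n!}\prod_{k=1}^{n-1}\bigl(j+nq+k(1-q)\bigr),\qquad \gamma=\frac{j+nq}{1-q},
\]
for every $q\in[0,1)$. The same computation with $e^{jw}$ replaced by $w$ recovers the E\u{g}ecio\u{g}lu--Remmel product for $Z_{n,q}$.

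What your approach buys:
\begin{itemize}
\item Uniformity in $a$ reduces to $\Gamma(n+\gamma)/\Gamma(n)\sim n^{\gamma}$ for $\gamma$ in a compact set.
\item The second claim becomes exact: $\frac{[x^{n-1}]e^{A(x,q)}}{n[x^n]A(x,q)}=\frac{1}{1+(n-1)q}$. So neither Lemma~\ref{lem: asymptotic coefficient partition function in transition regime} nor the shift $a\to a'$ is needed.
\end{itemize}

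What the paper's contour/Hankel template buys is robustness. It is reused for Proposition~\ref{prop: asymptotic coefficient typical depth in transition regime}, where the extra factor $H_n'$ has a pole near $u=ay$ (in the paper's variable), and the integrand no longer collapses to a single binomial coefficient.

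In a write-up, drop your heuristic paragraph. The displayed ``$\approx$'' fails pointwise near $w=0$, where the two sides differ by a factor $e^{a}$, and its ``$[w^{n-1}]$'' should be a residue. Your exact computation makes that paragraph unnecessary.
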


\begin{proposition}\label{prop: asymptotic coefficient typical depth in transition regime}
    Recall the function $F_1$ of Lemma~\ref{lem: generating function typical depth}.
    For any fixed $a\ge0$ and $y\in(0,1]$, we have
    \begin{equation*}
        [x^n] F_1{\left( x, y^{1/\log n}, a/n \right)} \sim n^{a} e^{-a} \sum_{k\ge1} k a^{k-1} y^{k} \frac{1}{\Gamma(a+k+1)}
    \end{equation*}
    as $n\to\infty$.
    As a consequence, if $u_n$ denotes a uniformly random vertex of $\cT_n^{(a/n)}$, then $\frac{1}{\log n} \dist(\rho,u_n)$ converges in distribution to a discrete random variable with probability mass function $k\mapsto \frac{k a^{k-1}}{(a+k)\cdots(a+1)}$ on $\bN$.
\end{proposition}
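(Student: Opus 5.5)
We will use the explicit formula \eqref{eq: explicit formula for generating function of typical depth} from Lemma~\ref{lem: generating function typical depth} and expand it in powers of $e^{(1-q)yA}$. Write $q=a/n$, $Y:=y^{1/\log n}$ (so $Y\to1$ and $Y^{k}$ is replaced by $y^{k}$ only when combined with exponentials of $\log n$), and $E:=e^{(1-q)YA(x,q)}$. Since $qE$ is small near the relevant contour, we expand
\begin{equation*}
    F_1(x,Y,q) = \frac{E-1}{Y(1-qE)} = \frac{1}{Y}\sum_{m\ge0} q^m \left(E^{m+1}-E^m\right),
\end{equation*}
and then extract coefficients termwise, using a variant of Lemma~\ref{lem: asymptotic coefficient exponentials in transition regime} with non-integer exponent $j=(1-q)Y(m+1)$, respectively $j=(1-q)Ym$.

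The key step is therefore an extension of Lemma~\ref{lem: asymptotic coefficient exponentials in transition regime} to real exponents $s$ in a compact subset of $(0,\infty)$, uniformly in $s$. The claim is that
\begin{equation*}
    [x^n]e^{sA(x,a/n)}\sim \frac{s\,n^{a+s-1}}{e^a\Gamma(a+s+1)}
\end{equation*}
uniformly. The contour-integration proof underlying Lemma~\ref{lem: asymptotic coefficient exponentials in transition regime} should go through verbatim. Near $x=1$ we have $A\approx-\log(1-x)$ corrected at scale $1/n$, and $e^{sA}$ then behaves like $(1-x)^{-s}$ times a factor producing $e^{-a}$ and the shift $s\mapsto a+s$. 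Since $s$ only enters analytically, the argument transfers. We also need a tail bound so that the sum over $m$ may be exchanged with the limit: the terms with large $m$ carry a factor $q^m n^{Ym}\approx a^m n^{-m}n^{m}$, and the Gamma function in the denominator gives summability $a^m/\Gamma(a+m+1)$. Obtaining this bound uniformly in $m$, for instance via a crude estimate $|[x^n]e^{sA}|\le C\,n^{a+s-1}\Gamma(a+s+1)^{-1}\cdot(\text{polynomial in }s)$ on a fixed contour, is the step I expect to be the main obstacle.

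Granting this, we plug in. With $s_k=(1-a/n)Yk$ we have $n^{s_k}=n^{k}\,y^{k}\,(1+o(1))$, since $Y^{k\log n}=y^k$ and $n^{-ak/n}\to1$. Thus
\begin{equation*}
    [x^n]\,q^m E^{k} \approx a^m n^{-m}\,\frac{k\,n^{a+k-1}y^k}{e^a\Gamma(a+k+1)}.
\end{equation*}
For $k=m+1$ this is of order $n^{a}$; the terms $q^mE^m$ are of order $n^{a-1}$ and hence negligible. Summing over $m$, with $k=m+1$, gives
\begin{equation*}
    n^a e^{-a}\sum_{k\ge1}\frac{k\,a^{k-1}y^k}{\Gamma(a+k+1)},
\end{equation*}
which is the claimed asymptotic.

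For the consequence, observe that
\begin{equation*}
    \expec{y^{\dist(\rho,u_n)/\log n}}=\frac{[x^n]F_1(x,y^{1/\log n},a/n)}{n[x^n]A(x,a/n)}.
\end{equation*}
By Lemma~\ref{lem: asymptotic coefficient partition function in transition regime}, the denominator is asymptotic to $n^a/(e^a\Gamma(a+1))$. The ratio therefore tends to
\begin{equation*}
    \sum_{k\ge1} \frac{k a^{k-1}\,\Gamma(a+1)}{\Gamma(a+k+1)}\,y^k,
\end{equation*}
and $\Gamma(a+1)/\Gamma(a+k+1)=1/\big((a+k)\cdots(a+1)\big)$. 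Convergence of the Laplace-type transforms $\expec{y^{X_n}}$ for all $y\in(0,1]$ (that is, of $\expec{e^{-\lambda X_n}}$ for $\lambda\ge0$) implies convergence in distribution to the stated discrete law. Setting $y=1$ confirms that the limit has total mass one, so no mass escapes.
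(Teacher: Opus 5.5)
Your route works, but it differs from the paper's, and the one step you leave open needs a different estimate from the one you propose.

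\textbf{Comparison with the paper.} The paper never expands at finite $n$. It writes $F_1(x,\nu_n,a/n)=H_n(A(x,a/n))$ and pushes the whole of $H_n'$ through the substitutions $z=A$, $u=ne^{-z}$. It keeps the double pole of $H_n'(-\log(u/n))$, located near $u=ay$, outside the contour $\cC''$ by taking $r>ay$. It then passes to the limiting integrand $\frac{y}{(u-ay)^2}u^{-a}e^u$ on a fixed Hankel contour, and only there expands $(u-ay)^{-2}$ in powers of $ay/u$, where Fubini is immediate.

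You expand in powers of $qE$ first. This gives an exact identity and reduces everything to a real-exponent version of Lemma~\ref{lem: asymptotic coefficient exponentials in transition regime}. That extension is routine, uniformly for $s$ in compacts, since $(n/u)^s$ is analytic on $\bC\setminus\bR_{\le0}$. Your version also makes visible that the $k$-th term of the limit comes from $q^{k-1}E^k$. The price is that you need domination in $m$ at finite $n$, which the paper avoids.

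\textbf{The tail bound.} That step is left open, and the estimate you suggest for it is false. Since $Z_{n,q}\ge(n-1)!$, $A(x,q)$ dominates $-\log(1-x)$ coefficientwise, so $[x^n]e^{sA(x,a/n)}\ge\binom{n+s-1}{n}$. At $s=n$ this is of order $4^n/\sqrt n$, whereas $n^{a+s-1}\Gamma(a+s+1)^{-1}$ times a polynomial in $s$ is only $e^n$ times a polynomial. Every $m$ occurs in your exact series, so you cannot restrict to bounded $s$.

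What does work is an absolute bound on the contour $\cC''$ from the proof of Lemma~\ref{lem: asymptotic coefficient partition function in transition regime}, with $r>a$. There $\abs{u}\ge r$, so $\abs{(n/u)^s}\le(n/r)^s$, and the rest of the integrand contributes $O(n^a)$ exactly as in that proof. Using $s=(m+1)Y(1-q)\le m+1$, this yields, uniformly in $n\ge r$,
\[ q^m\abs{[x^n]E^{m+1}}\le C(m+1)(a/r)^m n^a, \qquad q^m\abs{[x^n]E^{m}}\le C\,m\,(a/r)^m n^{a-1}. \]
These bounds are summable in $m$, so dominated convergence combined with your termwise limits completes the argument. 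The condition $r>a$ is the counterpart of the paper's $r>ay$: it is where the pole you expanded away re-enters, since $\abs{qE}<1$ corresponds roughly to $\abs{u}>ay$.

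Your deduction of the limit law is fine. This includes obtaining total mass one from $y=1$, where the left-hand side is identically $1$, instead of the paper's telescoping sum.
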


We start with a detailed proof of Lemma~\ref{lem: asymptotic coefficient partition function in transition regime}.
The proofs of Lemma~\ref{lem: asymptotic coefficient exponentials in transition regime} and Proposition~\ref{prop: asymptotic coefficient typical depth in transition regime} use the same technique, so we will skip over some of the computational details.

\begin{proof}[Proof of Lemma~\ref{lem: asymptotic coefficient partition function in transition regime}]
    For $q \in (0,1)$, recall that $x\mapsto A(x,q)$ is analytic on $\bD_A := \bD(0,x_0(q)+\delta) \setminus \bR_{\ge x_0(q)}$ for some $\delta>0$, where $x_0 = x_0(q):= q^{q/(1-q)}$.
    On this domain, it is one-to-one with inverse $z\mapsto \frac{e^{-qz}-e^{-z}}{1-q} = A^{-1}(z,q)$.
    Also, $A(0,q)=0$, and from \eqref{eq: asymptotic A close to singularity for fixed q}, $A(x_0,q) = \frac{\log q}{q-1} \in [0,\infty]$.
    
    Let $\cC$ be a simple closed contour in $\bD_A \setminus \{0\}$ that winds around $0$ in counterclockwise direction (for example, a sufficiently small circle centered at $0$).
    By Cauchy's integral formula, we have
    \begin{equation*}
        [x^n] A(x,q) = \frac{1}{n}[x^{n-1}] \partial_x A(x,q) 
        = \frac{1}{n} \frac{1}{2i\pi} \oint_\cC \frac{1}{x^n} \partial_x A(x,q) \mathrm{d}x \,.
    \end{equation*}
    The change of variables $z = A(x,q)$, $\mathrm{d}z = \partial_x A \cdot \mathrm{d}x$, $x = \frac{e^{-qz}-e^{-z}}{1-q}$, yields
    \begin{align*}
        [x^n] A(x,q)
        = \frac{(1-q)^n}{n} \frac{1}{2i\pi} \oint_{\cC'} \left( e^{-qz} - e^{-z} \right)^{-n} \mathrm{d}z\,,
    \end{align*}
    where $\cC' := A(\cC,q)$ is a simple contour in $\bC \setminus \{0\}$ that winds around $0$ in counterclockwise direction.
    Using the standard determination of $\log$, the map $z\mapsto ne^{-z}$ is one-to-one from the strip $\left\{ z \in \bC, \, \abs{\Im(z)} < \pi \right\}$ onto $\bC \setminus \bR_{\le0}$. If $\cC$ is taken to be sufficiently small, then $\cC'$ lies entirely inside this strip and in particular avoids all singularities of the integrand (which are located at $z = \frac{2ki\pi}{1-q}$, $k \in \bZ$) except for $0$.    
    Now set $q=a/n$ and use the further change of variables $u = ne^{-z}$, $z = -\log(u/n)$, $\mathrm{d}z = -\frac{\mathrm{d}u}{u}$ to obtain
    \begin{align}\label{eq: Cauchy contour formula for A after two changes of variables in transition regime}
        [x^n] A(x,a/n)
        = \frac{-(1-a/n)^n}{n} \frac{1}{2i\pi} \oint_{\cC''} \left( \left(\frac{u}{n}\right)^{a/n} - \frac{u}{n} \right)^{-n} \frac{\mathrm{d}u}{u},
    \end{align}
    where $\cC'' := n\exp(-\cC')$ is a simple contour in $\bC \setminus \bR_{\le0}$, that winds around $n$ in clockwise direction.
    By analyticity, we can use any such contour without changing the value of the above integral.
    Specifically, we take $\cC'':= \gamma_\text{small} \cup \gamma_\text{long,1} \cup \gamma_\text{big} \cup \gamma_\text{long,2}$, where
    \begin{align}\label{eq: contour left-pacman}
        \begin{array}{llll}
        &\gamma_\text{small} 
        &:= &\left\{ u = r e^{i\varphi} : -\frac\pi2 \le \varphi \le \frac\pi2 \right\} ;\vspace{.5em}
        \\&\gamma_\text{long,1} 
        &:= &\left\{ u = -t + r i : 0\le t\le n^2 \right\} ;\vspace{.5em}
        \\&\gamma_\text{big} 
        &:= &\left\{ u = \sqrt{n^4 + r^2} e^{i\varphi} : \abs{\varphi} \le \arccos\left(-\sqrt{n^4/(n^4+r^2)}\right) \right\} ;\vspace{.5em}
        \\&\gamma_\text{long,2} 
        &:= &\left\{ u = -t - r i : n^2\ge t\ge 0 \right\} ;
        \end{array}
    \end{align}
    and $r > 0$ is fixed and arbitrary, see \Cref{fig: contour left-pacman}.
    With this, we decompose \eqref{eq: Cauchy contour formula for A after two changes of variables in transition regime} into
    
    \begin{figure}
        \centering
        \includegraphics[width=0.4\linewidth]{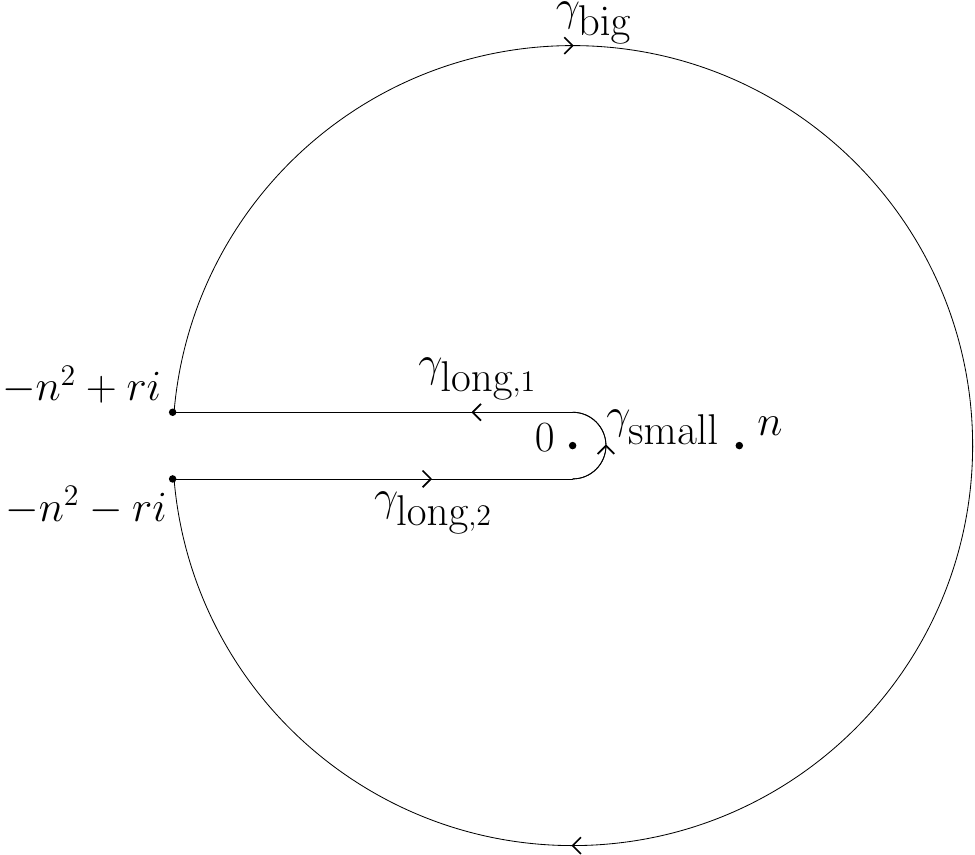}
        \caption{The contour described by \eqref{eq: contour left-pacman}.}
        \label{fig: contour left-pacman}
    \end{figure}
    
    \begin{equation}\label{eq: Cauchy formula for A into 4 parts}
        [x^n] A(x,a/n) = \frac{-(1-a/n)^n}{n} \left( I_\text{small} + I_\text{long,1} + I_\text{big} + I_\text{long,2} \right)
    \end{equation}
    according to the part of the contour we integrate on.
    Let us analyze the term appearing inside the integral of \eqref{eq: Cauchy contour formula for A after two changes of variables in transition regime}.
    All points $u$ on $\cC''$ satisfy $r^2\le \abs{u}^2 \le n^4+r^2$, so we can write 
    \begin{equation}\label{eq: expansion non-inverse integrand before power n}
        \left(\frac{u}{n}\right)^{a/n} - \frac{u}{n} 
        = 1 + \frac{a}{n}\log\left(\frac{u}{n}\right) - \frac{u}{n} + \cO\left(\frac{\log^2n}{n^2}\right)\,,
    \end{equation}
    where the $\cO$ is uniform in $u\in\cC''$ and in $a\in[0,M]$.
    From now on, the analysis differs for each part of the contour.
    To this aim, choose a sequence $m_n$ such that $m_n = o(\sqrt n)$ and $\log n = o(m_n)$.
    
    \paragraph{Small $u$'s.}
    Assume that $r\le |u|\le m_n$.
    From \eqref{eq: expansion non-inverse integrand before power n}, we derive that
    \begin{equation*}
        \left( \left(\frac{u}{n}\right)^{a/n} - \frac{u}{n} \right)^{-1}
        = 1 - \frac{a}{n}\log\left(\frac{u}{n}\right) + \frac{u}{n} + \cO\left(\frac{m_n^2}{n^2}\right)
        =: 1 + \frac{w}{n},
    \end{equation*}
    where $w = w(u)$ depends on $u$ and the error is uniform for $r\le |u|\le m_n$ and $a\in[0,M]$.
    With the same uniformity, we have 
    \begin{equation*}
        e^w = \left(\frac{u}{n}\right)^{-a} e^u \left( 1 + \cO\left(\frac{m_n^2}{n}\right) \right)
    \end{equation*}
    and
    \begin{align*}
        \left( 1 + \frac{w}{n} \right)^n
        = e^{n\log\left(1+\frac{w}{n}\right)}
        = e^{n\left(\frac{w}{n} + \cO\left(\frac{|w|^2}{n^2}\right)\right)}
        = e^w \left( 1 + \cO\left(\frac{m_n^2}{n}\right) \right) \,.
    \end{align*}
    Combining those three, we get
    \begin{align}\label{eq: Hankel computation of A - uniform estimate small u}
        \left( \left(\frac{u}{n}\right)^{a/n} - \frac{u}{n} \right)^{-n}
        = n^a u^{-a} e^u  \left( 1 + \cO\left(\frac{m_n^2}{n}\right) \right)\,,
    \end{align}
    uniformly for $r\le |u|\le m_n$ and $a\in[0,M]$.
    
    \paragraph{Medium $u$'s.}
    Assume that $u = -t \pm ri$ with $\sqrt{m_n^2-r^2}\le t\le n^2$. 
    Then, we have
    \begin{equation*}
        \abs{ 1 + \frac{a}{n}\log\left(\frac{u}{n}\right) - \frac{u}{n} + \cO\left(\frac{\log^2n}{n^2}\right) }
        = \abs{ 1 + \frac{t}{n} + \cO\left(\frac{\log n}{n}\right) }
        \ge 1 + \frac{m_n}{n} + \cO\left(\frac{\log n}{n}\right)\,,
    \end{equation*}
    uniformly for $t \in [\sqrt{m_n^2-r^2},n^2]$ and $a\in[0,M]$.
    Therefore, with the same uniformity,
    \begin{equation}\label{eq: Hankel computation of A - uniform estimate medium u}
        \abs{ \left(\frac{u}{n}\right)^{a/n} - \frac{u}{n} }^{-n}
        \le \left( 1 + \frac{m_n}{n} + \cO\left(\frac{\log n}{n}\right) \right)^{-n}
        = e^{-n\log\left( 1 + \frac{m_n}{n} + \cO\left(\frac{\log n}{n}\right) \right)}
        = \cO\left( e^{-m_n(1+o(1))} \right) \,.
    \end{equation}
    
    \paragraph{Large $u$'s.}
    If $\abs{u}^2 = n^4+r^2$, then
    \begin{equation}\label{eq: Hankel computation of A - uniform estimate large u}
       \abs{ \left( \left(\frac{u}{n}\right)^{a/n} - \frac{u}{n} \right)^{-n} \frac1u }
        \le n^{-n+\cO(1)}\,,
    \end{equation}
    uniformly for $u\in\gamma_\text{big}$ and $a\in[0,M]$.

    \paragraph{Transfer of the estimates.}
    Now, we can transfer these estimates to the integrals in \eqref{eq: Cauchy formula for A into 4 parts}.
    All $\cO$'s below hold uniformly for $a\in[0,M]$.
    First, using \eqref{eq: Hankel computation of A - uniform estimate small u} on the small half-circle, we get
    \begin{align*}
        I_\text{small} 
        = \frac{1}{2i\pi} \oint_{\gamma_\text{small}} \left( \left(\frac{u}{n}\right)^{a/n} - \frac{u}{n} \right)^{-n} \frac{\mathrm{d}u}{u}
        = \frac{1}{2i\pi} \oint_{\gamma_\text{small}} n^{a} u^{-a-1} e^u \mathrm{d}u\,\cdot \left( 1 + \cO\left(\frac{m_n^2}{n}\right) \right) \,.
    \end{align*}
    On $\gamma_\text{long,1}$, we can cut the path at $t_n:=\sqrt{m_n^2-r^2}$ and apply \Cref{eq: Hankel computation of A - uniform estimate small u,eq: Hankel computation of A - uniform estimate medium u} to the two different parts to get
    \begin{equation*}
        I_\text{long,1} 
        = - \frac{1}{2i\pi} \int_0^{t_n} n^{a} (-t+ri)^{-a-1} e^{-t+ri} \mathrm{d}t \,\cdot \left( 1 + \cO\left(\frac{m_n^2}{n}\right) \right)
        + \cO\left( e^{-m_n(1+o(1))} \right) \,,
    \end{equation*}
    and further
    \begin{equation*}
        I_\text{long,1} 
        = - \frac{1}{2i\pi} \int_0^{\infty} n^{a} (-t+ri)^{-a-1} e^{-t+ri} \mathrm{d}t \,\cdot \left( 1 + o(1) \right) \,.
    \end{equation*}
    Likewise, for $\gamma_\text{long,2}$,
    \begin{equation*}
        I_\text{long,2} 
        = \frac{1}{2i\pi} \int_0^{\infty} n^{a} (-t-ri)^{-a-1} e^{-t-ri} \mathrm{d}t \,\cdot \left( 1 + o(1) \right) \,.
    \end{equation*}
    Finally, on $\gamma_\text{big}$ we can use \eqref{eq: Hankel computation of A - uniform estimate large u} to obtain the bound
    \begin{equation*}
        \abs{I_\text{big}} \le n^{-n+\cO(1)} \,.
    \end{equation*}
    Coming back to \eqref{eq: Cauchy formula for A into 4 parts}, we deduce that
    \begin{equation*}
        [x^n] A(x,a/n) = n^{a-1} \frac{e^{-a}}{2i\pi} \oint_\cH u^{-a-1} e^u \mathrm{d}u\,\cdot \left( 1 + o(1) \right)
    \end{equation*}
    where
    \begin{equation}\label{eq: Hankel contour}
        \cH = \left\{ u = -t+ r i : t\ge0 \right\}
        \cup \left\{ u = r e^{i\varphi} : -\tfrac\pi2\le \varphi\le \tfrac{\pi}2 \right\}
        \cup \left\{ u = -t- r i : 0\le t \right\}
    \end{equation}
    is a positive Hankel contour (i.e., traversed clockwise).
    We can finally use the Hankel form of the Gamma function, i.e.,
    \begin{equation*}
        \frac{1}{\Gamma(\alpha)} = \frac{1}{2i\pi} \oint_{\cH} u^{-\alpha} e^u \mathrm{d}u\,,
    \end{equation*}
    which is valid for all $\alpha\in\bC$, to conclude the proof.
\end{proof}

\begin{proof}[Proof of Lemma~\ref{lem: asymptotic coefficient exponentials in transition regime}]
    The approach is largely the same as in the proof of Lemma~\ref{lem: asymptotic coefficient partition function in transition regime}.
    Let $\cC$ be the same contour around $0$ as in that proof.
    By Cauchy's integral formula, we have
    \begin{equation*}
        [x^n] e^{jA(x,q)} = \frac{j}{n}[x^{n-1}] e^{jA(x,q)} \partial_x A(x,q) 
        = \frac{j}{n} \frac{1}{2i\pi} \oint_\cC \frac{e^{jA(x,q)}}{x^n} \partial_x A(x,q) \mathrm{d}x \,.
    \end{equation*}
    Applying the same change of variables (namely, $A(x,q) = z = - \log(u,n)$) as in the proof of Lemma~\ref{lem: asymptotic coefficient partition function in transition regime}, we reach 
    \begin{align*}
        [x^n] e^{jA(x,a/n)}
        = \frac{-j(1-a/n)^n}{n} \frac{1}{2i\pi} \oint_{\cC''} \left(\frac{n}{u}\right)^j \left( \left(\frac{u}{n}\right)^{a/n} - \frac{u}{n} \right)^{-n} \frac{\mathrm{d}u}{u}\,,
    \end{align*}
    where $\cC''$ is again the contour defined in~\eqref{eq: contour left-pacman}.
    The same bounds lead to
    \begin{equation*}
        [x^n]e^{jA(x,a/n)} = n^{a+j-1} \frac{je^{-a}}{2i\pi} \oint_\cH u^{-a-j-1} e^u \mathrm{d}u\,\cdot \left( 1 + o(1) \right) \,,
    \end{equation*}
    which yields the stated formula.
    For the second claim, simply note that
    \begin{equation*}
        \prob{ \cT_n^{(a/n)} \in \mathcal{E}_1 }
        = \frac{[x^n] A^{(1)}(x,a/n)}{[x^n] A(x,a/n)}
        = \frac{1}{n} \frac{[x^{n-1}] e^{A(x,a/n)}}{[x^n] A(x,a/n)}
        \cv{n\to\infty} \frac{1}{a+1}
    \end{equation*}
    by Lemma~\ref{lem:trees with smallest root}.
\end{proof}

\begin{proof}[Proof of Proposition~\ref{prop: asymptotic coefficient typical depth in transition regime}]
    Fix $y>0$ and set $\nu_n := y^{1/\log n}$.
    By \eqref{eq: explicit formula for generating function of typical depth}, we can write $F_1\left(x, \nu_n, a/n\right) = H_n(A(x,a/n))$, where
    \begin{equation*}
        H_n(z) = \frac{1}{\nu_n}\frac{e^{(1-a/n)z\nu_n} - 1}{1 - (a/n)e^{(1-a/n)z\nu_n}} \,.
    \end{equation*}
    This function is analytic on $\bC \setminus \{\frac{\log(n/a) + 2ki\pi}{\nu_n(1-a/n)}\,:\, k \in \bZ\}$, and
    \begin{equation*}
        H_n'(z) = \left(1-\frac{a}{n}\right)^2 \frac{e^{(1-a/n)z\nu_n}}{\left( 1-(a/n)e^{(1-a/n)z\nu_n} \right)^2} \,.
    \end{equation*}
    As in the proof of Lemma~\ref{lem: asymptotic coefficient partition function in transition regime}, we can write
    \begin{align*}
        [x^n] F_1\left(x, \nu_n, a/n\right)
        = \frac{(1-a/n)^n}{n} \frac{1}{2i\pi} \oint_{\cC'} \frac{H_n'(z)}{\left( e^{-az/n} - e^{-z} \right)^n} \mathrm{d}z\,,
    \end{align*}
    where $\cC'$ is a simple contour in $\bC \setminus \{0\}$ that winds around $0$ in counterclockwise direction. It also needs to be small enough to avoid $\frac{\log(n/a)}{\nu_n(1-a/n)}$, the smallest singularity of $H_n$.
    We want to perform the change of variables $u = ne^{-z}$ again. Now note that
    \begin{equation*}
        \frac{\log(n/a)}{\nu_n(1-a/n)} = \frac{\log n - \log a}{1 + \frac{\log y}{\log n} + \cO((\log n)^{-2})} = \log n - \log (ay) + \cO\big((\log n)^{-1} \big),
    \end{equation*}
    so the image of the singularity $\frac{\log(n/a)}{\nu_n(1-a/n)}$ becomes $ne^{-\log n + \log(ay)} = ay$ in the limit as $n\to\infty$. Hence, for large enough $n$,
    \begin{align*}
        [x^n] F_1\left(x, \nu_n, a/n\right)
        \sim \frac{-e^{-a}}{n} \frac{1}{2i\pi} \oint_{\cC''} H_n'\left(-\log(u/n)\right) \left( \left(\frac{u}{n}\right)^{a/n} - \frac{u}{n} \right)^{-n} \frac{\mathrm{d}u}{u}\,,
    \end{align*}
    where $\cC''$ can be chosen as the contour $\gamma_\text{small}  \cup \gamma_\text{long,1} \cup \gamma_\text{big} \cup \gamma_\text{long,2}$ of \eqref{eq: contour left-pacman}, with $r$ taken to be larger than $ay$.
    Using the same bounds as in the proof of Lemma~\ref{lem: asymptotic coefficient partition function in transition regime}, together with the asymptotic formula
    \begin{equation*}
        H_n'(-\log(u/n)) \sim n \frac{yu}{(u-ay)^2}
    \end{equation*}
    for fixed $u$, we get
    \begin{equation*}
        [x^n]F_1\left(x, \nu_n, a/n\right) \sim n^a \frac{e^{-a}}{2i\pi} \oint_\cH \frac{y}{(u-ay)^2} u^{-a} e^u \mathrm{d}u\,,
    \end{equation*}
    where $\cH$ is the Hankel contour of \eqref{eq: Hankel contour} with $r>ay$.
    On this contour, we can expand
    \begin{equation*}
        (u-ay)^{-2} = u^{-2} \sum_{k\ge0} (k+1) (ay/u)^k \,.
    \end{equation*}
    The same expansion holds when $u$ is replaced by $|u|$, so Fubini's theorem applies and we deduce
    \begin{equation*}
        [x^n]F_1\left(x, \nu_n, a/n\right) \sim n^a e^{-a} \sum_{k\ge0} (k+1) a^k y^{k+1} \frac{1}{2i\pi} \oint_{\cH} u^{-a-k-2} e^u \mathrm{d}u
        = n^a e^{-a} \sum_{k\ge1} \frac{ k a^{k-1} y^{k} }{\Gamma(a+k+1)}\,,
    \end{equation*}
    as claimed.
    Finally, using Lemma~\ref{lem: asymptotic coefficient partition function in transition regime}, we obtain
    \begin{equation*}
        \expec{\frac1n \sum_{v\in \cT_n^{(a/n)}} y^{\dist(\rho,v)/\log n}}
        = \frac{[x^n] F_1\left(x, \nu_n, a/n\right)}{n [x^n] A(x,a/n)}
        \cv{} \sum_{k\ge1} \frac{\Gamma(a+1)}{\Gamma(a+k+1)} k a^{k-1} y^{k} 
    \end{equation*}
    as $n\to\infty$.
    This yields the moment generating function of $\dist(\rho,u_n) / \log n$, where $u_n$ is a uniformly random vertex of $\cT_n^{(a/n)}$.
    Since
    \begin{equation*}
       \sum_{k\ge1} \frac{\Gamma(a+1)}{\Gamma(a+k+1)} k a^{k-1} 
       = \sum_{k\ge1} \frac{ka^{k-1}}{(a+k)\cdots(a+1)}
       = \sum_{k\ge1} \left( \frac{a^{k-1}}{(a+k-1)\cdots(a+1)} - \frac{a^k}{(a+k)\cdots(a+1)} \right) = 1 \,,
    \end{equation*}
    we recognize that $\dist(\rho,u_n) / \log n$ converges in distribution to a discrete random variable with probability mass function $k \mapsto \frac{ka^{k-1}}{(a+k)\cdots(a+1)}$ on $\bN$.
\end{proof}

Finally, as in the previous section, we provide a CLT for the number of descents as a straightforward consequence of Lemma~\ref{lem: asymptotic coefficient partition function in transition regime}.

\begin{corollary}\label{cor: CLT descents q=a/n}
    Fix $a>0$.
    Then, we have
    \begin{equation*}
        \frac{\des{\cT_n^{(a/n)}} - a\log n}{\sqrt{a \log n}}
        \cv{} \Normal{0}{1}
    \end{equation*}
    in distribution as $n\to\infty$.
\end{corollary}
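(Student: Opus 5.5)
We will compute the moment generating function of $\des{\cT_n^{(a/n)}}$ directly from Lemma~\ref{lem: asymptotic coefficient partition function in transition regime}, exactly as in the proof of Corollary~\ref{cor: CLT descents q fixed}. Since $Z_{n,q}=\sum_{t\in\bT_n} q^{\des{t}}$, for any $u>0$ we have
\begin{equation*}
    \expec{u^{\des{\cT_n^{(a/n)}}}} = \frac{Z_{n,ua/n}}{Z_{n,a/n}} = \frac{[x^n] A(x,ua/n)}{[x^n] A(x,a/n)} \,.
\end{equation*}
Take $u = e^{s/\sqrt{a\log n}}$ with $s\in\bR$ fixed. Then $u\to1$, so $ua$ stays in a compact interval $[0,M]$. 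The uniformity in $a\in[0,M]$ in Lemma~\ref{lem: asymptotic coefficient partition function in transition regime} therefore allows us to plug in the $n$-dependent parameter $ua$, giving
\begin{equation*}
    \expec{u^{\des{\cT_n^{(a/n)}}}} \sim n^{(u-1)a}\, e^{-(u-1)a}\,\frac{\Gamma(a+1)}{\Gamma(ua+1)} = n^{(u-1)a}\,(1+o(1)) \,.
\end{equation*}
The last equality holds because $u\to1$ and $\Gamma$ is continuous at $a+1>0$.

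Next we expand the exponent $(u-1)a\log n$. Writing $\sigma_n := \sqrt{a\log n}$, we get
\begin{equation*}
    (u-1)a\log n = a\log n\left(\frac{s}{\sigma_n} + \frac{s^2}{2\sigma_n^2} + \cO(\sigma_n^{-3})\right) = s\sigma_n + \frac{s^2}{2} + o(1) \,.
\end{equation*}
Combining the two displays,
\begin{equation*}
    \expec{\exp\left(s\,\frac{\des{\cT_n^{(a/n)}} - a\log n}{\sqrt{a\log n}}\right)} \cv{n\to\infty} e^{s^2/2}
\end{equation*}
for every real $s$.

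By Curtiss's theorem \cite{Curtiss_1942}, convergence of moment generating functions on a neighbourhood of $0$ implies convergence in distribution to $\Normal{0}{1}$. The only point needing care is that the Gamma-asymptotic is applied with the varying parameter $ua$; this is exactly what the uniformity in $a$ in Lemma~\ref{lem: asymptotic coefficient partition function in transition regime} provides, so there is no real obstacle.
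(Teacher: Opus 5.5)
Your proof is correct and follows essentially the same route as the paper. The paper also writes the moment generating function as the ratio $[x^n]A(x,t^{1/\sqrt{\log n}}a/n)/[x^n]A(x,a/n)$, uses the uniformity in $a$ of Lemma~\ref{lem: asymptotic coefficient partition function in transition regime} to get $n^{a(t^{1/\sqrt{\log n}}-1)}$, and expands the exponent; the only cosmetic difference is that you normalize by $\sqrt{a\log n}$ from the start rather than by $\sqrt{\log n}$.
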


\begin{proof}
    Fix $t>0$.
    Then, by Lemma~\ref{lem: asymptotic coefficient partition function in transition regime},
    \begin{align*}
        \expec{ t^{\des{\cT_n^{(a/n)}} / \sqrt{\log n}} }
        = \frac{[x^n] A(x, t^{1/\sqrt{\log n}} a/n)}{[x^n] A(x,a/n)}
        \sim n^{a\left( t^{1/\sqrt{\log n}} - 1 \right)}
        \sim e^{a\log t \sqrt{\log n} + \frac{a}{2}\log^2 t} \,.
    \end{align*}
    Taking out $t^{a\sqrt{\log n}}$ on both sides, this yields the result by convergence of the moment generating function.
\end{proof}

\section{The CRT scaling limit for a fixed bias parameter}
\label{sec:scaling limit supercritical}

This section is solely devoted to the proof of \Cref{th: CRT limit if q fixed}.
Throughout, $q\in(0,1]$ is fixed, and we may drop the notational dependence on $q$.
We will actually focus on proving the following:

\begin{proposition}\label{prop: joint moments cv to CRT}
    Let $\ell\ge1$, and for each pair $(i,j)$ with $0\le i<j\le \ell$, let $r_{i,j} \in \bN_0$.
    Then, with the same notation as in \Cref{th: CRT limit if q fixed},
    \begin{equation*}
        \expec{ \prod_{0\le i<j\le \ell}
        \left( \frac{q-1}{\sqrt q \log q} \,\frac{1}{\sqrt n}
        \,\dist_n{\left( W_{n,i} , W_{n,j} \right)} \right)^{r_{i,j}}
        }
        \cv{n\to\infty}
        \expec{ \prod_{0\le i<j\le \ell}
        \left(\big. \dist{\left( W_{i} , W_{j} \right)} \right)^{r_{i,j}}
        } \,.
    \end{equation*}
\end{proposition}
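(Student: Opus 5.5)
We plan to extend the generating-function approach of \Cref{prop: Rayleigh limit of typical depth when q fixed} from one marked vertex to $\ell$ marked vertices, and compare with the CRT by working shape by shape. Given $\ell$ marked vertices together with the root, the subtree they span is a rooted tree shape: a binary tree with $\ell$ leaves in the generic case, plus degenerate shapes where marked points coincide or sit on internal branch points. This shape has $2\ell-1$ edges of lengths $h_1,\dots,h_{2\ell-1}$, and each pairwise distance $\dist_n(W_{n,i},W_{n,j})$ is a sum of some of the $h_e$. Expanding $\prod (\dist_n)^{r_{i,j}}$ reduces the problem to mixed factorial moments of edge lengths, summed over shapes.

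For a fixed shape we introduce the multivariate generating function $F_\ell(x,y_1,\dots,y_{2\ell-1},q)=\sum_t q^{\des t}\frac{x^{|t|}}{|t|!}\sum_{w_1,\dots,w_\ell}\prod_e y_e^{h_e}$. We derive it recursively, as in Lemma~\ref{lem: generating function typical depth}. A marked tree decomposes along its spine and branch points into pieces of two kinds: trees with one marked ``path'' vertex, whose generating function is $F_1$ in its variable $y_e$; and an extra factor at each branch vertex, which is basically $\partial_x A$-type (a tree with a marked vertex where two subtrees hang, carrying $e^{A}$ weights and $q$-corrections according to whether the attachment is a descent). The descent statistic is not local with respect to labels, so we will reuse the pointing via $\partial_x$. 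We expect that pointing a vertex corresponds to differentiating $x\partial_x$. Concretely, we expect $F$ for a shape to be a product of $2\ell-1$ factors $F_1(x,y_e,q)$-like terms times $(\partial_x A)^{\#\text{branch}}$ and analytic corrections, which is the key structural lemma to establish.

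Differentiating $r_e$ times in $y_e$ at $y_e=1$ and using \eqref{eq: claim singularity f1r} (each edge contributes $(1-x/x_0)^{-1/2-r_e/2}$ up to constants), plus the singular behaviour $\partial_x A\sim c(1-x/x_0)^{-1/2}$ at each of the $\ell-1$ branch points, gives a total exponent $-(\ell-1)/2-(2\ell-1)/2-R/2$ with $R=\sum r_e$. Singularity analysis together with \eqref{eq: asymptotic coefficient A for fixed q} then yields $n^{R/2}$ times an explicit constant. Degenerate shapes have fewer singular factors and are negligible. After rescaling by $\frac{q-1}{\sqrt q\log q}$, the constant must match the known CRT formula: the joint density of edge lengths of the reduced tree spanned by $\ell$ uniform points and the root is $2^{\ell}\,(\ell-1)!^{-1}\cdots\, s\,e^{-s^2/2}$ with $s=\sum h_e$ (Aldous), summed over the $(2\ell-3)!!$ shapes. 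Rather than recomputing this by hand, we will use the fact that for $q=1$ (Cayley trees) the same computation must give the CRT moments, since Aldous's theorem provides convergence with moments. Our constants depend on $q$ only through the scale factor $\sqrt{q/2}\,\frac{\log q}{q-1}$ and an overall normalization fixed by $\ell=0$, so the $q$-dependence cancels after rescaling.

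The main obstacle is the structural lemma: showing that the $\ell$-pointed generating function factorizes through $F_1$ and $\partial_x A$ with \emph{exactly} the right leading constants. In particular, the branch-point factor's dependence on $q$ must cancel as claimed, since the descent status of edges at branch points couples neighbouring pieces. We would first try to handle this by a Riccati-type recursion in the style of Lemma~\ref{lem: generating function typical depth}, inducting on $\ell$ by removing the label-$1$ vertex. Only leading singular terms matter, and all analytic corrections are absorbed by the induction.
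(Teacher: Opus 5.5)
There is a genuine gap, and it starts with a concrete error in your exponent count. With total singular exponent $-\frac{\ell-1}{2}-\frac{2\ell-1}{2}-\frac R2$, singularity analysis gives a coefficient of order $n^{\frac{3\ell-4}{2}+\frac R2}x_0^{-n}$. Dividing by $n^\ell[x^n]A(x)\asymp n^{\ell-\frac32}x_0^{-n}$ gives normalized moments of order $n^{\frac{\ell-1}{2}+\frac R2}$, not the $n^{R/2}$ you claim.

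This is already impossible at $R=0$. Summed over shapes, your pointed generating function at $y=1$ has $n$-th coefficient exactly $n^\ell[x^n]A(x)$. All coefficients are nonnegative, so no shape can have an exponent larger in modulus than $\frac12-\ell$.

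The correct exponent is $\frac12-\ell-\frac12\sum\br$, as in \eqref{eq: claim singularity fellr}. Only the $2\ell-1$ edges are singular. Branch points carry bounded factors, not $\partial_xA\sim\frac{1}{x_0\sqrt{2q}}(1-x/x_0)^{-1/2}$. For $q=1$, a shape contributes exactly $T^{2\ell}\prod_e\frac{y_e}{1-y_eT}$ with $T=A(x,1)$.

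More seriously, the factorization lemma is the entire content of the proposition, and you have neither a proof nor a correct form for it. For $q<1$ there is no evident decomposition along the spanned subtree. The descent weight depends on the relative order of labels across pieces, so labelled products of exponential generating functions do not track descents. This is why even $F_1$ is only reached through the Riccati equation from the label-$1$ decomposition.

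The claimed cancellation of $q$ also does not follow from the form of your factors. If each edge factor has the leading behaviour of $F_1$ in \eqref{eq: claim singularity f1r}, it carries the constant $\frac{1}{\sqrt{2q}}$. The $2\ell-1$ edges then give $(2q)^{-(2\ell-1)/2}$, against only $(2\pi q)^{-1/2}$ in $[x^n]A$. The node factors would have to contribute exactly $q^{\ell-1}$, up to a $q$-free constant, and that is precisely what must be proved. Your $\partial_xA$ branch factors make this worse, each adding $q^{-1/2}x_0(q)^{-1}$.

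Your fallback, removing the vertex labelled $1$, does not close in a shape-by-shape, edge-variable parametrization. Removing $v_1$ changes the shape and creates the attachment vertex $v_0$ as a new marked point.

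The paper avoids factorization altogether, in the following steps:
\begin{enumerate}
\item It uses pairwise variables $y_{i,j}$, for which the label-$1$ decomposition closes through the substitutions $\varphi_{I_k}$ and $\psi_{I_0}$. This gives the linear equation $\partial_xF_\ell=\partial_xA\,(Q_\ell F_\ell+R_\ell)$.
\item Solving the $\by$-differentiated equation by variation of parameters, with $e^{G_\ell}\sim(1-x/x_0)^{-(\ell+1)/2}$, gives $f_{\ell,\br}\sim C_{\ell,\br}(q)(1-x/x_0)^{\frac12-\ell-\frac12\sum\br}$.
\item The constants satisfy $C_{\ell,\br}(q)=\frac{\sqrt{2q}}{\ell-2+\sum\br}\,\xi(\cdot)$, where $\xi$ is a $q$-independent quadratic form in lower-order constants.
\item Induction then yields $C_{\ell,\br}(q)=q^{-1/2}\gamma(q)^{\sum\br}C_{\ell,\br}(1)$ with $\gamma(q)=\frac{\sqrt q\log q}{q-1}$.
\item The $q=1$ case (Aldous) identifies the limit.
\end{enumerate}
This induction on the leading constants is the step your proposal is missing.
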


The derivation of \Cref{th: CRT limit if q fixed} from Proposition~\ref{prop: joint moments cv to CRT} is straightforward.
Indeed, the distributions of distance matrices in the CRT are characterized by their joint moments (this follows, e.g., from the fact that the depth of each point $W_i$ is Rayleigh-distributed).
Therefore, Proposition~\ref{prop: joint moments cv to CRT} implies the convergence in distribution of distance matrices for all $\ell\ge1$.
Then, by Proposition~\ref{prop: convergence to a random dendron}, this is equivalent to the convergence in distribution towards the CRT in the GP topology.

In the upcoming sections, we prove Proposition~\ref{prop: joint moments cv to CRT} by induction on $\ell$ and $(r_{i,j})_{0\le i<j\le \ell}$.
Note that the case $\ell=1$ has already been established in Proposition~\ref{prop: Rayleigh limit of typical depth when q fixed}.
Our approach is to recursively pump the desired moments from their generating functions, relying heavily on singularity analysis.
This is analogous to our proof of Proposition~\ref{prop: Rayleigh limit of typical depth when q fixed}, but with heavier computations.
To simplify the argument, we will also use the fact that Proposition~\ref{prop: joint moments cv to CRT} is already known to hold for $q=1$ (see, e.g., \cite{Aldous_1993}).

\subsection{The generating functions of distance matrices}

We start with some notation.
Let $\ell\in\bN$.
We let $\pairs_\ell := \binom{[0,\ell]}{2}$ denote the set of unordered pairs $\{i,j\}$ with $0\le i < j\le \ell$.
Throughout, $\by = (y_{i,j})_{\{i,j\}\in \pairs_\ell}$ is a family of complex numbers with modulus $\abs{y_{i,j}}\le1$,  
and $\br = (r_{i,j})_{\{i,j\}\in \pairs_\ell}$ is a family of nonnegative integers. %with the same indexation.
We let $\sum \br := \sum_{\{i,j\}\in \pairs_\ell} r_{i,j}$.
We write $\bm{0}$ and $\bm{1}$ for the families of all $0$'s and all $1$'s with the same indices.

For $I\subseteq [0,\ell]$, we let $\pairs_\ell^{(I)}$ denote the set of pairs in $\pairs_\ell$ that contain at least one element of $I$.
If $I=\{i\}$ for some $i\in[0,\ell]$, we simply write $\pairs_\ell^{(i)}$.
We say that a family of nonnegative integers $\bm{\alpha} = (\alpha_{j,j'})_{\{j,j'\}\in \pairs_\ell} \in \bN_0^{\pairs_\ell}$ is supported on $\pairs_\ell^{(i)}$ if $\alpha_{j,j'}=0$ for all $\{j,j'\} \in \binom{[0,\ell] \setminus \{i\}}{2}$.

For a (possibly labeled) tree $t$ with root $w_0 := \rho(t)$, let
\[
G_\ell (t,\by) := \sum_{w_1,\dots,w_\ell \in t} \prod_{\{i,j\}\in \pairs_\ell} y_{i,j}^{\dist(w_i,w_j)}\,.
\]
We apply the convention $G_0(t) := 1$.
Now define
\[
F_\ell(x,\by) := \sum_{t\in\bT} q^\des{t} \frac{x^{\abs{t}}}{\abs{t}!} G_\ell(t,\by) \,.
\]
Note that $F_0(x) = A(x)$ and that $[x^n]F_\ell(x,\bm{1}) = n^\ell [x^n] A(x)$.
Let $\partial_\by^\br$ denote the operator that differentiates $r_{i,j}$ times with respect to $y_{i,j}$ for all $0\le i < j\le \ell$ (when Schwarz's theorem applies).
Define
\begin{equation*}
    f_{\ell,\br}(x) := \partial_\by^\br F_\ell(x,\by) \vert_{\by=\bm{1}} 
    = \sum_{t\in \bT} q^\des{t} \frac{x^{\abs{t}}}{\abs{t}!} \sum_{w_1,\dots,w_\ell\in t} \prod_{\{i,j\}\in \pairs_\ell} \left(\big. \dist(w_i,w_j) \right)_{r_{i,j}}\,,
\end{equation*}
where $(m)_k = m(m-1)\cdots(m-k+1)$ is the falling factorial, and note that
\begin{equation}\label{eq: joint moment of distances from extracted coefficients}
    \frac{1}{n^\ell} \frac{[x^n]f_{\ell,\br}(x)}{[x^n]A(x)} 
    = \expec{ \prod_{\{i,j\}\in \pairs_\ell} \left(\big. \dist(W_{n,i},W_{n,j}) \right)_{r_{i,j}} }\,,
\end{equation}
where $W_{n,1}, \dots, W_{n,\ell}$ are uniformly random i.i.d.~vertices in $\cT_n^{(q)}$ and $W_{n,0}$ denotes the root of $\cT_n^{(q)}$. 
Recall that $A(x)$ has the dominant singularity $x_0 := q^{q/(1-q)}$ and is amenable to singularity analysis.
We will show that %, for any $\by \in \bC^{\pairs_\ell}$ such that $\abs{y_{i,j}}\le1$ for all $0\le i,j\le \ell$, and 
for any $\br \in \bN_0^{\pairs_\ell}$, the functions $f_{\ell,\br}(x)$ are amenable to singularity analysis with the same dominant singularity $x_0$.

We will proceed as follows.
First, using the usual decomposition method for descent-biased trees, we prove in \Cref{sec: Recursive differential equation} that $F_{\ell}$ and $f_{\ell,\br}$ satisfy intricate yet exploitable recursive differential equations.
Then in \Cref{sec: Pumping the moments}, we use singularity analysis to find the dominant terms in our equations, and derive recursive formulas for \eqref{eq: joint moment of distances from extracted coefficients}.
Rather than explicitly solving these, we extract the dependence on $q$ in our coefficients. 
Thanks to Proposition~\ref{prop: joint moments cv to CRT} for $q=1$, this will complete the proof of Proposition~\ref{prop: joint moments cv to CRT} for general $q\in(0,1]$ in \Cref{sec: Identifying the CRT}.

\subsection{Recursive differential equations}
\label{sec: Recursive differential equation}

Recall from \Cref{sec: Generating functions related to descent-biased trees} that a tree $t\in\bT$ of size $\abs{t} = n+1$ can be decomposed with respect to its vertex $v_1$ labeled $1$.

\paragraph{First case:}
$v_1=\rho(t)$.
Write $t_1, \dots, t_p$ for the subtrees rooted at the children of $v_1$, ordered in an arbitrary manner.
For each $k\in[p]$, let $\rho_k$ and $\dist_{t_k}$ denote the root and graph distance of $t_k$, respectively.

Let $w_1,\dots,w_\ell \in t$, and define $I_k := \left\{ i\in[0,\ell] : w_i\in t_k \right\}$ for $k\in[1,p]$, and $I_{-1} := \left\{ i\in[0,\ell] : w_i = v_1 \right\}$.
Note that $0\in I_{-1}$, and that $\cI := \left( I_{-1}, I_1, \dots, I_p \right)$ is an ordered partition of $[0,\ell]$.
Now, for $0\le i,j\le \ell$,
\[
\dist_t(w_i,w_j) = 
\left\{
\begin{array}{llll}
    0 & \text{if } i,j\in I_{-1} ;\\
    1 + \dist_{t_k}(\rho_k,w_i) & \text{if } i\in I_k, j\in I_{-1} \text{ for some } k\ge1 ;\\
    \dist_{t_k}(w_i,w_j) & \text{if } i,j\in I_k \text{ for some } k\ge1 ;\\
    2 + \dist_{t_k}(\rho_k,w_i) + \dist_{t_m}(\rho_m,w_j) & \text{if } i\in I_k, j\in I_m \text{ for some } k,m\ge1, k \ne m \,.\\
\end{array}
\right.
\]
This allows us to write
\begin{equation}\label{eq: decomposition G first case}
    G_\ell(t,\by) = \sum_{\cI \in \cP_{1,p}} 
    \left( \prod_{i\in I_{-1}, j\notin I_{-1}} y_{i,j} \right)
    \left( \prod_{\{k,m\} \in \binom{[1,\ell]}{2}} \prod_{i\in I_k, j\in I_m} y_{i,j}^2 \right)
    \prod_{1\le k\le p} G_{\abs{I_k}}\left(\big. t_k , \varphi_{I_k}(\by) \right) \,.
\end{equation}
Here, the set $\cP_{1,p}$ contains ordered partitions $\cI = (I_{-1}, I_1, \dots, I_p)$ of $[0,\ell]$ where $0\in I_{-1}$ and all other blocks may be empty.
The family of variables $\varphi_{I_k}(\by)$ is indexed by unordered pairs in $\{0\} \cup I_k$, where the latter set is seen as isomorphic to $[0,\abs{I_k}]$.
It is defined by $\varphi_{I_k}(\by)_{i,j} := y_{i,j}$ and $\varphi_{I_k}(\by)_{0,j} := \prod_{h\in [0,\ell] \setminus I_k} y_{h,j}$ for $i,j\in I_k$.

\paragraph{Second case:}
$v_1\ne \rho(t)$.
Write $t_0$ for the subtree of $t$ obtained by deleting $v_1$ and its descendants, and let $v_0 \in t_0$ be the parent of $v_1$.
As before, write $t_1, \dots, t_p$ for the subtrees rooted at the children of $v_1$, ordered arbitrarily.

Let $w_1,\dots,w_\ell \in t$, and define $I_k := \left\{ i\in[0,\ell] : w_i\in t_k \right\}$ for $k\in[0,p]$, and $I_{-1} := \left\{ i\in[0,\ell] : w_i = v_1 \right\}$.
Note that $0\in I_0$ now, since $w_0$ is the root of $t$ and thus in $t_0$.
Now, for $0\le i,j\le \ell$,
\[
\dist_t(w_i,w_j) = 
\left\{
\begin{array}{llll}
    0 & \text{if } i,j\in I_{-1} ;\\
    1 + \dist_{t_0}(v_0,w_i) & \text{if } i\in I_0, j\in I_{-1} ;\\
    1 + \dist_{t_k}(\rho_k,w_i) & \text{if } i\in I_k, j\in I_{-1} \text{ for some } k\ge1 ;\\
    \dist_{t_k}(w_i,w_j) & \text{if } i,j\in I_k \text{ for some } k\ge0 ;\\
    2 + \dist_{t_0}(v_0,w_i) + \dist_{t_k}(\rho_k,w_j) & \text{if } i\in I_0, j\in I_k \text{ for some } k\ge1 \,;\\
    2 + \dist_{t_k}(\rho_k,w_i) + \dist_{t_m}(\rho_m,w_j) & \text{if } i\in I_k, j\in I_m \text{ for some } k,m\ge1, k \ne m \,.\\
\end{array}
\right.
\]
This allows us to write
\begin{align}\label{eq: decomposition G second case v0 fixed}
    G_\ell(t,\by) = \sum_{(I_{-1}, I_0, \dots, I_p)} &
    \left( \prod_{i\in I_{-1}, j\notin I_{-1}} y_{i,j} \right)
    \left( \prod_{\{k,m\} \in \pairs_\ell} \prod_{i\in I_k, j\in I_m} y_{i,j}^2 \right)
    \left( \prod_{1\le k\le p} G_{\abs{I_k}}\left(\big. t_k , \varphi_{I_k}(\by) \right) \right) \nonumber\\
    & \cdot \sum_{w_i\in t_0 \text{ for } i\in I_0\setminus\{0\}}
    \left( \prod_{ \{i,j\} \in \binom{I_0}{2}} y_{i,j}^{\dist(w_i,w_j)} \right)
    \left( \prod_{i\in I_0, j\notin I_0} y_{i,j}^{\dist(w_i,v_0)} \right) \,.
\end{align}
The sum runs over ordered partitions $(I_{-1}, I_0, I_1, \dots, I_p)$ of $[0,\ell]$ where $0\in I_0$ and all other blocks may be empty.
The family of variables $\varphi_{I_k}(\by)$ is defined as before.
Note that the last product appearing in \eqref{eq: decomposition G second case v0 fixed} equals $1$ if $I_0=[0,\ell]$, and is non-empty otherwise.

Now, we will sum over the choice of $v_0\in t_0$.
Instead of fixing $t$, fix a non-empty tree $t_0$, an integer $p\ge0$, and a family of possibly empty trees $t_1, \dots, t_p$.
For each choice of $v_0\in t_0$, define the tree $t$ by attaching a vertex $v_1$ to $v_0$, and subtrees $t_1, \dots, t_p$ to $v_1$.
Then,
\begin{align}\label{eq: decomposition G second case v0 summed}
    \sum_{v_0\in t_0} G_\ell (t,\by) &= \sum_{(I_{-1}, I_0, \dots, I_p)} 
    \left( \prod_{i\in I_{-1}, j\notin I_{-1}} y_{i,j} \right)
    \left( \prod_{\substack{i\in I_k, j\in I_m \\ k,m\ge0,\,k \ne m}} y_{i,j}^2 \right) \nonumber \\ &\qquad \qquad \qquad
    \left( \prod_{1\le k\le p} G_{\abs{I_k}}\left(\big. t_k , \varphi_{I_k}(\by) \right) \right) G_{\abs{I_0}}\left(\big. t_0 , \psi_{I_0}(\by) \right)
\end{align}
where the sum runs over the same partitions as for \eqref{eq: decomposition G second case v0 fixed}, and the family of variables $\psi_{I_0}(\by)$ is indexed by pairs in $I_0 \cup \{\ell+1\}$, where the latter set is seen as isomorphic to $[0,\abs{I_0}]$.
It is defined by $\psi_{I_0}(\by)_{i,j} = y_{i,j}$ and $\psi_{I_0}(\by)_{i,\ell+1} = \prod_{h\in[1,\ell] \setminus I_0} y_{i,h}$ for $i,j\in I_0$.\footnote{
The definition of $\psi$ differs slightly from that of $\varphi$. 
Indeed, the new index added to $I_k$ for $\varphi_{I_k}(\by)$ is related to the root $w_0$, which is fixed, while the new index added to $I_0$ for $\psi_{I_0}(\by)$ is related to a typical vertex in the tree.
}

It will be useful to isolate one specific term in \eqref{eq: decomposition G second case v0 summed}.
When $\abs{I_0}=\ell+1$, that is when $I_0=[0,\ell]$, we have $\psi_{I_0}(\by)_{i,\ell+1} = 1$ for all $i\in I_0$.
Therefore, we can write $G_{\ell+1}\left(\big. t_0 , \psi_{I_0}(\by) \right) = \abs{t_0} G_\ell(t_0,\by)$, and thus
\begin{equation}\label{eq: decomposition G second case v0 summed, isolated term}
    \sum_{v_0\in t_0} G_\ell (t,\by) = \abs{t_0} G_\ell(t_0,\by)
    + \sum_{\cI \in \cP_{2,p}} 
    M_\cI(\by)
    \left( \prod_{1\le k\le p} G_{\abs{I_k}}\left(\big. t_k , \varphi_{I_k}(\by) \right) \right) G_{\abs{I_0}}\left(\big. t_0 , \psi_{I_0}(\by) \right)
\end{equation}
where the set $\cP_{2,p}$ contains ordered partitions $\cI = (I_{-1}, I_0, I_1, \dots, I_p)$ of $[0,\ell]$ such that $0\in I_0$, $I_0\ne [0,\ell]$, and all other blocks may be empty.
The term $M_\cI(\by)$ is a monomial that only depends on the partition $\cI$:
\begin{equation*}
    M_\cI(\by) := \left( \prod_{i\in I_{-1}, j\notin I_{-1}} y_{i,j} \right)
    \left( \prod_{\substack{i\in I_k, j\in I_m \\ k,m\ge0,\,k \ne m}} y_{i,j}^2 \right) \,.
\end{equation*}
If $\cI$ belongs to $\cP_{1,p}$ instead, $\cI = (I_{-1}, I_1, \dots, I_p)$, we interpret $I_0$ as being empty and $M_\cI(\by)$ appears inside \eqref{eq: decomposition G first case}.

\medskip

We are now equipped to deduce a differential equation for $F_\ell$.
Below, if $t_0,\dots,t_p$ are (possibly labeled) trees and $v_0\in t_0$, we write $(t_1, \dots, t_p)$ for the unlabeled tree with a root to which the subtrees $t_1, \dots, t_p$ are attached, and we write $(t_0, v_0, t_1, \dots, t_p)$ for the unlabeled tree obtained by attaching $(t_1, \dots, t_p)$ to $v_0$ in $t_0$.
Then, we have
\begin{align*}
    \partial_x F_\ell(x,\by)
    &= \sum_{n\ge0} \sum_{t\in\bT_{n+1}} q^\des{t} \frac{x^n}{n!} G_\ell(t,\by) \\
    &= \sum_{n\ge0} \sum_{p\ge0} \sum_{\substack{t_1, \dots, t_p\in \bT\\ \abs{t_1}+\dots+\abs{t_p}=n}}
    \frac{1}{p!} \binom{n}{\abs{t_1},\dots,\abs{t_p}} q^{\des{t_1}+\dots+\des{t_p}} \frac{x^n}{n!} G_\ell\left(\big.(t_1,\dots,t_p), \by\right) \\
    &\hspace{3em} + \sum_{\substack{t_0, \dots, t_p\in\bT\\ \abs{t_0}+\dots+\abs{t_p}=n}}
    \sum_{v_0\in t_0} \frac{1}{p!} \binom{n}{\abs{t_0},\dots,\abs{t_p}} q^{1+\des{t_0}+\dots+\des{t_p}} \frac{x^n}{n!} G_\ell\left(\big.(t_0,v_0,t_1,\dots,t_p), \by\right) \\
    &= \sum_{p\ge0} \sum_{t_1, \dots, t_p\in \bT} \frac{1}{p!} q^{\des{t_1}+\dots+\des{t_p}} \frac{x^{\abs{t_1}}}{\abs{t_1}!} \cdots \frac{x^{\abs{t_p}}}{\abs{t_p}!} G_\ell\left(\big.(t_1,\dots,t_p), \by\right) \\
    &\hspace{3em} + \sum_{t_0, \dots, t_p\in\bT} \frac{1}{p!} q^{1+\des{t_0}+\dots+\des{t_p}} \frac{x^{\abs{t_0}}}{\abs{t_0}!} \cdots \frac{x^{\abs{t_p}}}{\abs{t_p}!} \sum_{v_0\in t_0} G_\ell\left(\big.(t_0,v_0,t_1,\dots,t_p), \by\right) \,.
\end{align*}
Thus, by \eqref{eq: decomposition G first case} and \eqref{eq: decomposition G second case v0 summed, isolated term},
\begin{align}\label{eq: equadiff on Fell before reduced}
    \partial_x F_\ell(x,\by)
    = \sum_{p\ge0} &\sum_{\cI\in \cP_{1,p}} \frac{1}{p!} 
    M_\cI(\by)
    \prod_{1\le k\le p} F_{\abs{I_k}}\left(\big. x , \varphi_{I_k}(\by) \right) \nonumber\\
    &+ \frac{1}{p!} q A(x)^p x \partial_x F_\ell(x,\by) \nonumber\\
    &+ q \sum_{\cI\in \cP_{2,p}} 
    \frac{1}{p!}
    M_\cI(\by)
    F_{\abs{I_0}}\left(\big. x , \psi_{I_0}(\by) \right)
    \prod_{1\le k\le p} F_{\abs{I_k}}\left(\big. x , \varphi_{I_k}(\by) \right) \,.
\end{align}
If $\cI$ is a partition in $\cP_{1,p}$ or $\cP_{2,p}$, let $\cJ$ be its \enquote{reduced} form obtained by deleting empty blocks, except $I_{-1}$. 
If $\cI\in\cP_{1,p}$, then we can write $\cJ = (J_{-1}, J_1, \dots, J_s)$ for some $s\in[0,\ell]$.
This is an ordered partition of $[0,\ell]$ with non-empty blocks and $0\in J_{-1}$.
Write $\cP_{1,s}^\red$ for the set of such partitions.
If $\cI\in\cP_{2,p}$, then we can write $\cJ = (J_{-1}, J_0, J_1, \dots, J_s)$ for some $s\in[0,\ell]$.
This is an ordered partition of $[0,\ell]$ with non-empty blocks, except possibly $J_{-1}$, and $0\in J_0$ as well as $J_0\ne [0,\ell]$.
Write $\cP_{2,s}^\red$ for the set of such partitions.

Let us collect a few observations.
First, the sets $\cup_{s\ge0} \cP_{1,s}^\red$ and $\cup_{s\ge0} \cP_{2,s}^\red$ are now finite (indeed, $s\le \ell$).
Secondly, given a reduced partition $\cJ\in\cP_{i,s}^\red$ for arbitrary $i\in\{1,2\}$, there are $\binom{p}{s}$ partitions $\cI\in\cP_{i,p}$ with this reduced form.
Thirdly, the monomial $M_\cI(\by)$ depends only on the reduced partition $\cJ$.
And finally, $\prod_{1\le k\le p} F_{\abs{I_k}}\left(\big. x , \varphi_{I_k}(\by) \right)$ can be rewritten as $A(x)^{p-s} \prod_{1\le k\le s} F_{\abs{J_k}}\left(\big. x , \varphi_{J_k}(\by) \right)$.
Therefore, by \eqref{eq: equadiff on Fell before reduced},
\begin{align*}
    \partial_x F_\ell(x,\by)
    = q e^{A(x)} x \partial_x F_\ell(x,\by)
    &+ \sum_{p\ge s\ge 0} \sum_{\cJ\in \cP_{1,s}^\red} \frac{1}{p!} \binom{p}{s} 
    M_\cJ(\by)
    A(x)^{p-s} \prod_{1\le k\le s} F_{\abs{J_k}}\left(\big. x , \varphi_{J_k}(\by) \right) \\
    &+ q \sum_{p\ge s\ge 0} \sum_{\cJ\in \cP_{2,s}^\red} \frac{1}{p!} \binom{p}{s} 
    M_\cJ(\by)
    F_{\abs{J_0}}{\left(\big. x , \psi_{J_0}(\by) \right)}
    A(x)^{p-s} \!\!\! \prod_{1\le k\le s} F_{\abs{J_k}}{\left(\big. x , \varphi_{J_k}(\by) \right)} 
\end{align*}
and thus
\begin{align}\label{eq: equadiff on Fell with everything}
    \left( e^{-A(x)} - qx \right)
    \partial_x F_\ell(x,\by)
    = &\sum_{s=0}^\ell \sum_{\cJ\in \cP_{1,s}^\red} \frac{1}{s!} 
    M_\cJ(\by)
    \prod_{1\le k\le s} F_{\abs{J_k}}\left(\big. x , \varphi_{J_k}(\by) \right) \nonumber\\
    &+ q \sum_{s=0}^\ell \sum_{\cJ\in \cP_{2,s}^\red} \frac{1}{s!} 
    M_\cJ(\by)
    F_{\abs{J_0}}\left(\big. x , \psi_{J_0}(\by) \right)
    \prod_{1\le k\le s} F_{\abs{J_k}}\left(\big. x , \varphi_{J_k}(\by) \right) \,.
\end{align}
Recall from \eqref{eq: differential equation for A} that $\left(\partial_x A(x) \right) \left( e^{-A(x)} - qx \right) = 1$.
In order to work with \eqref{eq: equadiff on Fell with everything}, we first need to identify in which terms the function $F_\ell$ appears.
In the first double sum of \eqref{eq: equadiff on Fell with everything}, $F_\ell$ only appears for the reduced partition $\cJ\in\cP_{1,1}^\red$ where $J_{-1}=\{0\}$ and $J_1=[1,\ell]$.
In the second double sum of \eqref{eq: equadiff on Fell with everything}, $F_\ell$ appears for three families of reduced partitions.
The first family consists of partitions $\cJ\in\cP_{2,0}^\red$ with $J_{-1}=\{i\}$ and $J_0=[0,\ell]\setminus\{i\}$ for some $i\in[1,\ell]$.
The second family consists of partitions $\cJ\in\cP_{2,1}^\red$ with $J_0=[0,\ell]\setminus\{i\}$ and $J_{1}=\{i\}$ for some $i\in[1,\ell]$.
The third family contains the single partition $\cJ\in\cP_{2,1}^\red$ with $J_0=\{0\}$ and $J_{1}=[1,\ell]$.
Therefore, we can write:
\begin{align}\label{eq: equadiff on Fell with Q and R}
    \partial_x F_\ell(x,\by) = \left(\partial_x A(x) \right)
    \left(\big. Q_\ell(x,\by) F_\ell(x,\by) + R_\ell(x,\by) \right) \,,
\end{align}
where
\begin{align}\label{eq: def Q}
    Q_\ell(x,\by) := \prod_{i\ne0} y_{0,i}
    + q \sum_{i=1}^\ell \prod_{j\ne i} y_{i,j}
    + q \sum_{i=0}^\ell F_1{\left(x, \prod_{j\ne i} y_{i,j} \right)} \prod_{j\ne i} y_{i,j}^2 \,,
\end{align}
and $R_\ell$ is defined as the appropriate remainder, containing only $F_m$'s with $m<\ell$ (see \eqref{eq: def Rell first line} and \eqref{eq: def Rell second line}).

\subsection{Pumping the moments}
\label{sec: Pumping the moments}

Our goal here is to prove that for any $\ell\in\bN$ and for any family of integers $\br = (r_{i,j})_{\{i,j\}\in \pairs_\ell} \in \bN_0^{\pairs_\ell}$, there exists a positive constant $C_{\ell,\br} > 0$ such that
\begin{equation}\label{eq: claim singularity fellr}
    f_{\ell,\br}(x) \sim C_{\ell,\br} (1-x/x_0)^{\frac12 - \ell - \frac12 \sum \br}
\end{equation}
as $x\to x_0$.
We proceed by induction, using the lexicographic order on $(\ell,\br)$ and the well-founded partial order on $\bN_0^{\pairs_\ell}$ defined by
\begin{equation*}
    \bm{\alpha} \prec \br
    \qquad\text{if and only if}\qquad
    \bm{\alpha}\ne\br 
    \text{ and for all }\{i,j\}\in \pairs_\ell,\:
    \alpha_{i,j} \le r_{i,j} \,.
\end{equation*}
We write $\bm{\alpha} \preceq \br$ if $\bm{\alpha} \prec \br$ or $\bm{\alpha} = \br$.
We also write $\binom{\br}{\bm{\alpha}} := \prod_{\{i,j\}\in \pairs_\ell} \binom{r_{i,j}}{\alpha_{i,j}}$.

\paragraph{Initialization.}
First, we recall that \eqref{eq: claim singularity fellr} has been proved for $\ell=1$ and all $\br = r \ge0$ in Proposition~\ref{prop: Rayleigh limit of typical depth when q fixed}, \eqref{eq: claim singularity f1r}.

Now, we prove that it holds for all $\ell\ge2$ and $\br = \bm{0}$.
We have $[x^n] F_\ell(x,\bm{1}) = n^\ell [x^n] A(x)$.
Therefore, we can write $f_{\ell,\bm{0}}(x) = F_\ell(x,\bm{1}) = x^\ell \partial_x^\ell A(x) + B(x)$, where $B$ is a linear combination of lower-order derivatives of $A$.
Differentiating the asymptotic formula~\eqref{eq: asymptotic A close to singularity for fixed q}, we get
\begin{equation}\label{eq: singularity for multiple derivatives of A}
\partial_x^\ell A(x) 
%\sim \sqrt{\frac{2}{q}} \frac{1}{x_0^\ell} \frac12 \cdot \frac12 \cdot \frac32 \dots \cdot \left( \ell - \frac32 \right) (1-x/x_0)^{1/2 - \ell}
\sim \frac{1}{x_0^\ell \sqrt{2q}} \frac{(2\ell-3)!!}{2^{\ell-1}} (1-x/x_0)^{1/2 - \ell}
\end{equation}
as $x\to x_0$, with the convention $(2\ell-3)!! := 1$ when $\ell=1$.
Hence, \eqref{eq: claim singularity fellr} holds with $C_{\ell,\bm{0}} = \frac{1}{\sqrt{2q}} \frac{(2\ell-3)!!}{2^{\ell-1}}$.

\paragraph{Induction step.}
Now, fix $(\ell,\br)$ with $\ell\ge2$ and $\sum\br > 0$.
Assume that $\eqref{eq: claim singularity fellr}$ holds for all the functions $f_{m,\bm{\alpha}}$ with $m<\ell$ and all the functions $f_{\ell,\bm{\alpha}}$ with $\bm{\alpha}\prec\br$.

From \eqref{eq: equadiff on Fell with Q and R}, we obtain
\begin{align*}
    \partial_x \partial_\by^\br F_\ell(x,\by) &= 
    (\partial_x A(x)) Q_\ell(x,\by) (\partial_\by^\br F_\ell(x,\by))
    \\&\quad + (\partial_x A(x)) \left( 
    \left(\Big. \partial_\by^\br \left[\big. Q_\ell(x,\by) F_\ell(x,\by) \right]
    - Q_\ell(x,\by) \big(\partial_\by^\br F_\ell(x,\by)\big) \right)
    + \partial_\by^\br R_\ell(x,\by)
    \right) \,,
\end{align*}
and at $\by=\bm{1}$, we write it as
\begin{equation}\label{eq: equadiff on fellr with Q and Rtilde}
    \partial_x f_{\ell,\br}(x)
    = G_\ell'(x) f_{\ell,\br}(x)
    + \wR_{\ell,\br}(x)\,,
\end{equation}
where 
\begin{equation}\label{eq: def Gell prime and Rellr tilde}
\begin{cases}
    G_\ell'(x) &= \partial_x A(x) Q_\ell(x,\bm{1}) = \partial_x A(x) \left(\big. 1 + q\ell + q(\ell+1)f_{1,0}(x) \right) \,;\\
    \wR_{\ell,\br}(x) &= \partial_x A(x) \left( 
    \left(\Big. \partial_\by^\br {\left[\big. Q_\ell(x,\by) F_\ell(x,\by) \right]}\vert_{\by=\bm{1}}
    - Q_\ell(x,\bm{1}) f_{\ell,\br}(x) \right)
    + \partial_\by^\br R_\ell(x,\by)\vert_{\by=\bm{1}}
    \right) \,.
\end{cases}
\end{equation}
The function $G_\ell$ is defined as an appropriate primitive of $G_\ell'$, to be specified later.
As $\ell\ge2$, \eqref{eq: equadiff on fellr with Q and Rtilde} is a linear ODE for $f_{\ell,\br}$ which can be solved  using variation of parameters:
\begin{equation}\label{eq: solution fellr with variation of constant}
    f_{\ell,\br}(x) = e^{G_\ell(x)} \int_0^x \wR_{\ell,\br}(u) e^{-G_\ell(u)} \mathrm{d}u \,.
\end{equation}
Thus, we need to study the asymptotics of the functions $G_\ell$ and $\wR_{\ell,\br}$ close to $x_0$.
First, plugging \eqref{eq: singularity for single derivative of A and for f10} into \eqref{eq: def Gell prime and Rellr tilde} yields
\begin{equation*}
    G_\ell'(x) = \frac{\ell+1}{2x_0} (1-x/x_0)^{-1}
    + \cO\left( \abs{x-x_0}^{-1/2} \right)
\end{equation*}
as $x\to x_0$.
Subsequently, for the appropriate choice of $G_\ell$,
\begin{equation}\label{ eq: singularity for expGell}
    G_\ell(x) = -\frac12(\ell+1) \log(1-x/x_0) + \cO\left( \abs{x-x_0}^{1/2} \right),
    \quad\text{and thus}\quad
    e^{G_\ell(x)} \sim (1-x/x_0)^{-\frac12(\ell+1)}
\end{equation}
as $x\to x_0$.
Let us now turn to $\wR_{\ell,\br}$.
Its analysis is contained in the following two lemmas, whose proofs are postponed.

\begin{lemma}\label{lem: asymptotics first term Rellr tilde}
    For $\ell\ge2$ and $\sum\br > 0$, we have
    \begin{equation*}
        \partial_\by^\br {\left[\big. Q_\ell(x,\by) F_\ell(x,\by) \right]}\vert_{\by=\bm{1}}
        - Q_\ell(x,\bm{1}) f_{\ell,\br}(x)
        \;\sim\;
        q\, C_{\ell,\br}^{(1)}\,
        (1-x/x_0)^{-\ell - \frac12\sum\br}
    \end{equation*}
    as $x\to x_0$, where
    \begin{equation*}
        C_{\ell,\br}^{(1)} := \sum_{i=0}^\ell \sum_{\substack{0\prec \bm{\alpha}\preceq \br \\ \bm{\alpha} \text{ supported on } \pairs_\ell^{(i)}}} 
        \binom{\br}{\bm{\alpha}}
        C_{\ell, \br-\bm{\alpha}} C_{1,\sum\bm{\alpha}} 
    \end{equation*}
    is a positive constant.
\end{lemma}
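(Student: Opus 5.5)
The plan is to expand $\partial_\by^\br[Q_\ell F_\ell]$ with the multivariate Leibniz rule and keep only the dominant terms. Writing
\[
\partial_\by^\br\big[Q_\ell F_\ell\big]\big|_{\by=\bm 1}=\sum_{\bm\alpha\preceq\br}\binom{\br}{\bm\alpha}\,\partial_\by^{\bm\alpha}Q_\ell(x,\by)\big|_{\by=\bm1}\; f_{\ell,\br-\bm\alpha}(x),
\]
the term $\bm\alpha=\bm 0$ is exactly $Q_\ell(x,\bm1)f_{\ell,\br}(x)$. It cancels, so only $\bm 0\prec\bm\alpha\preceq\br$ remain. For these, $f_{\ell,\br-\bm\alpha}$ is covered by the induction hypothesis \eqref{eq: claim singularity fellr}, giving $f_{\ell,\br-\bm\alpha}\sim C_{\ell,\br-\bm\alpha}(1-x/x_0)^{\frac12-\ell-\frac12\sum(\br-\bm\alpha)}$. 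What remains is to compute $\partial_\by^{\bm\alpha}Q_\ell|_{\by=\bm1}$ from \eqref{eq: def Q}.

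The first two pieces of $Q_\ell$, namely $\prod_{i\ne0}y_{0,i}$ and $q\sum_i\prod_{j\ne i}y_{i,j}$, are polynomials in $\by$. Their derivatives at $\bm1$ are bounded constants, so they contribute only $\cO\big((1-x/x_0)^{\frac12-\ell-\frac12\sum\br+\frac12\sum\bm\alpha}\big)$, which is negligible next to the claimed order $(1-x/x_0)^{-\ell-\frac12\sum\br}$ once $\sum\bm\alpha\ge1$. The main contribution comes from $q\,F_1\big(x,z_i\big)\,z_i^2$, where $z_i:=\prod_{j\ne i}y_{i,j}$. This depends only on variables indexed by pairs in $\pairs_\ell^{(i)}$. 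Hence $\partial_\by^{\bm\alpha}$ of the $i$-th summand vanishes unless $\bm\alpha$ is supported on $\pairs_\ell^{(i)}$, which explains the restriction in $C^{(1)}_{\ell,\br}$.

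For such $\bm\alpha$, write $g(z):=F_1(x,z)z^2$. Because every $y_{i,j}$ appears to the first power in $z_i$, the chain rule (Faà di Bruno for a product of distinct variables) shows that $\partial_\by^{\bm\alpha}g(z_i)|_{\bm1}$ is a combination of $g^{(k)}(1)$ for $k\le\sum\bm\alpha$. The top coefficient is $g^{(\sum\bm\alpha)}(1)$ with coefficient exactly $1$, since each derivative must hit $g$ and not a monomial factor. Lower $k$ arise when repeated derivatives in the same variable act on the monomial part, with $r$-fold differentiation in one variable $y$ of $h(y\cdot c)$ giving $c^r h^{(r)}$. In turn, $g^{(m)}(1)=f_{1,m}(x)+\cO(\text{lower }f_{1,m'})$. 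By \eqref{eq: claim singularity f1r}, $f_{1,m}\sim C_{1,m}(1-x/x_0)^{-\frac12-\frac m2}$, where I set $C_{1,m}$ to be the constant there. So the leading behaviour is
\[
\partial_\by^{\bm\alpha}Q_\ell|_{\bm1}\sim q\,C_{1,\sum\bm\alpha}(1-x/x_0)^{-\frac12-\frac12\sum\bm\alpha},
\]
summed over those $i$ with $\bm\alpha$ supported on $\pairs_\ell^{(i)}$.

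For $\bm\alpha\neq\bm 0$, at most one $i$ qualifies unless $\bm\alpha$ is supported on a single pair $\{i,j\}$. In that case both endpoints qualify, and the double sum in $C^{(1)}_{\ell,\br}$ counts it twice, consistently with the formula. Multiplying the two leading terms, the exponent is $\frac12-\ell-\frac12\sum\br+\frac12\sum\bm\alpha-\frac12-\frac12\sum\bm\alpha=-\ell-\frac12\sum\br$, which is independent of $\bm\alpha$. Summing over $i$ and $\bm\alpha$ gives $q\,C^{(1)}_{\ell,\br}(1-x/x_0)^{-\ell-\frac12\sum\br}$, and positivity follows since all $C$'s are positive and the sum is nonempty: pick any pair with $r_{i,j}>0$ and $\bm\alpha$ a unit vector there.

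I expect the main obstacle to be the bookkeeping in the chain-rule step. One must verify that every term other than the top one, $f_{1,\sum\bm\alpha}$, is of strictly smaller order, and that no other combinatorial factor sneaks into the coefficient. I would handle this by observing that each lower term involves $f_{1,m}$ with $m<\sum\bm\alpha$ (or $F_1$ itself, of order $(1-x/x_0)^{-1/2}$), so it loses at least a factor $(1-x/x_0)^{1/2}$. The remaining task is to check uniformly, term by term, that the exponents add as claimed.
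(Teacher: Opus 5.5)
Your proposal is correct and follows the paper's proof essentially verbatim. The shared steps are: the Leibniz expansion with the $\bm\alpha=\bm 0$ term cancelling; the fact that each summand of $Q_\ell$ depends only on the variables indexed by a single $\pairs_\ell^{(i)}$; the dominant contribution $q f_{1,\sum\bm\alpha}$, doubled when $\bm\alpha$ is supported on a single pair, which matches the double count in $C^{(1)}_{\ell,\br}$; and the induction hypothesis for $f_{\ell,\br-\bm\alpha}$.

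Your chain-rule bookkeeping just spells out what the paper states in one line. One small correction to that bookkeeping: the lower-order terms come from derivatives in one variable hitting the monomial created by differentiating in another variable, not from repeated derivatives in the same variable, since $\partial_y^r h(yc)=c^r h^{(r)}(yc)$ has no lower terms. This does not affect your conclusion.
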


\begin{lemma}\label{lem: asymptotics second term Rellr tilde}
    If $\ell=2$, then as $x\to x_0$,
    \begin{equation*}
        \partial_\by^\br R_\ell(x,\by) \vert_{\by=\bm{1}}
        = \cO\left( (1-x/x_0)^{\frac12-\ell-\frac12\sum\br} \right).
    \end{equation*}
    If $\ell\ge3$, then as $x\to x_0$,
    \begin{equation*}
        \partial_\by^\br R_\ell(x,\by) \vert_{\by=\bm{1}}
        \;\sim\;
        q\, C_{\ell,\br}^{(2)}\,
        (1-x/x_0)^{-\ell-\frac12\sum\br},
    \end{equation*}
    where
    \begin{equation*}
        C_{\ell,\br}^{(2)} 
        = \sum_{\substack{ J_0 \sqcup J_1 = [0,\ell] \\ 0\in J_0,\, \abs{J_0}<\ell,\, \abs{J_1}<\ell }}
        \sum_{\substack{ \bm{0}\preceq \bm{\alpha}\preceq \br \\ \alpha_{i,j}=r_{i,j} \text{ for all } i,j\in J_0 \\  \alpha_{i,j}=0 \text{ for all } i,j\in J_1 }}
        \binom{\br}{\bm{\alpha}}
        C_{\abs{J_0}, \bm\alpha^{J_0,\ell+1}}
        C_{\abs{J_1}, (\br-\bm\alpha)^{0,J_1}} \,.
    \end{equation*}
    Here, for $(J_0,J_1)$ as above and $\bm{\alpha} \in \bN_0^{\pairs_\ell}$, we let $\bm{\alpha}^{J_0,\ell+1}$ denote the element of $\bN_0^{\pairs_{\abs{J_0}}}$ defined by identifying $[0,\abs{J_0}]$ with $J_0 \sqcup \{\ell+1\}$ (by the unique order-preserving bijection) and setting $\alpha^{J_0,\ell+1}_{i,j} := \alpha_{i,j}$ and $\alpha^{J_0,\ell+1}_{i,\ell+1} := \sum_{h\in J_1} \alpha_{i,h}$ for all $i,j\in J_0$.
    Moreover, we let $\bm{\alpha}^{0,J_1}$ denote the element of $\bN_0^{\pairs_{\abs{J_1}}}$ defined by identifying $[0,\abs{J_1}]$ with $\{0\} \sqcup J_1$ and setting $\alpha^{0,J_1}_{i,j} = \alpha_{i,j}$ and $\alpha^{0,J_1}_{0,j} = \sum_{h\in J_0} \alpha_{h,j}$ for all $i,j\in J_1$.
    We set $C_{\ell,\br}^{(2)} := 0$ if $\ell=2$.
\end{lemma}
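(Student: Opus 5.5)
The plan is to write $R_\ell$ explicitly as the part of the right-hand side of \eqref{eq: equadiff on Fell with everything} that remains after removing the $F_\ell$ terms collected in $Q_\ell F_\ell$. The sum is finite and indexed by reduced partitions $\cJ\in\cP_{1,s}^\red\cup\cP_{2,s}^\red$. Each term of $R_\ell$ is a monomial $M_\cJ(\by)$ times a product of functions $F_m$ with $m<\ell$, evaluated at the substituted variables $\varphi_{J_k}(\by)$ or $\psi_{J_0}(\by)$. Applying $\partial_\by^\br$ and evaluating at $\by=\bm 1$, the Leibniz and chain rules turn each term into a finite linear combination of products $\prod_k f_{\abs{J_k},\bm\beta_k}(x)$, possibly also with $f_{\abs{J_0},\bm\beta_0}(x)$. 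The chain rule is manageable because every entry of $\varphi_{J_k}(\by)$ and $\psi_{J_0}(\by)$ is a monomial in $\by$. Derivatives falling on monomials only lower the total order $\sum\bm\beta$ of the derivatives applied to the $F$'s. Since all indices involved are smaller than $\ell$ (or smaller in the order $\prec$), the induction hypothesis \eqref{eq: claim singularity fellr} gives every factor the exact exponent $\frac12-m-\frac12\sum\bm\beta$.

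The main step is exponent bookkeeping. For a product over blocks with sizes $m_k=\abs{J_k}$ (where $\sum_k m_k$ equals $\ell$ minus the number of indices in $J_{-1}$, with the root index counted appropriately), the total exponent is
\[
\sum_k\Big(\tfrac12-m_k-\tfrac12\textstyle\sum\bm\beta_k\Big).
\]
Each derivative moved from a monomial onto an $F$ raises the singularity by $\frac12$, so the worst case puts all $\sum\br$ derivatives onto the $F$'s. Each nonempty block contributes $+\frac12$, and each index lying in $J_{-1}$ removes a unit from $\sum m_k$. I would check case by case that the exponent is at least $-\ell-\frac12\sum\br$, with equality only for the configurations where $J_{-1}=\emptyset$, $s=1$, $J_0\ni 0$ with $\abs{J_0}<\ell$, and $J_1$ nonempty with $\abs{J_1}<\ell$. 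In that case the term is $q\,F_{\abs{J_0}}(x,\psi_{J_0})F_{\abs{J_1}}(x,\varphi_{J_1})$, and its exponent is
\[
\Big(\tfrac12-\abs{J_0}+1\Big)+\Big(\tfrac12-\abs{J_1}\Big)-\tfrac12\textstyle\sum\br=-\ell-\tfrac12\sum\br+\ldots
\]
Here I would verify that $\abs{J_0}+\abs{J_1}=\ell+1$ and that $F_{\abs{J_0}}$ carries index count $\abs{J_0}$ because of the extra vertex $\ell+1$. This should produce exactly $-\ell-\frac12\sum\br$. All terms from $\cP_{1,s}^\red$, all terms with $J_{-1}\neq\emptyset$, and all terms with $s\ge2$ should lose at least $\frac12$.

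The constant then comes from the Leibniz rule. Derivatives in $y_{i,j}$ with $i,j\in J_0$ must hit the $J_0$ factor, and those with $i,j\in J_1$ must hit the $J_1$ factor. Mixed pairs split between the factor containing $i$ through $\psi$ and the factor containing $j$ through $\varphi$, which is exactly what the reindexings $\bm\alpha^{J_0,\ell+1}$ and $(\br-\bm\alpha)^{0,J_1}$ encode. Summing over choices then gives $q\,C^{(2)}_{\ell,\br}$. Positivity of the constant follows from the positivity of the $C$'s, since the set of admissible $(J_0,J_1)$ is nonempty when $\ell\ge3$.

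For $\ell=2$, the conditions $\abs{J_0},\abs{J_1}<2$ with $\abs{J_0}+\abs{J_1}=3$ are impossible. Hence every term loses at least $\frac12$, and the $\cO$ bound follows. I expect the main obstacle to be the careful case analysis showing that no other partition type, especially the ones with $J_{-1}\ne\emptyset$ or with derivatives landing on the monomials $M_\cJ$, reaches the critical exponent. A subtler point is confirming that the substituted variables at $\by=\bm1$ equal $\bm1$, so that the chain rule genuinely yields the functions $f_{m,\bm\beta}$.
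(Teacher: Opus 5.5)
Your proposal is correct and follows essentially the same route as the paper. You write $R_\ell$ term by term over reduced partitions, bound each term with the induction hypothesis (each derivative lowering the exponent by at most $\frac12$), show that only the $s=1$, $J_{-1}=\emptyset$ terms of the second sum reach the exponent $-\ell-\frac12\sum\br$ (which is impossible for $\ell=2$), and read off $C^{(2)}_{\ell,\br}$ from the Leibniz rule. One slip: in your displayed exponent, $F_{\abs{J_0}}(x,\psi_{J_0}(\by))$ contributes $\frac12-\abs{J_0}$, not $\frac12-\abs{J_0}+1$, because its $\abs{J_0}$ non-root points already include the extra vertex indexed by $\ell+1$; this is what gives exactly $1-(\ell+1)=-\ell$.
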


Before proving these two lemmas, let us see how to conclude the proof of \eqref{eq: claim singularity fellr}.
Plugging \eqref{eq: singularity for single derivative of A and for f10} and Lemmas~\ref{lem: asymptotics first term Rellr tilde} and~\ref{lem: asymptotics second term Rellr tilde} into \eqref{eq: def Gell prime and Rellr tilde}, we get
\begin{equation*}
    \wR_{\ell,\br}(x) \sim
    \frac{1}{x_0} \sqrt{\frac{q}{2}} \left( C_{\ell,\br}^{(1)} + C_{\ell,\br}^{(2)} \right) (1-x/x_0)^{-\frac12 - \ell - \frac12 \sum \br}
\end{equation*}
as $x\to x_0$.
Subsequently, using \eqref{ eq: singularity for expGell},
\begin{equation*}
    \int_0^x \wR_{\ell,\br}(u) e^{-G_\ell(u)} \mathrm{d}u \sim
    \frac{1}{\frac12\ell - 1 + \frac12\sum \br} \sqrt{\frac{q}{2}} \left( C_{\ell,\br}^{(1)} + C_{\ell,\br}^{(2)} \right) (1-x/x_0)^{1 - \frac12\ell - \frac12 \sum \br} \,.
\end{equation*}
Together with \eqref{eq: solution fellr with variation of constant}, this yields
\begin{equation}\label{eq: recursive singularity for fellr}
    f_{\ell,\br}(x) \sim
    \frac{\sqrt{2q}}{\ell - 2 + \sum \br} \left( C_{\ell,\br}^{(1)} + C_{\ell,\br}^{(2)} \right) (1-x/x_0)^{\frac12 - \ell - \frac12 \sum \br} 
\end{equation}
as $x\to x_0$.
This finishes the proof of \eqref{eq: claim singularity fellr} by induction, and yields an explicit recursive formula for $C_{\ell,\br}$.
% \begin{equation}\label{eq: recursion for Cellr}
%     C_{\ell,r} = \frac{\sqrt{2q}}{\ell - 2 + \sum r} \left(\big. \sigma(\cC_{\ell,r}) + \tau(\cC_{\ell,r}) \right) 
% \end{equation}
% which holds for all $\ell\ge2$ and $r\succeq\bm{0}$ such that $r\ne \bm{0}$.
All that remains is to prove Lemmas~\ref{lem: asymptotics first term Rellr tilde} and~\ref{lem: asymptotics second term Rellr tilde}, still within the induction step.

\begin{proof}[Proof of Lemma~\ref{lem: asymptotics first term Rellr tilde}]
Observe that
\begin{equation}\label{eq: Leibniz formula for product Qell Fell}
    \partial_\by^\br \left[\big. Q_\ell(x,\by) F_\ell(x,\by) \right]
    - Q_\ell(x,\by) \big( \partial_\by^\br F_\ell(x,\by) \big)
    = \sum_{\bm{0} \prec \bm{\alpha} \preceq \br} 
    \left( \prod_{\{i,j\}\in \pairs_\ell} \binom{r_{i,j}}{\alpha_{i,j}} \right) 
    %\binom{\br}{\bm{\alpha}}
     \big(\partial_\by^{\bm{\alpha}} Q_\ell(x,\by) \big) \big( \partial_\by^{\br-\bm{\alpha}} F_\ell(x,\by) \big)
\end{equation}
where the sum runs overs families of nonnegative integers $\bm{\alpha} = (\alpha_{i,j})_{\{i,j\}\in \pairs_\ell}$ such that $\bm{\alpha}\ne \bm{0}$ and $0\le \alpha_{i,j}\le r_{i,j}$ for all $\{i,j\}$, and $\br-\bm{\alpha} := (r_{i,j} - \alpha_{i,j})_{\{i,j\}\in \pairs_\ell}$.
Using \eqref{eq: def Q}, one could write a similar Leibniz formula for $\partial_\by^{\bm{\alpha}} Q_\ell(x,\by)$.
This allows us to directly identify the dominant term of $\partial_\by^{\bm{\alpha}} Q_\ell(x,\by) \vert_{\by=\bm{1}}$ as $x\to x_0$, depending on $\bm{\alpha}$:
\begin{itemize}
    \item Assume that $\bm{\alpha}$ is not supported on any of the subsets $\pairs_\ell^{(i)} := \{ \{i,j\} : j\in[0,\ell], j\ne i\}$ for $i\in [0,\ell]$.
    Then, it is easily seen that $\partial_\by^{\bm{\alpha}} Q_\ell(x,\by) = 0$.
    \item Assume that $\bm{\alpha}$ is supported on $\pairs_\ell^{(i)}$ for a unique $i\in[0,\ell]$.
    Then,
    \[
        \partial_\by^{\bm{\alpha}} Q_\ell(x,\by)
        = (\One{i=0} + q\One{i\ne0}) \partial_\by^{\bm{\alpha}} \left[ \prod_{\{i,j\}\in \pairs_\ell^{(i)}} y_{i,j} \right]
        + q \partial_\by^{\bm{\alpha}} \left[ F_1{\left(x, \prod_{\{i,j\}\in \pairs_\ell^{(i)}} y_{i,j}\right)} \prod_{\{i,j\}\in \pairs_\ell^{(i)}} y_{i,j}^{2
        %1+\One{i=0}
        } \right] \,.
    \]
    Using \eqref{eq: claim singularity f1r}, the dominant term is obtained when differentiating solely the $F_1$ term in the product, and not the monomial in $\by$:
    \begin{equation*}
        \partial_\by^{\bm{\alpha}} Q_\ell(x,\by) \vert_{\by=\bm{1}}
        %\sim \partial_\by^{\bm{\alpha}} F_1{\left(x, \prod_{\{i,j\}\in \pairs_\ell^{(i)}} y_{i,j}\right)} \vert_{\by=\bm{1}}
        \sim q f_{1,\sum\bm{\alpha}}(x)
        \sim q \frac{(\sum\bm{\alpha})!}{\sqrt{2q}} \left( \sqrt{\frac{q}{2}} \frac{\log q}{q-1} \right)^{\sum\bm{\alpha}} \left( 1 - {x}/{x_0} \right)^{-\frac12-\frac12\sum\bm{\alpha}}
    \end{equation*}
    as $x\to x_0$.
    \item Assume that $\bm{\alpha}$ is supported on both $\pairs_\ell^{(i)}$ and $\pairs_\ell^{(j)}$ for some $\{i,j\}\in \pairs_\ell$, i.e., that the only non-zero term of $\bm{\alpha}$ is $\alpha_{i,j}$.
    Then, with the same analysis as the previous case,
    \begin{equation*}
        \partial_\by^{\bm{\alpha}} Q_\ell(x,\by) \vert_{\by=\bm{1}}
        \sim 2qf_{1,\alpha_{i,j}}(x)
        \sim 2q \frac{\alpha_{i,j}!}{\sqrt{2q}} \left( \sqrt{\frac{q}{2}} \frac{\log q}{q-1} \right)^{\alpha_{i,j}} \left( 1 - {x}/{x_0} \right)^{-\frac12-\frac12\alpha_{i,j}}
    \end{equation*}
    as $x\to x_0$.
\end{itemize}
Combining these cases and plugging them into \eqref{eq: Leibniz formula for product Qell Fell}, we get that
\begin{equation*}
    \partial_\by^\br {\left. \left[\big. Q_\ell(x,\by) F_\ell(x,\by) \right] \right\vert_{\by=\bm{1}} }
    - Q_\ell(x,\bm{1}) f_{\ell,\br}(x)
    \sim q\sum_{i=0}^\ell \sum_{\substack{0\prec \bm{\alpha}\preceq \br \\ \bm{\alpha} \text{ supported on } \pairs_\ell^{(i)}}} 
    %\left( \prod_{\{i,j\}\in \pairs_\ell^{(i)}} \binom{r_{i,j}}{\alpha_{i,j}} \right)
    \binom{\br}{\bm{\alpha}}
    f_{\ell,\br-\bm{\alpha}}(x) f_{1,\sum\bm{\alpha}}(x)
\end{equation*}
as $x\to x_0$.
Finally, using the induction hypothesis, this concludes the proof.
The fact that the constant $C_{\ell,\br}^{(1)}$ is positive is straightforward.
\end{proof}

\begin{proof}[Proof of Lemma~\ref{lem: asymptotics second term Rellr tilde}]
Recall that $R_{\ell}$ is defined by \eqref{eq: equadiff on Fell with everything} after taking out the terms containing the function $F_\ell$:
\begin{align}
    R_\ell(x,\by)
    = &\sum_{s=0}^\ell \sum_{\substack{\cJ\in \cP_{1,s}^\red \\ \abs{J_k}<\ell \text{ for all } k\in[1,s]}} \frac{1}{s!} 
    M_\cJ(\by)
    \prod_{1\le k\le s} F_{\abs{J_k}}\left(\big. x , \varphi_{J_k}(\by) \right) 
    \label{eq: def Rell first line}\\
    &+ q \sum_{s=0}^\ell \sum_{\substack{\cJ\in \cP_{2,s}^\red \\ \abs{J_k}<\ell \text{ for all } k\in[0,s]}} \frac{1}{s!} 
    M_\cJ(\by)
    F_{\abs{J_0}}\left(\big. x , \psi_{J_0}(\by) \right)
    \prod_{1\le k\le s} F_{\abs{J_k}}\left(\big. x , \varphi_{J_k}(\by) \right) \,. 
    \label{eq: def Rell second line}
\end{align}
Let us analyze the asymptotics, as $x\to x_0$, of each term appearing in $\partial_\by^\br R_\ell(x,\by) \vert_{\by=\bm{1}}$, starting with the first line \eqref{eq: def Rell first line} of the formula.
\begin{itemize}
    \item Let $s=0$ and $\cJ$ be the unique partition in $\cP_{1,0}^\red$.
    The corresponding term is $\partial_\by^\br M_\cJ(\by) \vert_{\by=\bm{1}} = \cO(1)$.
    \item Let $s=1$ and $\cJ\in\cP_{1,1}^\red$ be such that $\abs{J_1}<\ell$.
    Then $M_\cJ(\bm{1}) F_{\abs{J_1}}(x,\bm{1}) = \cO\left( (1-x/x_0)^{\frac12-\abs{J_1}} \right)$ by the induction hypothesis \eqref{eq: claim singularity fellr}.
    Furthermore, each differentiation step changes this exponent by $0$ or $-\frac12$, so that
    \[
        \partial_\by^\br \left. \left[ M_\cJ(\by) F_{\abs{J_1}}\left( x,\varphi_{J_1}(\by) \right) \right] \right\vert_{\by=\bm{1}} 
        = \cO\left( (1-x/x_0)^{\frac12-(\ell-1)-\frac12\sum\br} \right) \,.
    \]
    \item Let $s\ge2$ and $\cJ\in\cP_{1,s}^\red$.
    Then $M_\cJ(\bm{1}) \prod_{1\le k\le s} F_{\abs{J_k}}(x,\bm{1}) = \cO\left( (1-x/x_0)^{\frac{s}{2}-\sum_{1\le k\le s}\abs{J_k}} \right)$ by the induction hypothesis \eqref{eq: claim singularity fellr}.
    Furthermore, each differentiation step changes this exponent by $0$ or $-\frac12$, so that
    \[
        \partial_\by^\br \left. \left[ M_\cJ(\by) \prod_{1\le k\le s} F_{\abs{J_k}}\left( x,\varphi_{J_k}(\by) \right) \right] \right\vert_{\by=\bm{1}} 
        = \cO\left( (1-x/x_0)^{\frac{s}{2}-\ell-\frac12\sum\br} \right) \,.
    \]
\end{itemize}
Each case above yields a $o\left( (1-x/x_0)^{\frac12-\ell-\frac12\sum\br} \right)$.
Now we turn to the second line \eqref{eq: def Rell second line}, which is handled similarly.
\begin{itemize}
    \item Let $s=0$ and $\cJ\in\cP_{2,0}^\red$ be such that $\abs{J_0}<\ell$.
    Then $M_\cJ(\bm{1}) F_{\abs{J_0}}(x,\bm{1}) = \cO\left( (1-x/x_0)^{\frac12-\abs{J_0}} \right)$ by the induction hypothesis \eqref{eq: claim singularity fellr}.
    Furthermore, each differentiation step changes this exponent by $0$ or $-\frac12$, so that
    \[
        \partial_\by^\br \left. \left[ M_\cJ(\by) F_{\abs{J_0}}\left( x,\psi_{J_0}(\by) \right) \right] \right\vert_{\by=\bm{1}} 
        = \cO\left( (1-x/x_0)^{\frac12-(\ell-1)-\frac12\sum\br} \right) \,.
    \]
    \item Let $s=1$ and $\cJ\in\cP_{2,1}^\red$ be such that $\abs{J_0}<\ell$ and $\abs{J_1}<\ell$.
    Then, by the induction hypothesis \eqref{eq: claim singularity fellr}, $M_\cJ(\bm{1}) F_{\abs{J_0}}(x,\bm{1}) F_{\abs{J_1}}(x,\bm{1}) = \cO\left( (1-x/x_0)^{1-\abs{J_0}-\abs{J_1}} \right)$.
    Furthermore, each differentiation step changes this exponent by $0$ or $-\frac12$, so that
    \[
        \partial_\by^\br \left. \left[ M_\cJ(\by) F_{\abs{J_0}}\left( x,\psi_{J_0}(\by) \right) F_{\abs{J_1}}\left( x,\varphi_{J_1}(\by) \right) \right] \right\vert_{\by=\bm{1}} 
        = \cO\left( (1-x/x_0)^{1-(\ell+1)-\frac12\sum\br} \right) \,.
    \]
    Let us analyze this case with more care.
    Specifically, we wish to identify which $\cJ\in\cP_{2,1}^\red$ yield an optimal exponent, that is, $-\ell$, before differentiating.
    First, assume $\ell=2$.
    Then, $\abs{J_0}+\abs{J_1} = \ell+1 = 3$ would imply either $\abs{J_0}=2$ or $\abs{J_1}=2$, which is forbidden in \eqref{eq: def Rell second line}.
    So, if $\ell=2$, we can improve to
    \[
        \partial_\by^\br \left. \left[ M_\cJ(\by) F_{\abs{J_0}}\left( x,\psi_{J_0}(\by) \right) F_{\abs{J_1}}\left( x,\varphi_{J_1}(\by) \right) \right] \right\vert_{\by=\bm{1}} 
        = \cO\left( (1-x/x_0)^{1-\ell-\frac12\sum\br} \right) \,.
    \]
    Next, assume $\ell\ge3$.
    Then, there exist partitions $(J_0,J_1)$ of $[0,\ell]$ with $0\in J_0$, $\abs{J_0}<\ell$ and $\abs{J_1}<\ell$.
    For such a partition, we have $M_\cJ(\bm{1}) F_{\abs{J_0}}(x,\bm{1}) F_{\abs{J_1}}(x,\bm{1}) = \Theta\left( (1-x/x_0)^{-\ell} \right)$ by the induction hypothesis \eqref{eq: claim singularity fellr}.
    Furthermore, to reach the optimal exponent $-\ell-\frac12\sum\br$ after differentiating, one needs to differentiate one of the two $F$ terms at each differentiation step.
    Recall that the family $\psi_{J_0}(\by)$ is defined in terms of $(y_{i,j})_{\{i,j\}\in \pairs_\ell^{(J_0)}}$ and the family $\varphi_{J_1}(\by)$ is defined in terms of $(y_{i,j})_{\{i,j\}\in \pairs_\ell^{(J_1)}}$.
    Recall also that $J_0 \sqcup J_1 = [0,\ell]$.
    Therefore, when differentiating $r_{i,j}$ times with respect to $y_{i,j}$, there are the following cases:
    \begin{itemize}
        \item If $\{i,j\}\in \pairs_\ell^{(J_0)} \cap \pairs_\ell^{(J_1)}$, that is, if $i\in J_0$ and $j\in J_1$ (or vice versa), then we get to choose which $F$ term to differentiate.
        Differentiating $F_{\abs{J_0}}( x,\psi_{J_0}(\by) )$, resp.\ $F_{\abs{J_1}}( x,\varphi_{J_1}(\by) )$, with respect to $y_{i,j}$ yields
        \[
            \left( \prod_{h\in J_1 \setminus \{j\}} y_{i,h} \right) \left[ \partial_{i,\ell+1} F_{\abs{J_0}} \right]( x,\psi_{J_0}(\by) ) \,,
            \quad\text{resp.}\quad
            \left( \prod_{h\in J_0 \setminus \{i\}} y_{h,j} \right) \left[ \partial_{0,j} F_{\abs{J_1}} \right]( x,\psi_{J_1}(\by) ) \,.
        \]
        Here, the first derivative is with respect to the argument of $F_{\abs{J_0}}$ indexed by $\{i,\ell+1\}$, where we identify $[0,\abs{J_0}]$ with $J_0 \sqcup \{\ell+1\}$.
        The second derivative is with respect to the argument of $F_{\abs{J_1}}$ indexed by $\{0,j\}$, where we identify $[0,\abs{J_1}]$ with $\{0\} \sqcup J_1$.
        \item Otherwise, $\{i,j\} \in \pairs_\ell^{(J_0)}$ if and only if both $i$ and $j$ are in $J_0$.
        In that case, we have to differentiate the $F_\abs{J_0}$ term.
        Differentiating $F_{\abs{J_0}}( x,\psi_{J_0}(\by) )$ with respect to $y_{i,j}$ yields
        \[
            \left[ \partial_{i,j} F_{\abs{J_0}} \right]( x,\psi_{J_0}(\by) ) \,,
        \]
        where the first derivative is with respect to the argument of $F_{\abs{J_0}}$ indexed by $\{i,j\}$, where we identify $[0,\abs{J_0}]$ with $J_0 \sqcup \{\ell+1\}$.
        \item Likewise, if both $i$ and $j$ are in $J_1$, we have to differentiate the $F_\abs{J_1}$ term.
        The function $F_{\abs{J_1}}( x,\varphi_{J_1}(\by) )$ has derivative with respect to $y_{i,j}$ given by
        \[
            \left[ \partial_{i,j} F_{\abs{J_1}} \right]( x,\varphi_{J_1}(\by) ) \,.
        \]
        Here, the derivative is with respect to the argument of $F_{\abs{J_1}}$ indexed by $\{i,j\}$, where we identify $[0,\abs{J_1}]$ with $\{0\} \sqcup J_1$.
    \end{itemize}
    In the end we get, with the notation of the lemma,
    \begin{multline*}
        \partial_\by^\br {\left. \left[ M_\cJ(\by) F_{\abs{J_0}}\left( x,\psi_{J_0}(\by) \right) F_{\abs{J_1}}\left( x,\varphi_{J_1}(\by) \right) \right] \right\vert_{\by=\bm{1}} } 
        \\\sim
        \sum_{\substack{ \bm{0}\preceq \bm{\alpha}\preceq \br \\ \alpha_{i,j}=r_{i,j} \text{ for all } i,j\in J_0 \\ \alpha_{i,j}=0 \text{ for all } i,j\in J_1}}
        %\left( \prod_{\{i,j\}\in \pairs_\ell} \binom{r_{i,j}}{\alpha_{i,j}} \right)
        \binom{\br}{\bm{\alpha}}
        f_{\abs{J_0}, \bm\alpha^{J_0,\ell+1}}(x)
        f_{\abs{J_1}, (\br-\bm\alpha)^{0,J_1}}(x) 
    \end{multline*}
    \item Let $s\ge2$ and $\cJ\in\cP_{2,s}^\red$.
    Then $M_\cJ(\bm{1}) F_{\abs{J_0}}(x,\bm{1}) \prod_{1\le k\le s} F_{\abs{J_k}}(x,\bm{1}) = \cO\left( (1-x/x_0)^{\frac{s+1}{2}-\sum_{1\le k\le s}\abs{J_k}} \right)$ by the induction hypothesis \eqref{eq: claim singularity fellr}.
    Furthermore, each differentiation step changes this exponent by $0$ or $-\frac12$, so that
    \[
        \partial_\by^\br \left. \left[ M_\cJ(\by) F_{\abs{J_0}}\left( x,\psi_{J_0}(\by) \right) \prod_{1\le k\le s} F_{\abs{J_k}}\left( x,\varphi_{J_k}(\by) \right) \right] \right\vert_{\by=\bm{1}} 
        = \cO\left( (1-x/x_0)^{\frac{s+1}{2}-(\ell+1)-\frac12\sum\br} \right) \,.
    \]
\end{itemize}
If $\ell=2$, then each case above yields a $\cO\left( (1-x/x_0)^{\frac12-\ell-\frac12\sum\br} \right)$.
If $\ell\ge3$, then the penultimate case yields a $\Theta\left( (1-x/x_0)^{-\ell-\frac12\sum\br} \right)$, while the other cases are negligible.
Using the induction hypothesis \eqref{eq: claim singularity fellr}, the conclusion follows.
\end{proof}

\subsection{Identifying the CRT}
\label{sec: Identifying the CRT}

To complete the proof of Proposition~\ref{prop: joint moments cv to CRT}, we need to investigate the dependence of the constants $C_{\ell, \br}$ on $q$.
For this, we bring back the dependence on $q$ in notation, and write $C_{\ell,\br}(q)$ for the constant in \eqref{eq: claim singularity fellr}.
Let $\gamma(q) := \frac{\sqrt q \log q}{q-1}$ for $q\in(0,1)$, and $\gamma(q) := 1$ for $q=1$.
We wish to prove that, for $q\in(0,1]$ and any $\ell\in\bN$ and any family $\br \in \bN_0^{\pairs_\ell}$,
\begin{equation}\label{eq: claim dependency Cellr on q}
    C_{\ell,\br}(q) = \frac{1}{\sqrt q} \gamma(q)^{\sum\br} C_{\ell,\br}(1) \,.
\end{equation}
As in the previous section, we proceed by induction, using the lexicographic order on $(\ell,\br)$ and the well-founded partial order $\prec$ on $\bN_0^{\pairs_\ell}$.

\paragraph{Initialization.}
For $\ell=1$ and $\br = r \ge0$, \eqref{eq: claim dependency Cellr on q} has been proved in Proposition~\ref{prop: Rayleigh limit of typical depth when q fixed}, see \eqref{eq: claim singularity f1r}.
For $\ell\ge2$ and $\br = \bm0$, it has been proved in the initialization step in the proof of \eqref{eq: claim singularity fellr}.

\paragraph{Induction step.}
Now, fix $(\ell,\br)$ with $\ell\ge2$ and $\sum\br > 0$.
Assume that $\eqref{eq: claim dependency Cellr on q}$ holds for all the indices $(m,\bm\alpha)$ with either $m<\ell$, or $m=\ell$ and $\bm{\alpha}\prec\br$.
Let us write $\cC_{\ell, \br}(q)$ for the family of constants $C_{m,\bm\alpha}(q)$ with either $m<\ell$, or $m=\ell$ and $\bm{\alpha}\prec\br$.

By \eqref{eq: recursive singularity for fellr}, we can write
\begin{equation}\label{eq: recursive formula Cellr}
    C_{\ell,\br}(q) = \frac{\sqrt{2q}}{\ell-2+\sum\br} \xi\big( \cC_{\ell, \br}(q) \big)
\end{equation}
for some degree $2$ polynomial function $\xi$ that is independent of $q$.
Using Lemmas~\ref{lem: asymptotics first term Rellr tilde} and~\ref{lem: asymptotics second term Rellr tilde} and the induction hypothesis \eqref{eq: claim dependency Cellr on q}, we see that
\begin{equation*}
    \xi\big( \cC_{\ell, \br}(q) \big) = \frac1q \gamma(q)^{\sum\br} \xi\big( \cC_{\ell, \br}(1) \big) \,.
\end{equation*}
Finally, we get that
\begin{equation*}
    C_{\ell,\br}(q) 
    = \frac{1}{\sqrt q} \gamma(q)^{\sum\br} \frac{\sqrt{2}}{\ell-2+\sum\br} \xi\big( \cC_{\ell, \br}(1) \big)
    = \frac{1}{\sqrt q} \gamma(q)^{\sum\br} C_{\ell,\br}(1) \,,
\end{equation*}
where the second identity was obtained by applying \eqref{eq: recursive formula Cellr} with $q=1$.
This concludes the proof by induction of \eqref{eq: claim dependency Cellr on q}.

\medskip

Finally, let us conclude the proof of Proposition~\ref{prop: joint moments cv to CRT}.
Recall that for fixed $q\in(0,1]$, the functions $f_{\ell,\br}(x,q)$ are amenable to singularity analysis, with a single dominant singularity at $x_0(q)$.
Therefore, by \eqref{eq: claim singularity fellr} and \eqref{eq: claim dependency Cellr on q},
\begin{equation*}
    [x^n] f_{\ell,\br}(x,q) \sim \frac{1}{\sqrt q} \gamma(q)^{\sum\br} \frac{C_{\ell,\br}(1)}{\Gamma(\ell-\frac12+\frac12\sum\br)} n^{\ell-\frac32+\frac12\sum\br} x_0(q)^{-n}
\end{equation*}
as $n\to\infty$.
Thus, by \eqref{eq: asymptotic coefficient A for fixed q} and \eqref{eq: joint moment of distances from extracted coefficients},
\begin{equation*}
    \expec{ \prod_{0\le i< j\le \ell} \left(\big. \dist_n(W_{n,i},W_{n,j}) \right)_{r_{i,j}} }
    = \frac{1}{n^\ell} \frac{[x^n]f_{\ell,r}(x,q)}{[x^n]A(x,q)}
    \sim \gamma(q)^{\sum \br} \frac{ C_{\ell,\br}(1) \sqrt{2\pi} }{ \Gamma\left(\ell -\frac12 + \frac12\sum \br\right) } n^{\frac12\sum \br}
\end{equation*}
as $n\to\infty$.
Hence, the same holds when replacing the falling factorials by the $r_{i,j}$-th powers.
Moreover, we know that Proposition~\ref{prop: joint moments cv to CRT} holds for $q=1$ by \cite{Aldous_1993}, and we deduce that
\begin{equation*}
    \expec{ \prod_{0\le i< j\le \ell} \left(\big. \dist(W_{i},W_{j}) \right)^{r_{i,j}} }
    = \frac{ C_{\ell,\br}(1) \sqrt{2\pi} }{ \Gamma\left(\ell -\frac12 + \frac12\sum \br\right) } \,.
\end{equation*}
The proof of Proposition~\ref{prop: joint moments cv to CRT} easily follows.

\section{The dendron scaling limit in the critical window}
\label{sec:dendron limit}

This section is devoted to the proof of Theorem \ref{th: dendron limit if a/n}, which states the convergence of the descent-biased trees $\cT_n := \cT_n^{(q_n)}$ when $q_n = a/n$ for some fixed $a \in (0,+\infty)$. 
The structure of the proof is as follows. 
In Section \ref{ssec:reduced tree}, we define and investigate a ``reduced'' tree $\cT_n^{k,\red}$, which is a finite decorated tree obtained from $\cT_n$ by considering the relative positions of the first $k$ labels. 
Our claim is that, for large enough $k$, as $n \rightarrow \infty$, the structure of this reduced tree is close to the structure of the whole tree $\cT_n$. 
Then, in Section \ref{ssec: limit dendron convergence}, we use properties of the reduced tree along with the so-called Foata--Fuchs bijection (see Definition \ref{def:finite foata fuchs}), and a coupling between $\cT_n$ and a limit dendron $\cD_a$, to complete the proof of Theorem \ref{th: dendron limit if a/n}.
Finally, in \Cref{sec: interpolation}, we rely on known results on $p$-trees to prove \Cref{th: our dendron cv to CRT}.

\subsection{The $k$-reduced tree}
\label{ssec:reduced tree}

We start by properly defining the $k$-\textit{reduced tree} $T^{k,\red}$ of a finite tree $T \in \bT_n$ for $n \geq k \geq 1$. 
See \Cref{fig: basic notation for reduced tree and stuff} for an example.
Then, we conduct a precise asymptotic study of the $k$-reduced tree $\cT_n^{k,\red}$ associated with our descent-biased trees $\cT_n^{(a/n)}$.

\subsubsection{Definition and main result}

First, consider the subgraph of $T$ induced by the vertices with labels $1$ to $k$, which is denoted by $T\cap[k]$.
Inheriting from the tree structure of $T$, this subgraph is a forest of a certain number $\ell^{k}(T)$ of subtrees that are not connected to each other, which we call $k$-components, and whose sizes sum to $k$.

\paragraph{The reduced tree $T^{k,\red}$.}

The reduced tree $T^{k,\red}$ is, roughly speaking, the genealogy tree of these $\ell^{k}(T)$ components. 
It is a decorated non-plane unlabeled tree with either $\ell^k(T)$ or $\ell^k(T)+1$ vertices, constructed as follows. 
\begin{itemize}
    \item First, if the root label of $T$ is $\le k$, then the root of $T^{k,\red}$ is decorated by the $k$-component of the root in $T\cap[k]$; otherwise\footnote{
    As we will see in Lemma~\ref{lem: asymptotic probability of V1}, this case may be ignored with ``high'' probability.
    }, it is decorated by $\emptyset$.
    \item Then, the other vertices of $T^{k,\red}$ are decorated by the other $k$-components of $T \cap [k]$. 
    This allows us to use the notation $(t_u)_{u \in T^{k,\red}}$ for the family of $k$-components, where $t_u$ is the $k$-component indexed by the vertex $u$ of the reduced tree.
    Note that depending on the root label, $t_{\rho(T^{k,\red})}$ is either a $k$-component or the empty tree.
    \item The vertices of $T^{k,\red}$ are connected according to their natural tree structure. 
    That is, for $u,v \in T^{k,\red}$, $(u,v)$ is a parent-child pair in $T^{k,\red}$ if and only if there exist labels $(i,i')$ in $T$ such that $i$ is in $t_u$, $i'$ is in $t_v$, $i$ is an ancestor of $i'$, and the unique simple path from $i$ to $i'$ (excluded) in $T$ contains only labels $>k$.
\end{itemize}

The $k$-reduced tree $T^{k,\red}$ thus contains either $\ell^{k}(T)$ or $\ell^{k}(T)+1$ vertices (depending on whether the root of $T$ has label $\leq k$ or not), and always exactly $\ell^{k}(T)$ non-empty vertices.

\paragraph{The subtrees $(\tau_i)_{i \in [k]}$.}

Now, deleting the edges between each label $\le k$ and its parent in $T$ (which may also have label $\le k$), we obtain a family of disconnected subtrees $\tau_1, \dots, \tau_k$, rooted at labels $1, \dots, k$, and potentially a top-tree $\tau_\emptyset$ if the root label of $T$ is $> k$.
For all $i \in [k]$, let $\nu_i$ be the size of $\tau_i$.
Each subtree $\tau_i$ initially comes with an \enquote{improper labeling}, that is, with the same labels as in $T$.
We relabel it with a \enquote{proper labeling}, that is, we label its vertices with the relative order on $[\nu_i]$ induced by the improper labeling.
In particular, the root of $\tau_i$ always has improper label $i$ and proper label $1$.

\begin{figure}
    \centering
    \includegraphics[width=0.8\linewidth]{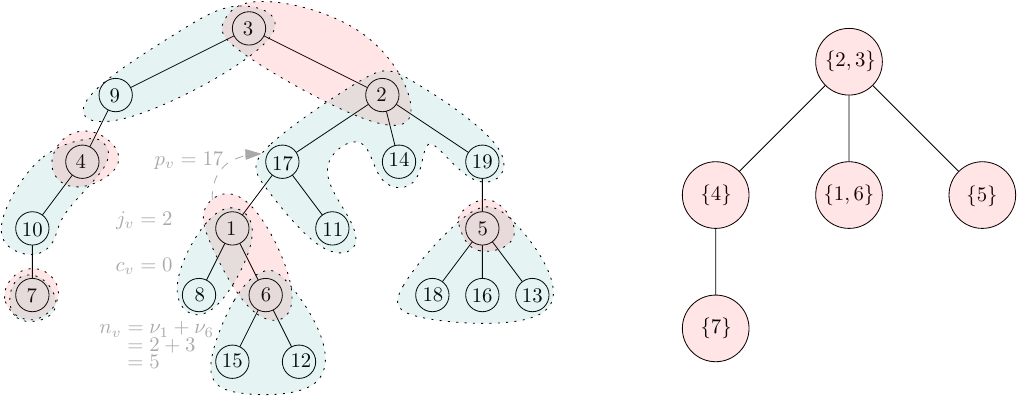}
    \caption{Left: a labeled tree $T \in \bT_{19}$.
    Right: its $k$-reduced tree $T^{k,\red}$ for $k=7$.
    On $T$, the subtrees $t_u$ for $u\in T^{k,\red}$ are highlighted in red, and the subtrees $\tau_i$ for $i\in[7]$ are highlighted in green (with their improper labeling).
    The \enquote{colored components}, obtained by merging the red and green ones, yield the subtrees $T_u$ for $u\in T^{k,\red}$.
    Some of the quantities associated with the reduced vertex $v\in T^\red$ corresponding to the labels $F_v=\{1,6\}$ are indicated in gray.
    }
    \label{fig: basic notation for reduced tree and stuff}
\end{figure}

\paragraph{Notation.}

Below is a list of notation concerning our objects of interest. 
It is partly illustrated in \Cref{fig: basic notation for reduced tree and stuff}.

\begin{itemize}
    \item $T^{k,\red}$ is the non-plane, rooted, unlabeled, decorated $k$-reduced tree.
    \item $\ell^{k}{\left( T \right)}$ is the number of non-empty vertices in $T^{k,\red}$, that is, the number of connected components in the subgraph of $T$ induced by the vertices with labels in $[k]$.
    \item For each vertex $u \in T^{k,\red}$,
    \begin{itemize}
        \item  $t_u$ is the $k$-component corresponding to $u$,
        \item $F_u \subseteq [k]$ is the family of labels (in $T$) of vertices in $t_u$,
        \item $j_u$ is the cardinality of $F_u$ (that is, the number of vertices in $t_u$),
        \item $c_u$ is the number of children of $u$ in $T^{k,\red}$.
    \end{itemize} 
    Note that the vertices of the tree $t_u$ initially come with an \enquote{improper labeling} $F_u$.
    In what follows, unless stated otherwise, we shall see the vertices of $t_u$ as \enquote{properly labeled}, using the relative order on $[j_u]$ induced by $F_u$.
    
    The unordered collections of the above objects are respectively denoted by $\mathbf{t}(T) := ( t_u )_{u\in T^{k,\red}}$, $\mathbf{F}(T) := ( F_u )_{u\in T^{k,\red}}$, $\bj(T) := ( j_u )_{u\in T^{k,\red}}$ and $\bc(T) := ( c_u )_{u\in T^{k,\red}}$.
    We allow $t_u = F_u = \emptyset$ and $j_u=0$ for the root vertex $u=\rho(T^{k,\red})$ of $T^{k,\red}$ if the root label of $T$ is $>k$.
    \item For each $i\in[k]$, the tree grafted at the vertex labeled $i$ in $T$ with labels $>k$ is denoted by $\tau_i$, and its size is denoted by $\nu_i$.
    Its improper labeling inherited from $T$ is $\{i\} \sqcup \Phi_i \subseteq [n]$ for some $\Phi_i \subseteq [n]\setminus[k]$, and its proper labeling is the one induced on $[\nu_i]$ (in particular, the root has proper label $1$).
    Their ordered collections are denoted by $\bm{\tau}(T) := ( \tau_i )_{1\le i\le k}$, $\mathbf{\Phi}(T) := ( \Phi_i )_{1\le i\le k}$, and $\bm{\nu}(T) := ( \nu_i )_{1\le i\le k}$.
    If the root label of $T$ is $>k$, we also add to the collection a tree $\tau_\emptyset$, improperly labeled by a set $\Phi_\emptyset \subseteq [n]$.
    \item For each $u\in T^{k,\red}$, let $[T]^{k}_u := \bigsqcup_{i\in F_u} \tau_i$ be the subtree of $T$ \enquote{rooted} at the root of $t_u$, and containing the subtrees $\tau_i$ for each label $i$ corresponding to a vertex of $t_u$.
    Write $[n]^{k}_u := \sum_{i\in F_u} \nu_i$ for its size.
    Equivalently, $[T]_u^k$ is the component of $T$ containing $t_u$ after deleting each edge between a parent with label $>k$ and a child with label $\le k$.
    Their unordered collections are $\mathbf{T}^{k}(T) := {\left( [T]^{k}_u \right)}_{u\in T^{k,\red}}$ and $\bn^{k}(T) := {\left( [n]^{k}_u \right)}_{u\in T^{k,\red}}$.
    For ease of notation, and when there is no ambiguity, we may sometimes write $T_u$ and $n_u$ instead of $[T]_u^k$ and $[n]_u^k$.
  \item  In $T$, for any non-root $v \in T^{k,\red}$, the root of the $k$-component $t_v$ is the child of some vertex with label $p_v > k$.
    Write $\mathbf{p}(T) = (p_v)_{v\in T^{k,\red} \text{ non root}}$ for their unordered collection. 
    Note that $p_v$ is the improper label of a vertex in $[T]_u^k$, where $u$ is the parent of $v$ in $T^{k,\red}$.
\end{itemize}

The dependence on $k$ is often skipped for ease of notation.
Note that quantities indexed by the reduced tree use roman letters, whereas quantities indexed by $[k]$ use greek letters.
Also, unless specified otherwise, the subtrees $t_u$ and $\tau_i$ will be regarded as {properly} labeled (that is, with labels in $[j_u]$ and $[\nu_i]$, repectively) by default.
We claim that for any given $n\ge k\ge1$, the mapping 
\[
    T \in \bT_n \mapsto \left( T^{k,\red}, \mathbf{t}(T), \mathbf{F}(T), \bm{\tau}(T), \mathbf{\Phi}(T), \mathbf{p}(T) \right)
\]
is one-to-one.
Indeed, one can reconstruct $T$ from $\left( T^{k,\red}, \mathbf{t}(T), \mathbf{F}(T), \bm{\tau}(T), \mathbf{\Phi}(T), \mathbf{p}(T) \right)$ as follows:
\begin{itemize}
\item for each $u\in T^{k,\red}$, place a tree $t_u$ whose proper labeling is consistently replaced by the set of labels $F_u$;
\item for each $i\in[k]$, graft to the vertex labeled $i$ the tree $\tau_i$ relabeled by $\Phi_i$;
\item potentially place the tree $\tau_\emptyset$ with labeling $\Phi_\emptyset$;
\item  finally, for each pair $(u,v)$ of parent-child in $T^{k,\red}$, attach the root of $t_v$ to the vertex labeled $p_v$ (which is in $[T]_u^k := \bigsqcup_{i\in F_u} \tau_i$).
\end{itemize}
One can check that the resulting tree is indeed the original tree $T$.

See \Cref{fig: simulation reduced components} for a simulation of a descent-biased tree $\cT_n := \cT_n^{(a/n)}$ together with its $k$-components and reduced tree.
In our study, we shall be mostly interested in the joint behavior of the statistics $\bj(\cT_n), \bc(\cT_n), \bn(\cT_n)$.
We will show in Proposition~\ref{prop: joint convergence of tilde j tilde c tilde n towards PoissonDirichlet} that these are asymptotically proportional, and distributed like a Poisson--Dirichlet random variable.
Both the statement of this result, and its proof, will require introducing an ordering for the unordered families $\bj(\cT_n), \bc(\cT_n), \bn(\cT_n)$.
We actually introduce two\footnote{
    Actually, we will also introduce a third, \emph{random}, reordering in the next section.
} slightly different orderings (which turn out to be asymptotically the same), for distinct purposes.

On the one hand, we use the symbol $\searrow$ to denote a reordering of our families dictated by $\bn(T)$.
Specifically, let $\bn^\searrow(T)$ denote the non-increasing reordering of $\bn(T)$, with arbitrary choice in case of equality. 
We also use $\bj^\searrow(T)$, $\bc^\searrow(T)$ to denote the respective reorderings of $\bj(T), \bc(T)$ according to the ordering of $\bn^\searrow(T)$. 
In other words, letting $(u_1,\ldots,u_\ell)$ be the vertices of $T^{k,\red}$ enumerated by non-increasing order of $(n_u)_{u \in T^{k,\red}}$, we let $\big( \bj^\searrow(T), \bc^\searrow(T), \bn^\searrow(T) \big) = \big( (j_{u_1},\ldots,j_{u_\ell}),(c_{u_1},\ldots,c_{u_\ell}),(n_{u_1},\ldots,n_{u_\ell}) \big)$.
This reordering will be useful in the application of Proposition~\ref{prop: joint convergence of tilde j tilde c tilde n towards PoissonDirichlet} in later sections.

On the other hand, we use the symbol $'$ to denote a reordering of our families dictated by $\bj(T)$.
That is, letting $(u_1',\ldots,u_\ell')$ be the vertices of $T^{k,\red}$ enumerated by non-increasing order of $(j_u)_{u \in T^{k,\red}}$ (with arbitrary choice in case of equality), we let $\big( \bj'(T), \bc'(T), \bn'(T) \big) = \big( (j_{u_1'},\ldots,j_{u_\ell'}),(c_{u_1'},\ldots,c_{u_\ell'}),(n_{u_1'},\ldots,n_{u_\ell'}) \big)$.
This reordering will be useful for the proof of Proposition~\ref{prop: joint convergence of tilde j tilde c tilde n towards PoissonDirichlet}.

Our main result states that, for a $q_n$-descent biased tree $\cT_n^{(q_n)}$ with $q_n := a/n$, as $n \rightarrow \infty$, the sequences of renormalized sizes of the $k$-components, degrees in the reduced tree and sizes of the components on top of the reduced vertices, all jointly converge towards the same Poisson--Dirichlet random variable, in the following sense:

\begin{figure}
    \centering
    \begin{minipage}{.49\linewidth}
    \includegraphics[width=.9\linewidth]{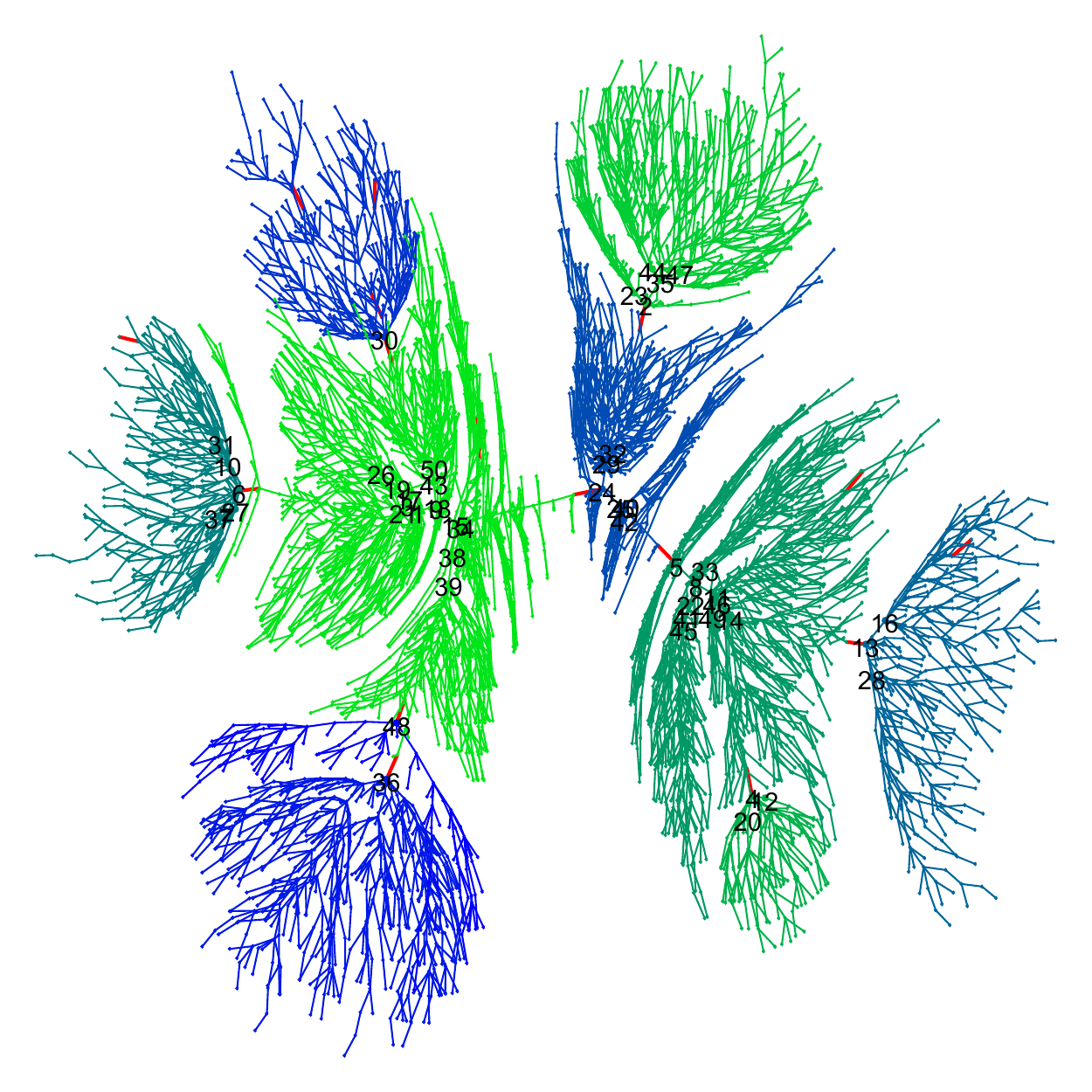}
    \end{minipage}
    \qquad
    \begin{minipage}{.4\linewidth}
    \includegraphics[width=\linewidth]{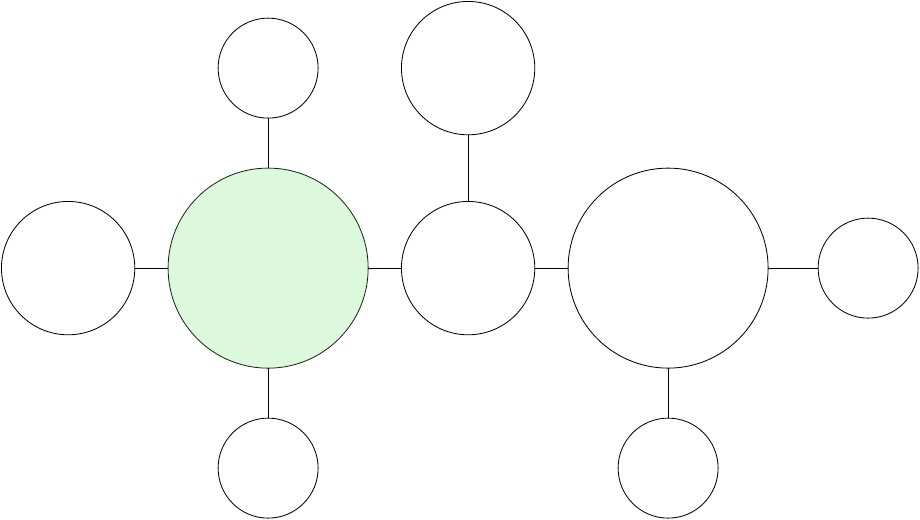}
    \end{minipage}
    \caption{Left:
    Simulation of a descent-biased tree $\cT_n^{(q)}$ with size $n=6000$ and bias parameter $q=3/n$.
    Its descent edges are displayed in red.
    The subtrees $[\cT_n]_{u}^k$ for $k=50$ and $u\in \cT_n^{k,\red}$ are highlighted, with colors ranging from light green to dark blue.
    The labels from $1$ to $50$ are also shown, with $1$ being the root.
    Right:
    The reduced subtree $\cT_n^{k,\red}$, rooted at the light green vertex.
    The vertex sizes have been chosen proportionally to the quantities $j_u$.
    }
    \label{fig: simulation reduced components}
\end{figure}

\begin{proposition}\label{prop: joint convergence of tilde j tilde c tilde n towards PoissonDirichlet}
Let $q_n=a/n$ and $\cT_n = \cT_n^{(q_n)}$.
The $\ell$-tuples $\bj^\searrow(\cT_n)$, $\bc^\searrow(\cT_n)$, $\bn^\searrow(\cT_n)$ asymptotically jointly approximate a single Poisson--Dirichlet random variable.
More precisely, let $\Delta$ denote the infinite-dimensional simplex endowed with the product topology, and let $d_{\Delta^3}$ be any metric for the weak topology on $\cM_1(\Delta^3)$. 
Embed 
%$\{(x_1,\ldots,x_\ell) \in \bR_+^\ell, x_1 \geq \ldots \geq x_\ell \geq 0, \sum_{i=1}^\ell x_i = 1\}$
$\{(x_1,\ldots,x_\ell) \in \bR_+^\ell : \sum_{i=1}^\ell x_i = 1\}$
naturally in $\Delta$ by completing an element with infinitely many $0$'s. Then,
    \begin{equation*}
        \lim_{k\to\infty} \limsup_{n\to\infty} 
        d_{\Delta^3}{\left( \Law{ \frac1k\bj^\searrow(\cT_n) , \frac{1}{a\log k} \bc^\searrow(\cT_n) , \frac1n \bn^\searrow(\cT_n) } \;,\; \Law{ X^{(a)} , X^{(a)} , X^{(a)} } \right)} 
        = 0
    \end{equation*}
    where $X^{(a)} \sim \PoissonDirichlet{0}{a}$.
\end{proposition}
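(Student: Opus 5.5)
The plan is to compute the joint law of $\big(\cT_n^{k,\red},\bj,\bc,\bn\big)$ essentially exactly through the bijection $T\mapsto (T^{k,\red},\mathbf t,\mathbf F,\bm\tau,\mathbf\Phi,\mathbf p)$. Then take $n\to\infty$ with $k$ fixed, and finally $k\to\infty$.

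\textbf{Step 1: factorize the weight $q^{\des{T}}$.} Descents of $T$ split into three kinds: descents inside the $k$-components $t_u$; descents inside the grafted trees $\tau_i$ and $\tau_\emptyset$; and one descent for each reduced edge $(u,v)$, since $p_v>k\ge$ the root label of $t_v$. The edges from a label $i\le k$ into $\tau_i$ are never descents, and the subtrees of $\tau_i$ are counted with their proper labeling.

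For fixed $k$ and $q=a/n\to0$, descents inside the $t_u$ cost a factor $\cO(1/n)$ each, so each $t_u$ is asymptotically an increasing (recursive) tree. Given $\bm\nu$, the trees $\tau_i$ are independent descent-biased trees with root label $1$, whose generating function is $A^{(1)}$ by Lemma~\ref{lem:trees with smallest root}. The free choices of the $p_v$ contribute $\prod_{v} n_{\mathrm{parent}(v)}$, and each reduced edge contributes a factor $q=a/n$.

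Summing over the labelings $\mathbf\Phi$ (a multinomial) and using Lemmas~\ref{lem: asymptotic coefficient partition function in transition regime} and~\ref{lem: asymptotic coefficient exponentials in transition regime} for the $[x^m]e^{jA}$ asymptotics, I expect the $n\to\infty$ limit law to be explicit. Conditionally on the component structure:
\begin{itemize}
\item the vector $\bm\nu/n$ converges to a Dirichlet$(1,\dots,1)$ vector on $k$ coordinates, as in a recursive tree;
\item the $p_v$'s pick their component $u$ with probability proportional to $n_u$;
\item the partition of $[k]$ into the label sets $F_u$ has probability proportional to $a^{\ell-1}\prod_u (j_u-1)!$.
\end{itemize}
In other words, in the limit $n\to\infty$ the partition $\mathbf F$ is an Ewens$(a)$ partition of $[k]$. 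Equivalently, label $k+1$ opens a new component with probability $\approx a/(a+k)$, which is a Chinese restaurant process. This is consistent with $\prob{\rho\in\cE_1}\to 1/(a+1)$ from Lemma~\ref{lem: asymptotic coefficient exponentials in transition regime}.

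\textbf{Step 2: the case where the root label is $>k$.} I also show that $\prob{\text{root label}>k}\to0$ as $k\to\infty$, uniformly in large $n$. This should come from the same computation, via the extra factor coming from $\tau_\emptyset$.

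\textbf{Step 3: pass $k\to\infty$.}
\begin{itemize}
\item Since the partition is Ewens$(a)$ in the limit, the classical result gives $\frac1k\bj'\to\PoissonDirichlet0a$ in law.
\item Given $\mathbf F$, we have $n_u/n\approx\sum_{i\in F_u}\nu_i/n$, which is Dirichlet$(j_u)$-distributed. By the law of large numbers for Dirichlet/Gamma sums, $\sup_u |n_u/n - j_u/k|\to 0$ in probability as $k\to\infty$. This also shows that the $'$ and $\searrow$ orderings agree asymptotically on the macroscopic components.
\item For $\bc$, the number of components is $\ell\sim a\log k$. I then use Step 1 in its sequential form (components created one at a time as labels increase): the $m$-th new component attaches to component $u$ with probability $\approx n_u/n$. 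Since the $n_u/n$ converge to the PD weights, a law of large numbers over the $\approx a\log k$ attachments gives $c_u/(a\log k)\to X_u$ for each fixed-rank component.
\end{itemize}
Combining these three limits yields the joint convergence to $(X^{(a)},X^{(a)},X^{(a)})$ in $\cM_1(\Delta^3)$. Because $\Delta$ carries the product topology, checking finitely many coordinates suffices.

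\textbf{Main obstacle.} The hardest part is the $\bc$ statement. The attachment probabilities depend on the $n_u$ at the time of attachment, which are determined by labels up to the current one, not by the final configuration. So I need a martingale or Pólya-urn style argument showing that these proportions stabilize early (for labels $\ge \sqrt k$, say) while most of the $a\log k$ attachments occur later. I would try to bound the variance of $c_u$ given the proportions directly, using the exact product formula from Step 1.

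Secondary difficulties are making the $n\to\infty$ limits in Step 1 uniform over all configurations on $[k]$, and controlling the non-recursive $t_u$ (those containing internal descents). Both should follow from the uniform-in-$a$ estimates of Section~\ref{sec: Asymptotics for q=a/n} together with a crude summation bound.
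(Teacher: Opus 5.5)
Your architecture matches the paper's: an exact formula from the bijection, an Ewens$(a)$ limit for $\bj$ as $n\to\infty$, then concentration of $\frac1n\bn$ and $\frac1\ell\bc$ around $\frac1k\bj$ as $k\to\infty$. However, one step is false and your treatment of $\bc$ misdiagnoses the difficulty.

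\textbf{The Dirichlet claim is false.} It is not true that, given the component structure, $\frac1n\bm\nu$ tends to a Dirichlet$(1,\dots,1)$ vector ``as in a recursive tree''. For fixed $k$ the descents inside the $t_u$ are indeed negligible. The descents inside the macroscopic trees $\tau_i$ are not: they carry essentially all of the $\approx a\log n$ descents of $\cT_n$. The weight of $\tau_i$ is $[x^{\nu_i-1}]e^{A(x,a/n)}$. By \Cref{lem: asymptotic coefficient exponentials in transition regime} with $\nu_i-1=\beta_i n$, this behaves like $(\beta_i n)^{\beta_i a}e^{-\beta_i a}/\Gamma(\beta_i a+2)$, which is not constant in $\beta_i$. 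After aggregating over the blocks, the limiting conditional density of $\frac1n\wbn$ given $\wbj$ is the one in \Cref{lem: asymptotic density of tilde n conditional tilde j}. That density is a genuine tilt of Dirichlet$(j_1,\dots,j_\ell)$, so your ``law of large numbers for Dirichlet sums'' does not apply as stated.

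The conclusion $\sup_u|n_u/n-j_u/k|\to0$ is still true, but it needs an argument. The paper computes the one-dimensional marginal (\Cref{cor: asymptotic density of n1 conditional tilde j}) and runs a Laplace-type estimate. A shorter repair is to bound the density ratio with respect to Dirichlet$(j_1,\dots,j_\ell)$:
\[
\frac{\Gamma(a+k+1)}{\Gamma(k+1)}\prod_{u}\alpha_u^{\alpha_u a}\,\frac{\Gamma(j_u+1)}{\Gamma(\alpha_u a+j_u+1)}\;\le\;\frac{\Gamma(a+k+1)}{\Gamma(k+1)}=\cO(k^a).
\]
Meanwhile, a $\BetaDistribution{j}{k-j}$ marginal with $j/k\in[\varepsilon,1-\varepsilon]$ deviates from $j/k$ by more than $\delta$ with probability $e^{-c(\varepsilon,\delta)k}$, which beats the $\cO(k^a)$ factor.

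\textbf{The obstacle for $\bc$ is not there.} The attachment labels $p_v$ are $>k$, so each $p_v$ is chosen inside a \emph{final} component. The total weight is $\prod_u (n_u-j_u)^{c_u}$, which you essentially wrote in Step 1; no time-dependent proportions enter. There are $(\ell-1)!/\prod_u c_u!$ labeled trees with out-degrees $\wbc$, so summing gives exactly
\[
\probcond{\wbc(\cT_n)=\wbc}{V_\rho(k),\wbj(\cT_n)=\wbj,\wbn(\cT_n)=\wbn}=\frac{(\ell-1)!}{(n-k)^{\ell-1}}\prod_{\upsilon\in[\ell]}\frac{(n_\upsilon-j_\upsilon)^{c_\upsilon}}{c_\upsilon!}.
\]
Hence each $c_u$ is $\BinomialDistribution{\ell-1}{(n_u-j_u)/(n-k)}$ given $(\bj,\bn)$. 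Since $\ell\sim a\log k\to\infty$ and $n_u/n\approx j_u/k$ by the previous step, concentration of $c_u/(a\log k)$ around $j_u/k$ is immediate. No martingale or P\'olya-urn argument is needed; your fallback of using the exact product formula is precisely the paper's route.
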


It is clearly equivalent to prove Proposition~\ref{prop: joint convergence of tilde j tilde c tilde n towards PoissonDirichlet} for either $(\bj^\searrow(\cT_n)$, $\bc^\searrow(\cT_n)$, $\bn^\searrow(\cT_n))$ or $(\bj'(\cT_n)$, $\bc'(\cT_n)$, $\bn'(\cT_n))$.
Therefore, we will prove it for the latter but apply it with the former.

\subsubsection{Combinatorial formulas for joint statistics in descent-biased trees}

The first step is to write explicit formulas for some statistics of the descent-biased tree; these formulas will be extensively used later to prove Proposition \ref{prop: joint convergence of tilde j tilde c tilde n towards PoissonDirichlet}.

Let $V_\rho(k)$ denote the event that the root of $\cT_n^{k,\red}$ is non-empty, that is, the root label of $\cT_n$ is $\le k$. 
In our computations, we will only investigate probabilities of events that are contained in $V_\rho(k)$. The reason is the following lemma.

\begin{lemma}\label{lem: asymptotic probability of V1}
For any fixed $k\ge1$,
\begin{align*}
    \prob{V_\rho(k)} \cv{n\to\infty} \frac{k}{a+k} \,.
\end{align*}
\end{lemma}
\noindent
In particular, we have $\lim_{n\to\infty} \prob{V_\rho(k)} \cv{k\to\infty} 1$.

In order to prove it, it will be useful to add a layer of randomness to $T^{k,\red}$, providing an arbitrary ordering on the vertices of the reduced tree.
The construction below is defined whenever the event $V_\rho(k)$ holds.
First, choose a uniformly random permutation $\sigma \in \fS_\ell$ of $\ell$ elements, where $\ell := \ell^k(T)$.
Then, use it to enumerate $\mathbf{F}(T) = (F_u)_{u\in T^{k,\red}}$, e.g., by ranking the parts according to their smallest element and choosing the $i$-th part to be the one with rank $\sigma_i$.
This yields a labeling of the non-plane tree $T^{k,\red}$ by $[\ell]$; we write $\widetilde T^{k,\red}$ for that labeled tree.
Through this labeling, all unordered families indexed by $T^{k,\red}$ are now indexed by $[\ell]$, and these newly ordered objects are written with a tilde.
That is, the unordered families $\mathbf{t}(T)$, $\mathbf{F}(T)$, $\bj(T)$ and $\bc(T)$, indexed by the vertices of $T^{k,\red}$, give rise to randomly ordered families $\widetilde{\mathbf{t}}(T) = (t_\upsilon)_{1\le \upsilon\le \ell}$, $\widetilde{\mathbf{F}}(T) = (F_\upsilon)_{1\le \upsilon\le \ell}$, $\widetilde{\bj}(T) = (j_\upsilon)_{1\le \upsilon\le \ell}$ and $\widetilde{\bc}(T) = (c_\upsilon)_{1\le \upsilon\le \ell}$.

All probabilities in the lemma below are with respect to the randomness of both the tree $\cT_n := \cT_n^{(a/n)}$ and, conditionally given $\cT_n$, the permutation $\sigma \sim \Unif{\fS_{\ell^k(\cT_n)}}$.

\begin{lemma}
    \label{lem:combinatorial formulas}
    Fix a non-plane labeled tree $\widetilde t \in \bT_\ell$ with size $\ell$ and out-degrees $\wbc = (c_\upsilon)_{1\le \upsilon\le \ell}$.
    Moreover, fix $\wbj = (j_\upsilon)_{1\le \upsilon\le \ell}$, a list of positive integers summing to $k$, and $\wbn = (n_\upsilon)_{1\le \upsilon\le \ell}$, a list of positive integers summing to $n$, such that $n_\upsilon\ge j_\upsilon$ for all $\upsilon\in[\ell]$.
\begin{enumerate}[label=(\roman*)]
\item \begin{multline*}
    \prob{V_\rho(k), \widetilde \cT_n^\red = \widetilde t , \wbj(\cT_n) = \wbj, \wbn(\cT_n) = \wbn}
    \\= \frac{k!(n-k)!}{\ell!Z_{n,q}} q^{\ell-1}
    \prod_{\upsilon\in [\ell]} \left( 
    (n_\upsilon - j_\upsilon) ^{c_\upsilon}
    \cdot [x^{j_\upsilon}] A(x,q)
    \cdot [x^{n_\upsilon-j_\upsilon}] e^{j_\upsilon A(x,q)}
    \right) \,.
    \end{multline*}

\item  Conditionally given $\wbc(\cT_n)$, the reduced tree $\widetilde \cT_n^{k,\red}$ is a uniform labeled tree with out-degrees $\wbc(\cT_n)$.

\item \begin{multline*}
    \prob{V_\rho(k), \wbc(\cT_n) = \wbc, \wbj(\cT_n) = \wbj, \wbn(\cT_n) = \wbn}
    \\= \frac{k!(n-k)!}{\ell Z_{n,q}} q^{\ell-1}
    \prod_{\upsilon\in [\ell]} \left( 
    \frac{(n_\upsilon - j_\upsilon)^{c_\upsilon}}{c_\upsilon!}
    \cdot [x^{j_\upsilon}] A(x,q)
    \cdot [x^{n_\upsilon-j_\upsilon}] e^{j_\upsilon A(x,q)}
    \right) \,.
    \end{multline*}

\item \begin{align*}
    \prob{V_\rho(k), \wbj(\cT_n) = \wbj, \wbn(\cT_n) = \wbn}
    = \frac{k!(n-k)!}{\ell! Z_{n,q}} \left(q(n-k)\right)^{\ell-1}
    \prod_{\upsilon\in [\ell]} \left( 
    [x^{j_\upsilon}] A(x,q)
    \cdot [x^{n_\upsilon}] x^{j_\upsilon} e^{j_\upsilon A(x,q)}
    \right) \,.
    \end{align*}

\item \begin{align}\label{item: proba reduced tree has small root and tildej}
    \prob{V_\rho(k), \wbj(\cT_n) = \wbj}
    = \frac{k!(n-k)!}{\ell! Z_{n,q}} \left(q(n-k)\right)^{\ell-1}
    [x^{n-k}] e^{k A(x,q)}
    \prod_{\upsilon\in [\ell]}
    [x^{j_\upsilon}] A(x,q) \,.
    \end{align}
\end{enumerate}
\end{lemma}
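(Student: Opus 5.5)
The plan is to establish (i) by a direct bijective count using the one-to-one correspondence $T \mapsto ( T^{k,\red}, \mathbf{t}, \mathbf{F}, \bm{\tau}, \mathbf{\Phi}, \mathbf{p})$ described above. Then (ii)--(v) follow from (i) by summing over the appropriate data.

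For (i), we fix $\widetilde t$, $\wbj$ and $\wbn$, and count pairs $(T,\sigma)$ with weight $q^{\des{T}}/\ell!$. The factor $1/\ell!$ is the probability of the chosen permutation. Summing over $\sigma$ is equivalent to counting the ways of assigning the blocks $F_\upsilon$ to the labels $\upsilon\in[\ell]$.

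\emph{Choosing the labels.} We first choose the ordered family $(F_\upsilon)_\upsilon$ of disjoint subsets of $[k]$ with sizes $j_\upsilon$, which gives $k!/\prod_\upsilon j_\upsilon!$ choices. We then choose how $[n]\setminus[k]$ is split among the $\upsilon$'s into sets of sizes $n_\upsilon-j_\upsilon$, which gives $(n-k)!/\prod_\upsilon(n_\upsilon-j_\upsilon)!$ choices.

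\emph{Counting within a block.} Inside block $\upsilon$, we sum over $k$-components $t_\upsilon$ of size $j_\upsilon$. We weight by $q^{\des{t_\upsilon}}$, since descents only depend on relative order. This contributes $j_\upsilon!\,[x^{j_\upsilon}]A$.

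Next we need the weighted number of forests grafted on the $j_\upsilon$ vertices using the $n_\upsilon-j_\upsilon$ large labels. Each edge from a label $\le k$ to a child with label $>k$ is never a descent. The internal descents of the grafted parts are counted by $q^{\des{\cdot}}$ of each $\tau_i$ minus its root. By the usual decomposition (as in Lemma~\ref{lem:trees with smallest root}), the generating function for a forest of trees attached to one root is $e^{A}$. Hence the count is $(n_\upsilon-j_\upsilon)!\,[x^{n_\upsilon-j_\upsilon}]e^{j_\upsilon A}$.

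\emph{The attachment points.} For each non-root $v$ with parent $u$, the root of $t_v$ hangs below a vertex $p_v$ with label $>k$ in $[T]_u$. This gives $n_u-j_u$ choices, so $\prod_u (n_u-j_u)^{c_u}$ in total. Each such edge is a descent, since the child label is $\le k$ and the parent label is $>k$. This contributes $q^{\ell-1}$.

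\emph{The root.} On $V_\rho(k)$, the root of $\widetilde t$ is the block containing the root of $T$, so there is no extra $\tau_\emptyset$.

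The factorials $j_\upsilon!$ and $(n_\upsilon-j_\upsilon)!$ cancel with those in the multinomial coefficients. Dividing by $Z_{n,q}$ then gives (i). The main point needing care is checking that descents decompose exactly this way, including that no descent occurs between a label $\le k$ and its child with label $>k$. One must also check that the grafted subtree structure is exactly ``forest on a set of roots'' with exponential generating function $e^{jA}$.

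For (ii), the formula in (i) depends on $\widetilde t$ only through its out-degrees. Hence, conditionally on the out-degrees, the labeled tree is uniform.

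For (iii), we sum (i) over labeled trees with out-degree sequence $\wbc$. There are $\binom{\ell-1}{c_1,\dots,c_\ell}=(\ell-1)!/\prod c_\upsilon!$ such trees. Combining with $1/\ell!$ gives $1/(\ell\prod c_\upsilon!)$.

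For (iv), we sum (iii) over $\wbc$ with $\sum c_\upsilon=\ell-1$. By the multinomial theorem,
\[
\sum_{\wbc} \frac{(\ell-1)!}{\prod c_\upsilon!}\prod (n_\upsilon-j_\upsilon)^{c_\upsilon}=(n-k)^{\ell-1}.
\]
This gives the factor $\frac{(n-k)^{\ell-1}}{\ell!}$ after dividing by $(\ell-1)!\,\ell$. We also rewrite $[x^{n_\upsilon-j_\upsilon}]e^{j_\upsilon A}=[x^{n_\upsilon}]x^{j_\upsilon}e^{j_\upsilon A}$.

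For (v), we sum (iv) over $\wbn$ with $\sum n_\upsilon=n$. The product of coefficient extractions convolves to
\[
[x^n]\,x^k e^{kA}=[x^{n-k}]e^{kA}.
\]
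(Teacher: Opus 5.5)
Your proposal is correct and follows essentially the same route as the paper. For (i) you do a direct weighted count of pairs $(T,\sigma)$ through the reconstruction bijection, counting the grafted parts via $e^{A}$ (which is the content of $\partial_x A^{(1)} = e^{A}$). You then sum successively: over labeled trees with out-degrees $\wbc$ (there are $(\ell-1)!/\prod_\upsilon c_\upsilon!$ of them), over $\wbc$ by the multinomial theorem, and over $\wbn$ by convolution of coefficients.

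The only cosmetic difference is in (i). The paper sums over individual sizes $\nu_i$ with the multinomial $\binom{n-k}{\nu_1-1,\dots,\nu_k-1}$ and over trees $\tau_i$ with root labeled $1$. You instead split the large labels blockwise and count the $j_\upsilon$-tuple of forests directly by $e^{j_\upsilon A}$.
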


Let us immediately see how we get \Cref{lem: asymptotic probability of V1} from that.

\begin{proof}[Proof of \Cref{lem: asymptotic probability of V1}]
Fix $\ell\ge1$ and sum equation~\eqref{item: proba reduced tree has small root and tildej} of Lemma~\ref{lem:combinatorial formulas} over all $\ell$-tuples $\wbj$ of positive integers summing to $k$:
\begin{align*}
    \prob{V_\rho(k), \ell(\cT_n)=\ell}
    = \frac{k!(n-k)!}{\ell! Z_{n,q}} \left(q(n-k)\right)^{\ell-1}
    [x^{n-k}] e^{k A(x,q)}
    \sum_{\substack{j_1, \dots, j_\ell \ge1\\ j_1+\dots+j_\ell =k}} \prod_{\upsilon\in [\ell]}
    [x^{j_\upsilon}] A(x,q).
\end{align*}
Thus, using the fact that $[x^0] A(x,q) = 0$,
\begin{align}\label{eq: reduced tree distribution ell}
    \prob{V_\rho(k), \ell(\cT_n)=\ell}
    = \frac{k!(n-k)!}{\ell! Z_{n,q}} \left(q(n-k)\right)^{\ell-1}
    [x^{n-k}] e^{k A(x,q)}
    [x^{k}] A(x,q)^\ell \,.
\end{align}
Finally, by summing \eqref{eq: reduced tree distribution ell} over $\ell\ge1$, we obtain
\begin{align}\label{eq: reduced tree distribution V1}
    \prob{V_\rho(k)}
    = \frac{k!(n-k)!}{Z_{n,q}} \left(q(n-k)\right)^{-1}
    [x^{n-k}] e^{k A(x,q)}
    [x^{k}] e^{q(n-k)A(x,q)} \,.
\end{align}
    Let us take $n\to\infty$ in \eqref{eq: reduced tree distribution V1} (recall that $k\ge1$ is fixed, and that $q=a/n$).
    By Lemma \ref{lem: asymptotic coefficient partition function in transition regime}, we know that 
    \begin{equation*}
        Z_{n,q} = n! [x^n] A(x,q) \sim \frac{n! n^{a-1}}{e^a \Gamma(a+1)} \,,
    \end{equation*}
    by Lemma \ref{lem: asymptotic coefficient exponentials in transition regime} that 
    \begin{equation*}
        [x^{n-k}] e^{k A(x,q)} \sim \frac{k n^{a-1+k}}{e^a\Gamma(a+k+1)} \,,
    \end{equation*}
    and by the standard enumeration of recursive trees that 
    \begin{equation*}
        [x^{k}] e^{q(n-k)A(x,q)} 
        \cv{} [x^k] e^{a A(x,0)}
        = [x^k] (1-x)^{-a}
        = \frac{\Gamma(a+k)}{\Gamma(a)k!} \,.
    \end{equation*}
    Plugging all of these into \eqref{eq: reduced tree distribution V1}, we find that $\prob{V_\rho(k)} \cv{} \frac{k}{a+k}$ as $n \rightarrow \infty$, as desired.
\end{proof}

We now prove Lemma \ref{lem:combinatorial formulas}.

\begin{proof}[Proof of Lemma \ref{lem:combinatorial formulas}]
%Fix a non-plane, labeled tree $\widetilde t$ with size $\ell$ and out-degrees $\wbc = (c_u)_{1\le u\le \ell}$.
%Also fix $\wbj = (j_u)_{1\le u\le \ell}$ a list of positive integers summing to $k$, and $\wbn = (n_u)_{1\le u\le \ell}$ positive integers summing to $n$ such that $n_u\ge j_u$ for all $u\in[\ell]$.
We can compute the probability $\prob{V_\rho(k), \widetilde \cT_n^\red = \widetilde t , \wbj(\cT_n) = \wbj, \wbn(\cT_n) = \wbn}$ by describing the permutation/tree pairs $(\sigma,T)$ satisfying these conditions.
To construct such a tree, one needs to choose $t_u$'s and $F_u$'s with sizes given by the $j_u$'s, then $\tau_i$'s and $\Phi_i$'s with positive sizes $\nu_i$ such that $\sum_{i\in F_u} \nu_i = n_u$, and consistent $p_v$'s.
Note also that once the partition $\wbF$ has been chosen, the permutation $\sigma$ is uniquely determined.
We get, by decomposing the tree $\cT_n$ according to the structure of its reduced tree,
\begin{align*}
    &\ell! \, Z_{n,q} \cdot \prob{V_\rho(k), \widetilde \cT_n^\red = \widetilde t , \wbj(\cT_n) = \wbj, \wbn(\cT_n) = \wbn}
    \\&= q^{\ell-1} \sum_{(t_\upsilon)_{\upsilon\in[\ell]} \in \prod\bT_{j_\upsilon}} 
    \! \left( \prod_{\upsilon\in [\ell]} q^\des{t_\upsilon} \right) \!\!
    \sum_{\substack{\wbF \vdash [k]\\ \abs{F_\upsilon}=j_\upsilon \\ \text{for all }\upsilon}}
    \sum_{\substack{(\nu_i)_{i\in[k]} \in \bN^k\\ \sum_{i\in F_\upsilon}\nu_i = n_\upsilon \\ \\ \text{for all }\upsilon}}
    \sum_{\substack{(\tau_i)_{i\in[k]} \in \prod\bT_{\nu_i}\\ \text{root labeled }1}} 
    \binom{n-k}{\nu_1-1, \dots, \nu_k-1}
    \prod_{1\le i\le k} q^\des{\tau_i}
    \\&\hspace{1.1cm} 
    \cdot \prod_{\substack{v \in \tilde t\setminus\rho\\\text{parent }=:\, u(v)}} \left( n_{u(v)} - j_{u(v)} \right)
    \\&= q^{\ell-1} (n-k)!
    \left( \prod_{\upsilon\in [\ell]} \sum_{t_\upsilon \in \bT_{j_\upsilon}} q^\des{t_\upsilon} \right) \!\!\!
    \sum_{\substack{\wbF \vdash [k]\\ \abs{F_\upsilon}=j_\upsilon \\ \text{for all }\upsilon}}
    \sum_{\substack{(\tau_i)_{i\in[k]} \in \bT^k\\ \text{root labeled }1}}
    \prod_{\upsilon\in [\ell]} \left( 
    \One{\sum_{i \in F_\upsilon}\abs{\tau_i} = n_\upsilon}
    \left( n_\upsilon - j_\upsilon \right)^{c_\upsilon}
    \prod_{i\in F_\upsilon} \frac{q^\des{\tau_i}}{(\abs{\tau_i}-1)!}
    \right) \,.
\end{align*}
Now, observe that the term inside the sum $\sum_{\wbF}$  does not depend on the partition $\wbF$.
Therefore, we can fix an arbitrary partition $\wbF$ and write
\begin{align*}
    &\ell! \, Z_{n,q} \cdot \prob{V_\rho(k), \widetilde \cT_n^\red = \widetilde t , \wbj(\cT_n) = \wbj, \wbn(\cT_n) = \wbn}
    \\&= q^{\ell-1} (n-k)!
    \left( \prod_{\upsilon\in [\ell]} \sum_{t_\upsilon \in \bT_{j_\upsilon}} q^\des{t_\upsilon} \right) \!
    \binom{k}{\wbj}
    \sum_{\substack{(\tau_i)_{i\in[k]} \in \bT^k\\ \text{root labeled }1}}
    \prod_{\upsilon\in [\ell]} \left( 
    \One{\sum_{i \in F_\upsilon}\abs{\tau_i} = n_\upsilon}
    \left( n_\upsilon - j_\upsilon \right)^{c_\upsilon}
    \prod_{i\in F_\upsilon} \frac{q^\des{\tau_i}}{(\abs{\tau_i}-1)!}
    \right) 
    \\&= q^{\ell-1} k!(n-k)!
    \prod_{\upsilon\in [\ell]} \left( 
    (n_\upsilon - j_\upsilon)^{c_\upsilon}
    \sum_{t_\upsilon \in \bT_{j_\upsilon}} \frac{q^\des{t_\upsilon}}{j_\upsilon!}
    \sum_{\substack{(\tau_i)_{i\in F_\upsilon} \in \bT^{F_\upsilon}\\ \text{root labeled }1}}
    \One{\sum_{i \in F_\upsilon}\abs{\tau_i} = n_\upsilon}
    \prod_{i\in F_\upsilon} \frac{q^\des{\tau_i}}{(\abs{\tau_i}-1)!}
    \right) 
    \\&= q^{\ell-1} k!(n-k)!
    \prod_{\upsilon\in [\ell]} \left( 
    (n_\upsilon - j_\upsilon)^{c_\upsilon}
    \cdot [x^{j_\upsilon}] A(x,q)
    \cdot [x^{n_\upsilon-j_\upsilon}] \left( \partial_x A^{(1)}(x,q) \right)^{j_\upsilon}
    \right)
\end{align*}
where $A^{(1)}(x,q)$  is the generating function for descent-biased trees with root labeled $1$.
By \Cref{lem:trees with smallest root}, we deduce Lemma \ref{lem:combinatorial formulas}~(i). 

Hence, the probability that $\widetilde \cT_n^{k,\red} = \widetilde t$ only depends on $\widetilde t$ through its sequence of out-degrees $\wbc$. This immediately implies Lemma \ref{lem:combinatorial formulas}~(ii).

Then, recall that the number of labeled trees $\widetilde t \in \bT_\ell$ with out-degrees $\wbc$ is $\frac{(\ell-1)!}{\prod\limits_{\upsilon\in[\ell]} c_\upsilon!}$. Therefore, using Lemma \ref{lem:combinatorial formulas}~(i), we get Lemma \ref{lem:combinatorial formulas}~(iii).

Now, sum over all $\ell$-tuples $\wbc$ of non-negative integers summing to $\ell-1$:
\begin{align*}
    &\prob{V_\rho(k), \wbj(\cT_n) = \wbj, \wbn(\cT_n) = \wbn}
    \\&= \frac{k!(n-k)!}{\ell Z_{n,q}} q^{\ell-1} \!\!\sum_{\substack{(c_\upsilon)_{\upsilon\in[\ell]} \in \bN_0^\ell\\\sum c_\upsilon = \ell-1}}
    \prod_{\upsilon\in [\ell]} \left( 
    \frac{(n_\upsilon - j_\upsilon)^{c_\upsilon}}{c_\upsilon!}
    \cdot [x^{j_\upsilon}] A(x,q)
    \cdot [x^{n_\upsilon-j_\upsilon}] e^{j_\upsilon A(x,q)}
    \right) 
    \\&= \frac{k!(n-k)!}{\ell Z_{n,q}} q^{\ell-1}
    [x^{\ell-1}] 
    \prod_{\upsilon\in[\ell]} \left( 
    \sum_{c\ge0} \frac{(n_\upsilon-j_\upsilon)^c x^c}{c!} 
    [x^{j_\upsilon}] A(x,q)
    \cdot [x^{n_\upsilon-j_\upsilon}] e^{j_\upsilon A(x,q)}
    \right) 
    \\&= \frac{k!(n-k)!}{\ell Z_{n,q}} q^{\ell-1}
    [x^{\ell-1}] e^{x(n-k)}
    \prod_{\upsilon\in [\ell]} \left( 
    [x^{j_\upsilon}] A(x,q)
    \cdot [x^{n_\upsilon-j_\upsilon}] e^{j_\upsilon A(x,q)}
    \right) \,,
\end{align*}
which implies Lemma \ref{lem:combinatorial formulas} (iv).

Finally, sum over all $\ell$-tuples $\wbn$ of non-negative integers summing to $n$ (the condition $n_\upsilon\ge j_\upsilon$ appears naturally in the formula):
\begin{align*}
    \prob{V_\rho(k), \wbj(\cT_n) = \wbj}
    &= \frac{k!(n-k)!}{\ell! Z_{n,q}} \left(q(n-k)\right)^{\ell-1}
    \left( \prod_{\upsilon\in [\ell]}
    [x^{j_\upsilon}] A(x,q) \right)
    \sum_{n_1+ \dots+ n_\ell = n} \prod_{\upsilon\in [\ell]} [x^{n_\upsilon}] x^{j_\upsilon} e^{j_\upsilon A(x,q)}
    \\&= \frac{k!(n-k)!}{\ell! Z_{n,q}} \left(q(n-k)\right)^{\ell-1}
    \left( \prod_{\upsilon\in [\ell]}
    [x^{j_\upsilon}] A(x,q) \right)
    \cdot 
    [x^n] \prod_{\upsilon\in[\ell]} x^{j_\upsilon} e^{j_\upsilon A(x,q)} \,,
\end{align*}
which is Lemma \ref{lem:combinatorial formulas} (v).
\end{proof}

\subsubsection{Proof of Proposition \ref{prop: joint convergence of tilde j tilde c tilde n towards PoissonDirichlet}}

We now have all the tools to investigate the behavior of our sequences, starting with the asymptotics of $\bj'(\cT_n)$.

\begin{lemma}\label{lem: asymptotic distribution tilde j Ewens}
    Conditionally on $V_\rho(k)$, the sequence $\bj'(\cT_n) := (j_{(1)}\ge \dots\ge j_{(\ell)})$ 
    %(that is, the non-increasing ordering of $\bj(\cT_n)$) 
    converges in distribution as $n\to\infty$ to the non-increasing ordering of 
    %the sequence of 
    cycle lengths in a Ewens random permutation with size $k$ and parameter $a$.
\end{lemma}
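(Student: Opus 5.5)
The plan is to compute the limit of the joint law of the unordered block sizes directly from Lemma~\ref{lem:combinatorial formulas}~(v) and compare it with the Ewens sampling formula. Fix $k$, $\ell\ge1$ and a composition $\wbj=(j_1,\dots,j_\ell)$ of $k$ into positive parts. Lemma~\ref{lem:combinatorial formulas}~(v) gives $\prob{V_\rho(k),\wbj(\cT_n)=\wbj}$ as an explicit product. The factor $(n-k)!\,[x^{n-k}]e^{kA(x,q)}/Z_{n,q}$ does not depend on $\wbj$ or $\ell$. Using Lemma~\ref{lem: asymptotic coefficient exponentials in transition regime} and Lemma~\ref{lem: asymptotic coefficient partition function in transition regime} with $q=a/n$, it is asymptotically $\frac{k\,\Gamma(a+1)\,n^{k}}{\Gamma(a+k+1)}\cdot\frac{(n-k)!}{n!}\sim \frac{k\,\Gamma(a+1)}{\Gamma(a+k+1)}$. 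Next, $(q(n-k))^{\ell-1}\to a^{\ell-1}$. Finally, for each fixed $j$, $[x^{j}]A(x,a/n)\to[x^j]A(x,0)=[x^j](-\log(1-x))=1/j$, since $Z_{j,q}$ is a polynomial in $q$ (namely $\prod_{m<j}(j-m+mq)$) and $q\to0$. Only finitely many compositions of $k$ exist, so no uniformity issue arises, and
\begin{equation*}
\prob{V_\rho(k),\wbj(\cT_n)=\wbj}\cv{n\to\infty}\frac{k!\,k\,\Gamma(a+1)}{\Gamma(a+k+1)}\cdot\frac{a^{\ell-1}}{\ell!}\prod_{\upsilon=1}^{\ell}\frac{1}{j_\upsilon}.
\end{equation*}

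Dividing by $\prob{V_\rho(k)}\to k/(a+k)$ (Lemma~\ref{lem: asymptotic probability of V1}) gives, for the conditional law, the limit $\frac{k!\,a^{\ell}}{a(a+1)\cdots(a+k-1)}\cdot\frac{1}{\ell!}\prod_\upsilon j_\upsilon^{-1}$. Here we use $\Gamma(a+k+1)/((a+k)\Gamma(a+1))=a(a+1)\cdots(a+k-1)/a$.

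Since $\wbj$ is a uniformly random ordering of the blocks, we pass to the non-increasing reordering $\bj'$. A multiset of block sizes with $m_i$ blocks of size $i$ corresponds to $\ell!/\prod_i m_i!$ compositions, all with the same limiting weight. The limiting probability that $\bj'(\cT_n)$ equals this multiset is therefore
\begin{equation*}
\frac{k!}{a^{(k)}}\prod_{i}\frac{a^{m_i}}{i^{m_i}m_i!},\qquad a^{(k)}:=a(a+1)\cdots(a+k-1).
\end{equation*}
This is exactly the Ewens sampling formula for the cycle type of an Ewens$(a)$ permutation of size $k$. The limiting probabilities sum to $1$ because the Ewens formula is a probability distribution. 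Convergence of these finitely many point masses is convergence in distribution, which concludes the argument.

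The main obstacle is bookkeeping rather than analysis. It lies in the conditioning on the random ordering $\sigma$: Lemma~\ref{lem:combinatorial formulas} is stated for the randomly labeled families, so the count $\ell!/\prod m_i!$ must match how $\wbj$ is induced by $\sigma$. It also requires checking that the constant prefactor simplifies exactly to $k!/a^{(k)}$.
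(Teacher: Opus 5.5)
Your proposal is correct and follows the paper's own proof: you take $n\to\infty$ in Lemma~\ref{lem:combinatorial formulas}~(v), divide by the limit $k/(a+k)$ from Lemma~\ref{lem: asymptotic probability of V1}, and match the result with Ewens. The only difference is cosmetic: the paper compares directly with the cycle lengths of an Ewens permutation listed in uniformly random order, whereas you sum over the $\ell!/\prod_i m_i!$ compositions to obtain the Ewens sampling formula for the cycle type.

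The bookkeeping concern you raise about $\sigma$ is not a real obstacle. For a partition $\mu$ of $k$, the event $\{\bj'(\cT_n)=\mu\}\cap V_\rho(k)$ is simply the disjoint union of the events $\{\wbj(\cT_n)=\wbj\}\cap V_\rho(k)$ over the compositions $\wbj$ that sort to $\mu$. This holds whatever rule induces the ordering.
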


% \begin{remark}
% Notice that $\bj'(\cT_n)$ is not necessarily equal to $\bj^\searrow(\cT_n)$.  
% \end{remark}

\begin{proof}[Proof of \Cref{lem: asymptotic distribution tilde j Ewens}]
    Take $n\to\infty$ in Lemma \ref{lem:combinatorial formulas}~(v), using $A(x,q) \cv{} A(x,0) = -\log(1-x)$ along with the same asymptotics as before.
    This yields, for any $\ell$-tuple $\wbj$ of positive integers summing to $k$,
    \begin{align*}
        \prob{V_\rho(k), \wbj(\cT_n) = \wbj}
        \cv{n\to\infty}
        k!
        \frac{ k\Gamma(a+1)}{\Gamma(a+k+1)}
        \frac{a^{\ell-1}}{\ell!}
        \prod_{\upsilon\in [\ell]} \frac{1}{j_\upsilon} \,.
    \end{align*}
    Thus, by \Cref{lem: asymptotic probability of V1},
    \begin{align}
    \label{eq:ewens1}
        \probcond{ \wbj(\cT_n) = \wbj }{V_\rho(k)}
        \cv{n\to\infty}
        k!
        \frac{\Gamma(a)}{\Gamma(a+k)}
        \frac{a^{\ell}}{\ell!}
        \prod_{\upsilon\in [\ell]} \frac{1}{j_\upsilon} \,.
    \end{align}
    Now let $\pi\in\fS_k$ be Ewens-distributed with parameter $a$, and let $\wbj(\pi_k)$ be its sequence of cycle lengths after a uniformly random ordering.
    Then,
    \begin{align}
    \label{eq:ewens2}
        \prob{ \wbj(\pi_k) = \wbj }
        = \frac{a^\ell}{Z_{k,a}}
        \cdot \frac{1}{\ell!} 
        \cdot\binom{k}{\bj} \prod_{\upsilon\in[\ell]} (j_\upsilon-1)!
        = \frac{k!}{Z_{k,a}}
        \frac{a^{\ell}}{\ell!}
        \prod_{\upsilon\in [\ell]} \frac{1}{j_\upsilon} \,,
    \end{align}
    where $Z_{k,a} = \frac{\Gamma(a+k)}{\Gamma(a)}$ is the partition function for the Ewens model.
    Comparing Equations \eqref{eq:ewens1} and \eqref{eq:ewens2} concludes the proof.
\end{proof}

Recall that the non-increasing ordering of cycle lengths in an Ewens random permutation with size $k$ and parameter $a$ (also known as the $k$-th step of the Chinese restaurant process with parameter $a$) converges, as $k\to\infty$ and after $\frac1k$-rescaling, to the Poisson--Dirichlet distribution $\PoissonDirichlet{0}{a}$.
%This explains the first marginal convergence in \Cref{prop: joint convergence of tilde j tilde c tilde n towards PoissonDirichlet}.
Additionally, the number of cycles follows a central limit theorem with asymptotic mean $a\log k$ (see, e.g., \cite{Arratia_Barbour_Tavare_2003}).
Therefore, from \Cref{lem: asymptotic probability of V1,lem: asymptotic distribution tilde j Ewens}, we immediately deduce the following corollaries.

\begin{corollary}\label{cor: asymptotic concentration ell}
    We have
    \begin{equation*}
        \lim_{k\to\infty} \limsup_{n\to\infty} \abs{ \frac{\ell^k(\cT_n)}{a\log k} - 1 } = 0
    \end{equation*}
    in probability.
\end{corollary}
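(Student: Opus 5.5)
The statement is a double limit in probability. Concretely, for every $\varepsilon>0$,
\[
\lim_{k\to\infty}\limsup_{n\to\infty}\prob{\abs{\ell^k(\cT_n)/(a\log k)-1}>\varepsilon}=0 .
\]
The plan is to split on the event $V_\rho(k)$ and then pass to the limit first in $n$, then in $k$.

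\textbf{Step 1: split on $V_\rho(k)$.} We bound
\[
\prob{\abs{\tfrac{\ell^k(\cT_n)}{a\log k}-1}>\varepsilon}\le \prob{V_\rho(k)^c}+\prob{V_\rho(k),\ \abs{\tfrac{\ell^k(\cT_n)}{a\log k}-1}>\varepsilon}.
\]
By \Cref{lem: asymptotic probability of V1}, $\limsup_{n}\prob{V_\rho(k)^c}=\frac{a}{a+k}$, which tends to $0$ as $k\to\infty$. So only the second term needs work.

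\textbf{Step 2: the limit $n\to\infty$ on $V_\rho(k)$.} On $V_\rho(k)$, $\ell^k(\cT_n)$ is exactly the number of entries of $\bj'(\cT_n)$. This count is a continuous (indeed discrete) functional of the finite sequence $\bj'(\cT_n)$, whose entries take values in the finite set of partitions of $k$. Therefore \Cref{lem: asymptotic distribution tilde j Ewens} gives, for fixed $k$,
\[
\mathcal L\big(\ell^k(\cT_n)\mid V_\rho(k)\big)\;\longrightarrow\; \mathcal L(K_k)\quad\text{as } n\to\infty,
\]
where $K_k$ is the number of cycles of an Ewens$(a)$ permutation of size $k$. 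Since everything lives on a finite set, this is convergence of probability mass functions. It follows that
\[
\limsup_{n\to\infty}\prob{V_\rho(k),\ \abs{\tfrac{\ell^k(\cT_n)}{a\log k}-1}>\varepsilon}\le \prob{\abs{\tfrac{K_k}{a\log k}-1}>\varepsilon}.
\]

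\textbf{Step 3: concentration of $K_k$.} By the Feller coupling, $K_k$ is a sum of independent Bernoulli variables with parameters $\frac{a}{a+i-1}$, $i=1,\dots,k$. Hence
\[
\bE[K_k]=a\log k+\cO(1),\qquad \mathrm{Var}(K_k)\le \bE[K_k].
\]
Chebyshev's inequality then yields $K_k/(a\log k)\to1$ in probability. Alternatively, this is the CLT of \cite{Arratia_Barbour_Tavare_2003} quoted just above the statement. Letting $k\to\infty$ completes the proof.

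No step is a real obstacle. The only point requiring care is the order of limits: the weak convergence of Step 2 is used only at fixed $k$, and the $k\to\infty$ concentration is applied to the limiting Ewens law, not uniformly in $n$. This is exactly the $\lim_k\limsup_n$ structure of the statement.
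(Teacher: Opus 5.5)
Your proposal is correct and takes the same route as the paper, which derives the corollary in one line from \Cref{lem: asymptotic probability of V1}, \Cref{lem: asymptotic distribution tilde j Ewens}, and the concentration of the number of cycles of an Ewens permutation of size $k$ and parameter $a$ around $a\log k$. You add the explicit split on $V_\rho(k)$, the handling of the order of limits, and a self-contained Feller-coupling-plus-Chebyshev argument in place of the cited CLT. One small wording fix: it is the whole sequence $\bj'(\cT_n)$, not its individual entries, that takes values in the finite set of partitions of $k$.
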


\begin{corollary}\label{cor: asymptotic distribution tilde j PoissonDirichlet}
    Let $\bj'(\cT_n)$ be the non-increasing reordering of $\wbj(\cT_n)$.
    Then,
    \begin{equation*}
        \lim_{k\to\infty} \lim_{n\to\infty} \Law{ \frac1k \bj'(\cT_n) } = \PoissonDirichlet{0}{a}
    \end{equation*}
    weakly on the infinite-dimensional simplex.
\end{corollary}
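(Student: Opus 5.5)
This follows from \Cref{lem: asymptotic distribution tilde j Ewens}, \Cref{lem: asymptotic probability of V1}, and the classical Poisson--Dirichlet limit of the Chinese restaurant process, by a diagonal argument on the double limit. For fixed $k$, $\frac1k\bj'(\cT_n)$ takes values in a finite set: the non-increasing $\ell$-tuples of positive integers summing to $k$, padded with zeros and embedded in $\Delta$. Convergence in distribution as $n\to\infty$ therefore amounts to convergence of finitely many point masses, so we may work with probabilities of individual configurations.

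The first step is to remove the conditioning on $V_\rho(k)$. By \Cref{lem: asymptotic distribution tilde j Ewens}, the conditional law of $\bj'(\cT_n)$ given $V_\rho(k)$ converges to the law of the ordered cycle lengths $\bj(\pi_k)$ of an Ewens$(a)$ permutation of size $k$. On the complement of $V_\rho(k)$, $\bj'(\cT_n)$ is still a partition of $k$. Hence for any bounded continuous $g$ on $\Delta$,
\[
\abs{\expec{g(\tfrac1k\bj'(\cT_n))}-\expec{g(\tfrac1k\bj(\pi_k))}} \le \abs{\expecond{g(\tfrac1k\bj'(\cT_n))}{V_\rho(k)}-\expec{g(\tfrac1k\bj(\pi_k))}} + 2\|g\|_\infty\,\prob{V_\rho(k)^c}.
\]
As $n\to\infty$, the first term tends to $0$ and, by \Cref{lem: asymptotic probability of V1}, the second tends to $2\|g\|_\infty\frac{a}{a+k}$. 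If $\lim_{n}$ is read as a $\limsup$, this gives
\[
\limsup_{n}\abs{\expec{g(\tfrac1k\bj'(\cT_n))}-\expec{g(\tfrac1k\bj(\pi_k))}} \le \frac{2a\|g\|_\infty}{a+k}.
\]

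The second step is to let $k\to\infty$. The term $\frac{2a\|g\|_\infty}{a+k}$ vanishes, and $\frac1k\bj(\pi_k)\to\PoissonDirichlet{0}{a}$ in distribution on $\Delta$. This is the standard limit theorem for Ewens partitions (Kingman; see \cite{Arratia_Barbour_Tavare_2003}). Since the product topology is metrizable and separable, testing against bounded continuous $g$ is enough, and the double limit statement follows.

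There is one subtlety. The inner limit exists genuinely only conditionally on $V_\rho(k)$. Unconditionally, the behaviour on $V_\rho(k)^c$ is not controlled by the lemmas stated so far. The plan is therefore to interpret the inner limit as a $\limsup$, or in the $d_{\Delta^3}$ sense of \Cref{prop: joint convergence of tilde j tilde c tilde n towards PoissonDirichlet}, which the bound above handles directly. Beyond this, the only nontrivial input is the classical Ewens-to-Poisson--Dirichlet convergence, which I would cite rather than reprove.
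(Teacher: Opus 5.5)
Your proposal is correct and follows the paper's argument: the paper deduces this corollary in one line from \Cref{lem: asymptotic probability of V1}, \Cref{lem: asymptotic distribution tilde j Ewens} and the classical Ewens-to-$\PoissonDirichlet{0}{a}$ limit, and you have written out the $\frac{2a\|g\|_\infty}{a+k}$ cost of removing the conditioning on $V_\rho(k)$, which the paper leaves implicit. Your caveat about the unconditional inner limit is fair, since the lemmas as stated do not settle it and the paper does not address it; it can be removed, though, because redoing the count of \Cref{lem:combinatorial formulas}~(v) on $V_\rho(k)^c$ gives $\prob{V_\rho(k)^c, \wbj(\cT_n)=\wbj} = \frac{q(n-k)}{k}\,\prob{V_\rho(k), \wbj(\cT_n)=\wbj}$ (all $\ell$ components now hang below labels $>k$, the top tree $\tau_\emptyset$ contributes a factor $n_\emptyset [x^{n_\emptyset}]A(x,q)$, and $[x^{n-k}]\, x\partial_x A(x,q)\, e^{kA(x,q)} = \frac{n-k}{k}[x^{n-k}]e^{kA(x,q)}$), so for every $n$ the law of $\wbj(\cT_n)$ does not depend on whether $V_\rho(k)$ holds, and the inner limit exists and equals the Ewens law.
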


The final step in the proof of \Cref{prop: joint convergence of tilde j tilde c tilde n towards PoissonDirichlet} is to establish asymptotic proportionality between the rescaled sequences $\bj(\cT_n)$, $\bc(\cT_n)$, $\bn(\cT_n)$.
In what follows, we let $d_\infty$ denote the supremum distance between sequences of real numbers.

\begin{lemma}\label{lem: proportionality tilde j tilde n}
    The $\ell$-tuples $\frac1n \bn'(\cT_n)$ and $\frac1k\bj'(\cT_n)$ are close in the following sense:
    for any $\delta>0$,
    \begin{align*}
    \limsup_{n \rightarrow \infty} \prob{d_\infty{\left( \frac1k \bj'(\cT_n), \frac1n \bn'(\cT_n) \right)}
    \ge \delta} 
    \cv{k\to\infty} 0 \,.
    \end{align*}
\end{lemma}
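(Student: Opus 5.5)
The plan is to work with the exact joint law of $(\wbj,\wbn)$ given by Lemma~\ref{lem:combinatorial formulas}~(iv), conditionally on $\wbj$ via (v), and show that given $\wbj(\cT_n)=\wbj$ the vector $\wbn/n$ is approximately Dirichlet$(j_1,\dots,j_\ell)$. Dividing (iv) by (v) gives
\begin{equation*}
    \probcond{\wbn(\cT_n)=\wbn}{V_\rho(k),\wbj(\cT_n)=\wbj}
    = \frac{\prod_{\upsilon\in[\ell]} [x^{n_\upsilon-j_\upsilon}] e^{j_\upsilon A(x,q)}}{[x^{n-k}] e^{kA(x,q)}} \,.
\end{equation*}
Lemma~\ref{lem: asymptotic coefficient exponentials in transition regime} gives $[x^{m}]e^{jA(x,a/n)}\sim \frac{j m^{a+j-1}}{e^a\Gamma(a+j+1)}$ when $m$ is large and $m/n$ is bounded away from $0$. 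The uniformity in $a\in[0,M]$ handles $q=a/n$ versus $a'/m$ with $a'=am/n\le a$. So for $n_\upsilon=\lfloor s_\upsilon n\rfloor$ with $s_\upsilon$ in a compact subset of the open simplex, the ratio becomes $n^{-(\ell-1)}$ times a density in $s$. This density is proportional to $\prod_\upsilon s_\upsilon^{j_\upsilon-1+a}$ divided by the corresponding constant, up to the $e^{-a}$ factors: there are $\ell$ of them on top and one on the bottom.

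I would then simply take the limit as $n\to\infty$ with $k$ fixed. A Riemann-sum argument together with Scheffé's lemma shows that $\wbn/n$, conditionally on $\wbj$ and $V_\rho(k)$, converges in law to the density $\propto \prod_\upsilon s_\upsilon^{j_\upsilon+a-1}$ on the simplex. Whether the exponents are exactly $j_\upsilon+a-1$ or $j_\upsilon-1$ plus a correction does not matter for what follows. To control the boundary regions (some $n_\upsilon=o(n)$), I would use a crude uniform upper bound $[x^m]e^{jA(x,a/n)}\le C_j (m+1)^{a+j-1}$, valid for all $m\le n$. This follows from the same contour estimates, or more simply from monotonicity in $q$ together with the $q=a/m$ comparison. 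With this bound the discrete sum near the boundary is negligible, since the limiting density is integrable. Tightness then identifies the limit as a Dirichlet law with parameters $(j_\upsilon+\mathcal O(a))_\upsilon$.

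The last step is concentration as $k\to\infty$. A Dirichlet vector with parameters $\beta_\upsilon=j_\upsilon+\theta_\upsilon$, where $|\theta_\upsilon|\le a$, and total mass $B=k+\mathcal O(a\ell)$ has coordinates with mean $\beta_\upsilon/B$ and variance at most $\beta_\upsilon/B^2$. Chebyshev's inequality combined with a union bound over $\upsilon$ then gives
\begin{equation*}
    \prob{\max_\upsilon |s_\upsilon - j_\upsilon/k|\ge\delta}\le \sum_\upsilon \frac{j_\upsilon+a}{\delta^2 k^2}+\One{a\ell/k\ge\delta/2}
    \le \frac{1+a\ell/k}{\delta^2 k}+\One{a\ell/k\ge\delta/2}.
\end{equation*}
By Corollary~\ref{cor: asymptotic concentration ell}, $\ell/k\to0$ in probability, so this tends to $0$ as $k\to\infty$. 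Lemma~\ref{lem: asymptotic probability of V1} removes the conditioning on $V_\rho(k)$. Finally, since the ordering $'$ is by $\bj$, the coordinatewise closeness of $\wbn/n$ and $\wbj/k$ transfers directly to $d_\infty(\frac1k\bj',\frac1n\bn')$. Ties in $\bj$ are harmless because we compare the same vertex.

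The main obstacle is the uniform coefficient bound for $[x^m]e^{jA(x,a/n)}$ when $m\ll n$. It is needed to rule out mass escaping to the boundary of the simplex, and Lemma~\ref{lem: asymptotic coefficient exponentials in transition regime} is only stated with $m=n$. My first attempt would be to redo the Hankel contour argument with $n$ replaced by $m$ and $q=a/n=(am/n)/m$, relying on the uniformity in $a\in[0,M]$ already proved.
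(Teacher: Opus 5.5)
Your overall route is the paper's: you take the conditional law of $\frac1n\wbn$ given $\wbj$ from Lemma~\ref{lem:combinatorial formulas}~(iv)--(v), pass to a Riemann-sum limit as $n\to\infty$, and then prove concentration as $k\to\infty$. The gap is that the coefficient asymptotic you feed into it is wrong, and the error is not harmless. A block with $m_\upsilon=n_\upsilon-j_\upsilon\approx s_\upsilon n$ sees the bias $a/n=(as_\upsilon)/m_\upsilon$. The uniformity in $a$ of Lemma~\ref{lem: asymptotic coefficient exponentials in transition regime} therefore gives
\[
[x^{m_\upsilon}]e^{j_\upsilon A(x,a/n)}\sim \frac{j_\upsilon\, m_\upsilon^{as_\upsilon+j_\upsilon-1}}{e^{as_\upsilon}\Gamma(as_\upsilon+j_\upsilon+1)},
\]
and not $m_\upsilon^{a+j_\upsilon-1}$. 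Only with $as_\upsilon$ do the factors $n^{\sum_\upsilon as_\upsilon}=n^a$ and $e^{-\sum_\upsilon as_\upsilon}=e^{-a}$ cancel against the denominator $[x^{n-k}]e^{kA}$. With your exponents the numerator carries an extra $n^{(\ell-1)a}$, so the ratio is not $n^{-(\ell-1)}$ times a probability density.

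The correct limit is the density of Lemma~\ref{lem: asymptotic density of tilde n conditional tilde j}, namely $\frac{\Gamma(a+k+1)}{k}\prod_\upsilon j_\upsilon s_\upsilon^{as_\upsilon+j_\upsilon-1}/\Gamma(as_\upsilon+j_\upsilon+1)$. Here $s_\upsilon$ enters both the exponent and the Gamma function, so this is not a Dirichlet law for any parameters. Your mean/variance formulas and the Chebyshev step are therefore unjustified.

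The same slip makes your boundary bound $C_j(m+1)^{a+j-1}$ too weak. It exceeds the true order $m^{am/n+j-1}$ by a factor $m^{a(1-m/n)}$. Summed over a layer such as $\{n_1\le n/\log n\}$, it bounds that layer's mass by something of order $n^a(\log n)^{-a-1}$, which diverges instead of being small. The right bound, $C_j(m+1)^{am/n+j-1}$, follows at once from the uniformity in $a\in[0,M]$ that you invoke yourself. So the obstacle you single out is not the real one.

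The concentration step can be repaired while keeping your union-bound idea. The ratio of the true limit density to the $\mathrm{Dirichlet}(j_1,\dots,j_\ell)$ density is
\[
\frac{\Gamma(a+k+1)}{\Gamma(k+1)}\prod_{\upsilon}\frac{\Gamma(j_\upsilon+1)\,s_\upsilon^{as_\upsilon}}{\Gamma(as_\upsilon+j_\upsilon+1)}\le \frac{\Gamma(a+k+1)}{\Gamma(k+1)}=\cO(k^a),
\]
so every event has probability at most $\cO(k^a)$ times its Dirichlet probability. Chebyshev's $\cO(1/(\delta^2k))$ cannot absorb the factor $k^a$ once $a\ge1$. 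However, the Dirichlet marginals are $\BetaDistribution{j_\upsilon}{k-j_\upsilon}$, with mean exactly $j_\upsilon/k$ and $2r$-th central moment $\cO_r(k^{-r})$ uniformly in $j_\upsilon$. Markov's inequality with $r>a+1$, plus a union bound over the $\ell\le k$ coordinates, then gives $\cO(\delta^{-2r}k^{a+1-r})\to0$. The portmanteau theorem applied to this closed event transfers the bound to $\limsup_n$.

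The repaired argument gives closeness of all coordinates at once. The paper instead extracts the one-dimensional marginal $f_{k,j}$ (Corollary~\ref{cor: asymptotic density of n1 conditional tilde j}) and runs a Stirling/Laplace analysis only for $j_i\in[\varepsilon k,(1-\varepsilon)k]$. It then handles the remaining coordinates through the Poisson--Dirichlet mass of the top $p$ components, a step your repaired version avoids.
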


\begin{lemma}\label{lem: proportionality tilde j tilde c}
    The $\ell$-tuples $\frac{1}{\ell-1} \bc'(\cT_n)$ and $\frac1k\bj'(\cT_n)$ are close in the following sense:
    for any $\delta>0$,
    \begin{align*}
    \limsup_{n \rightarrow \infty} \prob{d_\infty{\left( \frac1k \bj'(\cT_n), \frac{1}{\ell^k(\cT_n)-1} \bc'(\cT_n) \right)}
    \ge \delta} 
    \cv{k\to\infty} 0 \,.
    \end{align*}
\end{lemma}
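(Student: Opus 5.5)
The plan is to use Lemma~\ref{lem:combinatorial formulas}~(iii) together with (ii). Both say that, given $(\wbj,\wbn)$, the out-degrees $\wbc$ follow a multinomial law. Summing (iii) over trees with fixed $\wbj,\wbn$ is exactly what produced (iv). Conditionally on $V_\rho(k)$, $\wbj(\cT_n)=\wbj$ and $\wbn(\cT_n)=\wbn$, we get
\begin{equation*}
\probcond{\wbc(\cT_n)=\wbc}{\cdots} = \frac{(\ell-1)!}{(n-k)^{\ell-1}} \prod_{\upsilon\in[\ell]} \frac{(n_\upsilon-j_\upsilon)^{c_\upsilon}}{c_\upsilon!}.
\end{equation*}
So $\wbc$ is multinomial with $\ell-1$ trials and cell probabilities $p_\upsilon=(n_\upsilon-j_\upsilon)/(n-k)$. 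This is the key structural step, and it is read off directly from the product form in (iii)–(iv).

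Given this, I would argue in three steps.

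First, I would restrict to the good events. On $V_\rho(k)$, Lemma~\ref{lem: asymptotic probability of V1} shows that the complement has vanishing probability in the double limit. Corollary~\ref{cor: asymptotic concentration ell} gives $\ell=\ell^k(\cT_n)\ge \tfrac12 a\log k\to\infty$ with high probability. Lemma~\ref{lem: proportionality tilde j tilde n} gives $d_\infty(\frac1k\bj',\frac1n\bn')<\delta$ with high probability. Since $k$ is fixed while $n\to\infty$, we have $p_\upsilon=\frac{n_\upsilon}{n}+O(k/n)$, so on these events $\max_\upsilon |p_\upsilon - j_\upsilon/k|\le 2\delta$ for $n$ large.

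Second, I would prove the multinomial concentration. Conditionally on $(\wbj,\wbn)$, each $c_\upsilon\sim\BinomialDistribution{\ell-1}{p_\upsilon}$. Chebyshev gives $\prob{|c_\upsilon/(\ell-1)-p_\upsilon|\ge\delta}\le p_\upsilon/((\ell-1)\delta^2)$. A union bound over $\upsilon$, using $\sum p_\upsilon=1$, bounds the probability that some coordinate deviates by $1/((\ell-1)\delta^2)$. On the good event this is at most $2/(a\delta^2\log k)\to0$ as $k\to\infty$. Hence $\max_\upsilon |c_\upsilon/(\ell-1) - j_\upsilon/k|\le 3\delta$ with high probability, in the indexing given by the random labeling $\upsilon$.

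Third, I would pass to the ordering $'$. This step is where care is needed, since the statement compares vectors reordered by $\bj$, and both reorderings use the same vertex order $u'_1,\dots,u'_\ell$. The bound from the second step holds for every vertex simultaneously. It is therefore invariant under any common permutation of indices, in particular the one sorting $\bj$ non-increasingly, with arbitrary tie-breaking. Hence $d_\infty(\frac1k\bj',\frac1{\ell-1}\bc')\le3\delta$ on the good event.

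Taking $\limsup_n$ and then $k\to\infty$, and letting $\delta$ be arbitrary, concludes the proof. The only genuinely delicate point is verifying the conditional multinomial law rigorously: one sums (iii) over $\wbc$ with the fixed random labeling, and checks that the normalizing constant agrees with (iv) via $\sum_{\sum c=\ell-1}\prod (n_\upsilon-j_\upsilon)^{c_\upsilon}/c_\upsilon! = (n-k)^{\ell-1}/(\ell-1)!$. This identity is exactly the computation already displayed in the proof of (iv).
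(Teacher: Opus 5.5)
Your proof is correct. Its structural core matches the paper's. From Lemma~\ref{lem:combinatorial formulas}~(iii) and~(iv), given $V_\rho(k)$, $\wbj$ and $\wbn$, the vector $\wbc$ is multinomial with $\ell-1$ trials and cell probabilities $(n_\upsilon-j_\upsilon)/(n-k)$. Lemma~\ref{lem: proportionality tilde j tilde n} then compares these cell probabilities with $j_\upsilon/k$. One small correction: the multinomial law comes from (iii) and (iv) alone. Part (ii) describes the law of the reduced tree given $\wbc$, not the law of $\wbc$.

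Where you genuinely differ is the concentration step.

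The paper fixes one index $i$ at a time. It restricts to the event $j_i'/k\in[\varepsilon,1-\varepsilon]$, which is made likely by the Poisson--Dirichlet limit of Corollary~\ref{cor: asymptotic distribution tilde j PoissonDirichlet}. It then uses concentration of $\BinomialDistribution{\ell-1}{q}$ uniformly over $q\in[\varepsilon/3,1-\varepsilon/3]$. The remaining small coordinates are only sketched, by deferring to the sum-constraint argument at the end of the proof of Lemma~\ref{lem: proportionality tilde j tilde n}.

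Your argument instead combines Chebyshev with a union bound over all $\ell$ cells, using $\sum_\upsilon p_\upsilon(1-p_\upsilon)\le 1$. This controls every coordinate simultaneously with the explicit bound $1/((\ell-1)\delta^2)$. Since $\ell^k(\cT_n)\to\infty$ by Corollary~\ref{cor: asymptotic concentration ell}, this bound vanishes in the double limit. It also makes the passage to the $'$ ordering trivial, because the estimate is uniform over vertices. Note that only $\ell\to\infty$ in probability is needed; the rate $\tfrac12 a\log k$ is merely convenient.

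Your route is shorter and quantitative. It needs no $\varepsilon$-restriction, no Poisson--Dirichlet input beyond what is already inside Lemma~\ref{lem: proportionality tilde j tilde n}, and no separate tail argument. The paper's coordinatewise route mirrors the structure of its proof of Lemma~\ref{lem: proportionality tilde j tilde n}, but leaves the uniformity over all coordinates to a sketch.
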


\begin{proof}[Proof of \Cref{prop: joint convergence of tilde j tilde c tilde n towards PoissonDirichlet}]
\Cref{prop: joint convergence of tilde j tilde c tilde n towards PoissonDirichlet} follows directly from \Cref{cor: asymptotic distribution tilde j PoissonDirichlet,cor: asymptotic concentration ell} together with \Cref{lem: proportionality tilde j tilde n,lem: proportionality tilde j tilde c}.    
\end{proof}

It remains to prove \Cref{lem: proportionality tilde j tilde n,lem: proportionality tilde j tilde c}. 
To this end, we start by computing the asymptotic density of $\frac1n \wbn(\cT_n)$.

\begin{lemma}\label{lem: asymptotic density of tilde n conditional tilde j}
    Fix $\ell\ge2$ and a sequence of positive integers $\wbj=(j_1, \dots, j_\ell)$ summing to $k$.
    Conditionally on the events $V_\rho(k)$ and $\wbj(\cT_n) = \wbj$, the $\ell$-tuple $\frac1n \wbn(\cT_n)$ converges in distribution, as $n\to\infty$, to the density 
    \begin{equation*}
        \alpha \mapsto \frac{\Gamma(a+k+1)}{k} \prod_{\upsilon\in [\ell]} \frac{j_\upsilon \cdot \alpha_\upsilon^{\alpha_\upsilon a+j_\upsilon-1}}{\Gamma(\alpha_\upsilon a + j_\upsilon+1)}
    \end{equation*}
    with respect to the Lebesgue measure on the ($\ell\!-\!1$)--dimensional simplex 
    \[
        \Delta_{\ell} := \left\{ (\alpha_1,\dots,\alpha_{\ell-1}) \;:\; \alpha_1,\dots,\alpha_{\ell-1} \ge 0 ,\, \alpha_1+\dots+\alpha_{\ell-1} \le 1 \right\} \,,
    \]
    seen as a compact subset of $\bR^{\ell-1}$, where we write $\alpha_\ell := 1 - (\alpha_1+\dots+\alpha_\ell)$.
\end{lemma}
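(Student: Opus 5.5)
The plan is a local limit theorem: divide Lemma~\ref{lem:combinatorial formulas}~(iv) by (v), plug in the coefficient asymptotics of Section~\ref{sec: Asymptotics for q=a/n}, and then pass from pointwise convergence of point masses to convergence in distribution.

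\textbf{Exact conditional law.} Taking the ratio of (iv) and (v), everything that does not depend on $\wbn$ cancels:
\begin{equation*}
\probcond{\wbn(\cT_n)=\wbn}{V_\rho(k),\wbj(\cT_n)=\wbj}
=\frac{\prod_{\upsilon\in[\ell]}[x^{n_\upsilon-j_\upsilon}]e^{j_\upsilon A(x,a/n)}}{[x^{n-k}]e^{kA(x,a/n)}} .
\end{equation*}
The denominator is handled directly by Lemma~\ref{lem: asymptotic coefficient exponentials in transition regime} with $j=k$:
\begin{equation*}
[x^{n-k}]e^{kA(x,a/n)}\sim \frac{k\,n^{a+k-1}}{e^a\Gamma(a+k+1)} .
\end{equation*}

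\textbf{Numerator: rewriting the parameter.} For each factor I set $m=n_\upsilon-j_\upsilon\approx \alpha_\upsilon n$. Then
\begin{equation*}
\frac{a}{n}=\frac{a_\upsilon}{m},\qquad a_\upsilon:=\frac{am}{n}\approx a\alpha_\upsilon\in[0,a].
\end{equation*}
This is exactly why Lemma~\ref{lem: asymptotic coefficient exponentials in transition regime} is stated uniformly for $a\in[0,M]$. Applying it with $m$ in place of $n$ gives
\begin{equation*}
[x^{m}]e^{j_\upsilon A(x,a_\upsilon/m)}\sim \frac{j_\upsilon\, m^{a_\upsilon+j_\upsilon-1}}{e^{a_\upsilon}\Gamma(a_\upsilon+j_\upsilon+1)} ,
\end{equation*}
uniformly as long as $m\to\infty$, i.e.\ for $\alpha_\upsilon\ge\varepsilon$.

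\textbf{Assembling the limit.} Take the product over $\upsilon$ and divide by the denominator.
\begin{itemize}
\item Since $\sum_\upsilon a_\upsilon = a(n-k)/n$, the factors $e^{-a_\upsilon}$ multiply to $e^{-a+o(1)}$, which cancels the $e^{-a}$ in the denominator.
\item We have $m^{a_\upsilon}=n^{a_\upsilon}\alpha_\upsilon^{a\alpha_\upsilon}(1+o(1))$. Because $\sum_\upsilon a_\upsilon=a-o(1)$ with an $\cO(1/n)$ error, $n^{\sum a_\upsilon}=n^{a}(1+o(1))$.
\item The remaining powers of $n$ give $n^{k-\ell}$ overall, against $n^{k-1}$ in the denominator. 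This leaves a factor $n^{-(\ell-1)}$, which is precisely the lattice mesh factor for $\ell-1$ free coordinates.
\end{itemize}
Hence the point mass equals $n^{-(\ell-1)}\big(g(\alpha)+o(1)\big)$, uniformly on $\{\alpha_\upsilon\ge\varepsilon\ \forall\upsilon\}$, where
\begin{equation*}
g(\alpha)=\frac{\Gamma(a+k+1)}{k}\prod_{\upsilon\in[\ell]}\frac{j_\upsilon\,\alpha_\upsilon^{a\alpha_\upsilon+j_\upsilon-1}}{\Gamma(a\alpha_\upsilon+j_\upsilon+1)} .
\end{equation*}
The shift $n_\upsilon$ versus $m=n_\upsilon-j_\upsilon$ is $\cO(1/n)$ after rescaling and is harmless. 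A Riemann-sum argument then gives convergence of $\frac1n\wbn$ to the density $g$ on the interior of the simplex.

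\textbf{Main obstacle: the boundary.} The uniform asymptotics fail near $\partial\Delta_\ell$, where some $m$ stays bounded or is $o(n)$. My first attempt is to use tightness via total mass rather than a direct boundary estimate. Show that $\int_{\Delta_\ell} g=1$; then the interior convergence on $\{\alpha_\upsilon\ge\varepsilon\}$ carries mass tending to $1$ as $\varepsilon\to0$, and since the conditional laws are probability measures, the boundary region must carry asymptotically negligible mass.

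To prove $\int_{\Delta_\ell} g=1$ I would try a generating-function or Dirichlet-type computation, which I have not checked. If it fails, the fallback is a direct boundary estimate. For small $m$, bound the coefficients crudely by $[x^m]e^{jA(x,a/n)}\le [x^m]e^{jA(x,1)}$-type bounds or by monotonicity in $q$. This should show that the configurations with some $\alpha_\upsilon<\varepsilon$ contribute $\cO(\varepsilon^{j_\upsilon})$. Such a bound is integrable because $j_\upsilon\ge1$.
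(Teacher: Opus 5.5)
Your argument is the paper's route: the conditional law as the ratio of Lemma~\ref{lem:combinatorial formulas}~(iv) and~(v), the rewriting $a/n=a'/m$ with $a'=am/n\in[0,a]$ so that the uniformity in \Cref{lem: asymptotic coefficient exponentials in transition regime} applies, and a Riemann sum. Your bookkeeping of the powers of $n$ and of $e^{-a}$ is correct.

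The one unfinished step is the boundary of the simplex, and you are right to worry about it. The paper asserts \eqref{eq: estimate coefficient xalphan of expA transition regime uniform in alpha} uniformly in $\alpha\in[0,1]$. This fails when $\alpha n$ stays bounded: for instance $[x^1]e^{2A(x,q)}=2$, while the formula tends to $1$. The paper also obtains $\int_{\Delta_\ell}g=1$ only afterwards, as a consequence of the lemma (see the Remark following it). So your first route cannot borrow that identity from the paper; it needs an independent proof.

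As written, neither of your routes closes the boundary step.
\begin{itemize}
\item The comparison with $q=1$ only helps for bounded $m$, since $[x^m]e^{jA(x,1)}$ grows like $e^m$.
\item Monotonicity in $q$ towards a larger parameter such as $a/m$ loses a factor $m^{a(1-m/n)}$. That is a power of $n$ and cannot be absorbed.
\end{itemize}

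The bound you need comes from your own interior device, applied to all $m\ge m_0$ rather than only to $m\ge\varepsilon n$. Uniformity in $a'\in[0,a]$ gives $[x^m]e^{jA(x,a/n)}\le C(m+1)^{am/n+j-1}$ for all $0\le m\le n$; for $m<m_0$ use the $q=1$ comparison.
\begin{itemize}
\item The factor $\prod_\upsilon (m_\upsilon+1)^{am_\upsilon/n}\le (n+1)^{a}$ absorbs the $n^a$ of the denominator.
\item Counting compositions of $n-k$ with $m_{\upsilon_0}<\varepsilon n$ gives $\sum\prod_\upsilon (m_\upsilon+1)^{j_\upsilon-1}\lesssim \varepsilon^{j_{\upsilon_0}}n^{k-1}$.
\item Hence the conditional probability that some $\alpha_\upsilon<\varepsilon$ is $\cO(\varepsilon)$, uniformly in large $n$.
\end{itemize}
Together with the boundedness of $g$ on $\Delta_\ell$, this finishes the proof and yields $\int_{\Delta_\ell} g=1$ as a byproduct. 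This is also exactly the estimate that justifies the paper's truncated Riemann sum.

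Alternatively, your first route can be closed directly. Set $F_j(t):=j\,t^{at+j-1}/\Gamma(at+j+1)$. Using the Hankel representation of $1/\Gamma$ and a residue computation, one gets $\int_0^\infty e^{-st}F_j(t)\,\mathrm dt=\omega(s)^{-j}$, where $\omega(s)$ is the large root of $\omega-a\log\omega=s$. Hence $F_{j_1}*\cdots*F_{j_\ell}=F_k$, and evaluating at $t=1$ gives $\int_{\Delta_\ell}g=1$.
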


In particular, it has the following marginal distributions.

\begin{corollary}\label{cor: asymptotic density of n1 conditional tilde j}
    Fix $\ell\ge2$ and a non-increasing sequence of positive integers $\bj'=(j_1 \ge \dots \ge j_\ell)$ summing to $k$.
    Conditionally on the events $V_\rho(k)$ and $\bj'(\cT_n) = \bj'$, for each $i\in[\ell]$, the random variable~$\frac1n n_i'(\cT_n)$ converges in distribution to the density
    \begin{equation*}
        f_{k,j_i}: \alpha \mapsto \frac{j_i(k-j_i)}{k} \Gamma(a+k+1) \frac{\alpha^{\alpha a+j_i-1}}{\Gamma(\alpha a + j_i +1)} \frac{(1-\alpha)^{(1-\alpha)a+k-j_i-1}}{\Gamma((1-\alpha)a+k-j_i+1)}
    \end{equation*}
    on $[0,1]$.
\end{corollary}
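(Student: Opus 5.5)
The plan is to obtain the marginal directly from the joint density of \Cref{lem: asymptotic density of tilde n conditional tilde j}. First I reduce to the randomly ordered families. Conditionally on $V_\rho(k)$ and $\bj'(\cT_n)=\bj'$, the randomly ordered tuple $\wbj(\cT_n)$ is a uniformly random rearrangement of $\bj'$, since $\sigma$ is uniform and independent of $\cT_n$. The index $i$ in $\bj'$ corresponds to a block of value $j_i$. If several blocks share this value, the choice among them is arbitrary, but the limiting density below depends only on $j_i$, so ties do not affect the result. Hence it suffices to fix one ordering $\wbj$ with $j_1=j_i$ (placing that vertex first) and to show that, conditionally on $V_\rho(k)$ and $\wbj(\cT_n)=\wbj$, the variable $n_1/n$ converges in distribution to $f_{k,j_i}$. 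The case $\ell=1$ is degenerate and excluded, since then $k=j_i$ and $n_1=n$; so assume $\ell\ge2$.

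By \Cref{lem: asymptotic density of tilde n conditional tilde j}, $\frac1n\wbn(\cT_n)$ converges in distribution to the stated density on $\Delta_\ell$. Convergence in distribution is preserved under the continuous projection onto the first coordinate, so the limit law of $n_1/n$ is obtained by integrating out $(\alpha_2,\dots,\alpha_\ell)$. Fix $\alpha_1=\alpha$. The remaining coordinates range over a simplex of total mass $1-\alpha$, and the factor to integrate is
\[
\prod_{\upsilon=2}^\ell \frac{j_\upsilon\,\alpha_\upsilon^{\alpha_\upsilon a+j_\upsilon-1}}{\Gamma(\alpha_\upsilon a+j_\upsilon+1)} .
\]

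The key step is the convolution identity
\[
\int_0^{s}\frac{j\,\beta^{\beta a+j-1}}{\Gamma(\beta a+j+1)}\cdot\frac{j'\,(s-\beta)^{(s-\beta)a+j'-1}}{\Gamma((s-\beta)a+j'+1)}\,\mathrm d\beta=\frac{(j+j')\,s^{sa+j+j'-1}}{\Gamma(sa+j+j'+1)} .
\]
This says that the family $g_j(s):=\frac{j\,s^{sa+j-1}}{\Gamma(sa+j+1)}$ is closed under convolution, with the index $j$ adding up. Iterating it $\ell-2$ times collapses the integral to $g_{k-j_i}(1-\alpha)$. Multiplying by the prefactor $\frac{\Gamma(a+k+1)}{k}$ and by $g_{j_i}(\alpha)$ then gives exactly $f_{k,j_i}$.

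To prove the identity, I would not evaluate the integral directly. Instead I would read it off from the generating functions already computed. From the proof of \Cref{lem:combinatorial formulas}(v), the density in \Cref{lem: asymptotic density of tilde n conditional tilde j} arises as the limit of $[x^{n_\upsilon}]\,x^{j_\upsilon}e^{j_\upsilon A(x,q)}$, and \Cref{lem: asymptotic coefficient exponentials in transition regime} applied with $n$ replaced by $n_\upsilon\approx\beta n$ gives the corresponding local asymptotics. The identity
\[
\sum_{m+m'=N}[x^m]x^je^{jA}\cdot[x^{m'}]x^{j'}e^{j'A}=[x^N]x^{j+j'}e^{(j+j')A}
\]
then converts, by a Riemann-sum argument, into the continuous convolution. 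Matching both sides through \Cref{lem: asymptotic coefficient exponentials in transition regime} produces the claimed formula.

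The main obstacle is justifying the Riemann-sum step. It requires uniformity in the asymptotics of \Cref{lem: asymptotic coefficient exponentials in transition regime} when the effective parameter is $a_{\rm eff}=\beta a$ with $\beta\in[0,1]$, which is covered by the uniformity in $a\in[0,M]$ stated there. It also requires controlling the boundary regions where $\beta n$ or $(s-\beta)n$ is small. There I would use that these contributions vanish because $j,j'\ge1$: the factor $\beta^{j-1}$ is integrable, and the coefficients are monotone bounded. If this turns out to be awkward, the alternative is to verify the identity via Laplace/Mellin-type arguments, or simply to note that both sides integrate to the correct normalisations and check consistency using the total mass in \Cref{lem: asymptotic density of tilde n conditional tilde j}.
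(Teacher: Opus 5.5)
Your proof is correct, and its skeleton (project the joint limit density of \Cref{lem: asymptotic density of tilde n conditional tilde j} onto one coordinate, then integrate out the others) is the paper's. The difference is in how you evaluate the integral over the remaining $\ell-1$ coordinates.

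The paper proves no new identity. It substitutes $\beta_\upsilon=\alpha_\upsilon/(1-\alpha)$ and $b=(1-\alpha)a$, and observes that $\beta_\upsilon b=\alpha_\upsilon a$. Each factor then becomes $(1-\alpha)^{\beta_\upsilon b+j_\upsilon-1}$ times the same factor with $a$ replaced by $b$. The resulting integrand is the joint density of that lemma with parameters $(b,k-j_i,\ell-1)$, whose total mass is $1$ by the Remark following the lemma, which holds for every value of the parameter.

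You instead prove the semigroup identity $g_j*g_{j'}=g_{j+j'}$ on $[0,s]$ by a fresh Riemann-sum argument at total size $N\approx sn$. Since $a/n=sa/N$, this amounts to re-proving that same normalization at parameter $sa$. It therefore forces you to handle uniformity and the boundary regions, which you rightly flag. Your argument does go through. The uniformity in $a\in[0,M]$ of \Cref{lem: asymptotic coefficient exponentials in transition regime} gives bounds of order $\epsilon^{j}$, relative to the main term, on the region $\beta\le\epsilon$. Bounded indices are negligible because $j,j'\ge1$.

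You can bypass that obstacle entirely: your convolution identity follows in one line from the Remark via the paper's scaling $\beta=s\gamma$, $b=sa$. Your route does yield a cleaner structural statement, namely that $(g_j)_{j\ge1}$ is a convolution semigroup, which gives the law of any partial sum of coordinates at once. The paper's route is shorter.

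Your last fallback, checking that both sides have the same total mass, would not by itself establish a pointwise identity, so do not rely on it.
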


\begin{proof}[Proof of \Cref{lem: asymptotic density of tilde n conditional tilde j}]
    By Lemma \ref{lem:combinatorial formulas} (iv) and (v), we have, for any sequence $\wbn$ of positive integers summing to $n$,
    \begin{align}\label{eq: reduced tree distribution tilde n conditional tilde j}
        \probcond{\Big.\wbn(\cT_n)=\wbn}{V_\rho(k), \wbj(\cT_n)=\wbj} 
        = \frac{1}{ [x^{n-k}] e^{k A(x,q)} } \prod_{\upsilon\in[\ell]} [x^{n_\upsilon-j_\upsilon}] e^{j_\upsilon A(x,q)} \,.
    \end{align}
    To work out the asymptotics related to this distribution, we rely on Lemma \ref{lem: asymptotic coefficient exponentials in transition regime} implying that for all $\alpha\in[0,1]$, all $n \in \bN$ such that $\alpha n \in \bN$ and all $j\ge1$,
    \begin{align}\label{eq: estimate coefficient xalphan of expA transition regime uniform in alpha}
        [x^{\alpha n}] e^{jA(x,q)} 
        = [x^{\alpha n}] e^{jA(x, \frac{\alpha a}{\alpha n})} 
        = \frac{j (\alpha n)^{\alpha a+j-1} e^{-\alpha a}}{\Gamma(\alpha a+j+1)} \left(1+o(1)\right),
    \end{align}
    where the $o$ is uniform in $\alpha$. Now, let $h$ be a continuous map on $\Delta_\ell$.
    Taking $n\to\infty$ in \eqref{eq: reduced tree distribution tilde n conditional tilde j} with the asymptotics \eqref{eq: estimate coefficient xalphan of expA transition regime uniform in alpha}, we get
    \begin{align*}
        &\expecond{ h{\left(\frac1n\wbn(\cT_n)\right)} }{V_\rho(k), \wbj(\cT_n)=\wbj} 
        \\&= \sum_{\substack{j_\upsilon\le n_\upsilon\le n \text{ for all }\upsilon\in[\ell-1]\\ n_{\ell}:= n- \sum_{\upsilon<\ell} n_\upsilon}}
        h{\left(\frac1n\wbn\right)}
        \probcond{\Big. \wbn(\cT_n)=\wbn}{V_\rho(k), \wbj(\cT_n)=\wbj} 
        \\&= \sum_{\substack{j_\upsilon\le n_\upsilon\le n \text{ for all }\upsilon\in[\ell-1]\\ n_{\ell}:= n- \sum_{\upsilon<\ell} n_\upsilon}}
        \One{n_\ell\ge j_\ell}
        h{\left(\frac1n\wbn\right)}
        \frac{\Gamma(a+k+1)}{k n^{a+k-1} e^{-a}} \prod_{\upsilon\in[\ell]} \frac{j_\upsilon (n_\upsilon-j_\upsilon)^{(n_\upsilon-j_\upsilon) a/n +j_\upsilon -1} e^{-(n_\upsilon-j_\upsilon) a/n}}{\Gamma((n_\upsilon-j_\upsilon) a/n+j_\upsilon+1)} \left(1+o(1)\right)
        \\&= \sum_{\substack{j_\upsilon\le n_\upsilon\le n \text{ for all }\upsilon\in[\ell-1]\\ n_{\ell}:= n- \sum_{\upsilon<\ell} n_\upsilon}}
        \One{n_\ell\ge j_\ell}
        h{\left(\frac1n\wbn\right)}
        \frac{\Gamma(a+k+1)}{k n^{a+k-1} e^{-a}} \prod_{\upsilon\in[\ell]} \frac{j_\upsilon n_\upsilon^{n_\upsilon a/n +j_\upsilon -1} e^{-n_\upsilon a/n}}{\Gamma(n_\upsilon a/n+ j_\upsilon +1)} \left(1+o(1)\right)
        \\&= \sum_{\substack{j_\upsilon\le n_\upsilon\le n \text{ for all }\upsilon\in[\ell-1]\\ n_{\ell}:= n- \sum_{\upsilon<\ell} n_\upsilon}}
        \One{n_\ell\ge j_\ell}
        h{\left(\frac1n\wbn\right)}
        \frac{\Gamma(a+k+1)}{k n^{\ell-1}} \prod_{\upsilon\in[\ell]} \frac{j_\upsilon (n_\upsilon/n)^{n_\upsilon a/n +j_\upsilon -1}}{\Gamma(n_\upsilon a/n+ j_\upsilon +1)} \left(1+o(1)\right)\,,
    \end{align*}
    where the $o$ is uniform in the $n_\upsilon$'s.
    One recognizes a (truncated) $(\ell-1)$--dimensional Riemann sum for the continuous map 
    \begin{equation*}
        (\alpha_1, \dots, \alpha_{\ell-1}) \mapsto 
        h{\left(\alpha\right)}
        \prod_{\upsilon\in[\ell]} \frac{j_\upsilon \alpha_\upsilon^{\alpha_\upsilon a +j_\upsilon-1}}{\Gamma(\alpha_\upsilon a+j_\upsilon +1)}
    \end{equation*}
    on the ($\ell\!-\!1$)--dimensional simplex 
    (where we write $\alpha_\ell := 1 - \sum_{\upsilon<\ell} \alpha_\upsilon$), yielding
    \begin{align}
    \label{eq:strange computation}
        \expecond{ h\left(\frac1n\wbn(\cT_n)\right) }{V_\rho(k), \wbj(\cT_n)=\wbj} 
        \cv{} \frac{\Gamma(a+k+1)}{k} 
        \int_{0}^{1} \mathrm{d}\alpha_1 \ldots \int_{0}^{1} \mathrm{d}\alpha_{\ell-1} \One{\alpha_\ell\ge0}
        h(\alpha) \prod_{\upsilon\in[\ell]} \frac{j_\upsilon \alpha_\upsilon^{\alpha_\upsilon a +j_\upsilon -1}}{\Gamma(\alpha_\upsilon a+j_\upsilon +1)}
    \end{align}
    as $n\to\infty$.
    This concludes the proof.
\end{proof}

\begin{remark}
In particular, \Cref{eq:strange computation} provides a computation of the nontrivial integral
    \begin{equation*}
        \frac{\Gamma(a+k+1)}{k} \int_{0}^{1} \mathrm{d}\alpha_1 \ldots \int_{0}^{1} \mathrm{d}\alpha_{\ell-1} \One{\alpha_\ell\ge0}
        \prod_{\upsilon\in[\ell]} \frac{j_\upsilon \alpha_\upsilon^{\alpha_\upsilon a +j_\upsilon -1}}{\Gamma(\alpha_\upsilon a+j_\upsilon +1)}
        = 1 \,,
    \end{equation*}
    where $\alpha_\ell := 1 - \sum_{\upsilon<\ell} \alpha_\upsilon$.
\end{remark}

From that we deduce the marginal distribution of $n_1$.

\begin{proof}[Proof of \Cref{cor: asymptotic density of n1 conditional tilde j}]
We need to compute the marginal distribution of $\alpha :=\alpha_{i}$, where $(\alpha_1, \dots, \alpha_\ell)$ is the weak limit of $\frac1n\bn'(\cT_n)$ conditionally given $\bj'(\cT_n)$.
Since \Cref{lem: asymptotic density of tilde n conditional tilde j} holds for $\wbn$ when conditioning on $\wbj$, it also holds for $\bn'$ when conditioning on $\bj'$, after reordering the vectors to make $\wbj$ non-increasing (with ties broken arbitrarily).
Therefore, we get that $\alpha$ has density
\begin{align*}
    \alpha \mapsto \frac{\Gamma(a+k+1)}{k} \frac{j \alpha^{\alpha a +j-1}}{\Gamma(\alpha a+j+1)} 
    %\int\limits_{\sum_{\upsilon \in [\ell]\backslash\{1\}}\alpha_\upsilon=1-\alpha}
    \int_{[0,1]^{\ell-2}} \One{\sum_{\upsilon\in[\ell-1]\setminus\{i\}} \alpha_\upsilon \le 1-\alpha}
    \prod_{\upsilon \in [\ell] \backslash \{i\}} \frac{j_\upsilon \alpha_\upsilon^{\alpha_\upsilon a +j_\upsilon-1}}{\Gamma(\alpha_\upsilon a+j_\upsilon+1)} 
    \prod_{\upsilon\in[\ell-1]\setminus\{i\}} \mathrm{d}\alpha_{\upsilon}
\end{align*}
on $[0,1]$, where we write $j:=j_i(\cT_n)$ and $\alpha_\ell := 1-\alpha - \sum_{\upsilon\in[\ell-1]\setminus\{i\}} \alpha_\upsilon$.
In order to compute that integral, define $b := (1-\alpha) a$ and, for any $v \in [\ell]\setminus\{i\}$, $\beta_\upsilon = \frac{\alpha_\upsilon}{1-\alpha}$. 
Then, we have $\beta_\upsilon b=\alpha_\upsilon a$, and we can rewrite
\begin{align*}
    \prod_{\upsilon \in [\ell] \backslash \{i\}} \frac{j_\upsilon \alpha_\upsilon^{\alpha_\upsilon a +j_\upsilon-1}}{\Gamma(\alpha_\upsilon a+j_\upsilon+1)} &= \prod_{\upsilon \in [\ell] \backslash \{i\}} \frac{j_\upsilon (\beta_\upsilon(1-\alpha))^{\beta_\upsilon b +j_\upsilon-1}}{\Gamma(\beta_\upsilon b+j_\upsilon+1)}\\
    &= (1-\alpha)^{\sum (\beta_\upsilon b+j_\upsilon-1)} \prod_{\upsilon \in [\ell] \backslash \{i\}} \frac{j_\upsilon \beta_\upsilon^{\beta_\upsilon b +j_\upsilon-1}}{\Gamma(\beta_\upsilon b+j_\upsilon+1)} \\
    &= (1-\alpha)^{(1-\alpha) a-(\ell-1)+k-j} \prod_{\upsilon \in [\ell] \backslash \{i\}} \frac{j_\upsilon \beta_\upsilon^{\beta_\upsilon b +j_\upsilon-1}}{\Gamma(\beta_\upsilon b+j_\upsilon+1)},
\end{align*}
where we have used that $\sum\limits_{\upsilon \in [\ell] \backslash \{i\}} \beta_\upsilon = 1$, $b=(1-\alpha)a$, and $\sum\limits_{\upsilon \in [\ell] \backslash \{i\}} j_\upsilon = k-j$.
We also know from Lemma \ref{lem: asymptotic density of tilde n conditional tilde j} that 
\begin{equation*}
    (\beta_\upsilon, \upsilon\in[\ell-1]\setminus\{i\}) \mapsto \frac{\Gamma(b+k-j+1)}{k-j} \prod_{\upsilon\in[\ell]\setminus\{i\}} \frac{j_\upsilon \beta_\upsilon^{\beta_\upsilon b +j_\upsilon-1}}{\Gamma(\beta_\upsilon b+j_\upsilon +1)}
\end{equation*}
is a probability density on $\Delta_{\ell-1}$, where we write $\beta_\ell := 1 - \sum_{\upsilon\in[\ell-1]\setminus\{i\}} \beta_\upsilon$. 
Hence, we have
\begin{align*}
    &%\int\limits_{\sum_{u=2}^\ell \alpha_u=1-\alpha} 
    \int_{[0,1]^{\ell-2}} \One{\sum_{\upsilon\in[\ell-1]\setminus\{i\}} \alpha_\upsilon \le 1-\alpha}
    \prod_{\upsilon \in [\ell] \backslash \{i\}} \frac{j_\upsilon \alpha_\upsilon^{\alpha_\upsilon a +j_\upsilon-1}}{\Gamma(\alpha_\upsilon a+j_\upsilon +1)} \prod_{\upsilon\in[\ell-1]\setminus\{i\}}\mathrm{d}\alpha_\upsilon 
    \\&= (1-\alpha)^{\ell-2} 
    %\int\limits_{\sum_{\upsilon=2}^\ell \beta_\upsilon=1}  
    \int_{[0,1]^{\ell-2}} \One{\sum_{\upsilon\in[\ell-1]\setminus\{i\}} \beta_\upsilon \le 1}
    (1-\alpha)^{(1-\alpha) a-(\ell-1)+k-j} \prod_{\upsilon \in [\ell] \backslash \{i\}} \frac{j_\upsilon \beta_\upsilon^{\beta_\upsilon b +j_\upsilon-1}}{\Gamma(\beta_\upsilon b+j_\upsilon +1)} \prod_{\upsilon\in[\ell-1]\setminus\{i\}}\mathrm{d}\beta_\upsilon \\
    &= (1-\alpha)^{(1-\alpha) a-1+k-j} \frac{k-j}{\Gamma((1-\alpha)a+k-j+1)} \,.
\end{align*}
Altogether, we find that $\alpha$ has the desired density on $[0,1]$.
\end{proof}

We can finally prove \Cref{lem: proportionality tilde j tilde n,lem: proportionality tilde j tilde c}.

\begin{proof}[Proof of \Cref{lem: proportionality tilde j tilde n}]
We will show that, conditionally on $\bj'(\cT_n)$ being ``typical'', the vector $\frac1n \bn'(\cT_n)$ is concentrated around $\frac{1}{k} \bj'(\cT_n)$.
Let $i\in\bN$ be fixed.
Thanks to \Cref{cor: asymptotic concentration ell}, we can assume with high probability that $i \le \ell$.
Write $j := j_i(\cT_n)$ and let $f_{k,j}(\alpha)$ be the asymptotic density of $\frac1n n_i(\cT_n)$ found in \Cref{cor: asymptotic density of n1 conditional tilde j}.
Fix $\varepsilon \in (0,1/2)$, and let $\beta = \frac{j}{k}$. 
We have the following approximation, uniformly in $\beta \in [\varepsilon, 1-\varepsilon]$ and in $\alpha \in (0,1)$, as $k \rightarrow \infty$:
\begin{align*}
    f_{k,j}(\alpha) &=\Gamma(a+k+1) \frac{j(k-j)}{2k\pi \sqrt{(\alpha a +j)((1-\alpha)a+k-j)}} \frac{\alpha^{\alpha a-1+j}}{\left(\frac{\alpha a +j}{e}\right)^{\alpha a +j}} \frac{(1-\alpha)^{(1-\alpha) a-1+k-j}}{\left(\frac{(1-\alpha) a +k-j}{e}\right)^{(1-\alpha) a +k-j}} (1+o(1))\\
    &=\Gamma(a+k+1)\frac{\sqrt{\beta(1-\beta)}}{2\pi} e^{a+k} \frac{1}{\alpha(1-\alpha)} \left( \frac{\alpha}{\alpha a+j} \right)^{\alpha a + j} \left( \frac{1-\alpha}{(1-\alpha)a+k-j} \right)^{(1-\alpha)a+k-j} (1+o(1)) \,.
\end{align*}
We now expand
\begin{align*}
\log\left(\left( \frac{\alpha}{\alpha a+j} \right)^{\alpha a +j}\right) 
= (\alpha a + j) \big( \log(\alpha/j)+\cO(1/k) \big)
= j \log(\alpha/j) + \cO(\log k) \,,
\end{align*}
again uniformly for $\beta \in [\varepsilon, 1-\varepsilon]$ and $\alpha \in (0,1)$. 
Hence,
\begin{align*}
    \log &\left( \frac{1}{\alpha(1-\alpha)} \left( \frac{\alpha}{\alpha a+j} \right)^{\alpha a + j} \left( \frac{1-\alpha}{(1-\alpha)a+k-j} \right)^{(1-\alpha)a+k-j} \right)
    \\ &\qquad \qquad= j \log(\alpha/j) + (k-j)\log((1-\alpha)/(k-j)) - \log(\alpha(1-\alpha)) + \cO(\log k)
    \\&\qquad \qquad= \left(1+\cO\left(\frac1k\right)\right) \left[\Big. j \log(\alpha/j) + (k-j) \log((1-\alpha)/(k-j)) \right]
    \\&\qquad \qquad= \left(1+\cO\left(\frac1k\right)\right) k \left[\Big. \beta \log(\alpha/\beta)+ (1-\beta) \log((1-\alpha)/(1-\beta)) - \log k \right] \,,
\end{align*}
% indeed, $-\log\alpha = \cO -\beta\log\alpha \le \cO -\beta\log\alpha + \beta\log j = \cO \frac1k j\log(j/\alpha)
again uniformly for $\beta \in [\varepsilon, 1-\varepsilon]$ and $\alpha \in (0,1)$. 
The function $g: \alpha\mapsto  \beta \log(\alpha/\beta)+ (1-\beta) \log((1-\alpha)/(1-\beta))$ is strictly concave and reaches its only maximum at $\alpha_*=\beta$, where $g(\alpha_*)=0$. 
Furthermore, $g''(\alpha)<-1$ for any $\alpha \in (0,1)$ and any $\beta \in [\varepsilon, 1-\varepsilon]$. 
Thus, Taylor's formula provides the following:
\begin{itemize}
    \item for all $\alpha$ such that $|\alpha-\alpha_*|<k^{-1/2}$, there exists $\gamma$ between $\alpha$ and $\alpha_*$ such that
    \begin{align*}
    g(\alpha)
    =g(\alpha_*)+\frac{g''(\gamma)}{2}(\alpha-\alpha_*)^2
    \ge \frac{1}{2k} g''(\alpha_*)(1+o(1))
    \end{align*}
    as $k \rightarrow \infty$, uniformly in $\beta \in [\varepsilon, 1-\varepsilon]$.
    \item for all $\alpha$ such that $|\alpha-\alpha_*|>k^{-1/4}$, there exists $\gamma$ between $\alpha$ and $\alpha_*$ such that
    \begin{align*}
    g(\alpha)
    =g(\alpha_*)+\frac{g''(\gamma)}{2}(\alpha-\alpha_*)^2
    \le -\frac{1}{2\sqrt{k}} \,.
    \end{align*}
\end{itemize}
Thus, we get
\begin{multline*}
    \left(\Gamma(a+k+1) e^{a+k} \frac{\sqrt{\beta(1-\beta)}}{2\pi}\right)^{-1}\int_{|\alpha-\alpha_*|<k^{-1/2}} f_{k,j}(\alpha) \mathrm d\alpha 
    \\= (1+o(1)) \int_{|\alpha-\alpha_*|<k^{-1/2}} e^{-k \log k}e^{kg(\alpha)} e^{\cO(g(\alpha)-\log k)} \mathrm d\alpha
    \ge 2k^{-1/2}e^{-k \log k} e^{-\cO(\log k)} \,,
\end{multline*}
while
\begin{align*}
    \left(\Gamma(a+k+1) e^{a+k} \frac{\sqrt{\beta(1-\beta)}}{2\pi}\right)^{-1} \int_{|\alpha-\alpha_*| 
    > k^{-1/4}} f_{k,j}(\alpha) \mathrm d\alpha 
    \le e^{- C\sqrt{k}}e^{-k \log k}e^{-\cO(\log k)},
\end{align*}
for some $C>0$ independent of $\beta \in [\varepsilon, 1-\varepsilon]$. 
Hence, we get that
\begin{equation}
\label{eq: cv of alpha}
    \probcond{ \abs{\alpha_i-\frac{j_i(\cT_n)}{k}} \le k^{-1/4} \quad}{\quad j_i(\cT_n) \in [\varepsilon k, (1-\varepsilon)k] } \cv{k\to\infty} 1 \,,
\end{equation}
where $i\in\bN$ is fixed and $\alpha_i$ is distributed under the $f_{k,j_i}$ density.
We can now conclude the proof. 
Fix $\eta>0$. 
Using the Poisson--Dirichlet asymptotics of $\frac1k \bj'(\cT_n)$ from \Cref{cor: asymptotic distribution tilde j PoissonDirichlet}, we get the following. 
There exist $\varepsilon>0$ and $p\ge1$ such that, for all $1\le i\le p$ and all large enough $n$, $\prob{j_i'(\cT_n) \in [\varepsilon k, (1-\varepsilon)k]} \geq 1-\eta$, and in addition
\begin{align*}
    \prob{\sum_{i=1}^p j_i' \ge (1-\eta)k} 
    \ge 1-\eta \,.
\end{align*}
Using \eqref{eq: cv of alpha} for each of the $j_i'$'s, $1 \leq i \leq p$, we get the result.
\end{proof}

\begin{proof}[Proof of \Cref{lem: proportionality tilde j tilde c}]
    From Lemma \ref{lem:combinatorial formulas} (iii) and (iv), we deduce that
    \begin{equation}\label{eq: reduced tree distribution tilde c conditional tilde j tilde n}
        \probcond{\big.\wbc(\cT_n)=\wbc}{V_\rho(k), \wbj(\cT_n)=\wbj, \wbn(\cT_n)=\wbn}
        = \frac{(\ell-1)!}{(n-k)^{\ell-1}} \prod_{\upsilon\in[\ell]} \frac{(n_\upsilon-j_\upsilon)^{c_\upsilon}}{c_\upsilon!} \,.
    \end{equation}
    Therefore, the same formula holds when reordering all vectors to make $\wbj$ non-increasing.
    For fixed $i$ ($1\le i\le \ell$), let us derive the conditional distribution of $c_i'(\cT_n)$. We have
    \begin{align*}
        \probcond{\big. c_i'(\cT_n)=c_i'}{V_\rho(k), \bj'(\cT_n)=\bj', \bn'(\cT_n)=\bn'} \nonumber
        &= \sum_{\substack{c_\upsilon' \ge 0, \upsilon\in[\ell]\setminus\{i\}\\\sum_{\upsilon\in[\ell]\setminus\{i\}} c_\upsilon'=\ell-1-c_i'}} \frac{(\ell-1)!}{(n-k)^{\ell-1}} \prod_{\upsilon\in[\ell]} \frac{(n_\upsilon'-j_\upsilon')^{c_\upsilon'}}{c_\upsilon'!} \nonumber
        \\&= \frac{(\ell-1)! (n_i'-j_i')^{c_i'}}{(n-k)^{\ell-1} c_i'!} [x^{\ell-1-c_i'}] \prod_{\upsilon\in[\ell]\setminus\{i\}} e^{x(n_\upsilon'-j_\upsilon')} \nonumber
        \\&= \frac{(\ell-1)! (n_i'-j_i')^{c_i'}}{(n-k)^{\ell-1} c_i'!} \frac{(n-n_i'-k+j_i')^{\ell-1-c_i'}}{(\ell-1-c_i')!} \,.
    \end{align*}
    That is, the conditional law of $c_i'(\cT_n)$, under the event $V_\rho(k)$ and given $\bj'(\cT_n), \bn'(\cT_n)$, is $\BinomialDistribution{\ell-1}{\frac{n_i'-j_i'}{n-k}}$.   
    It remains to show the desired concentration result for this distribution.

    Fix an arbitrary $p\in(0,1)$.
    For a given $\varepsilon>0$, let $E_\varepsilon$ be the event that $\frac1k j_i'(\cT_n) \in [\varepsilon,1-\epsilon]$.
    We know from \Cref{cor: asymptotic distribution tilde j PoissonDirichlet} that there exists a small enough $\varepsilon = \varepsilon(p)$, a $k_0$ and for each $k$ an $m_0(k)$, such that
    \begin{equation*}
        \text{for all }k\ge k_0 \text{ and for all }n\ge m_0(k),\quad
        \prob{E_\varepsilon} \ge 1-p \,.
    \end{equation*}
    Then, fix $0< \delta\le \frac13\varepsilon$ and use \Cref{lem: proportionality tilde j tilde n}.
    There exists $k_1$, and for each $k$ there exists $m_1(k)$, such that
    \begin{equation*}
        \text{for all }k\ge k_1 \text{ and for all }n\ge m_1(k),\quad
        \prob{ \abs{ \frac1k j_i'(\cT_n) - \frac1n n_i'(\cT_n) } \le \delta } \ge 1-p \,.
    \end{equation*}
    Let $E_\delta'$ be the above event.
    Under $E_\delta'$, we have $\abs{ \frac{n_i'-j_i'}{n-k} - \frac1k j_i' } \le \delta + \frac{2k}{n-k}$.
    For each $k$, let $m_2(k)$ be such that $\frac{2k}{n-k} \le \delta$ for all $n\ge m_2(k)$.
    Under the events $E_\varepsilon$ and $E_\delta'$ and for any $n\ge m_2(k)$, we have $\abs{ \frac{n_i'-j_i'}{n-k} - \frac1k j_i' } \le 2\delta$, and further $\frac{n_i'-j_i'}{n-k} \in [\frac13\varepsilon, 1-\frac13\varepsilon]$.
    
    For each $\ell\ge2$ and $q\in[0,1]$, write $S_{\ell,q}$ for a $\BinomialDistribution{\ell-1}{q}$ random variable.
    Then, there exists $\ell_0$ such that
    \begin{equation*}
        \text{for all }q\in[\tfrac13\varepsilon, 1-\tfrac13\varepsilon] \text{ and for all }\ell\ge\ell_0,\quad
        \prob{\abs{\frac{S_{\ell,q}}{\ell-1}-q} \le \delta} \ge 1-p \,.
    \end{equation*}
    Furthermore, again by \Cref{cor: asymptotic distribution tilde j PoissonDirichlet}, there exists $k_3$, and for each $k$ there exists $m_3(k)$, such that:
    \begin{equation*}
        \text{for all }k\ge k_3 \text{ and for all }n\ge m_3(k),\quad
        \prob{ \ell(\cT_n) \ge \ell_0 } \ge 1-p \,.
    \end{equation*}
    Combining all of the above, we find that
    \begin{equation*}
        \text{for all } k\ge \max(k_0,k_1,k_3),\text{ and for all } n\ge \max(m_0,m_1,m_2,m_3),\quad
        \prob{ \abs{ \frac{c_i'(\cT_n)}{\ell(\cT_n)-1} - \frac1k j_i'(\cT_n) } \le 3\delta } \ge 1-4p \,,
    \end{equation*}
    for any fixed $i\in\bN$.
    The conclusion follows in the same way as in the proof of \Cref{lem: proportionality tilde j tilde n}.
\end{proof}

\subsection{Convergence towards the limit dendron}
\label{ssec: limit dendron convergence}

Here we prove \Cref{th: dendron limit if a/n}, that is, the convergence of the descent-biased tree $\cT_n := \cT_n^{(a/n)}$ towards a dendron with respect to the Gromov-weak topology recalled in \Cref{sec: Gromov Prokhorov}. 
We start by defining the limit dendron, through a generalization of the so-called Foata--Fuchs bijection, in Section \ref{sssec:foata}. 
Then, after gathering some preliminary results in \Cref{ssec: descents in ancestral line,sssec:prelim}, we describe a coupling between discrete trees and the limit dendron in \Cref{sssec:coupling}. 
We then use this coupling in \Cref{sssec:final proof} to prove the convergence of the discrete descent-biased trees to the limit dendron.

\subsubsection{The Foata--Fuchs bijection and the Poisson--Dirichlet dendron}
\label{sssec:foata}

\paragraph{The Foata--Fuchs bijection.}

The construction of the limit dendron $\cD_a$ is based on a generalization to infinite trees of the Foata--Fuchs bijection, which is a bijection between finite trees with a given sequence of out-degrees and a certain set of lists of integers. 
We start by recalling this bijection; see, e.g., \cite{Addario-Berry_Blanc-Renaudie_Donderwinkel_Maazoun_Martin_2023} for more details, and \Cref{fig: finite Foata} for an example. 

\begin{definition}[The finite Foata--Fuchs construction]
\label{def:finite foata fuchs}
Fix $m \in \bN$ and consider a sequence ${\bx} := (x_1,\ldots,x_{m-1}) \in [m]^{m-1}$. We build a tree $T_\bx \in \bT_m$ from $\bx$ as follows. 
Let $D(\bx)$ be the set of elements of $[m]$ which do not appear in $\bx$.
Let $(u_1, \ldots, u_m)$ be the vertices of $T_\bx$, where $u_i$ has label $i$. Then,
\begin{itemize}
    \item the root of $T_\bx$ is the vertex $u_{x_1}$;
    \item if $x_2 \neq x_1$, then $u_{x_2}$ is a child of $u_{x_1}$;
    \item if $x_2=x_1$, then $u_{d_1}$ is a child of $u_{x_1}$, where $d_1=\min D(\bx)$.
    \item Iteratively, we have, for $3\le i \leq m-1$:
    \begin{itemize}
        \item if $x_i \notin \{x_1,\ldots,x_{i-1}\}$, then $u_{x_i}$ is a child of $u_{x_{i-1}}$;
        \item if $x_i \in \{x_1,\ldots,x_{i-1}\}$, then $u_d$ is a child of $u_{x_{i-1}}$, where $d$ is the smallest element of $D(\bx)$ not used yet.
    \end{itemize}
    \item Finally, $u_d$ is a child of $u_{x_{m-1}}$, where $d$ is the only remaining element in $D(\bx)$.
\end{itemize}
One can check that the tree $T_\bx$ constructed that way is indeed an element of $\bT_m$, and that the out-degree of a vertex $u_i$ is exactly the number of occurrences of $i$ in $\bx$.
\end{definition}

\begin{figure}
    \centering
    \includegraphics[width=.8\linewidth]{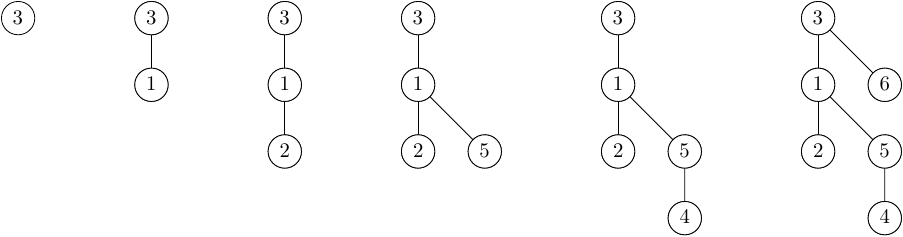}
    \caption{Construction of the tree $T_\bx \in \bT_6$ associated with the sequence $\bx = (3,1,1,5,3)$ through the Foata--Fuchs construction.}
    \label{fig: finite Foata}
\end{figure}

This construction turns out to be a bijection between $[m]^{m-1}$ and $\bT_m$. 
Its restriction to sequences with a given number of occurrences of each integer is also in bijection with the subset of trees with a given sequence of out-degrees. 
This is the content of the following theorem, whose proof can be found e.g. in \cite{Addario-Berry_Blanc-Renaudie_Donderwinkel_Maazoun_Martin_2023}.

\begin{theorem}[The Foata--Fuchs bijection]
\label{thm:foata fuchs}
Fix $m \geq 1$, and define the set 
\[
    C_m:= \Big\{ (c_1,\ldots,c_m) \in \bN_0^m : \sum_{i=1}^m c_i=m-1 \Big\} \,.
\] 
For any $\bc := (c_1,\ldots,c_m) \in C_m$, let $\bT_{m,\bc}$ be the subset of $\bT_m$ consisting of all trees whose sequence of out-degrees is $\bc$ (that is, such that the vertex labeled $i$ has $c_i$ children, for all $i \in [m]$). 
Then,
\begin{itemize}
    \item[(i)] the Foata--Fuchs construction is a bijection from $[m]^{m-1}$ to $\bT_m$;
    \item[(ii)] for any $\bc \in C_m$, the Foata--Fuchs construction is a bijection from $[m]^{m-1}_{\bc}$ to $\bT_{m,\bc}$.
\end{itemize}
Here, $[m]^{m-1}_{\bc}$ denotes the set of $(m\!-\!1)$-tuples of elements of $\{1,\ldots,m\}$ with $c_i$ occurrences of $i$, for all $i \in [m]$.
\end{theorem}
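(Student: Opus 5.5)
Since the construction is deterministic, the plan is to prove (ii) directly and obtain (i) by summing over $\bc\in C_m$. This works because $[m]^{m-1}$ is the disjoint union of the sets $[m]^{m-1}_{\bc}$, and $\bT_m$ is the disjoint union of the sets $\bT_{m,\bc}$.

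\emph{Step 1: the construction is well defined.} Fix $\bx$ with occurrence vector $\bc$. Let $k$ be the number of distinct values among $x_1,\dots,x_{m-1}$. Then $\abs{D(\bx)}=m-k$. The number of repeated positions $i\ge2$, with $x_i\in\{x_1,\dots,x_{i-1}\}$, is $(m-1)-k$, and one more element of $D(\bx)$ is used at the last step. So exactly $m-k$ elements of $D(\bx)$ are consumed, and the pool never runs dry.

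\emph{Step 2: $T_\bx$ is a tree in $\bT_{m,\bc}$.} Each label is introduced exactly once: either as $x_i$ at its first occurrence, or as an element of $D(\bx)$. It is attached to the parent $u_{x_{i-1}}$, or is the root $u_{x_1}$. That parent has already been introduced, because every $x_{i-1}$ appeared earlier in the sequence. So we get $m-1$ edges, each joining a new vertex to an existing one, which gives a rooted tree. Moreover, step $i$ (for $i=2,\dots,m$, with the final step counted as $i=m$) gives one child to $u_{x_{i-1}}$. Hence the out-degree of $u_j$ is $\#\{i: x_i=j\}=c_j$, and $T_\bx\in\bT_{m,\bc}$.

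\emph{Step 3: bijectivity via an inverse.} Given $T\in\bT_{m,\bc}$, I would reconstruct $\bx$ greedily by reading $T$ as a union of \enquote{paths}. Set $x_1$ to be the root label. At each stage the previous value $x_{i-1}$ is known, and the next vertex added is a child of $u_{x_{i-1}}$. The rule is as follows. If the current path can be continued to a vertex that is not a leaf, set $x_i$ to be that child. Otherwise, $x_i$ must be a repeated value, namely the next vertex in the order in which vertices are revisited, and the attached child is the smallest unused leaf.

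The main obstacle is making this inverse canonical. The cleanest approach is to identify the blocks of $\bx$. Cut $\bx$ at the repeated entries. Each maximal run of first occurrences forms a path whose endpoint receives the smallest available element of $D(\bx)$, that is, a leaf (an element with $c=0$). So $T$ decomposes into vertex-disjoint paths, each ending at a leaf. These paths are attached in the order of their leaves, which is increasing in label. The first block starts at the root, and each later block starts by hanging off the repeated vertex.

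From $T$ one recovers the blocks in the following way. Process the leaves in increasing order. For the $r$-th smallest leaf $d_r$, walk up from $d_r$ until reaching a vertex already visited, or the root when $r=1$. The walked vertices, read top-down and excluding $d_r$, give the new first-occurrence entries. These are preceded by the repeated value, namely the visited attachment vertex, when $r\ge2$.

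I would verify that this map is a two-sided inverse by induction on the number of blocks. Alternatively, surjectivity plus a cardinality count suffices. The multinomial $\binom{m-1}{\bc}$ counts $[m]^{m-1}_{\bc}$, and it also counts $\bT_{m,\bc}$ by the classical formula $\frac{(m-1)!}{\prod c_i!}$ for labeled trees with prescribed out-degrees. This formula is already used in the proof of Lemma~\ref{lem:combinatorial formulas}(iii). Comparing cardinalities then reduces the task to injectivity, which follows directly from the leaf-ordered reconstruction above.
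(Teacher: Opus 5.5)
Your proposal is correct and takes essentially the route the paper indicates. The paper defers the proof to the cited reference, but it describes exactly your inverse: take the leaves in increasing label order, follow the shortest path from the already visited vertices to the next leaf, and record the upper endpoint of each edge traversed. You additionally check well-definedness and the out-degree count explicitly.

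Discard the first \enquote{greedy} rule in Step~3, which is wrong as stated. For $\bx=(3,1,1,5,3)$, the leaf $u_2$ must be hung from $u_1$ before the path continues to $u_5$, so the correct entry is $x_3=1$, not $5$. The leaf-ordered reconstruction is a two-sided inverse on its own, so the cardinality comparison is optional.
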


\begin{remark}
    In \Cref{def:finite foata fuchs}, we explain how to construct a tree given its degree sequence.
    This is the main use of the Foata--Fuchs bijection that we will need.
    For completeness, let us briefly describe the inverse map as well:
    consider a tree $T\in\bT_m$ with vertices $(u_1, \dots, u_m)$, where $u_i$ has label $i$.
    Let $\bc$ be the degree sequence of $T$; we wish to construct $\bx\in[m]^{m-1}_\bc$ such that $T = T_\bx$.
    
    First, let $u_d$ be the leaf of $T$ with lowest label and let $P_1 := (u_{x_1}, \dots, u_{x_j}, u_d)$ denote the shortest path from the root of $T$ to this leaf.
    We set the first $j$ elements of $\bx$ to $(x_1, \dots, x_j)$.
    Then, let $u_{d'}$ be the leaf of $T$ with the second lowest label.
    Let $P_2 := (u_{y_1}, \dots, u_{y_{j'}}, u_{d'})$ denote the shortest path from $P_1$ to $u_{d'}$.
    In particular, $y_1 \in \{x_1, \dots, x_j\}$.
    The next elements of $\bx$ are then defined as $(y_1, \dots, y_{j'})$ (notice that, then, $y_1$ appears for the second time in $\bx$).
    
    This procedure goes on iteratively: at each step, we follow the shortest path from the previously visited vertices to the leaf with the lowest label not yet visited.
    In this procedure, we follow each edge $e$ precisely once, and this corresponds to an occurrence (in $\bx$) of the label of the endpoint of $e$ that is closer to the root. 
    Hence, one can check that $\bx\in[m]^{m-1}_\bc$. See, e.g., \cite{Addario-Berry_Blanc-Renaudie_Donderwinkel_Maazoun_Martin_2023} for details.
\end{remark}

We now provide a countably infinite version of this Foata--Fuchs bijection, which builds an infinite tree from an infinite sequence of positive integers.

\begin{definition}[The countable Foata--Fuchs construction]
\label{def:countable foata}
Consider a sequence $\bz := (z_1,z_2,\ldots) \in \bN^\bN$ of positive integers. 
We construct a real tree $(T_\bz,\rho_\bz,d_\bz)$ from it as follows.
\begin{itemize}
    \item We first build a discrete tree $T^{\discrete}(\bz)$ with countably many vertices denoted by $\{v_1,v_2, \ldots\}$:
    \begin{itemize}
        \item the root $\rho_\bz$ of the tree is $v_{z_1}$;
        \item for all $j \geq 2$ such that $z_j \notin \{z_1,\ldots,z_{j-1}\}$, $v_{z_j}$ is a child of $v_{z_{j-1}}$, and we draw an edge between them.    
        \end{itemize}
    \item Then, $(T_\bz,\rho_\bz,d_\bz)$ is the real tree obtained from the discrete tree $T^{\discrete}(\bz)$ by regarding each edge as an interval of length $1$.
\end{itemize}
\end{definition}

From this, we can define the limit dendron $\cD_a$.
See \Cref{fig: PD dendron} for an illustration.

\begin{definition}[The dendron $\cD_a$]\label{def: size-biased dendron}
Fix $a>0$ and let $(X_1,X_2,\ldots)$ be distributed as a Poisson--Dirichlet variable $\PoissonDirichlet{0}{a}$. 
Then, define a sequence $\boldZ := (Z_j)_{j \geq 1}$ of i.i.d.\ random variables such that $\prob{Z_1=i}=X_i$, for all $i \in \mathbb{N}$. 
We construct the random dendron $\cD_a:=(T_a,\rho_a,d_a,\nu_a)$ as follows. 
\begin{itemize}
    \item The base tree $(T_a,\rho_a,d_a)$ is the tree $(T_{\boldZ},\rho_\boldZ,d_\boldZ)$ built from $\boldZ$ by the countably infinite Foata--Fuchs construction of Definition \ref{def:countable foata};
    \item the mass measure $\nu_a$ on $T_a \times [0,\infty)$ is defined as
    \begin{align*}
        \nu_a = \sum_{i \geq 1} X_i \delta_{(v_i,1)} \,,
    \end{align*}
    where $(v_i)_{i\ge1}$ are the vertices of the discrete tree $T^{\discrete}(\boldZ)$.
\end{itemize}
\end{definition}

\begin{figure}
    \centering
    \includegraphics[scale=.9]{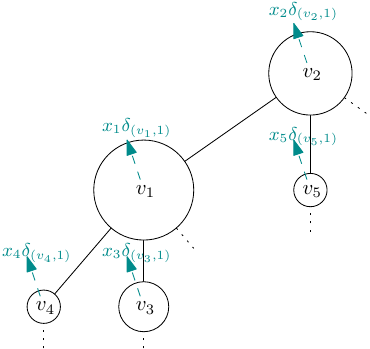}
    \caption{First steps in the construction of the dendron $\cD_a$ after sampling $X \approx (.32, .23, .14, .11, .09, \ldots)$ and $Z = (2,1,4,1,3,2,5,1,3,\ldots)$.
    The green arrows represent the mass measure $\nu_a$.
    The vertices of the discrete tree $T_a^{\discrete}$ are represented with sizes roughly proportional to their associated masses.}
    \label{fig: PD dendron}
\end{figure}

Our aim is to prove \Cref{th: dendron limit if a/n}, which states the convergence in distribution of $\frac{1}{\log n}\cT_n$ towards the random dendron $\cD_a$.
To this end, we fix $p \geq 1$ and consider $p$ i.i.d.\ uniform vertices $w_1^{(n)}, \ldots, w_p^{(n)}$ of the tree $\cT_n$. 
We will show the following result, which is equivalent to \Cref{th: dendron limit if a/n} by Proposition~\ref{prop: convergence to a random dendron}:
\begin{theorem}
\label{thm:convergence of distances critical window}
For any $p \geq 1$, the (random) matrix 
\begin{align*}
    \left( \frac{1}{\log n} d{\left( w_r^{(n)}, w_s^{(n)} \right)} \right)_{1 \leq r,s \leq p}
\end{align*}
converges in distribution to the random matrix $(d_a(w_{r}, w_{s}))_{1 \leq r,s \leq p}$, where $w_{1}, \ldots, w_{p}$ are i.i.d.\ points on $\cD_a$ drawn according to the measure $\nu_a$.
\end{theorem}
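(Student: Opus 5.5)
We plan to reduce the statement to a structural description of $\cT_n$ at a fixed, large truncation level $k$, using the $k$-reduced tree $\cT_n^{k,\red}$. Fix $p$ and $\eta>0$. By Lemma~\ref{lem: asymptotic probability of V1} and Proposition~\ref{prop: joint convergence of tilde j tilde c tilde n towards PoissonDirichlet}, for $k$ large and $n\ge n_0(k)$ we may work on $V_\rho(k)$. There, with probability at least $1-\eta$, the $p$ uniform vertices $w_r^{(n)}$ fall into blocks $[\cT_n]_u^k$ whose rescaled sizes $n_u/n$ are close to a $\PoissonDirichlet{0}{a}$ vector. Each vertex $w$ in a block $T_u=\bigsqcup_{i\in F_u}\tau_i$ is decomposed in the natural way. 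Let $\pi(w)\in F_u$ be its ancestor of label $\le k$ with no smaller label strictly between them.

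The distance between $w_r^{(n)}$ and $w_s^{(n)}$ then splits into three contributions. The first is the length of the path inside the $\tau_i$'s from $w_r^{(n)}$ up to $\pi(w_r^{(n)})$. The second is a sum, over the vertices $u$ on the reduced-tree path between the two blocks, of the distance inside $T_u$ from the attachment point $p_v$ to the root of $t_u$. The third consists of bounded contributions coming from the $k$-components $t_u$, which have size at most $k$ and are negligible after division by $\log n$.

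The key claims will be the following. Claim (a): a uniform vertex of $\tau_i$ is at depth $(1+o_\bP(1))\log n$ from its root. Claim (b): each attachment point $p_v$ is distributed, given the block, essentially as a uniform vertex of $T_u$ minus $t_u$. This follows from the factor $(n_u-j_u)^{c_u}$ in Lemma~\ref{lem:combinatorial formulas}~(i), so $\dist(p_v,\rho(t_u))=(1+o_\bP(1))\log n$ as well. Claim (c): the different paths behave asymptotically independently, and in particular two uniform points in the same block have distance $(2+o(1))\log n$ unless they coincide in the limit.

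For (a), I would condition on $\nu_i$ and use that, given the sizes, $\tau_i$ is a descent-biased tree with root label $1$ and size $\nu_i\approx \alpha n$, and parameter $q=a/n=\alpha a/(\alpha n)$. By Lemma~\ref{lem:trees with smallest root}, its generating function is $\int e^{A}$. I would then repeat the contour computation of Proposition~\ref{prop: asymptotic coefficient typical depth in transition regime} with root label forced to be $1$. The expected outcome is that the depth over $\log n$ converges to $1$ rather than to a nondegenerate discrete law: the descents, which cause the extra $\log n$ jumps, are exactly what is removed by cutting at labels $\le k$ when $k\to\infty$. Concretely, the probability that a typical path from a uniform vertex to $\pi(w)$ contains a descent onto a label $>k$ should be $O(a/k)$ by the same computation. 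This estimate, uniform in the block sizes, is what I expect to be the main obstacle. It needs a version of Lemma~\ref{lem: asymptotic coefficient exponentials in transition regime} with a marked vertex and the $\log n$-scaled depth variable, uniform in $j\le k$ and $\alpha\in[\varepsilon,1]$. I would first attempt it via the explicit Riccati solution \eqref{eq: explicit formula for generating function of typical depth} composed with $e^{jA}$, plus the Hankel-contour estimate.

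Finally, I would identify the limit through a coupling. By Lemma~\ref{lem:combinatorial formulas}~(ii) and \eqref{eq: reduced tree distribution tilde c conditional tilde j tilde n}, given the block sizes, the reduced tree is a uniform labeled tree with out-degrees that are multinomial with weights $(n_u-j_u)/(n-k)$. By the Foata--Fuchs bijection (Theorem~\ref{thm:foata fuchs}), this is the tree $T_\bx$ built from $\ell-1$ i.i.d.\ letters with those weights, i.e.\ a $p$-tree. Coupling these weights with the Poisson--Dirichlet vector $X$, its first finitely many vertices agree with high probability with the discrete tree $T^{\discrete}(\boldZ)$ of Definition~\ref{def: size-biased dendron}, up to relabeling. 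The remaining step is to check that the rescaled distance contributions add up to the lengths in $\cD_a$. A uniform vertex in the block of $v_i$ is at height $1$ above $v_i$. Moving to an adjacent block adds one unit, through the path from $p_v$ to the root of $t_u$. The measure $\nu_a$ gives mass $X_i$ to block $i$. Hence the distance matrix converges in distribution, and letting $k\to\infty$ then $\eta\to0$ concludes.
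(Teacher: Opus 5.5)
Your outline is correct, but it reaches the result by a different route from the paper at the key step.

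The paper's route is as follows:
\begin{itemize}
\item It lets $k_n\to\infty$ slowly, with $k_n!=o(n)$, so that \Cref{lem:no small descent} excludes descents among labels $\le k_n$.
\item It controls descents through \Cref{prop:not many descents on the ancestral line}: descent endpoints on the ancestral line of a uniform vertex are $\cO_\bP(1)$. This is proved combinatorially, by reshuffling the ancestral line with a conditioned descent-biased permutation (\Cref{lem:structure_of_permutations}).
\item It then treats the descent-free part $T^\circ_i$ of each macroscopic component as a random recursive tree and imports the classical estimates $h\approx\log m$, $d\approx 2\log m$ (proof of \Cref{prop:spanned subtree}).
\item Its coupling (\Cref{lem:proba of a structure}) goes through $\bc^\searrow/(a\log k_n)\to X^{(a)}$.
\end{itemize}

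You do three things differently:
\begin{itemize}
\item You keep $k$ fixed, so the $t_u$ contribute only $O(k)$, and neither \Cref{lem:no small descent} nor an RRT identification is needed.
\item You extend the contour argument of \Cref{prop: asymptotic coefficient typical depth in transition regime} to the marked forest of root-label-$1$ trees. Its generating function is $j e^{jA}(1+yF_1)$, by the decomposition of \Cref{lem:trees with smallest root}. This one computation gives both the $\log n$ depths and the descent control.
\item You observe that, given $(\wbj,\wbn)$, the Foata--Fuchs word has i.i.d.\ letters with weights $(n_u-j_u)/(n-k)$. This makes the reduced tree an honest $p$-tree, so the coupling only needs $\frac1n\bn^\searrow\to X^{(a)}$.
\end{itemize}
The paper's route needs no new analytic estimate. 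Yours is more self-contained, at the cost of a contour estimate uniform in the block parameters and a double limit ($n\to\infty$, then $k\to\infty$).

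Two details need adjusting.
\begin{itemize}
\item At fixed $k$, claim (a) is false as stated. Your computation shows that a uniform vertex of a forest of $j$ such trees, of total size $\alpha n$, has a descent on its path with probability about $\alpha a/(\alpha a+j+1)$. So the depth over $\log n$ has a nondegenerate limit. This probability is $O(a/k)$ only because $j_u\approx X_u k$ on the macroscopic blocks (\Cref{lem: proportionality tilde j tilde n}), not uniformly in $j\le k$.
\item Your decomposition through $\pi(w)$ does not cover two relevant points ($w_r$'s or $p_v$'s) lying in the same $\tau_i$. You should discard that event. Its probability tends to $0$ as $j_u\to\infty$, since no single $\tau_i$ carries a macroscopic fraction of its block.
\end{itemize}
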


\begin{remark}
With the exact same ingredients, one can prove that, for all $p \geq 1$, the random matrix 
\begin{align*}
    \left( \frac{1}{\log n} d{\left( w_r^{(n)}, w_s^{(n)} \right)} \right)_{0 \leq r,s \leq p} \,,
\end{align*}
with the convention that $w_0^{(n)}$ is the root of $\cT_n$,
converges in distribution to the random matrix $(d_a(w_{r}, w_{s}))_{0 \leq r,s \leq p}$, where $w_{1}, \ldots, w_{p}$ are i.i.d. points on $\cD_a$ drawn according to the measure $\nu_a$ and $w_{0} := (\rho_a,0)$.
\end{remark}

\subsubsection{Descents in the ancestral line of a uniform vertex}
\label{ssec: descents in ancestral line}

One of the steps towards the convergence to the dendron $\cD_a$ consists in proving that, with high probability, the ancestral line of a typical vertex does not contain any descent to a vertex with a ``large'' label. 
More precisely, a consequence of \Cref{lem: asymptotic probability of V1} is the following result. 
For any vertex $u$ in a tree $T$, let $D_u(T)$ be the set of labels of ancestors of $u$ in $T$ (\textbf{including} $u$) that have a smaller label than their parent.
In other words, $D_u(T)$ contains the endpoints of descents on the path from the root to $u$.
Then, we have the following.

\begin{proposition}
\label{prop:not many descents on the ancestral line}
    Conditionally given the tree $\cT_n$, let $u_n$ be a uniformly random vertex in $\cT_n$. 
    Then,
    \begin{align*}
      \liminf_{n\to\infty} \prob{ D_{u_n}(\cT_n) \subseteq \{1, \ldots, K\} } \cv{K\to\infty} 1 \,.
    \end{align*}
\end{proposition}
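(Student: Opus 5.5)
Since the paper presents this as a consequence of \Cref{lem: asymptotic probability of V1}, the plan is to turn the event ``some ancestor of $u_n$ with a label larger than $K$ is the endpoint of a descent'' into a root-label event for a subtree. Then a local version of $\prob{V_\rho(k)}\to k/(a+k)$ applies.

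\textbf{Step 1: reduction to subtrees.} Suppose $m\in D_{u_n}(\cT_n)$ with $m>K$. Then the vertex labeled $m$ is an ancestor of $u_n$ (or $u_n$ itself) and its parent has a larger label. Equivalently, $u_n$ lies in a fringe subtree whose root has label $m$ and is the endpoint of a descent. A union bound over all such vertices gives
\[
\prob{D_{u_n}\not\subseteq[K]}\le \frac1n\,\expec{\sum_{m>K}\One{m\text{ is a descent endpoint}}\,\abs{\cT_n(m)}},
\]
where $\cT_n(m)$ denotes the fringe subtree rooted at label $m$. It therefore suffices to control this expected total mass carried by subtrees rooted at descent endpoints with label larger than $K$.

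\textbf{Step 2: exact formula via generating functions.} I would compute the expectation with the vertex-$1$ style decomposition, now cutting at a descent edge. Grafting a subtree of size $s$ whose root has relative label $i$ onto a parent with a larger label gives the count
\[
\sum_{s}\, s\,\binom{n}{s}\, q\cdot(\text{number of valid parents})\cdot(\text{weights}).
\]
The relative ranks matter here. Combined with the bound $\#\{\text{labels in the top tree larger than } m\}\le n$, this expresses the expectation through coefficients of $A$ and $A^{(1)}$, or more precisely of the generating function of trees with a prescribed root rank.

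A cruder route may be simpler. Condition on the subtree containing $u_n$ cut at its first descent below label $K$, and use \Cref{lem: asymptotic probability of V1} applied to the reduced picture. The key observation is that $D_{u_n}\subseteq[K]$ is implied by ``every descent endpoint on the path to $u_n$ lies in $[K]$''. Consider the $K$-reduced tree. The vertex $u_n$ lies in some component $[\cT_n]^K_v$, which is built from the trees $\tau_i$, and each $\tau_i$ has root label $i\le K$. So the bad event occurs exactly when the path inside $\tau_i$ from its root to $u_n$ contains a descent, or when $V_\rho(K)$ fails and $u_n\in\tau_\emptyset$. Given its size, each $\tau_i$ is a descent-biased tree conditioned to have root label $1$. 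Its internal descents can also be handled recursively: a descent inside $\tau_i$ ending at a vertex $w$ means that $w$ is the minimal-label root of a subtree.

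\textbf{Step 3: estimate.} The main quantity to bound is the probability that a uniform vertex of a descent-biased tree of size $\nu$, conditioned on root label $1$ and with $q=a/n\le a/\nu$, has a descent on its ancestral line. I would bound this by the same coefficient asymptotics, \Cref{lem: asymptotic coefficient exponentials in transition regime} and \Cref{prop: asymptotic coefficient typical depth in transition regime}. Heuristically the bound should come out as $\approx \sum_{m>K} a/(a+m)\cdot(\text{mass fraction})$, which tends to $0$ once weighted by the Poisson--Dirichlet masses.

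\textbf{Main obstacle.} The hardest part will be obtaining a bound that is uniform in $n$ on the expected mass below descent endpoints with label larger than $K$. My first attempt would be the generating-function identity in Step 2, followed by the Hankel contour estimates of \Cref{sec: Asymptotics for q=a/n}. The goal is to show that this expected mass fraction is at most $C a/(a+K)$ for large $n$, mirroring $1-\prob{V_\rho(K)}\to a/(a+K)$.
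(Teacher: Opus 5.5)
Your Step~1 is a valid reduction. By the union bound it suffices to show that $\limsup_{n}\frac1n\,\expec{\sum_{m>K}\One{\mathrm{par}(m)>m}\abs{\cT_n(m)}}\to0$ as $K\to\infty$. The heuristic value $\sum_{m>K}\frac{a}{a+m}\cdot\frac1m\approx a/K$ is plausible; the Poisson--Dirichlet masses play no role in it. But this first-moment bound is the whole content of the argument, and you never prove it: it is exactly what you list as the ``main obstacle''. It is also not a routine application of Section~\ref{sec: Asymptotics for q=a/n}. Cutting at the edge above $m$ gives
\[
\prob{\mathrm{par}(m)>m,\ \abs{\cT_n(m)}=s,\ \rk(m)=i}
=q\,(s-i+1)\binom{m-1}{i-1}\binom{n-m}{s-i+1}\frac{Z_{n-s,q}\,U_{s,i}}{Z_{n,q}},
\]
where $U_{s,i}$ is the sum of $q^{\des{t}}$ over trees $t\in\bT_s$ with root label $i$. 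The restriction $m>K$ enters only through $\sum_{m>K}\binom{m-1}{i-1}\binom{n-m}{s-i+1}$, that is, through the event that the $i$-th smallest element of a random $(s+1)$-subset of $[n]$ exceeds $K$. For $s\approx\alpha n$ this is non-negligible only when $i$ is at least of order $\alpha K$. So you would need bounds on the tail of the root rank of a descent-biased tree of size $s$ with bias $a/n$, uniform in $i$ and in $s\le n$. That tail is heavy, of order $as/(ni)$. The paper has no generating function or uniform estimate for $U_{s,i}$, and \Cref{lem: asymptotic probability of V1} and \Cref{lem: asymptotic coefficient exponentials in transition regime} are only asymptotics for fixed $k$ and fixed $j$.

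The ``cruder route'' does not close the gap, and it contains two errors.

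First, the bad event is \emph{not} just that the path inside the $\tau_i$ containing $u_n$ has a descent. In every ancestor component (in $\cT_n^{K,\red}$) of the component containing $u_n$, the ancestral line also runs through some $\tau_{i'}$, from $i'$ down to the attachment vertex $p_v$, and descents with endpoint $>K$ can sit there too.

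Second, comparing $\tau_i$ with a tree of bias $a/\nu$ (via $q=a/n\le a/\nu$) cannot give anything small. In a descent-biased tree of size $\nu$ with root label $1$ and bias $a'/\nu$, with $a'$ of order one, a uniform vertex lies below a descent with non-vanishing probability: already label $2$ hangs below a descent with probability about $a'/(1+a')$, carrying a macroscopic subtree. Smallness could only come from $a'=a\nu_i/n$ being small, i.e.\ from a separate proof that $\max_{i\le K}\nu_i/n\to0$, which you do not give.

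The idea you are missing is exchangeability along the spine, and this is how the paper actually uses \Cref{lem: asymptotic probability of V1}. Conditionally on the pieces hanging off the ancestral line, the relative order of its $h+1\asymp\log n$ labels is a descent-biased permutation with bias $a/n$ and prescribed last entry. By \Cref{lem:structure_of_permutations}, such a permutation has $\cO_\bP(1)$ descents, all at the $\cO_\bP(1)$ smallest ranks. It also places any of these small ranks at the root with probability bounded below. Tightness of the root label then transfers to tightness of all the relevant labels, with no mass estimate at all.
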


Equivalently, $\max D_{u_n}(\cT_n) = \cO_\bP(1)$ as $n\to\infty$. 
In other words, the endpoints of the descents in the ancestral line of a typical vertex have bounded labels. 
In order to prove this property, we first prove a structural result on descent-biased permutations, which is reminiscent of \cite[Lemma 15]{Thevenin_Wagner_2023}.

\begin{lemma}
\label{lem:structure_of_permutations}
Let $\des{\sigma}$ be the number of descents of a permutation $\sigma$.
Fix $a > 0$ and let $q_n := a/n$.
Fix also $0<c<C$. 
For $c\log n\le h\le C\log n$, let $\sigma^{h}$ be a descent-biased permutation with size $h$ and bias $a/n$.
Following \cite{Thevenin_Wagner_2023}, we write $S^{(a/n)}_{h}$ for the law of $\sigma^h$.
Then, we have the following.
\begin{enumerate}[label=(\roman*)]
    \item For all $1\le r\le h$, let $\sigma^{h,r}$ be distributed as $S^{(a/n)}_{h}$, conditionally on its last element $\sigma_h=r$.
    The number of descents of $\sigma^{h,r}$ is uniformly stochastically bounded: 
    \begin{align*}
    \liminf_{n\to\infty} \inf_{c\log n\le h\le C\log n} \inf_{1\le r\le h}
    \prob{\des{\sigma^{h,r}} \le Q} \cv{Q\to\infty} 1 \,.
    \end{align*}
    \item For all $d\ge 1$, let $\sigma^{h,r,d}$ be distributed as $S^{(a/n)}_{h}$, conditionally on $\sigma_h=r$ and $\des{\sigma}=d$. 
    For a permutation~$\sigma$, let $D(\sigma)=\{\sigma_i : 2\le i\le h, \sigma_{i-1} > \sigma_i\}$ be the set of endpoints of its descents.
    Then, for all fixed $d\ge1$,
    \begin{align*}
    \liminf_{n\to\infty} \inf_{c\log n\le h\le C\log n} \inf_{1\le r\le h}
    \prob{D(\sigma^{h,r,d}) \subseteq \{1,\ldots,K\} } \cv{K\to\infty} 1 \,.
    \end{align*}
    \item For $d=1$, the first element of $\sigma^{h,r,d}$ can take any finite value less than $r$ with probability uniformly bounded from below.
    That is, for all fixed $K\ge1$, we have
    \begin{align*}
    \liminf_{n\to\infty} \inf_{c\log n\le h\le C\log n} \inf_{1\le r\le h} \inf_{1\le j\le \min(K,r-1)}
    \prob{\sigma_1^{r,h,d} = j} > 0 \,.
    \end{align*}
    \item For $d\ge2$, the first element of $\sigma^{h,r,d}$ can take any finite value, except $r$, with probability uniformly bounded from below.
    That is, for all fixed $d\ge2$ and $K\ge1$, we have
    \begin{align*}
    \liminf_{n\to\infty} \inf_{c\log n\le h\le C\log n} \inf_{1\le r\le h} \inf_{j\in [1,K] \setminus \{r\}}
    \prob{\sigma_1^{r,h,d} = j} > 0 \,.
    \end{align*}
\end{enumerate}
\end{lemma}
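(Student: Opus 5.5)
The plan is to work directly with the explicit weights. A permutation of $[h]$ with last element $r$ and $d$ descents has weight $q^d$, with $q=a/n$. So the conditional law of $\des{\sigma^{h,r}}$ is proportional to $E(h,r,d)\,q^d$, where $E(h,r,d)$ is the number of permutations of $[h]$ with $\sigma_h=r$ and exactly $d$ descents.

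The key step is a crude counting bound for these refined Eulerian numbers. A permutation with $d$ descents splits into $d+1$ increasing runs, so it is determined by an ordered set partition of $[h]$ into at most $d+1$ blocks. This gives
\[
E(h,r,d)\le (d+1)^{h-1}.
\]
It also gives a lower bound of order $\binom{h-1}{\cdot}$ with the prescribed last element. We then compare consecutive ratios $E(h,r,d+1)q/E(h,r,d)$. Since $h\le C\log n$, we have $(d+2)^{h}\le n^{C\log(d+2)}$. Direct ratio bounds are delicate for large $d$, so I would instead compare with the case $d\le 1$, where exact formulas are available:
\begin{itemize}
\item $E(h,r,0)=\One{r=h}$;
\item $E(h,r,1)$ is explicit: choose the set of the first run, with the constraint that the second run ends at $r$.
\end{itemize}

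\textbf{Part (i).} I would bound $\sum_{d>Q}E(h,r,d)q^d$ against $E(h,r,d_0)q^{d_0}$ for $d_0\in\{0,1,2\}$, whichever is admissible. A cleaner route may be the generating-function identity
\[
\sum_{\sigma} q^{\des\sigma} \;=\; \sum_{d}\binom{h+1}{\cdot}\ldots,
\]
that is, Worpitzky-type formulas: $\sum_d E(h,d)t^d/(1-t)^{h+1}=\sum_m m^h t^m$. Writing the descent-biased law as a mixture via $\sum_m m^h q^m$ shows that the number of descents is dominated by a variable $M$ with $\prob{M=m}\propto m^{h}q^m$. For $q=a/n$ and $h\le C\log n$, the terms $m^h q^m = e^{h\log m}(a/n)^m$ decay geometrically once $m$ is a large constant, because $h\log m\le C\log n\log m$ against $m\log n$ requires $m\gtrsim C\log m$. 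This gives uniform tightness. The conditioning on $\sigma_h=r$ is handled by the analogous refined identity (place the last element separately), which changes the weights only by polynomial factors in $m$.

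\textbf{Part (ii).} Conditionally on $d$ descents and $\sigma_h=r$, the permutation is encoded by a uniform choice among compatible ordered run-partitions. Under the Worpitzky representation, this amounts to assigning i.i.d.\ uniform colours in $[m]$ to the values, with runs given by colour classes sorted increasingly. The endpoint of a descent is the minimum of a run other than the first. With $m$ bounded and $h\to\infty$, each colour class is a uniform-like random subset of size of order $h/m$, and its minimum is geometric-type, hence tight. This gives $D(\sigma)\subseteq[K]$ with probability tending to $1$ as $K\to\infty$, uniformly in $h$ and $r$.

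\textbf{Parts (iii) and (iv).} The first element is the minimum of the first run. For $d=1$, the first run is a set $S$ not containing $r$, with $r$ ending the second run, so $r$ is the maximum of the complement. One can then count directly: the number of valid $S$ with $\min S=j$ is a positive fraction of the total for each fixed $j<r$. This comes from explicit binomial sums, uniformly in $h$ and $r$, and it forces $j<r$.

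For $d\ge2$, we use the colouring picture again. Forcing value $j$ to be the minimum of colour class $1$ has probability bounded below, about $m^{-j}$, after conditioning on a bounded $m$ of positive probability. Any $j\ne r$ is allowed because the run containing $r$ need not be first.

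\textbf{Main obstacle.} The hardest step is making the Worpitzky/colouring mixture rigorous with the conditioning on $\sigma_h=r$ and on exactly $d$ descents, since colourings can produce fewer descents than colours. Concretely, the difficulty is to get bounds that are uniform over $r$ and over $h\in[c\log n,C\log n]$. I would first try an inclusion-exclusion correction showing that colourings with empty or merged classes have vanishing relative weight.
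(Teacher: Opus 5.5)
Your first route for (i) is exactly the paper's argument: the bound $E(h,r,d)\le(d+1)^{h-1}$, compared with a conditional partition function of at least $q$ (from $E(h,h,0)=1$, resp.\ $E(h,r,1)=2^{r-1}$ for $r<h$), then $h\le C\log n$. Your direct count for (iii) is also correct: for $r<h$ a fraction exactly $2^{-j}$ of the one-descent permutations start with $j<r$, and asymptotically so for $r=h$. If you keep the Worpitzky route for (i), note that conditioning on $\sigma_h=r$ does \emph{not} change the weight of $M=m$ by a factor polynomial in $m$. For $r<h$ the weight of $m=1$ vanishes, and that of $m=2$ drops from $2^hq^2$ to $2^{r-1}q^2$. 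The route still works only because the normalisation stays at least $q^2$.

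The genuine gap is in (ii) and (iv), and it is the one you flag yourself: nothing identifies $\sigma^{h,r,d}$, uniformly in $r$ and $h$, with an i.i.d.\ colouring up to a negligible conditioning. The mixture over $M$ gives i.i.d.\ colourings only \emph{before} conditioning on $\{\des{\sigma}=d,\ \sigma_h=r\}$. That event can have polynomially small probability: for large $C$ most of the mass sits on $d$ of order $C$. So ``each class is a uniform-like subset of size of order $h/m$'' and ``probability about $m^{-j}$'' are unjustified as stated. Your proposed correction is true but does not finish the job. Given $\sigma$, $M$ has law proportional to $q^m\binom{m-d+h-1}{h}$ with $d=\des{\sigma}$, so $\probcond{M=d+1}{\des{\sigma}=d}=(1-q)^{h+1}\to1$. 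This only leaves you with a uniform $(d+1)$-colouring conditioned on inducing exactly $d$ descents and last entry $r$. Under a uniform $(d+1)$-colouring of $[h]$, the event $\{\sigma_h=r\}$ alone has probability of order $(\frac{d}{d+1})^{h-r}$, again polynomially small when $r$ is small.

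The missing idea, which is how the paper proceeds, is to build the conditioning into the palettes. Colour each value $v<r$ uniformly in $[d+1]$ and each $v>r$ uniformly in $[d]$, independently. Let $L_1,\dots,L_{d+1}$ be the classes, and let $\tau$ be the concatenation of $L_1,\dots,L_d,L_{d+1}\cup\{r\}$, each sorted increasingly. Every permutation with exactly $d$ descents and last entry $r$ arises from exactly one such colouring, namely its run decomposition (values above $r$ cannot lie in the last run). All colourings are equally likely. Hence $\tau$ conditioned on the event $B$ that all $d$ boundaries are genuine descents is exactly $\sigma^{h,r,d}$. Since $d$ is fixed and $h\ge c\log n\to\infty$, $\prob{B}\to1$ uniformly in $r$.

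Everything is then explicit. Each run minimum exceeds $K$ with probability at most $(\frac{d}{d+1})^{K-2}$, which gives (ii). For (iii) and (iv), $\prob{\min L_1=j}$ equals $(\frac{d}{d+1})^{j-1}\frac{1}{d+1}$ for $j<r$ and $(\frac{d}{d+1})^{r-1}(\frac{d-1}{d})^{j-r-1}\frac1d$ for $j>r$; subtract $\prob{B^c}$. The second formula also corrects your explanation for (iv). The entry $r$ always sits in the last run. A value $j>r$ can come first when $d\ge2$ because the values in $(r,j)$ can be sent to runs $2,\dots,d$; for $d=1$ they are all forced into the first run.
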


Let us immediately see how this implies Proposition \ref{prop:not many descents on the ancestral line}.

\begin{proof}[Proof of Proposition \ref{prop:not many descents on the ancestral line}]
Fix $\varepsilon>0$. First, by Proposition \ref{prop: asymptotic coefficient typical depth in transition regime}, there exist $0<c<C$ such that, for all sufficiently large $n$,
\[
    \prob{\dist(\rho,u_n)\in [c \log n, C \log n]} \geq 1-\varepsilon \,. 
\]
Write $h := \dist(\rho,u_n)$ for ease of notation.
Let $\rho =: v_0, \ldots, v_{h} := u_n$ be the ancestors of $u_n$, $v_j$ having height $j$, and let $L(v_0), \ldots, L(v_{h})$ be their labels.
Write $L_i$ for the $i$-th smallest of these labels, and let $r$ be such that $L_r = L(u_n)$.
We call $r = \rk(u_n)$ the rank of $u_n$; by definition, $u_n$ has precisely $r-1$ ancestors with labels lower that its own.
Now, perform the following shuffling operation on the tree $\cT_n$:
\begin{itemize}
    \item Let $\sigma \in \fS_{h+1}$ be distributed as $S_{h+1}^{(a/n)}$, conditionally on $\{ \sigma_{h+1} = r \}$. 
    \item Let $(v'_j)_{0 \leq j \leq h-1}$ be the vertices $(v_j)_{0 \leq j \leq h-1}$ reordered so that $v'_{j-1}$ has label $L_{\sigma_j}$.
    \item For $0\le j\le h-1$, write $C_j$ for the connected component of $\cT_n \backslash \bigcup_{i=1}^{h}\{(v_{i-1}, v_{i})\}$ containing $v_j$.
    Then reconnect the $C_j$'s according to this new order dictated by $\sigma$.
    In particular, $v'_0$ is now the root of the tree that we obtain.
\end{itemize}
Let $\tilde{\cT}_n$ be the tree obtained this way. 
Then, clearly, $(\tilde{\cT}_n, u_n) \overset{(d)}{=} (\cT_n,u_n)$.
See \Cref{fig: shuffling} for an illustration.

\begin{figure}
    \centering
    \includegraphics[width=.45\linewidth]{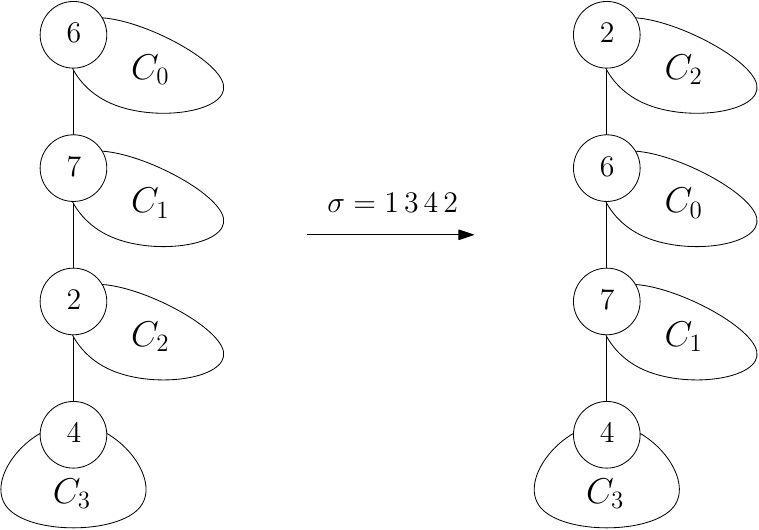}
    \caption{Example of an ancestral reshuffling.
    Left: 
    the vertex $u_n$ has height $h=3$ and label $4$.
    The ranks along its ancestral line are $3,4,1,2$.
    In particular, $r=2$.
    Right: 
    the resulting tree after shuffling the ancestral line with $\sigma = 1\,3\,4\,2$.}
    \label{fig: shuffling}
\end{figure}

Observe that $\big|D_{u_n}\big(\tilde\cT_n\big)\big|= \des{\sigma}$, and furthermore that $D_{u_n}\big(\tilde\cT_n\big)$ can be identified with $D(\sigma)$ via the ranks of the labels.
By Lemma \ref{lem:structure_of_permutations}~(i), there exists $Q>0$ such that
\begin{align*}
\liminf_{n\to\infty} 
\inf_{c\log n\le h\le C\log n} \inf_{1\le r\le h}
\probcond{ \des{\sigma} \le Q \;}{\, h(u_n) = h, \rk(u_n)=r} \ge 1-\varepsilon \,.
\end{align*}
From now on, we restrict ourselves to the case $\des{\sigma}\le Q$. 
%If $\des{\sigma}=0$ then the conclusion follows trivially, so we can assume $\des{\sigma}\ge1$.
From \Cref{lem:structure_of_permutations}~(ii), we get that for all $1\le d\le Q$,
\begin{align*}
\liminf_{n\to\infty} 
\inf_{c\log n \le h \le C\log n}\inf_{1\le r\le h} 
\probcond{ D_{u_n}\big(\tilde\cT_n\big) \subseteq \{L_1,\ldots,L_K\} \, }{ \, \des{\sigma}=d, h(u_n)=h, \rk(u_n)=r} \cv{K\to\infty} 1 \,.
\end{align*}
Hence, we can choose $K$ so large that, with probability at least $1-\varepsilon$, the labels of all descent endpoints are among the smallest $K$ labels in the ancestral line of $u_n$.
It remains to deduce that these labels are stochastically bounded.

We know by \Cref{lem: asymptotic probability of V1} that the root label of the tree is asymptotically stochastically bounded.
In addition, we have by \Cref{lem:structure_of_permutations}~(iv) that for all $2\le d\le Q$,
\begin{equation*}
    \liminf_{n\to\infty} 
    \inf_{c\log n\le h\le C\log n} \inf_{1\le r\le h} \inf_{\substack{1\le j\le K \\ j\ne r}} 
    \probcond{ L(v_0') = L_j \,}{\, \des{\sigma}=d, h(u_n)=h, \rk(u_n)=r} > 0 \,.
\end{equation*}
The latter extends the stochastic boundedness of the root label $L_{\sigma_1}$ to a stochastic boundedness for the smallest $K$ labels $L_1, \ldots, L_K$, conditionally on $\des{\sigma}\geq 2$. This concludes the proof when $\des{\sigma}\geq 2$.

When $\des{\sigma}=1$, we just need to use the fact that $D_{u_n}\big(\cT_n\big)$ contains at most one label which is among the $r-1$ smallest ones.
We can therefore conclude as before, using \Cref{lem:structure_of_permutations}~(iii).
\end{proof}

We now prove Lemma \ref{lem:structure_of_permutations}.

\begin{proof}[Proof of Lemma \ref{lem:structure_of_permutations}]
Let us start by proving (i). 
Let $r\in[h]$.
Write $Z_{h,r,a/n}$ for the partition function of descent-biased permutations conditioned on $\sigma_h=r$, that is,
\begin{equation*}
    Z_{h,r,a/n} := \sum_{\sigma\in\fS_h} \One{\sigma_h=r} (a/n)^{\des{\sigma}} \,.
\end{equation*}
Then, let us bound the probability of $\des{\sigma^{h,r}}\ge Q$.
To this aim, we can use \cite[Corollary~16~(ii)]{Thevenin_Wagner_2023} which bounds the number of permutations with a given number of descents and a given first value.
Thanks to the bijection $\sigma\in\fS_h \mapsto \left( n+1-\sigma_{n+1-i} \right)_{1\le i\le n}$, we know that the number of permutations with $d$ descents and ending with $r$ equals the number of permutations with $d$ descents and starting with $n-r+1$.
Therefore,
\begin{align*}
    \prob{\des{\sigma^{h,r}}\ge Q}
    \le Z_{h,r,a/n}^{-1} \sum_{d\ge Q} d^{r-1} (d+1)^{h-r} (a/n)^d
    \le Z_{h,r,a/n}^{-1} \sum_{d\ge Q} (d+1)^{h} (a/n)^d \,.
\end{align*}
In order to bound the partition function, define $\tau$ as the cycle $(r,r+1, \dots, h)$ (if $r=h$, then $\tau=\id_h$).
In particular $\tau_{h}=r$, and $\tau$ has at most one descent so that $\prob{\sigma^{h,r} = \tau} \ge Z_{h,r,a/n}^{-1} \cdot (a/n)$. 
This further entails that $Z_{h,r,a/n}^{-1} \le (a/n)^{-1}$, and thus
\begin{align*}
    \prob{\des{\sigma^{h,r}}\ge Q}
    \le \sum_{d\ge Q} (d+1)^{h} (a/n)^{d-1} \,.
\end{align*}
Under the assumption $h\le C\log n$, we get
\begin{align*}
    \prob{\des{\sigma^{h,r}}\ge Q} 
    \le a^{-1} \sum_{Q\le d\le C\log n} n^{C\log(d+1) + (C\log a)^+ - d + 1}
\end{align*}
where $(C\log a)^+ = \max(0, C\log a)$.
If $Q$ is large enough, this goes to $0$ as $n\to\infty$, proving (i).

In order to prove (ii), (iii) and (iv), we rely on a construction of $\sigma^{h,r,d}$ that is similar to the one presented in \cite[Lemma~15]{Thevenin_Wagner_2023}. 
Fix $d\ge 1$. 
Let $A_1, \ldots, A_{r-1}$ be i.i.d.~random variables, uniform on $\{1,\ldots,d+1\}$, and $A_{r+1}, \ldots, A_h$ be i.i.d.~random variables, uniform on $\{1,\ldots,d\}$. 
For $1\le i\le d+1$, let $L_i := \{j: A_j = i\}$. 
Let finally $\tau^{h,r,d}$ be the permutation obtained by concatenating $L_1,\ldots,L_{d+1},\{r\}$, where the elements in each $L_i$ are placed in increasing order. 
Then, uniformly for $h \in [c\log n, C\log n]$ and $r \in [h]$, $\prob{\tau^{h,r,d} \text{ has } d \text{ descents}} \cv{n\to\infty} 1$; 
furthermore, under this event, $\tau^{h,r,d}$ is distributed as $\sigma^{h,r,d}$. 
We omit the proof, which is the exact same as for \cite[Lemma~15]{Thevenin_Wagner_2023}.
Now observe that, for any $K \geq 1$ and any $1 \leq i \leq d$, we have
\begin{equation*}
\prob{\min L_i \geq K} 
= \prod\limits_{\substack{j=1 \\ j\ne r}}^{K-1} \prob{A_j \neq i}
\le \left( \frac{d}{d+1} \right)^{K-2} \,,
\end{equation*}
which goes to $0$ as $K \rightarrow \infty$. 
Furthermore, for $i=d+1$, we have 
\begin{align*}
\prob{\min L_{d+1} \cup \{r\} \geq K} 
= \One{r \geq K} \prod\limits_{j=1}^{K-1} \prob{A_j \ne d+1} 
\le \left( \frac{d}{d+1} \right)^{K-1} \,,
\end{align*}
which also goes to $0$ as $K \rightarrow \infty$. 
This proves (ii).
For (iii) and (iv), it remains to find an adequate lower bound on the probability that $\tau^{h,r,d}_1 = j$.
If $j<r$, we have
\begin{equation*}
    \prob{ \tau^{h,r,d}_1 = j }
    = \prob{A_j=1} \prod_{1\le i< j} \prob{A_i \ne 1}
    = \left( \frac{d}{d+1} \right)^{j-1} \cdot \frac{1}{d+1} \,,
\end{equation*}
whereas for $j>r$ we have
\begin{equation*}
    \prob{ \tau^{h,r,d}_1 = j }
    = \prob{A_j=1} \prod_{\substack{1\le i< j\\i\ne r}} \prob{A_i \ne 1}
    = \left( \frac{d}{d+1} \right)^{r-1} \cdot \left( \frac{d-1}{d} \right)^{j-r-1} \cdot \frac{1}{d} \,,
\end{equation*}
which is nonzero whenever $d \geq 2$.
The conclusion follows.
\end{proof}

\subsubsection{Structure of the reduced tree}
\label{sssec:prelim}

Now, we gather some preliminary results on the structure of $\cT_n$. 
Recall the notation $\cT_n^{k,\red}$, which is the $k$-reduced tree associated with $\cT_n$. 
From now on, we consider a sequence $(k_n)_{n \geq 1}$ such that $k_n \cv{n\to\infty} +\infty$ slowly enough, so that the following conditions hold.
\begin{itemize}
\item[(H1)] $k_n!=o(n)$.
\item[(H2)] \begin{equation}
\label{eq:kn}
 \left( \frac{1}{k_n}\bj^\searrow(\cT_n) , \frac{1}{a\log k_n} \bc^\searrow(\cT_n) , \frac1n \bn^\searrow(\cT_n) \right)
        \overset{(d)}{\cv{n\to\infty}} \left( X^{(a)} , X^{(a)} , X^{(a)} \right) ,
\end{equation}
where $X^{(a)} \sim \PoissonDirichlet{0}{a}$ and the vectors $\bj^\searrow(\cT_n), \bc^\searrow(\cT_n)$, $\bn^\searrow(\cT_n)$ are constructed from $\cT_n^{k_n,\red}$.
\item[(H3)] 
\begin{equation}
\label{eq:concentration ell}
\frac{\ell^{k_n}(\cT_n)}{a \log k_n} \overset{(d)}{\cv{n\to\infty}} 1 .
\end{equation}
\end{itemize}
The existence of such a sequence is guaranteed by \Cref{prop: joint convergence of tilde j tilde c tilde n towards PoissonDirichlet} and \Cref{cor: asymptotic concentration ell}. 
One reason why we want $\rm (H1)$ to hold is the following result:

\begin{lemma}
\label{lem:no small descent}
Assume that $k_n ! = o(q_n^{-1})$.
Then, with high probability, there is no descent edge between two vertices with labels $\le k_n$ in $\cT_n$.
\end{lemma}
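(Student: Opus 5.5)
The plan is a first-moment argument via the $k_n$-components: a descent edge between two labels $\le k_n$ is an edge lying inside one of the $k_n$-components $t_u$ (the forest $\cT_n\cap[k_n]$). Such an edge is a descent of that labeled tree $t_u$. So the event in question is exactly that some $t_u$ has $\des{t_u}\ge1$. I will bound its probability by an explicit coefficient computation using Lemma~\ref{lem:combinatorial formulas}, and separately handle the event $V_\rho(k_n)^c$.

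\emph{Step 1: reduction.} I will not simply discard $V_\rho(k_n)^c$: its probability tends to $a/(a+k_n)$ only for fixed $k$, which is $o(1)$ but needs uniformity in $k$. Instead I will redo the count of Lemma~\ref{lem:combinatorial formulas}(v) keeping track of the weights $q^{\des{t_\upsilon}}$. In that proof the factor $\prod_\upsilon [x^{j_\upsilon}]A(x,q)$ arises as $\prod_\upsilon\sum_{t_\upsilon\in\bT_{j_\upsilon}} q^{\des{t_\upsilon}}/j_\upsilon!$. Restricting to the event that no internal descents occur replaces each such factor by $[x^{j_\upsilon}]A(x,0)=1/j_\upsilon$. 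The probability of the complementary event, intersected with $V_\rho(k)$, is therefore bounded by the same formula with $\prod_\upsilon [x^{j_\upsilon}]A(x,q)$ replaced by
\[
\prod_\upsilon [x^{j_\upsilon}]A(x,q)-\prod_\upsilon [x^{j_\upsilon}]A(x,0).
\]
Summing over compositions, this gives $[x^{k}]\bigl(A(x,q)^\ell-A(x,0)^\ell\bigr)$ in \eqref{eq: reduced tree distribution ell}, and summing over $\ell$ gives $[x^k]\bigl(e^{q(n-k)A(x,q)}-e^{q(n-k)A(x,0)}\bigr)$ in \eqref{eq: reduced tree distribution V1}.

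\emph{Step 2: bounding the difference.} I will compare coefficientwise. Every coefficient of $A(x,q)-A(x,0)$ up to order $k$ is at most $q\,[x^j]\partial_qA$. More crudely, $Z_{j,q}-(j-1)!\le q\cdot(\text{number of trees with at least one descent})\le q\, j^{j-1}$ for $q\le1$. Hence $[x^j](A(x,q)-A(x,0))\le q\,j^{j-1}/j!$. Since everything has nonnegative coefficients, I can expand $e^{b(A_0+\Delta)}-e^{bA_0}$ with $b=q(n-k)\le a$. This gives $[x^k](\cdots)\le q\,C^{k}\,\mathrm{poly}(k)$ for some constant $C$, roughly $q\,k!\,e^{O(k)}/k!$. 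The prefactor in \eqref{eq: reduced tree distribution V1} is
\[
\frac{k!(n-k)!}{Z_{n,q}}\,(q(n-k))^{-1}\,[x^{n-k}]e^{kA}.
\]
To control it uniformly in $k=k_n\to\infty$, I will use $[x^{n-k}]e^{kA(x,q)}\le [x^{n-k}]e^{kA(x,1)}$, or better the exact identity obtained by summing \eqref{eq: reduced tree distribution V1} itself: the same prefactor times $[x^k]e^{q(n-k)A(x,q)}$ equals $\prob{V_\rho(k)}\le 1$. So the ratio bound reduces to
\[
\frac{[x^k](e^{bA_q}-e^{bA_0})}{[x^k]e^{bA_q}}\le \frac{q\,C^k\,k^{O(1)}}{[x^k]e^{bA_0}}.
\]
The denominator is $\Gamma(b+k)/(\Gamma(b)k!)\ge b/k$. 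This gives a bound of order $q\,k^{O(1)}C^k\le q\,k!$ for large $k$, which is $o(1)$ by the hypothesis $k_n!=o(q_n^{-1})$.

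\emph{Step 3: the event $V_\rho(k)^c$.} Here the root component is replaced by a top tree $\tau_\emptyset$, and an analogous formula holds with one extra factor. I will either repeat the computation, or sidestep it by noting that an internal descent among labels $\le k$ does not involve the root structure in any essential way. The main obstacle I expect is the uniformity in $k_n\to\infty$ of the asymptotic inputs. The ratio trick in Step 2 is designed to avoid needing them; I will check that the denominator $b=q(n-k)$ stays bounded below.
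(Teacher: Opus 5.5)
Your route works, but Step 3 is left unfinished, and the uniformity worry behind it is unfounded. $V_\rho(k)=\{\text{root label}\le k\}$ is increasing in $k$. So for each fixed $K$ and all $n$ with $k_n\ge K$, you have $\prob{V_\rho(k_n)^c}\le\prob{V_\rho(K)^c}\to a/(a+K)$ by Lemma~\ref{lem: asymptotic probability of V1}. Hence $\prob{V_\rho(k_n)^c}\to0$, using $k_n\to\infty$ as in the context where the lemma is applied, and you can simply discard $V_\rho(k_n)^c$.

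Steps 1--2 are then sound. Restricting the component trees to have no descent replaces $[x^{j}]A(x,q)$ by exactly $[x^j]A(x,0)=1/j$. With $b=q(n-k)$, this gives the identity
\[
\probcond{\text{some descent between labels }\le k}{V_\rho(k)}=\frac{[x^k]\bigl(e^{bA(x,q)}-e^{bA(x,0)}\bigr)}{[x^k]e^{bA(x,q)}} .
\]
Coefficientwise you have $A(x,q)-A(x,0)\preceq q\,A(x,1)$ and $e^{z}-1\preceq z\,e^{z}$. So for $b\le a$ the numerator is at most $bq\,[x^k](1-x)^{-a}A(x,1)e^{aA(x,1)}=O\bigl(b\,q\,e^{k}k^{O(1)}\bigr)$. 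The denominator is at least $[x^k](1-x)^{-b}\ge b/k$. The factor $b$ cancels, so you do not need $b$ bounded below, and the ratio is $O(q_n e^{k_n}k_n^{O(1)})=O(q_n k_n!)\to0$.

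The paper's proof is entirely different and elementary. Permuting the labels $\le k_n$ among themselves changes no descent that involves a label $>k_n$. So each ``bad'' tree can be relabeled into a ``good'' one with at least one fewer descent, and each good tree has at most $k_n!$ preimages. This gives $\check Z_{n,q}\le k_n!\,q_n\,Z_{n,q}$ directly.

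That argument needs no reduced tree, no $V_\rho(k)$, no coefficient estimates, and no assumption $q_n=a/n$ or $k_n\to\infty$. It also shows exactly where the factorial in the hypothesis comes from.

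What your computation buys is a sharper bound. In the regime $q_n=a/n$, the bad probability on $V_\rho(k_n)$ is $O(q_n e^{k_n}k_n^{O(1)})$, so for $k_n\to\infty$ the lemma already holds when $q_n e^{k_n}k_n^{C}\to0$. Nothing in the paper needs this extra strength.
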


\begin{proof}
%    This can be seen by observing that the trees $t_u, u\in\cT_n^{k_n,\red}$ are, conditionally given their labels, descent-biased trees.
%    However, let us sketch a more direct, self-contained proof for the sake of clarity.
    Let us call a tree $t\in\bT_n$ \enquote{bad} if it contains a descent edge between two labels $\le k_n$, and \enquote{good} otherwise.
    Recall that $Z_{n,q} = \sum_{t\in\bT_n} q_n^{\des{t}}$ denotes the partition function of our model.
    Let $\check{Z}_{n,q}$ denote the restriction of this sum to bad trees $t$.
    Observe that any bad tree $t\in\bT_n$ can be mapped to a good tree $t'\in\bT_n$ through a reordering of the small labels.
    This map strictly decreases the number of descents, and any given good tree $t'$ has at most $k_n!$ preimages under this map.
    Therefore,
    \[
        \check Z_{n,q} \le k_n! \cdot q_n \cdot Z_{n,q} \,,
    \]
    and the claim follows readily.
\end{proof}

In what follows, let $\{ (\cT_n)_i^{k_n} ,\, 1\le i\le \ell^{k_n}(\cT_n) \}$ be the trees $\{ [\cT_n]_u^{k_n} ,\, u \in \cT_n^{k_n,\red}\}$ ranked by non-increasing order of sizes $\bn^\searrow(\cT_n) = \left( (n)_1^{k_n} \ge \ldots \ge (n)_{\ell^{k_n}(\cT_n)}^{k_n} \right)$ (with arbitrary choice in case of equality). 
From now on, we call them \textit{components}. 

The next two lemmas give structural properties of the components of the tree $\cT_n$: 
with high probability, the large components of $\cT_n$ contain a macroscopic number of vertices, and very few of those vertices have an ancestor with large label which is the endpoint of a descent.

\begin{lemma}
\label{lem:lemmerelou4bis}
For all fixed $L \geq 1$ and all $\varepsilon>0$, there exists $\eta>0$ such that
\begin{align*}
    \liminf_{n\to\infty} \prob{ (n)_i^{k_n} \geq \eta n \text{ for all } i\in[L]\;} \geq 1-\varepsilon.
\end{align*}
\end{lemma}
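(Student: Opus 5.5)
This follows from hypothesis (H2) on the sequence $(k_n)$ together with the fact that the Poisson--Dirichlet coordinates are almost surely positive. By (H2), the rescaled component sizes $\frac1n \bn^\searrow(\cT_n) = \big(\frac1n (n)_1^{k_n}, \frac1n (n)_2^{k_n}, \dots\big)$, completed with zeros, converge in distribution in $\Delta$ (product topology) to $X^{(a)} \sim \PoissonDirichlet{0}{a}$. Since the product topology is the topology of coordinatewise convergence, the projection onto the first $L$ coordinates is continuous. Hence the vector $\big(\frac1n (n)_i^{k_n}\big)_{1\le i\le L}$ converges in distribution to $(X^{(a)}_1,\dots,X^{(a)}_L)$, and consequently so does its minimum, which equals $\frac1n (n)_L^{k_n}$ because the ordering is non-increasing.

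Next I would use that $X^{(a)}_L > 0$ almost surely. In the stick-breaking representation, the sizes $Y_m \prod_{k<m}(1-Y_k)$ with i.i.d.\ $\BetaDistribution{1}{a}$ variables $Y_k$ are all almost surely strictly positive. There are therefore infinitely many positive atoms, so after decreasing reordering every coordinate, in particular the $L$-th, is almost surely positive. Given $\varepsilon>0$, I can then choose $\eta>0$ such that $\prob{X^{(a)}_L \le 2\eta} \le \varepsilon/2$, and, adjusting $\eta$ slightly if needed, such that $\eta$ is not an atom of the law of $X_L^{(a)}$.

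The Portmanteau theorem applied to the closed set $\{x \le \eta\}$ then gives
\[
\limsup_{n\to\infty} \prob{\tfrac1n (n)_L^{k_n} \le \eta} \le \prob{X^{(a)}_L \le \eta} \le \varepsilon .
\]
This yields $\liminf_n \prob{(n)_i^{k_n} \ge \eta n \text{ for all } i\in[L]} \ge 1-\varepsilon$, again by monotonicity of the ordering.

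The only points needing care are the following. The embedding of finite $\ell$-tuples into $\Delta$ pads with zeros, so if $\ell^{k_n}(\cT_n) < L$ the $L$-th coordinate is $0$ and the event fails; this is harmless because the bound above already controls it, and (H3) also ensures $\ell^{k_n}(\cT_n)\to\infty$ in probability. The ties in the ordering do not affect the values of the order statistics. I expect no substantial obstacle: the whole content lies in (H2), that is, in Proposition~\ref{prop: joint convergence of tilde j tilde c tilde n towards PoissonDirichlet}.
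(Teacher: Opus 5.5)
Your proof is correct and takes the same approach as the paper. The paper's one-line argument is exactly that (H2) gives convergence in distribution of $\frac1n \bn^\searrow(\cT_n)$ to a $\PoissonDirichlet{0}{a}$ sequence, whose $i$-th coordinate has no mass at $0$; you simply spell out the projection onto the first $L$ coordinates and the Portmanteau step. The adjustment of $\eta$ to avoid an atom is unnecessary, since the closed-set form of Portmanteau already suffices.
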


\begin{proof}[Proof of Lemma \ref{lem:lemmerelou4bis}]
    This follows from \Cref{eq:kn} and the fact that, for all $i \geq 1$, the $i$-th element of a $\PoissonDirichlet{0}{a}$ random variable has a density on $\bR_+^*$. 
\end{proof}

For any $k \geq 1$ and any subset $S \subseteq \cT_n$, let $V^{(k)}(S) := \{ u \in S \,:\, \max D_u(\cT_n) \geq k+1\}$ be the set of vertices in $S$ that have an ancestor with label $\geq k+1$ that is the endpoint of a descent, and $N^{(k)}(S) := |V^{(k)}(S)|$ its cardinality.

\begin{lemma}
\label{lem:lemmerelou5bis}
For any $L \geq 1$, we have
\begin{align*}
    \sup\limits_{1 \leq i \leq L} \frac{N^{(k_n)}\big( (\cT_n)_i^{k_n} \big)}{(n)_i^{k_n}} \overset{\bP}{\cv{n\to\infty}} 0
\end{align*}
\end{lemma}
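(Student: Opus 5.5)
The plan is to turn the statement into one about a uniformly random vertex $u_n$ of $\cT_n$ and then apply \Cref{prop:not many descents on the ancestral line} together with \Cref{lem:lemmerelou4bis}.

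\textbf{Step 1: reduce to a fixed threshold.} Fix $L$ and $\delta,\varepsilon>0$. By \Cref{lem:lemmerelou4bis}, there is $\eta>0$ such that, with probability at least $1-\varepsilon$ for all large $n$, we have $(n)_i^{k_n}\ge \eta n$ for every $i\in[L]$. On this event,
\begin{equation*}
\sup_{1\le i\le L} \frac{N^{(k_n)}\big((\cT_n)_i^{k_n}\big)}{(n)_i^{k_n}}
\le \frac{1}{\eta n}\sum_{i\le L} N^{(k_n)}\big((\cT_n)_i^{k_n}\big)
\le \frac{N^{(k_n)}(\cT_n)}{\eta n}.
\end{equation*}
So it suffices to show that $N^{(k_n)}(\cT_n)/n\to0$ in probability.

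\textbf{Step 2: pass to a typical vertex.} Since $u_n$ is uniform given $\cT_n$,
\begin{equation*}
\expec{N^{(k_n)}(\cT_n)/n} = \prob{\max D_{u_n}(\cT_n)\ge k_n+1}.
\end{equation*}
For any fixed $K$ and all $n$ with $k_n\ge K$, this probability is at most $\prob{D_{u_n}(\cT_n)\not\subseteq[K]}$. Therefore
\begin{equation*}
\limsup_{n\to\infty}\expec{N^{(k_n)}(\cT_n)/n}\le 1-\liminf_{n\to\infty}\prob{D_{u_n}(\cT_n)\subseteq[K]}.
\end{equation*}
By \Cref{prop:not many descents on the ancestral line}, the right-hand side tends to $0$ as $K\to\infty$. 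Hence $\expec{N^{(k_n)}(\cT_n)/n}\to0$, and Markov's inequality gives convergence in probability.

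\textbf{Step 3: conclude.} Combining Steps 1 and 2, the probability that the supremum exceeds $\delta$ is at most $\varepsilon+\prob{N^{(k_n)}(\cT_n)\ge \delta\eta n}$. The second term tends to $0$, and $\varepsilon$ is arbitrary, which proves the lemma.

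The main point to check is Step 2, namely that the random, slowly growing threshold $k_n$ is dominated by a fixed $K$. This is automatic since $k_n\to\infty$ and the event $\{\max D\ge k_n+1\}$ shrinks as the threshold increases. The step I expect to need the most care is the uniform lower bound on the component sizes in Step 1. Here I am relying entirely on (H2) through \Cref{lem:lemmerelou4bis}, including that the first $L$ components exist with high probability, which follows from (H3). No further estimate appears to be required.
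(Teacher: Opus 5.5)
Your proof is correct and follows the paper's argument: you use \Cref{lem:lemmerelou4bis} to bound the $L$ largest component sizes below by $\eta n$, and you control the number of vertices whose ancestral line has a descent endpoint above $k_n$ via \Cref{prop:not many descents on the ancestral line}, monotonicity in the threshold, and Markov's inequality. The paper merely packages the constants as $\eta\eta'\varepsilon$ instead of first proving $N^{(k_n)}(\cT_n)/n\to0$ in probability. One small correction: $k_n$ is a deterministic sequence, not a random threshold, and that is exactly why the comparison with a fixed $K$ for large $n$ is immediate.
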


Lemmas \ref{lem:lemmerelou4bis} and \ref{lem:lemmerelou5bis} can be rephrased as follows: for any $L \geq 1$, with high probability each of the largest $L$ components has size at least $\eta n$, and contains a proportion $o(1)$ of vertices that have a large descent endpoint on their ancestral line.
%an ancestor with label $\geq k_n+1$ which is the endpoint of a descent.

\begin{proof}[Proof of Lemma \ref{lem:lemmerelou5bis}]
    Fix $\varepsilon>0$. By \Cref{lem:lemmerelou4bis},  there exists $\eta>0$ such that
    \begin{align*}
        \limsup_{n\to\infty} \prob{ (n)_i^{k_n} < \eta n \text{ for some } i \in [L]} < \varepsilon.
    \end{align*}
    Now fix $\eta'>0$, and let $u_n$ denote a uniformly random vertex of $\cT_n$.
    By \Cref{prop:not many descents on the ancestral line}, there exists $k_0 = k_0(\eta,\eta',\varepsilon) \ge 1$ such that, for all $k \geq k_0$, $\limsup_{n\to\infty}\prob{ \max D_{u_n}(\cT_n) \ge k+1 } \le \eta \eta' \varepsilon$. 
    Since $k_n\to\infty$, we deduce by monotonicity that
    \begin{equation*}
        \limsup_{n\to\infty}\prob{ \max D_{u_n}(\cT_n) \ge k_n+1 } \le \eta \eta' \varepsilon \,.
    \end{equation*}
    This implies by Markov's inequality that
    \begin{align*}
        \limsup_{n\to\infty} \prob{|E_{n}| \geq \eta \eta' n} \le \varepsilon,
    \end{align*}
    where $E_{n} := \{ u\in \cT_n, \max D_u \geq k_n+1\}$.
    On the other hand, under the event $|E_{n}| \leq \eta \eta' n$, for all $i \in [L]$ such that $(n)_i^{k_n} \geq \eta n$, we have $N^{(k_n)}((\cT_n)_i^{k_n}) \leq \eta \eta' n \leq \eta' (n)_i^{k_n}$. 
    Therefore,
    \begin{equation*}
        \limsup_{n\to\infty} \prob{ N^{(k_n)}\big( (\cT_n)_i^{k_n} \big) > \eta' (n)_i^{k_n} \text{ for some } i\in[L]}
        \le 2\varepsilon \,.
    \end{equation*}
    Since this holds for any $\varepsilon>0$ and $\eta'>0$, we get the result.
\end{proof}

\subsubsection{A coupling argument}
\label{sssec:coupling}

We now present a coupling between the random trees $\{\cT_n, n \geq 1\}$ and the dendron $\cD_a$, based on the Foata--Fuchs constructions (recall Theorem \ref{thm:foata fuchs} and Definition \ref{def:countable foata}). 
This coupling keeps track of the location of a large proportion of the total mass of the tree.
More precisely, we prove that, under this coupling, vertices of the reduced tree $\cT_n^{k_n,\red}$ approximate vertices of the dendron $\cD_a$, and that the proportion of vertices in a large component of $\cT_n$ converges to the mass of the corresponding vertex in the dendron.

\paragraph{The $M$-structures of $\cT_n$ and of $\cD_a$.}
\

\smallskip

\noindent
For fixed $k$ and $M$ with $k \geq M \geq 1$, we define the $M$-structure $\St_M^{(k)}(\cT_n)$ of the tree $\cT_n$ as, roughly speaking, the subtree of $\cT_n^{k,\red}$ spanned by the vertices corresponding to the largest $M$ components of $\cT_n$. 

Let $\mathbb{T}_M^{\geq}$ denote the set of finite rooted trees whose vertices have distinct labels in $\mathbb{N}$  and whose leaves all have labels in $[M]$.

\begin{definition}
\label{def: M structure of discrete tree}
Fix $k$ and $M$ with $k \geq M \geq 1$, and fix a tree $T\in\bT_n$ with $n$ vertices labeled from $1$ to $n$. 
Assume that the root label of $T$ is $\le k$.
The $M$-structure $\St_M^{(k)}(T)$ is the element of $\mathbb{T}_M^{\geq}$ that is constructed as follows.
\begin{itemize}
    \item Relabel the vertices of $T^{k,\red}$ as $u_{(1)}, \ldots, u_{(\ell)}$, by non-increasing order of the sizes $[n]_{u_{(1)}}^k \ge \ldots \ge [n]_{u_{(\ell)}}^k$ of the associated components (with arbitrary choice in case of equality);
    \item give the label $i$ to $u_{(i)}$, for $1 \leq i \leq \ell$;
\end{itemize}
Then, define $\St_M^{(k)}(T)$ as the subtree of $T^{k,\red}$ spanned by $u_{(1)}, \ldots, u_{(\min(M,\ell))}$, forgetting the decorations and keeping only the labels defined above. 
By construction, $\St_M^{(k)}(T)$ is an element of $\mathbb{T}_M^{\geq}$.
\end{definition}

\begin{remark}
   By \Cref{eq:concentration ell}, we get that $\ell^{k_n}(\cT_n)/\log k_n \cv{} a$ in probability as $n \to \infty$. 
   In particular, we have 
    \[
    \prob{\min(M,\ell^{k_n}(\cT_n)) = M} \cv{n\to\infty} 1 \,.
    \]
\end{remark}

We also define the continuous analogue, namely, the $M$-structure $\St_M(\cD_a)$ of the dendron $\cD_a$ for $M \geq 1$. 
It is the (almost surely finite) subtree spanned by the vertices with the largest $M$ masses.

\begin{definition}
Recall the definition of the dendron $\cD_a:=(T_a,\rho_a,d_a,\nu_a)$ with measure $\nu_a := \sum_{i \geq 1} X_i \delta_{(v_i,1)}$, with the notation of Definition~\ref{def: size-biased dendron}. 
The $M$-structure $\St_M(\cD_a)$ is the subtree of $T_a$ spanned by $v_1, \ldots, v_M$. 
Giving the label $i$ to each vertex $v_i$ makes it an element of $\mathbb{T}_M^{\geq}$.
\end{definition}

\paragraph{The coupling.}

\ 

\smallskip

\noindent
The main result of this part is the existence of a coupling between the random trees $\{ \cT_n, n \geq 1 \}$ and the dendron $\cD_a$, under which the sequence of distance matrices between $p$ i.i.d.\ uniform points in $\cT_n$ converges to the distance matrix between $p$ i.i.d.\ points in $\cD_a$ distributed according to $\nu_a$. 
This coupling is based on the convergence in distribution of the $M$-structure of $\cT_n$ to the $M$-structure of $\cD_a$, for all fixed $M \geq 1$, see \Cref{lem:proba of a structure}.

By Skorokhod's theorem, we can fix a sequence $(k_n)_{n \geq 1}$ that satisfies $k_n!=o(n)$ and for which the convergences of \Cref{eq:kn,eq:concentration ell} hold almost surely on some probability space (sometimes refered to as the ``Skorokhod space'').
Letting $(X^{(a)}, X^{(a)}, X^{(a)})$ denote the limit triple, we let $\cD_a$ be the dendron constructed from $(X_1^{(a)}, X_2^{(a)}, \ldots) = X^{(a)}$ by Definition \ref{def: size-biased dendron}. 
From now on, we work under the almost sure convergence in this probability space. 
The next lemma shows the convergence of the $M$-structure of $\cT_n$ towards the $M$-structure of $\cD_a$.

\begin{lemma}
\label{lem:proba of a structure}
Fix $M \geq 1$. 
Then the following statements hold:
\begin{itemize}
    \item[(i)] On the Skorokhod space where \Cref{eq:kn,eq:concentration ell} hold a.s., we have, for any given $\mathfrak{T}\in \bT_M^{\geq}$,
    \[
        \probcond{ \St_M^{(k_n)}(\cT_n)=\mathfrak{T} }{\bc^\searrow(\cT_n)} 
        \overset{a.s.}{\cv{n\to\infty}} 
        \probcond{\Big. \St_M(\cD_a)=\mathfrak{T} }{X^{(a)}} \,.
    \]
    \item[(ii)] Let $\max \St_M^{(k_n)}(\cT_n)$ be the largest label of a vertex in the $M$-structure of $\cT_n$. 
    Then $\max \St_M^{(k_n)}(\cT_n) = \cO_\bP(1)$, i.e.:
    \[
        \limsup_{n \rightarrow \infty} \prob{\max \St_M^{(k_n)}(\cT_n) \geq A} \underset{A \rightarrow \infty}{\rightarrow} 0 \,.
    \]
\end{itemize}
\end{lemma}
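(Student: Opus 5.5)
The plan is to reduce both claims to the Foata--Fuchs description of the reduced tree. By Lemma~\ref{lem:combinatorial formulas}~(ii), conditionally on $V_\rho(k_n)$ and on the out-degrees, $\widetilde\cT_n^{k_n,\red}$ is a uniform labeled tree with out-degree sequence $\wbc(\cT_n)$. The same remains true once we relabel its vertices by the rank of their component sizes: the relabeling $u\mapsto$ rank in $\bn^\searrow$ is a function of $\wbn$, and formula~\eqref{eq: reduced tree distribution tilde c conditional tilde j tilde n} shows that the tree shape is independent of $\wbn$ given $\wbc$. So, given $\bc^\searrow(\cT_n)=(c_1,\dots,c_\ell)$, the tree with labels $1,\dots,\ell$ is uniform among trees in $\bT_{\ell}$ in which vertex $i$ has $c_i$ children. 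By Theorem~\ref{thm:foata fuchs}~(ii), this tree is $T_{\bx}$, where $\bx$ is a uniformly random arrangement of the multiset containing $c_i$ copies of $i$. Since $\prob{V_\rho(k_n)}\to1$ by Lemma~\ref{lem: asymptotic probability of V1}, conditioning on it costs nothing.

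Next, I couple this uniform arrangement with i.i.d.\ sampling. A uniform arrangement of the multiset with multiplicities $c_i$ is obtained by drawing without replacement from an urn with $\ell-1$ balls, $c_i$ of color $i$. By (H2) and (H3), the proportions $c_i/(\ell-1)$ converge a.s.\ to $X_i$ for each fixed $i$, and $\ell-1\to\infty$. For each fixed $m$, the first $m$ draws without replacement are therefore within total variation $O(m^2/\ell)$ of $m$ i.i.d.\ draws from $(c_i/(\ell-1))_i$. In turn, these converge in law to $m$ i.i.d.\ draws $Z_1,\dots,Z_m$ with $\prob{Z=i}=X_i$, because the law of each draw converges in total variation: $\sum_i |c_i/(\ell-1)-X_i|\to0$ by Scheffé's lemma, both being probability vectors.

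The key observation is that the $M$-structure is determined by a finite prefix of $\bx$. In the Foata--Fuchs construction, the path explored up to step $j$ is fixed by $x_1,\dots,x_j$. Take $J$ to be the first time all labels $1,\dots,M$ have appeared in $\bx$. Then $v_1,\dots,v_M$ are all placed, and the subtree they span is a function of $(x_1,\dots,x_J)$. Vertices inserted at repeat steps carry labels from $D(\bx)$, i.e.\ leaves with $c_i=0$; these never lie on paths between $v_1,\dots,v_M$, which have $c_i\ge1$ for $\ell$ large since $X_i>0$. The same holds in the countable construction (Definition~\ref{def:countable foata}), with the same function of $(Z_1,\dots,Z_J)$. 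Since $X_1,\dots,X_M>0$ a.s., $J$ is a.s.\ finite in the limit. Hence $\prob{J>m}$ can be made small uniformly in $n$ large by choosing $m$ large, and combining with the coupling gives~(i).

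For~(ii), $\max\St_M^{(k_n)}(\cT_n)$ is at most $\max\{x_1,\dots,x_J\}$, the largest color appearing before $J$. By the coupling, this is tight: with probability close to $1$, we have $J\le m$, and the $m$ i.i.d.\ draws from $X$ are all at most $A$ for $A$ large. Taking $\limsup_n$ and then $A\to\infty$ gives~(ii).

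The main obstacle is the first step: checking that the relabeling by size rank preserves uniformity given $\bc^\searrow$, which needs the exchangeability from the random ordering $\sigma$ together with~\eqref{eq: reduced tree distribution tilde c conditional tilde j tilde n}. A secondary issue is making the claim ``the spanned subtree depends only on $x_1,\dots,x_J$'' rigorous for both the finite and the countable Foata--Fuchs constructions, including how ties in the ranking are handled.
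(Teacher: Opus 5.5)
Your proposal is correct and follows essentially the same route as the paper: uniformity of the reduced tree given $\bc^\searrow$ via Lemma~\ref{lem:combinatorial formulas}~(ii), the Foata--Fuchs bijection turning it into a uniform arrangement of a multiset whose first few letters are approximately i.i.d.\ with law $X^{(a)}$, the fact that the $M$-structure is a function of the prefix up to the first time all of $[M]$ has appeared, and tightness of the largest letter in that prefix for (ii). Your total-variation coupling (sampling without versus with replacement, plus Scheff\'e) is a cleaner substitute for the paper's pointwise product formula followed by a $\liminf$ bound, and your treatment of the relabeling by size rank and of the leaves coming from $D(\bx)$ fills in points the paper passes over; note only that the conditional uniformity given $(\wbc,\wbj,\wbn)$ needed for the relabeling follows from Lemma~\ref{lem:combinatorial formulas}~(i), not from \eqref{eq: reduced tree distribution tilde c conditional tilde j tilde n}, which only gives the law of $\wbc$.
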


\begin{proof}
Fix $\varepsilon>0$. 
Let $(Z_1,Z_2,\ldots)$ be i.i.d.\ random variables such that $\prob{Z_1=i}=X_i^{(a)}$ for all $i$, and let 
\[
    t_M := \inf\{ t \geq 1 : \text{for all } j \in [M], \text{ there exists } s \leq t \text{ such that } Z_s= j \}
\] 
be the first time $t \geq 1$ for which $[M] \subseteq \{Z_1,\ldots,Z_t\}$. 
It is clear that $t_M<\infty$ almost surely. 
Hence, one can choose $Q \geq 1$ such that $\prob{t_M \leq Q} \geq 1-\varepsilon$.

For $(z_1,\ldots,z_Q) \in \bN^Q$, let $T(z_1,\ldots,z_Q)$ be the discrete tree obtained from $(z_1,\ldots,z_Q)$ after the first $Q$ steps of the countable Foata--Fuchs construction (\Cref{def:countable foata}). 
That is, the vertices of $T(z_1,\ldots,z_Q)$ are all $v_i$ (where $v_i$ has label $i$) such that there exists $j \in [Q]$ with  $z_j=i$.
The root of $T(z_1,\ldots,z_Q)$ is $v_{z_1}$. For $2 \leq j \leq Q$ such that $z_j \neq \{z_1,\ldots,z_{j-1}\}$, $v_{z_j}$ is a child of $v_{z_{j-1}}$. 
Let $T_M(z_1,\ldots,z_Q) \in \bT_M^\le$ denote the subtree of $T(z_1,\ldots,z_Q)$ obtained by removing all vertices that have no descendant with a label in $[M]$.
For later use,
consider $\fT \in \mathbb{T}_M^{\geq}$ and an arbitrary $(z_1, \ldots, z_Q) \in \bN^Q$ such that $T_M(z_1,\ldots,z_Q)=\fT$, if it exists. 

As in the construction of \Cref{def: M structure of discrete tree}, we label the vertices of $\cT_n^{k_n,\red}$ according to the sizes of the associated components (with ties broken arbitrarily).
That is, we give the label $i$ to the vertex of $\cT_n^{k_n,\red}$ with component $(\cT_n)_i^{k_n}$.
Conditionally given $\ell^{k_n}(\cT_n) = \ell$, consider a vector $\bc = (c_1, \dots, c_\ell)$ of positive integers summing to $\ell-1$. 
Then, conditionally on $\bc^\searrow(\cT_n):= \bc$, we know by \Cref{lem:combinatorial formulas} (ii) that $\cT_n^{k_n,\red}$ is distributed as a uniformly random tree with the prescribed out-degree sequence $(c_1,\ldots,c_\ell)$, where $c_i$ is the out-degree of the vertex labeled $i$.

Let $(x_1,\ldots,x_{\ell-1}) \in [\ell]^{\ell-1}_\bc$ be the image of $\cT_n^{k_n,\red} \in \bT_{\ell,\bc}$ by the Foata--Fuchs bijection (\Cref{thm:foata fuchs}).
Conditionally given $\bc_n := \bc^\searrow(\cT_n)$, we know that $(x_1,\ldots,x_{\ell-1})$ is distributed uniformly on $[\ell]^{\ell-1}_{\bc_n}$.
Then, using \eqref{eq:concentration ell}, we get under our coupling that $\ell^{k_n}(\cT_n) \sim a\log k_n \overset{a.s.}{\cv{n\to\infty}} \infty$, and also
\begin{align*}
    \probcond{\big. (x_1,\ldots,x_Q)=(z_1,\ldots,z_Q)}{\bc_n} = \prod_{i=1}^Q \frac{c_{z_i}(\cT_n)}{a \log k_n} (1+o_{a.s.}(1))
    \overset{a.s.}{\cv{n\to\infty}} \prod_{i=1}^Q X^{(a)}_{z_i}.
\end{align*}
Now, recalling the construction of the dendron $\cD_a$ from the sequence $X^{(a)}$ (Definition \ref{def: size-biased dendron}), we have
\begin{align*}
    \probcond{\Big. (Z_1,\ldots,Z_Q)=(z_1,\ldots,z_Q) }{X^{(a)}} =\prod_{i=1}^Q \prob{Z_i=z_i}
    = \prod_{i=1}^{Q} X^{(a)}_{z_i}.
\end{align*}
Finally, observe that $\St_M(\cD_a)$ equals $T_M(Z_1, \dots, Z_Q)$ for any $Q\ge1$ such that $Q \ge t_M$,
and likewise that $\St_M^{(k_n)}(\cT_n)$ equals $T_M(x_1, \dots, x_Q)$ for any $Q\ge1$ such that $[M] \subseteq \{ x_1, \dots, x_Q \}$.
Hence,
\begin{align*}
\probcond{\St_M^{(k_n)}(\cT_n)=\fT}{\bc_n} 
&\geq \sum\limits_{\substack{(z_1,\ldots,z_Q) \in \bN^Q \\ T_M(z_1,\ldots,z_Q)=\fT}} \probcond{\big. (x_1,\ldots,x_Q)=(z_1,\ldots,z_Q)}{\bc_n} \\
%&=\sum\limits_{\substack{(z_1,\ldots,z_Q) \\ T_M(z_1,\ldots,z_Q)=\fT}}\prod_{i=1}^Q \frac{c_{z_i}}{a \log k_n} (1+o(1))\\
&= \sum\limits_{\substack{(z_1,\ldots,z_Q) \in\bN^Q \\ T_M(z_1,\ldots,z_Q)=\fT}}\prod_{i=1}^Q X_{z_i}^{(a)} (1+o_{\text{a.s.}}(1))\\
& \geq \probcond{\big. \St_M(\cD_a)=\fT }{X^{(a)}} - \prob{t_M > Q} + o(1)\\
& \geq \probcond{\big. \St_M(\cD_a)=\fT }{X^{(a)}} -\varepsilon + o(1) \,.
\end{align*}
Letting $\varepsilon \rightarrow 0$, we find that, for all $\fT \in \bT^{\geq}_M$,
\begin{align*}
\liminf_{n \rightarrow \infty} \probcond{ \St_M^{(k_n)}(\cT_n)=\fT }{\bc_n} \geq \probcond{\Big. \St_M(\cD_a)=\fT }{X^{(a)}}
\end{align*}
almost surely, which concludes the proof of (i).
The proof of (ii) follows from the fact that, for any fixed $Q \geq 1$, $\prob{\max\limits_{1 \leq i \leq Q} Z_i \geq A} \cv{A\to\infty} 0$.
\end{proof}

Therefore, for any fixed $M \geq 1$, we may find a sequence $(k_n)_{n \geq 1}$ and couple the trees $(\cT_n)_{n \geq 1}$ and the dendron $\cD_a$ in such a way that (H1), (H2) and (H3) hold almost surely, and in addition that 
\begin{align*}
\St_M^{(k_n)}(\cT_n) \overset{a.s.}{\cv{n\to\infty}} \St_M(\cD_a) \,.
\end{align*}

The next lemma states that, under the almost sure convergence \eqref{eq:kn}, we also have convergence of the locations of uniform points in the tree $\cT_n$.

\begin{lemma}
\label{lem:same component}
Let $u_n$ be a uniform vertex in $\cT_n$, and $u \in \cD_a$ with law $\nu_a$. 
Then, for all $i \in [M]$, under the almost sure convergence \eqref{eq:kn},
\[
    \probcond{u_n \in (\cT_n)_i^{k_n}}{\bn^\searrow(\cT_n)} 
    \overset{a.s.}{\cv{n\to\infty}} 
    \probcond{\big. u=(v_{i},1)}{X^{(a)}} \,.
\]
\end{lemma}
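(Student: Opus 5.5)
This lemma is essentially immediate once we unpack the two sides; it requires no new estimate. On the discrete side, the components $\{(\cT_n)_i^{k_n}, 1\le i\le \ell^{k_n}(\cT_n)\}$ are disjoint and, provided the root label of $\cT_n$ is at most $k_n$ (that is, on the event $V_\rho(k_n)$), they partition the vertex set of $\cT_n$. This is because every vertex lies in exactly one tree $\tau_i$ for $i\in[k_n]$. Since $u_n$ is uniform given $\cT_n$, and the component $(\cT_n)_i^{k_n}$ has size $(n)_i^{k_n}$, we get, on $V_\rho(k_n)$,
\[
\probcond{u_n \in (\cT_n)_i^{k_n}}{\cT_n} = \frac{(n)_i^{k_n}}{n}.
\]
Off $V_\rho(k_n)$ the same identity holds: there is an extra top tree $\tau_\emptyset$, but $u_n$ still lies in the $i$-th component with probability equal to its relative size. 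Because the right-hand side is a function of $\bn^\searrow(\cT_n)$, conditioning on $\bn^\searrow(\cT_n)$ instead of $\cT_n$ gives the same value, namely the $i$-th coordinate of $\frac1n\bn^\searrow(\cT_n)$. The reordering $\searrow$ used to index the components is exactly the one used in (H2), so no relabeling issue arises.

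On the dendron side, $u$ has law $\nu_a=\sum_{j\ge1}X_j^{(a)}\delta_{(v_j,1)}$ given $X^{(a)}$. Here $X^{(a)}$ is a probability vector almost surely, the coordinates of $\PoissonDirichlet{0}{a}$ summing to $1$. Moreover, the vertices $v_j$ are distinct by construction of the countable Foata--Fuchs tree. Hence
\[
\probcond{u=(v_i,1)}{X^{(a)}}=X_i^{(a)}.
\]

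It then remains to invoke the almost sure version of (H2) on the Skorokhod space. There, $\frac1n\bn^\searrow(\cT_n)\to X^{(a)}$ almost surely in $\Delta$ with the product topology, which is exactly coordinatewise convergence. Taking the $i$-th coordinate for each $i\in[M]$ yields the claim.

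The only point needing care is the event $V_\rho(k_n)^c$ together with the top tree $\tau_\emptyset$. I would check how it is indexed: if $\tau_\emptyset$ is treated as the root component, the identity above is exact. Otherwise, its mass is at most $n$ times an indicator of an event whose probability tends to $0$, by \Cref{lem: asymptotic probability of V1}, since $\frac{k_n}{a+k_n}\to1$. In that case the statement holds after discarding this event, which is harmless under the coupling.
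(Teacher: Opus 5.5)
Your proposal is correct and is the same argument as the paper's one-line proof: the left side equals $(n)_i^{k_n}/n$, the $i$-th coordinate of $\frac1n\bn^\searrow(\cT_n)$, the right side equals $X_i^{(a)}$, and the almost sure version of (H2) gives coordinatewise convergence. Your closing hedge about $\tau_\emptyset$ is unnecessary, since a uniform vertex falls in a fixed set of $(n)_i^{k_n}$ vertices with probability $(n)_i^{k_n}/n$ however the top tree is indexed; it is just as well, because discarding an event whose probability merely tends to $0$ would not by itself preserve \emph{almost sure} convergence.
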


\begin{proof}
This is immediate by the convergence \eqref{eq:kn} of $\frac1n \bn^\searrow(\cT_n)$ towards $X^{(a)}$, the fact that the coordinates of $X^{(a)}$ are a.s.\ distinct, and the construction of the dendron $\cD_a$.
\end{proof}

\subsubsection{Conditioning on the structure of the reduced subtree spanned by uniform vertices}
\label{sssec:final proof}

We can finally prove the convergence of the distance matrix, that is, Theorem \ref{thm:convergence of distances critical window}. 
Fix $\varepsilon>0$, and consider $M,Q \geq 1$ such that $E_{M,Q}$ holds with probability $\geq 1-\varepsilon$, where $E_{M,Q}$ is the event that
\begin{itemize}
    \item $w_r \in \bigcup\limits_{i=1}^M \{(v_i,1)\}$ for all $r \in [p]$, and
    \item the largest label in $\St_M(\cD_a)$ is $\leq Q$.
    In particular, the size of the tree is $\abs{ \St_M(\cD_a) } \leq Q$.
\end{itemize}
Note that such an $M$ exists because, since $\sum_{i \geq 1} X^{(a)}_i =1$, we have $\nu_a\big( \{(v_1,1),\ldots,(v_M,1)\} \big) \cv{M\to\infty} 1$. 
On the other hand, the existence of $Q$ is immediate by Lemma \ref{lem:proba of a structure} and the fact that the largest label in $\St_M(\cD_a)$ is (deterministically) larger than the number of vertices in $\St_M(\cD_a)$.

In view of the previous results, we can work under (H1), (H2), (H3), and the almost sure convergence 
\begin{align*}
\St_M^{(k_n)}(\cT_n) \overset{a.s.}{\cv{n\to\infty}} \St_M(\cD_a) \,.
\end{align*}
Using also Lemma \ref{lem:same component}, it is enough to prove the convergence of the distance matrices under the following assumptions. Fix $i_1, \ldots, i_p \geq 1$ and assume without loss of generality that $M,Q \geq \max\limits_{1 \leq r \leq p} i_r$. Consider a tree $\mathfrak{T} \in \bT^\geq_M$ with $\leq Q$ vertices that have labels in $[Q]$.
As usual, we write $v_i$ for the vertex with label $i$ in $\mathfrak{T}$.
We will assume that the following holds for sufficiently large $n$:
\begin{itemize}
    \item[(A1)] for all $r \in [p]$, $w_r^{(n)} \in (\cT_n)_{i_r}^{k_n}$;
    \item[(A2)] the $M$-structure of $\cT_n$ is $\mathfrak{T}$;
\end{itemize}
and that the following holds in $\cD_a$:
\begin{itemize}
    \item[(B1)] for all $r \in [p]$, $w_{r} = (v_{i_r},1)$;
    \item[(B2)] the $M$-structure of $\cD_a$ is $\mathfrak{T}$.
\end{itemize}

\begin{proposition}
\label{prop:spanned subtree}
%Fix $i_1, \ldots, i_p \geq 1$ and $Q \geq M \geq \max\limits_{1 \leq r \leq p} i_r$. Consider a tree $\mathfrak{T} \in \bT^\geq_M$ with $\leq Q$ vertices, whose vertices have labels in $[Q]$, where we denote by $v_i$ the vertex with label $i$. 
Assume (A1) and (A2) for sufficiently large $n$. 
Then, conditionally on these events,
\begin{align*}
    \frac{1}{\log n} \left( \dist_{\cT_n}(w_r^{(n)},w_s^{(n)}) \right)_{1 \leq r,s \leq p}
    \overset{\bP}{\cv{n\to\infty}}
    \left( 2+\dist_{\mathfrak{T}}(v_{i_r},v_{i_s}) \right)_{1 \leq r,s \leq p} \,.
\end{align*}
\end{proposition}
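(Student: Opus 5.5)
We will decompose the path between $w_r^{(n)}$ and $w_s^{(n)}$ according to the components it crosses. Under (A1)--(A2), $w_r^{(n)}$ lies in the component $(\cT_n)_{i_r}^{k_n}$, which corresponds to the vertex $v_{i_r}$ of $\mathfrak{T}$. The path from $w_r^{(n)}$ to $w_s^{(n)}$ therefore crosses exactly the components indexed by the vertices on the path from $v_{i_r}$ to $v_{i_s}$ in $\cT_n^{k_n,\red}$, which is the path in $\mathfrak{T}$. The distance thus splits into two kinds of contribution: (a) the distance from $w_r^{(n)}$ up to the root of its component (the vertex carrying the smallest label of $t_{u}$, as seen inside $[\cT_n]_u$); and (b) for each edge $(u,v)$ of the path in $\mathfrak{T}$, the distance inside $[\cT_n]_u$ from its root to the attachment vertex $p_v$, plus one. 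If $r\neq s$ lie in different components, the two endpoint terms from (a) give the constant $2$ after rescaling, and each crossed edge of $\mathfrak{T}$ contributes $1$. If they lie in the same component, the constant $2$ should come instead from the two depths within that component, since $d(w_r,w_s)$ is still about $2\log n$ on the diagonal block. It therefore suffices to show that, within a large component $[\cT_n]_u$ of size $n_u\asymp n$, three quantities are all $(1+o_\bP(1))\log n$: the depth of a uniform vertex, the depth of the attachment points $p_v$ of the children components, and the distance between two independent uniform vertices minus the two depths, which should be $o(\log n)$.

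For the within-component estimates, we will use that, conditionally on its size and on being free of descents below level $k_n$, a component behaves like a recursive tree. By Lemma~\ref{lem:no small descent} (using (H1)), there are no descents among labels $\le k_n$. By Lemma~\ref{lem:lemmerelou5bis}, all but an $o(1)$ proportion of vertices of $(\cT_n)_i^{k_n}$ have no descent endpoint with label $>k_n$ on their ancestral line, so in $[\cT_n]_u$ their ancestral lines are increasing. For a uniform vertex $w$ of the component, we will then argue as for random recursive trees: its depth in $[\cT_n]_u$ is $(1+o_\bP(1))\log n_u=(1+o_\bP(1))\log n$. To do this, we will use the explicit law of $\tau_i$ (root label $1$) given by Lemma~\ref{lem:combinatorial formulas}: conditionally on sizes, the $\tau_i$ are independent descent-biased trees with root label $1$. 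The ancestral line of an increasing path to a uniform vertex gives a Rényi-type record process of length $\sim\log\nu_i$, and $\nu_i=\Theta(n)$ for the relevant $i$. Two independent uniform vertices of the same component will separate near the top, at depth $O_\bP(1)$, as in recursive trees. This is the key observation that yields $2\log n$ rather than anything smaller.

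For the attachment points, we will use the formula in Lemma~\ref{lem:combinatorial formulas}(i). The factor $(n_u-j_u)^{c_u}$ means that, given everything else, each $p_v$ is uniform among the $n_u-j_u$ vertices with labels $>k_n$ in $[\cT_n]_u$. Hence $p_v$ behaves like a uniform vertex, and its depth is also $(1+o_\bP(1))\log n$; the extra edge $p_v\to$ root of $t_v$ is negligible. The components along the path in $\mathfrak{T}$ that are not among the $M$ largest (branch points with labels up to $Q$) must also be handled. Their sizes are still of order $n$ with high probability by (H2), since their labels are bounded by $Q$; this is the same Poisson--Dirichlet positivity as in Lemma~\ref{lem:lemmerelou4bis}. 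Each such component is traversed from its root to an attachment point, so it contributes about $\log n$, matching the unit edge lengths in $\mathfrak{T}$. Summing the finitely many pieces gives the claimed limit.

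The main obstacle is making the recursive-tree depth estimate rigorous inside a component in spite of the conditioning on (A1)--(A2) and the presence of descents within the subtrees $\tau_i$. We expect to first prove the estimate for a uniform vertex of $\cT_n$ unconditionally, by combining Proposition~\ref{prop:not many descents on the ancestral line} with the depth law of Proposition~\ref{prop: asymptotic coefficient typical depth in transition regime}, conditioned on the number of components crossed. We will then transfer it to the conditioned setting, using that the conditioning events have probability bounded below.
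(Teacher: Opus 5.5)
\paragraph{Where your argument matches the paper.}
Your architecture is the paper's:
\begin{itemize}
\item decompose the path along $\mathfrak{T}$;
\item use the factor $(n_u-j_u)^{c_u}$ of \Cref{lem:combinatorial formulas} to see that attachment points are uniform among the labels $>k_n$ of the parent component;
\item use \Cref{lem:no small descent,lem:lemmerelou4bis,lem:lemmerelou5bis} to restrict to components of size at least $n/\log n$ in which almost every vertex has a descent-free ancestral line;
\item conclude with recursive-tree asymptotics: depth $\sim\log m$, and lowest common ancestor at depth $o(\log m)$.
\end{itemize}
The last fact is what makes your decomposition (b) correct to first order. At the branching component, the path joins two attachment points without passing through that component's root.

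\paragraph{The gap.}
The gap is exactly the step you call the main obstacle, and neither justification you give for it works.
\begin{itemize}
\item The claim $\nu_i=\Theta(n)$ is false. A component of size $\Theta(n)$ is shared among $j_u\asymp k_n\to\infty$ subtrees $\tau_i$, so typical $\nu_i$ are much smaller than $n$. What is true, and suffices, is $\log\nu_i=(1+o(1))\log n$ for the $\tau_i$ containing a uniform vertex. This holds because the $\tau_i$ with $\nu_i\le n/(k_n\log n)$ carry at most $n/\log n$ vertices in total, and $\log k_n=o(\log n)$.
\item More seriously, the assertion that an increasing ancestral line is a R\'enyi record process is unproved. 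In a descent-biased tree, knowing that a vertex's ancestral line is increasing does not by itself make each parent uniform among the smaller labels.
\item The fallback does not help either. \Cref{prop: asymptotic coefficient typical depth in transition regime} only gives the law of the \emph{total} depth, a mixture of $k\log n$ over $k\in\bN$. It contains no information on how that depth splits among the components crossed. To ``condition on the number of components crossed'' you would need the law of that number and a per-component lower bound, which is the very estimate you are after.
\end{itemize}

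\paragraph{The missing idea: an exact symmetry.}
Work on the high-probability events of \Cref{lem:no small descent,lem:lemmerelou5bis}. Let $T^\circ_i$ be the set of vertices of $(\cT_n)_i^{k_n}$ with no descent endpoint $>k_n$ on their ancestral line. Then:
\begin{itemize}
\item $T^\circ_i$ is a subtree containing the component's root, with labels increasing away from that root.
\item Everything else is attached to it through edges whose descent status does not depend on the shape of $T^\circ_i$. This covers the subtrees hanging from it through descent edges, the child components, and the edge to the parent component.
\item Hence, conditionally on its label set and on everything outside it (including the labels of the attachment vertices), $T^\circ_i$ is a uniform increasing tree, i.e.\ a random recursive tree.
\item Its size is $(1-o(1))(n)_i^{k_n}\ge n/(2\log n)$.
\end{itemize}
The $w_r^{(n)}$ and the attachment points $x_j$ fall in the relevant $T^\circ_i$ w.h.p.\ and are uniform there. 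For two uniform vertices of a random recursive tree of size $m$, the classical facts are $h(u)=(1+o_\bP(1))\log m$ and $d(u,v)=(2+o_\bP(1))\log m$. These give all within-component distances directly, with no transfer from unconditional depth laws.
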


\begin{lemma}
\label{lem: distances in a dendron}
Consider the dendron $\cD_a$. 
Let $w_1, \dots, w_p$ be as in (B1), 
%i.i.d. distributed according to $\nu_a$, and assume that (B1) and 
and assume that (B2) holds. 
Then,
\begin{align*}
    \left( d_a(w_{r},w_{s}) \right)_{1 \leq r,s \leq p} = \left(\big. 2+\dist_{\mathfrak{T}}(v_{i_r},v_{i_s})\right)_{1 \leq r,s \leq p} \,.
\end{align*}
\end{lemma}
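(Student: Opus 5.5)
This is essentially a direct computation from the definition of the dendron metric, recalled in \Cref{sec: Gromov Prokhorov}. For a dendron whose base tree is a real tree $(T,d)$ with mass measure on $T\times[0,\infty)$, the distance between two sampled points $(x,h)$ and $(x',h')$ is, following \cite{ET22,Janson21}, $d(x,x')+h+h'$ when they are distinct samples. Under (B1), $w_r=(v_{i_r},1)$ and $w_s=(v_{i_s},1)$. So for $r\neq s$ we will get $d_a(w_r,w_s)=d_a(v_{i_r},v_{i_s})+1+1$. This holds even when $i_r=i_s$, since two independent samples at the same atom are still at distance $2$, as in $\Upsilon_{\delta_1}$.

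The plan has three steps, carried out in order.

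\begin{enumerate}
\item \textbf{Normalisation.} Recall the precise convention for distances between i.i.d.\ samples of a dendron from \Cref{sec: Gromov Prokhorov}, and check that it gives height contributions $1+1$ and distance $0$ on the diagonal. The diagonal entries $r=s$ must be treated separately: they are $0$ on both sides, or $2$ on the right-hand side if the paper's matrix convention counts them that way. I will align this with the statement as the appendix dictates.

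\item \textbf{Base-tree distance equals distance in $\mathfrak{T}$.} Show that $d_a(v_{i_r},v_{i_s})=\dist_{\mathfrak{T}}(v_{i_r},v_{i_s})$. By Definition~\ref{def:countable foata}, the base tree is the discrete tree $T^{\discrete}(\boldZ)$ with unit-length edges, so $d_a$ restricted to vertices is the graph distance. The geodesic between $v_{i_r}$ and $v_{i_s}$ lies in the subtree spanned by $v_1,\dots,v_M$, because $i_r,i_s\le M$. By definition this spanned subtree is $\St_M(\cD_a)$, which equals $\mathfrak{T}$ by (B2).

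\item \textbf{Labels and graph structure.} Check that the labels of $\mathfrak{T}$ match the vertex labels $v_i$ of $T^{\discrete}(\boldZ)$, so that $\dist_{\mathfrak{T}}$ is computed on the same vertices. Also check that the spanned subtree carries the induced graph metric, which holds since subtrees of trees are geodesically convex.
\end{enumerate}

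There is no real obstacle here. The only delicate point is the first step: pinning down exactly how the dendron distance between atoms at height $1$ is defined in the appendix, in particular whether it is $d+h+h'$, and how coinciding atoms and the diagonal entries are handled. Once that is fixed, the identity follows immediately from (B1) and (B2).
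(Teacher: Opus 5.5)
Your proposal is correct and is the paper's argument: the paper calls the lemma immediate from the definition $d_D((x,a),(y,b)) = d(x,y)+a+b$, combined with (B1), (B2) and the fact that $\St_M(\cD_a)$ is the (geodesically convex) subtree of $T_a$ spanned by $v_1,\dots,v_M$ with $i_r\le M$. One small correction to your first step: the diagonal needs no separate treatment and is never $0$ on the right-hand side, since the raw definition of $d_D$ gives $d_a(w_r,w_r) = 0+1+1 = 2 = 2+\dist_{\mathfrak{T}}(v_{i_r},v_{i_r})$ (only the $\One{i\ne j}$ convention of Definition~\ref{def: convergence to a fixed dendron} would set these entries to $0$).
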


The proof of Lemma \ref{lem: distances in a dendron} is immediate by definition of distances in a dendron. 
Furthermore, observe that Proposition \ref{prop:spanned subtree} and Lemma \ref{lem: distances in a dendron} are enough to prove Theorem \ref{thm:convergence of distances critical window}, and therefore \Cref{th: dendron limit if a/n}.

\begin{proof}[Proof of Theorem \ref{thm:convergence of distances critical window}]
Fix $\varepsilon>0$. Consider $M,Q \geq 1$ such that $E_{M,Q}$ holds with probability at least $1-\varepsilon$. 
Under the event $E_{M,Q}$, there are only finitely many possible trees $\mathfrak{T}$ and finitely many allocations $(i_1, \ldots, i_p)$ such that $\probcond{ \St_M(\cD_a)=\mathfrak{T} \, ; \, w_r = (v_{i_r},1) \text{ for all } r \in [p] }{ E_{M,Q}}>0$. 
Fix any such $(\mathfrak{T},i_1,\ldots,i_p)$. 
Using the coupling of \Cref{sssec:coupling} along with Lemmas \ref{lem:proba of a structure} and \ref{lem:same component}, our result follows from Proposition \ref{prop:spanned subtree} and Lemma \ref{lem: distances in a dendron}.
\end{proof}

The last step that remains is to prove Proposition \ref{prop:spanned subtree}.

\begin{proof}[Proof of Proposition \ref{prop:spanned subtree}]
First, we claim that we only need to prove our result under the additional assumptions that
\begin{itemize}
    \item[(C1)] $(n)_Q^{k_n} \geq n/\log n$; 
    \item[(C2)] $N^{(k_n)}\big( (\cT_n)_i^{k_n} \big) = o\big( (n)_i^{k_n} \big)$ for all $i \in [Q]$;
    \item[(C3)] There is no descent edge between two vertices of labels $\leq k_n$. 
\end{itemize}
Indeed, these three events hold with high probability as $n \rightarrow \infty$, using \Cref{lem:no small descent,lem:lemmerelou5bis,lem:lemmerelou4bis}.

Observe that, conditionally on assumptions (A1) and (A2), for all $r \in [p]$ independently, $w_r^{(n)}$ is a uniform vertex of $(\cT_n)_{i_r}^{k_n}$.
Furthermore, the following holds for all $j\in[Q]$ such that $(\cT_n)_j^{k_n}$ is not the root component of $\cT_n$.
Let $\rho_j$ denote the root of $(\cT_n)_j^{k_n}$, and $x_j$ its parent in $\cT_n$.
Then, $x_j$ is a uniformly random vertex of $(\cT_n)_{p(j)}^{k_n} \setminus t_{p(j)}^{k_n}$, where $p(j) \in \cT_n^{k_n,\red}$ is such that $(\cT_n)_{p(j)}^{k_n}$ is the component containing $x_j$. 
Recall that $t_{p(j)}^{k_n}$ is the subtree of $(\cT_n)_{p(j)}^{k_n}$ consisting of labels $\le k_n$.

Hence, by (C2), with probability $1-o(1)$, $w_r^{(n)} \in T^\circ_{i_r}$, where $T^\circ_{i_r} := \big( V^{(k_n)}\big( (\cT_n)_{i_r}^{k_n} \big) \big)^c$ is the set of vertices in $(\cT_n)_{i_r}^{k_n}$ such that all descent endpoints in their ancestral line have labels $\leq k_n$. 
In addition, conditionally on that event, $w_r^{(n)}$ is distributed as a uniform vertex of $T^\circ_{i_r}$. 
Furthermore, with probability $1-o(1)$ for all valid $j\in[Q]$, the vertex $x_j$ is an element of $T^\circ_{p(j)}$ and, conditionally on this event, is a uniform vertex of $T^\circ_{p(j)} \setminus t_{p(j)}^{k_n}$. 

By (C1) and (C3), for all $i \in [Q]$, $(n)_i^{k_n} \geq n/\log n$ and $T^\circ_i$, conditionally on its size, is distributed as a random recursive tree of that size. 
We finally use the following classical result (see, e.g., \cite{Panholzer_2004}): let $T_m$ be a random recursive tree with $m$ vertices, let $u,v$ be i.i.d.\ uniform vertices in $T_m$, $d(u,v)$ their graph distance in $T_m$, and $h(u)$ the height of the vertex $u$. Then, as $m \rightarrow \infty$, $d(u,v) = 2\log(m)(1+o_{\textbf{P}}(1))$ and $h(u)=\log(m)(1+o_{\textbf{P}}(1))$.

This immediately implies that, conditionally on (A1), (A2), (C1), (C2) and (C3),
\begin{itemize}
    \item for all $r \in [p]$, $\dist_n(w_r^{(n)},x_{i_r})=\log(n) (1+o_\bP(1))$;
    \item for all $j$ such that $j$ is the label of a non-root vertex of $\mathfrak{T}$, $\dist_n(x_j, x_{p(j)})=\log(n)(1+o_\bP(1))$;
    \item for all labels $j_1,j_2$ of non-root vertices of $\mathfrak{T}$ such that $p(j_1)=p(j_2)$, $\dist_n(x_{j_1},x_{j_2})=2\log(n) (1+o_\bP(1))$.
\end{itemize}
The result follows from the tree structure of $\mathfrak{T}$.
\end{proof}

\subsection{Interpolation on the space of dendrons}
\label{sec: interpolation}

Here we prove \Cref{th: our dendron cv to CRT}, stating that the family $(\cD_a)_{a > 0}$ interpolates, on the space of dendron distributions, between Aldous' CRT (when $a \rightarrow \infty$) and the trivial dendron $\Upsilon_{\delta_1}$ (when $a \rightarrow 0$).

\begin{lemma}\label{lem: asymptotic first term and variance for PD}
    For $a>0$, let $\big( X_{a,1} \ge X_{a,2} \ge \dots \big)$ be a $\PoissonDirichlet{0}{a}$ random variable and $\sigma_a^2 := \sum_{i\ge1} X_{a,i}^2$.
    Then,
    \begin{equation*}
        a X_{a,1}^2 \overset{(d)}{\cv{a\to\infty}} 0
        \quad\text{and}\quad
        a\sigma_a^2 \overset{(d)}{\cv{a\to\infty}} 1 \,.
    \end{equation*}
\end{lemma}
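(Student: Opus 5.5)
We work throughout with the stick-breaking (GEM) representation rather than the ordered sequence. Let $(Y_k)_{k\ge1}$ be i.i.d.\ $\BetaDistribution{1}{a}$ and $P_n := Y_n\prod_{k<n}(1-Y_k)$. Then $(X_{a,i})_{i\ge1}$ is the decreasing rearrangement of $(P_n)_{n\ge1}$. In particular $\sigma_a^2=\sum_n P_n^2$ and $X_{a,1}=\max_n P_n$, so both quantities can be computed from $(P_n)$ directly.

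For the second claim we compute the first two moments of $\sigma_a^2$ and apply Chebyshev's inequality. The first moment follows from the size-biased pick property: $\expec{\sigma_a^2}=\expec{\sum_i X_{a,i}^2}$ is the probability that two independent picks from the masses coincide. By the Ewens sampling formula with $k=2$, this equals $\frac{1}{1+a}$. Alternatively, we can sum $\expec{P_n^2}=\expec{Y^2}\,\expec{(1-Y)^2}^{n-1}$, using $\expec{Y^2}=\frac{2}{(a+1)(a+2)}$ and $\expec{(1-Y)^2}=\frac{a}{a+2}$; the geometric sum gives the same value. Hence $a\,\expec{\sigma_a^2}\to1$.

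For the second moment we write
\[
\expec{(\sigma_a^2)^2}=\expec{\textstyle\sum_i X_{a,i}^4}+\expec{\textstyle\sum_{i\ne j}X_{a,i}^2X_{a,j}^2}.
\]
Both terms come from Ewens probabilities for a sample of size $4$. The first term is the probability that all four picks fall in one block, namely $\frac{3!}{(a+1)(a+2)(a+3)}=O(a^{-3})$. The second is the probability that the four picks form two ordered pairs in two distinct blocks, namely $\frac{a}{(a+1)(a+2)(a+3)}$. Multiplying by $a^2$, the first term contributes $O(a^{-1})$ and the second contributes $a^3/\big((a+1)(a+2)(a+3)\big)\to1$. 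Therefore $\var{a\sigma_a^2}\to 1-1=0$, and Chebyshev's inequality gives $a\sigma_a^2\to1$ in probability, hence in distribution.

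For the first claim, it suffices to show that $aX_{a,1}^2\le a\max_n P_n^2\to0$ in probability, that is, $\max_n P_n=o_\bP(a^{-1/2})$. We bound the maximum by a sum of fourth powers: $a^2X_{a,1}^4\le a^2\sum_i X_{a,i}^4$. The expectation of the right-hand side is $O(a^{-1})$ by the computation above. Markov's inequality then gives $a^2X_{a,1}^4\to0$ in probability, and taking square roots gives $aX_{a,1}^2\to0$.

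The only step that needs care is the exact fourth-moment formulas, which come from the Ewens sampling formula with parameter $a$. These can be checked against the Chinese restaurant process. For example, the probability that four customers all sit at one table is $\frac{1\cdot2\cdot3}{(a+1)(a+2)(a+3)}$. The probability that customers split into two specific pairs, with the second table opened by customer $2$ or $3$, sums to $\frac{a}{(a+1)(a+2)(a+3)}$ for each pairing; there are three pairings, and ordering the two blocks produces the stated ordered-pairs count. Any constant-factor slip in these counts does not affect the conclusions, because each of the two limits only requires the orders of magnitude. The one exception is the variance cancellation, which needs the leading coefficient of the two-pairs term to equal $1$; this is fixed by requiring consistency with $\big(\expec{a\sigma_a^2}\big)^2\to1$.
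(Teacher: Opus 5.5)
Your proof is correct. It reaches the same two moments as the paper, $\expec{\sigma_a^2}=\frac{1}{1+a}$ and $\expec{\sigma_a^4}=\frac{6+a}{(1+a)(2+a)(3+a)}$, but by a different mechanism.

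The paper derives these moments from the stick-breaking fixed-point equation $\sigma_a^2\overset{(d)}{=}B_a^2+(1-B_a)^2\sigma_a^2$, with $B_a\sim\BetaDistribution{1}{a}$ independent, and squares it to get a linear equation for $\expec{\sigma_a^4}$. You instead read them off Kingman's paintbox. There, $\expec{\sum_i X_{a,i}^4}$ and $\expec{\sum_{i\ne j}X_{a,i}^2X_{a,j}^2}$ are the Ewens probabilities of the partitions $\{\{1,2,3,4\}\}$ and $\{\{1,2\},\{3,4\}\}$. These equal $\frac{6}{(a+1)(a+2)(a+3)}$ and $\frac{a}{(a+1)(a+2)(a+3)}$, exactly as you state.

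The real gain is in the first claim. The paper uses the explicit formula for $\expec{X_{a,1}^2}$ from Arratia--Barbour--Tavar\'e together with dominated convergence. Your bound $a^2X_{a,1}^4\le a^2\sum_i X_{a,i}^4$ has mean $O(1/a)$ and recycles the fourth-moment term you already computed. It needs nothing about the law of the largest part, so it is shorter and self-contained. The paper's fixed-point route, in turn, avoids the paintbox/Ewens correspondence and stays within the GEM description the paper uses to define $\PoissonDirichlet{0}{a}$.

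Two small corrections to the write-up:
\begin{itemize}
\item $\expec{\sum_{i\ne j}X_{a,i}^2X_{a,j}^2}$ is the probability of the single partition $\{\{1,2\},\{3,4\}\}$. The remark about three pairings and ordering the blocks is therefore a red herring: no combinatorial factor enters, and your formula is already the right one.
\item Drop the final hedge. Appealing to consistency with $(\expec{a\sigma_a^2})^2\to1$ cannot pin down the leading coefficient of the two-pairs term, because Jensen only gives the lower bound $\expec{(a\sigma_a^2)^2}\ge(\expec{a\sigma_a^2})^2$. The variance cancellation rests on the exact computation, which you have done correctly.
\end{itemize}
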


\begin{proof}
    The distribution of $X_{a,1}$ is given, e.g., in \cite{Arratia_Barbour_Tavare_2003}.
    By \cite[Eq.\ (4.101)]{Arratia_Barbour_Tavare_2003}, we have
    \begin{equation*}
        \expec{X_{a,1}^2}
        = \frac{1}{a+1} \int_0^\infty x e^{-x-aE_1(x)} \mathrm dx,
        \quad\text{where}\quad
        E_1(x) := \int_x^\infty y^{-1} e^{-y} \mathrm dy \,.
    \end{equation*}
    It is straightforward, by dominated convergence, to see that $a \cdot \expec{X_{a,1}^2} \to 0$ as $a\to\infty$.
    Therefore, $a X_{a,1}^2 \to 0$ in probability as $a\to\infty$.
    For the second statement, we rely on a simple distributional equation for $\sigma_a^2$.
    Indeed, let $B_a$ denote a $\BetaDistribution{1}{a}$ random variable, independent of $\sigma_a^2$.
    Then, we have the following equality in distribution:
    \begin{equation*}
        \sigma_a^2 \overset{(d)}{=} B_a^2 + (1-B_a)^2 \sigma_a^2 \,.
    \end{equation*}
    From this we deduce
    \[
        \expec{\sigma_a^2} = \frac{2}{(1+a)(2+a)} + \frac{a}{2+a} \expec{\sigma_a^2}\,,
    \]
    and so $\expec{\sigma_a^2} = \frac{1}{1+a}$.
    Squaring the distributional equality further gives
    \[
        \expec{\sigma_a^4} = \frac{24}{(1+a)(2+a)(3+a)(4+a)}
        + \frac{4a}{(2+a)(3+a)(4+a)} \expec{\sigma_a^2} + \frac{a}{4+a} \expec{\sigma_a^4}\,,
    \]
    and so $\expec{\sigma_a^4} = \frac{6+a}{(1+a)(2+a)(3+a)}$.
    It follows that $\var{\sigma_a^2} = \frac{2a}{(1+a)^2(2+a)(3+a)} = o(1/a^2)$ and that $a\sigma_a^2 \cv{} 1$ in probability as $a\to\infty$ by the Bienaymé--Chebyshev inequality.
\end{proof}

We can now prove \Cref{th: our dendron cv to CRT}.

\begin{proof}[Proof of \Cref{th: our dendron cv to CRT}]
The proof of (i) is now immediate, as $X^{(a)} \cv{a\to0} (1,0,\ldots)$ in distribution with respect to the $\ell_\infty$ norm on $\Delta$. 
So we turn to the proof of (ii).
    For $a>0$, let $\fp_a$ be the random probability measure on $\bN$ given by
    \[
        \fp_a(i) = X_{a,i} \text{ for all } i\ge1,
        \quad\text{where}\quad
        \big( X_{a,1} \ge X_{a,2} \ge \dots \big) \sim \PoissonDirichlet{0}{a} \,.
    \]
    Then, the base tree $T_a^{\discrete}$ of our dendron $\cD_a$ is a $\fp_a$-tree in the sense of \cite{Camarri_Pitman_2000,Blanc-Renaudie_2025}.
    We wish to use \cite[Theorem~6.2]{Blanc-Renaudie_2025}, which gives a criterium for the Gromov--Prokhorov convergence of p-trees toward inhomogeneous CRTs (in our case, the regular CRT).
    See also the discussion in \cite[Section~5]{Camarri_Pitman_2000}.
    In our case, it suffices to check that
    \begin{equation*}
        X_{a,1} / \sigma_a \overset{(d)}{\cv{a\to\infty}} 0
    \end{equation*}
    where $\sigma_a = \sqrt{\sum_{i\ge1} X_{a,i}^2}$.
    This is a direct consequence of \Cref{lem: asymptotic first term and variance for PD}.
    Therefore, we get that 
    \begin{equation*}
        \left( T_a^{\discrete} , {\sigma_a} \cdot d_a , \fp_a \right)
        \overset{(d)}{\cv{a\to\infty}}
        \left( \cT_\be , \dist, \mu \right)
    \end{equation*}
    with respect to the Gromov--Prokhorov topology.
    In this convergence statement, the right-hand side is the Brownian CRT.
    The left-hand side is the discrete base tree of the dendron $\cD_a$ with distances renormalized by $\sigma_a$ and measure $\fp_a = \sum_{i\ge1} X_{a,i} \delta_{v_i}$ on its vertices.
    Since $\sigma_a \sim 1/\sqrt{a}$ in probability as $a\to\infty$ by \Cref{lem: asymptotic first term and variance for PD}, it is straightforward to see that the Prokhorov distance between the measures $\fp_a$ on ${\sigma_a} \cdot T_a^{\discrete}$ and $\nu_a$ on $\sigma_a \cdot \cD_a$ goes to $0$ in probability, and thus
    \begin{equation*}
        \left( \cD_a , \frac{1}{\sqrt a} \cdot d_a , \nu_a \right)
        \overset{(d)}{\cv{a\to\infty}}
        \left( \cT_\be , \dist, \mu \right)
    \end{equation*}
    follows.
\end{proof}

\section{The Benjamini--Schramm local limit}
\label{sec:local limit}

This section is devoted to the proof of our local limit result, \Cref{th: BS local limit unified}.
We split the proof into two separate cases: first when $q_n \to q\in(0,1]$, second when $q_n \to 0$ as $n \to \infty$.

\subsection{Local limit for fixed $q \in (0,1]$}

In this subsection we fix $q_n \equiv q \in (0,1]$. 
One can easily check that, since we only consider events involving finite trees, the proof can be written mutatis mutandis to the case $q_n \rightarrow q \in (0,1]$.
In order to properly define the random rooted tree $(\cT_*^{(q)}, \rho_*^{(q)})$ of Definition~\ref{def:t etoile q}, we introduce some notation.

For $T\in\bT_n$ and $k \in [n]$, we let $T(k)$ denote the subtree of $T$ rooted at the vertex labeled $k$.
For $j\in[n]$ labeling a vertex in $T(k)$, we write $\rk_{T(k)}(j)$, or sometimes simply $\rk(j)$, for the rank of $j$ among labels in $T(k)$.
This allows us to see $T(k) \in \bT$ as a properly labeled tree, that is, with labels ranging from $1$ to $\abs{T(k)}$.

For all $n,r,m,i \geq 1$, and all $d \geq 0$, let $\bT_{n,d} \subseteq \bT_n$ be the set of trees with $n$ vertices and $d$ descents;
let $\bU_{n,d,i}$ be the set of trees with $n$ vertices, $d$ descents and root-label $i$;
let $\bV_{n,d,r,m,i} \subseteq \bT_{n,d}$ be the set of trees $T \in \bT_{n,d}$ such that $|T(r)|=m$ and $\rk_{T(r)}(r)=i$.
%Denote by $v_r$ the vertex labeled $r$.
Finally, for all $m \geq i \geq 1$, define 
\begin{equation}\label{eq: function fmi}
    f_{m,i}^{(q)}: \chi \in[0,1] \mapsto \left(\chi+q(1-\chi)\right) x_0(q)^m \frac{\chi^{i-1}}{(i-1)!}\frac{(1-\chi)^{m-i}}{(m-i)!} \sum\limits_{d \geq 0} \abs{\bU_{m,d,i}}q^d \,.
\end{equation}

Recall the definitions of the variables $Y_{q,\chi}$ in Definition \ref{def:grafted trees} and the Markov chain 
$(X_k)_{k \geq 0}$ in Definition \ref{def:t etoile q}: the former is given by
$\prob{Y_{q,\chi}=(m,i)}=f_{m,i}^{(q)}(\chi)$ for all $m \geq i \geq 1$. The latter is started at $X_0 \sim \Unif{[0,1]}$ and has the kernel
\[
        \kappa_q(\mathrm dy|x)=\frac{1}{x+q(1-x)} \mathbbm{1}_{y \in [0,x]} \mathrm dy + \frac{q}{x+q(1-x)} \mathbbm{1}_{y \in [x,1]} \mathrm dy \,.
    \]

We will see in \Cref{cor:sums at 1} that, for any given $\chi\in[0,1]$, the function $(m,i) \mapsto f_{m,i}^{(q)}(\chi)$ defines a probability mass function, justifying that \Cref{def:grafted trees} is valid.

Our aim here is to prove \Cref{th: BS local limit unified} for $q_n \equiv q \in (0,1]$.
We start by stochastically bounding the sizes of the subtrees in $\cT_n^{(q)}$.
In what follows, we often write $v_r$ for the vertex labeled $r$.

\begin{proposition}
\label{prop:stochastic bound for size subtree}
For any given $q\in(0,1]$, we have
\begin{align*}
    \limsup_{n \rightarrow \infty} \sup_{1 \leq k \leq n} \prob{|\cT_n^{(q)}(k)| \geq M} \cv{M\to\infty} 0. 
\end{align*}
\end{proposition}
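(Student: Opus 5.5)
We need to bound uniformly in $k$ the probability that the subtree rooted at $v_k$ has size at least $M$. The plan is to compute this probability exactly by a decomposition, and then bound it using the exponential growth rate $x_0(q)^{-1}$ of $Z_{n,q}/n!$ from \eqref{eq: asymptotic coefficient A for fixed q}.

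\textbf{Exact formula.} Fix $k$ and $m$. A tree $T\in\bT_n$ with $|T(k)|=m$ is determined by three pieces of data:
\begin{itemize}
\item the label set $S\ni k$ of $T(k)$, with $\binom{n-1}{m-1}$ choices;
\item the properly labeled tree $T(k)\in\bT_m$;
\item the tree $T'$ on the remaining $n-m$ labels, together with a vertex of $T'$ to which $v_k$ is attached. If $m=n$ there is no such $T'$, and $v_k$ is the root.
\end{itemize}
The number of descents is $\des{T(k)}+\des{T'}$, plus one if the parent label exceeds $k$. The weight factor for the attachment vertex is therefore at most $n-m$.

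A key point is that the descents inside $T(k)$ depend only on its relative order, so the factor coming from $T(k)$ is exactly $Z_{m,q}$. Hence
\[
\prob{|\cT_n(k)|=m}\le \binom{n-1}{m-1}\frac{Z_{m,q}\,(n-m)\,Z_{n-m,q}}{Z_{n,q}} \qquad (m<n).
\]
This bound holds uniformly in $k$, which handles the supremum over $k$.

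\textbf{Asymptotics of the bound.} Write $Z_{j,q}=j!\,a_j$, where $a_j\sim \frac{1}{\sqrt{2\pi q}}j^{-3/2}x_0^{-j}$ by \eqref{eq: asymptotic coefficient A for fixed q}. The right-hand side becomes
\[
\frac{m}{n}\,(n-m)\,\frac{a_m\,a_{n-m}}{a_n}.
\]
The exponentials $x_0^{-m}x_0^{-(n-m)}x_0^{n}$ cancel. What remains is of order
\[
m\, a_m\,\Big(\frac{n}{n-m}\Big)^{1/2}\,\frac{(n-m)^{-1}\cdot(n-m)}{n}\cdot n^{3/2}\cdot\frac1n,
\]
which simplifies to roughly $m\,a_m\,x_0^{m}\cdot \big(n/(n-m)\big)^{1/2}\asymp m^{-1/2}\big(n/(n-m)\big)^{1/2}$. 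A cleaner way to organize this is to recognize the structure of the bound: it is essentially the probability that the fringe subtree of a uniform vertex has size $m$ in a Galton--Watson-type tree. The terms $m^{-1/2}$ alone would not be summable, so I need to redo the bookkeeping carefully. The correct count gives roughly $(n-m)\cdot\frac{m}{n}\cdot m^{-3/2}(n-m)^{-3/2}n^{3/2}$. For $m\le n/2$ this is $\asymp m^{-1/2}$, which again is not summable.

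\textbf{Main obstacle.} The crude bound of $n-m$ for the attachment factor loses too much. Most attachment vertices have label greater than $k$ whenever $k$ is small, and then they carry an extra factor $q$. Even so, the count $\sim m^{-1/2}$ suggests this route alone is insufficient, since for $q=1$ the true law of $|T(k)|$ for a fixed label $k$ does have tail $\asymp m^{-1/2}$ in Cayley trees. However, that tail is still summable to $1$ with mass escaping only to $m\approx n$.

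So I will bound $\sum_{m\ge M}$ by splitting into two ranges:
\begin{itemize}
\item For $M\le m\le n-M'$, the sum is $\sum_{m\ge M} m^{-1/2}\cdots$. Here I redo the computation more precisely: the factor should actually be $\frac{m}{n}(n-m)\cdot\frac{Z_{m}}{m!}\cdots$. I expect the correct tail to be $\asymp m^{-3/2}$, since the total over $m$ must be $1$. The extra $1/m$ should come from $\binom{n-1}{m-1}=\frac{m}{n}\binom nm$ compared with $\binom nm$. That decay is summable, so this range contributes $\sum_{m\ge M}m^{-3/2}\to0$ as $M\to\infty$.
\item For $m$ close to $n$, the subtree $T(k)$ contains almost everything, so $v_k$ is within the top $M'$ vertices. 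This is bounded by the probability that $v_k$ is at bounded depth. That probability tends to $0$ uniformly in $k$ by a separate argument: for instance, the root-label distribution, via \Cref{lem:trees with smallest root} generalized to a fixed label, gives probability $O(1/n)$-scale contributions summed over $n-m\le M'$ values.
\end{itemize}
Establishing the uniform $m^{-3/2}$ estimate, including uniformity in $k$, is where I expect the real work to lie.
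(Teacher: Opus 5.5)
There is a genuine gap, and it is in your first estimate. The subtree $T(k)$ is not an arbitrary tree on its label set $S$. It must be rooted at $v_k$, whose rank $i=\rk_S(k)$ in $S$ is fixed by $S$. So the weight contributed by $T(k)$ is $u_{m,i}:=\sum_{d\ge0}\abs{\bU_{m,d,i}}q^d$, not $Z_{m,q}=\sum_i u_{m,i}$, and your inequality overcounts by a factor of order $m$.

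This overcount is exactly why your bookkeeping gives
\[
\tfrac mn(n-m)\tfrac{a_ma_{n-m}}{a_n}\asymp m^{-1/2}\sqrt{n/(n-m)}.
\]
Summed over $M\le m<n$, this is of order $\sqrt n$, so the bound is useless in every range of $m$.

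Your proposed repair does not work. You say the missing $1/m$ comes from $\binom{n-1}{m-1}=\frac mn\binom nm$, but that factor $\frac mn$ is already in your computation. Your treatment of $m$ close to $n$ via ``$v_k$ at bounded depth'' is also not an argument: nothing you cite gives $\prob{\text{depth of }v_k\le M'}\to0$ uniformly in $k$.

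The missing ingredient is a bound on $\max_i u_{m,i}$, and this is where label-swapping, the paper's key idea, comes in.
\begin{itemize}
\item Exchanging labels $i$ and $i+1$ maps trees rooted at $i+1$ injectively to trees rooted at $i$, without increasing the number of descents. If $v_i$ and $v_{i+1}$ are adjacent, $v_i$ must be a child of the root $v_{i+1}$, and the swap removes that descent.
\item Hence $u_{m,i+1}\le u_{m,i}$, so $\max_i u_{m,i}=u_{m,1}=(m-1)!\,[x^{m-1}]e^{A(x,q)}$ by \Cref{lem:trees with smallest root}.
\item With this correction, and your bound $n-m$ on the attachment factor, you get uniformly in $k$, for $m<n$,
\[
\prob{\abs{\cT_n(k)}=m}\le \frac{n-m}{n}\,[x^{m-1}]e^{A(x,q)}\,\frac{[x^{n-m}]A(x,q)}{[x^n]A(x,q)}\lesssim m^{-3/2}\sqrt{\frac{n}{n-m}} .
\]
\item Summed over $m\ge M$, this is $O(M^{-1/2})+O(n^{-1/2})$.
\item The remaining case $m=n$ is handled by $\prob{v_k\text{ is the root}}\le u_{n,1}/Z_{n,q}=O(1/n)$.
\end{itemize}

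Repaired this way, your route would be a valid quantitative alternative to the paper's proof. The paper instead computes the exact limit law of $\abs{\cT_n(1)}$, namely $q x_0^M[x^{M-1}]e^{A(x,q)}$, which sums to $1$. It then shows that $k\mapsto\prob{\abs{\cT_n(k)}\ge M}$ is non-increasing, via a randomized swap of $k$ and $k+1$.

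As written, your proposal has neither the root constraint nor any monotonicity in the label, so it does not establish the uniform bound.
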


This is an immediate consequence of the following lemma.

\begin{lemma}
    \begin{itemize}
    \item[(i)] For all $q \in (0,1]$,
        \begin{align*}
            \limsup_{n \rightarrow \infty} \prob{|\cT_n^{(q)}(1)| \geq M} \cv{M\to\infty} 0.
        \end{align*}
    \item[(ii)] For all $n \geq 1$, all $q \in (0,1]$, and all $k \leq n-1$, we have $\prob{|\cT_n^{(q)}(k)| \geq M} \geq \prob{|\cT_n^{(q)}(k+1)| \geq M} .$
    \end{itemize}
\end{lemma}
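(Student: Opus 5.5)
Part (i) is a generating-function computation, and part (ii) follows from a label-swapping coupling.

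For (i), I will compute the law of $|\cT_n^{(q)}(1)|$ exactly, using the decomposition at the vertex labeled $1$ from \Cref{sec: Generating functions related to descent-biased trees}. Either $v_1$ is the root, so that $\cT_n(1)=\cT_n$, or $v_1$ hangs below some vertex $v_0$ of a tree $t_0$, creating one extra descent. By \Cref{lem:trees with smallest root}, the trees rooted at label $1$ have generating function $A^{(1)}$ with $\partial_x A^{(1)}=e^{A}$. Counting trees with $|T(1)|=m<n$ therefore gives
\[
\prob{|\cT_n(1)|=m} = \frac{q}{[x^n]A}\, [x^{m}]A^{(1)} \cdot [x^{n-m}] x\partial_x A \cdot(\text{binomial normalisation}).
\]
Concretely, this is $\frac{q}{Z_{n,q}}\binom{n-1}{m-1}(m-1)!\,[x^{m-1}]e^{A}\cdot (n-m)\,Z_{n-m,q}$, since the labels of $T(1)$ are $1$ together with $m-1$ labels chosen among the others. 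The case $m=n$ contributes $\prob{\text{root label }1}$.

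By \eqref{eq: asymptotic coefficient A for fixed q}, $Z_{n,q}/n!\sim n^{-3/2}x_0^{-n}/\sqrt{2\pi q}$. Hence, for fixed $m$, the ratio $(n-m)Z_{n-m,q}/(n-m)!$ divided by $Z_{n,q}/n!$ behaves like $n\,x_0^{m}$. Together with the $1/(n-1)\cdots$ factors from the binomial, $\prob{|\cT_n(1)|=m}$ converges to $q\,x_0^{m}[x^{m-1}]e^{A(x)}$. This limit is a probability mass function. Indeed, $\sum_m x_0^m[x^{m-1}]e^{A}=x_0e^{A(x_0)}$, and evaluating \eqref{eq: implicit equation for A} at the singularity $A(x_0)=\log q/(q-1)$ gives $q x_0 e^{A(x_0)}=1$. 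The mass near $m=n$ must also vanish. For this I will bound the terms with $M\le m\le n-M$ uniformly using the transfer bound $[x^j]A\le C j^{-3/2}x_0^{-j}$, which makes the tail sum $O(\sum_{m\ge M} m^{-3/2})\to0$. The terms with $m>n-M$, including the root case, are $O(n^{-1/2})$ by the same estimates. Tightness of the limit plus convergence of each point mass then gives (i).

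For (ii), I will use the involution that swaps the labels $k$ and $k+1$. Take a tree $T$ in which $v_{k+1}$ is not an ancestor or descendant issue, i.e.\ the two labels are not adjacent. Then the swap preserves $\des{T}$, since every other label compares with $k$ and $k+1$ in the same way, and it exchanges $|T(k)|$ and $|T(k+1)|$. So on the set of such trees the two laws match.

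If $k$ and $k+1$ are adjacent, there are two cases. If $v_k$ is the parent of $v_{k+1}$, then $|T(k)|>|T(k+1)|$ already. If $v_{k+1}$ is the parent of $v_k$, the edge is a descent. Swapping the labels turns it into an ascent, so the swapped tree has weight ratio $1/q\ge1$ relative to the original, while the swap sends $|T(k)|<|T(k+1)|$ to the opposite inequality. I will build a weight-nondecreasing injection from $\{|T(k+1)|\ge M\}$ into $\{|T(k)|\ge M\}$: on non-adjacent trees use the swap; if $v_k$ is the parent of $v_{k+1}$ use the identity; if $v_{k+1}$ is the parent of $v_k$ use the swap, which gains a factor $q^{-1}\ge1$ and lands among trees where $v_k$ is the parent of $v_{k+1}$. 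I need to check that these three images are disjoint. They are: the identity image consists of parent-$v_k$ trees not in the swap image of the non-adjacent class, and the swap restricted to parent-$v_{k+1}$ trees maps into parent-$v_k$ trees. A collision can occur only if a parent-$v_k$ tree satisfies both $|T(k)|\ge M$ via the identity and arises as a swap image; but there both are fine because distinct preimages have distinct images under the combined map. This injectivity bookkeeping is the main obstacle. If it gets messy, I will instead directly compare $\sum q^{\des{}}$ over the two events by pairing $T$ with its swap, showing that each pair contributes at least as much weight to $\{|T(k)|\ge M\}$.
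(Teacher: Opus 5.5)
Your part (i) is the paper's argument: the same exact formula for $\prob{|\cT_n(1)|=m}$, the same limit $q x_0^m[x^{m-1}]e^{A(x,q)}$, and the same identity $q x_0 e^{A(x_0,q)}=1$. The tail estimates near $m=n$ are superfluous. Once each point mass converges to a probability mass function, $\prob{|\cT_n(1)|\ge M}=1-\sum_{m<M}\prob{|\cT_n(1)|=m}$ already converges to $\sum_{m\ge M} q x_0^m[x^{m-1}]e^{A(x,q)}$, which tends to $0$ as $M\to\infty$.

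In part (ii), the map you propose is not injective, and the sentence claiming that distinct preimages have distinct images is exactly what fails. Take any tree $T$ in which $v_k$ is the parent of $v_{k+1}$ and $|T(k+1)|\ge M$. Your identity branch sends $T$ to itself. Now consider $T':=T^{k\leftrightarrow k+1}$. In $T'$, $v_{k+1}$ is the parent of $v_k$, and $|T'(k+1)|=|T(k)|>|T(k+1)|\ge M$. So $T'$ lies in your domain, falls into the swap branch, and is also sent to $T$. This happens for every such $T$, so $q^{\des{T}}$ is counted twice on the target side, and the comparison of total weights does not follow.

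The repair is to choose the branch by the events rather than by adjacency. Apply the identity on $\{|T(k)|\ge M\}\cap\{|T(k+1)|\ge M\}$, and the swap on $\{|T(k+1)|\ge M>|T(k)|\}$. On the latter set $v_k$ cannot be the parent of $v_{k+1}$, so the swap multiplies the weight by $1$ or $q^{-1}$. Its image lies in $\{|T(k)|\ge M>|T(k+1)|\}$, which is disjoint from the identity part, so the combined map is injective.

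Equivalently, your fallback pairing works, and it is what the paper does via a law-preserving randomized swap. Non-adjacent pairs $\{T,T^{k\leftrightarrow k+1}\}$ contribute equally to both events. For a pair where $v_k$ is the parent of $v_{k+1}$ in $T$, write $x=|T(k)|>y=|T(k+1)|$. That pair contributes $(1-q)\,q^{\des{T}}\big(\One{x\ge M}-\One{y\ge M}\big)\ge 0$ more to $\{|\cdot(k)|\ge M\}$ than to $\{|\cdot(k+1)|\ge M\}$. This is precisely the paper's relation: the probability of $\{|\cT_n(k+1)|\ge M>|\cT_n(k)|\}$ with $v_{k+1}$ the parent of $v_k$ equals $q$ times the probability of $\{|\cT_n(k)|\ge M>|\cT_n(k+1)|\}$ with $v_k$ the parent of $v_{k+1}$.
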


\begin{proof}
    Let us prove (i). Define, for $d \geq 0$ and $n > M \geq 1$, 
    \begin{align*}
        B_M(x,q) := \sum_{n \geq 1, d \geq 0}\frac{
        %|f(d,n,M)|
        \abs{\bV_{n,d,1,M,1}}
        }{n!} x^n q^d,
    \end{align*}
    where we recall that $\bV_{n,d,1,M,1}$ is the set of trees $T$ with $d$ descents, $n$ vertices and such that $|T(1)|=M$ (counting the vertex $v_1$ labeled $1$ itself). 
    When $n>M$, there is necessarily a descent leading to $v_1$ in such trees, and thus $v_1$ is not the root of the tree. 
    We immediately get that, for any fixed $n>M \geq 1$,
    \begin{align*}
    [x^n] B_M(x,q) = q \frac{n-M}{n} [x^{n-M}] A(x,q) [x^{M-1}] e^{A(x,q)},
    \end{align*}
    so that, for fixed $M\ge1$ and as $n\to\infty$,
    \begin{align*}
        \prob{|\cT_n^{(q)}(1)|=M} 
        = q \frac{n-M}{n} [x^{M-1}] e^{A(x,q)} \frac{[x^{n-M}]A(x,q)}{[x^n] A(x,q)}
        \sim q x_0^M [x^{M-1}] e^{A(x,q)} %\text{ by e.g. \cite[Eq.(15)]{Thevenin_Wagner_2023} }
    \end{align*}
    by \eqref{eq: asymptotic coefficient A for fixed q}, where $x_0:=q^{\frac{q}{1-q}}$ is the dominant singularity of $A(x,q)$. 
    Using the fact that 
    \begin{align*}
        \sum_{M \geq 1} q x_0^M [x^{M-1}]e^{A(x,q)}=qx_0e^{A(x_0,q)}=1
    \end{align*}
    by \eqref{eq: asymptotic A close to singularity for fixed q}, we get the result.
    
     We now prove (ii). Fix $k \leq n-1$. 
     Let $E_1 \subset \bT_n$ be the set of trees where $v_k$ and $v_{k+1}$ are not connected by an edge, $E_2$ the set of trees where $v_k$ is a child of $v_{k+1}$ and $E_3$ the set of trees where $v_{k+1}$ is a child of $v_k$.
     We consider a random transformation $T \mapsto \hat{T}$ on $\bT_n$. 
     For $T \in \bT_n$, define $T^{k \leftrightarrow k+1}$ as the tree in which the labels of the vertices labeled $k$ and $k+1$ have been swapped (leaving the rest of the tree and the tree structure unchanged). 
     Now, define $\hat{T}$ as follows:
     \begin{itemize}
         \item If $T \in E_1$, then $\hat{T}$ is set to $T$ with probability $\frac12$, and it is set to $T^{k \leftrightarrow k+1}$ with probability $\frac12$;
         \item If $T \in E_2$, then $\prob{\hat{T}=T}=\frac{q}{q+1}$ and $\prob{\hat{T}=T^{k \leftrightarrow k+1}}=\frac{1}{q+1}$;
         \item If $T \in E_3$, then $\prob{\hat{T}=T}=\frac{1}{q+1}$ and $\prob{\hat{T}=T^{k \leftrightarrow k+1}}=\frac{q}{q+1}$.
     \end{itemize}
     The map $T \mapsto \hat{T}$ preserves the distribution of $\cT_n := \cT_n^{(q)}$. 
     Indeed, we have the following cases:
     \begin{itemize}
         \item If $T \in E_1$, then $\prob{\hat{\cT_n}=T}=\frac{1}{2} \prob{\cT_n=T}+\frac{1}{2} \prob{\cT_n=T^{k \leftrightarrow k+1}} = \prob{\cT_n=T}$ since $T$ has as many descents as $T^{k \leftrightarrow k+1}$.
         \item If $T \in E_2$, then $\prob{\hat{\cT_n}=T}=\frac{q}{q+1} \prob{\cT_n=T}+\frac{q}{q+1} \prob{\cT_n=T^{k \leftrightarrow k+1}} = \prob{\cT_n=T}$ since $T$ has one more descent than $T^{k \leftrightarrow k+1}$.
         \item If $T \in E_3$, we get analogously that $\prob{\hat{\cT_n}=T}=\prob{\cT_n=T}$.
     \end{itemize}
Now fix $M \geq 1$. We have
\begin{align*}
&\prob{\abs{\cT_n(k+1)} \geq M > \abs{\cT_n(k)}, \cT_n \in E_1} 
\\&= \prob{\abs{\hat{\cT_n}(k+1)} \geq M > \abs{\hat{\cT_n}(k)}, \hat{\cT_n} \in E_1}\\
&= \frac12 \prob{\abs{\cT_n(k+1)} \geq M > \abs{\cT_n(k)}, \cT_n \in E_1} + \frac12 \prob{\abs{\cT_n(k)} \geq M > \abs{\cT_n(k+1)}, \cT_n \in E_1},
\end{align*}
so that $\prob{\abs{\cT_n(k+1)} \geq M > \abs{\cT_n(k)}, \cT_n \in E_1}=\prob{\abs{\cT_n(k)} \geq M > \abs{\cT_n(k+1)}, \cT_n \in E_1}$.

On the other hand,
\begin{align*}
&\prob{\abs{\cT_n(k+1)} \geq M > \abs{\cT_n(k)}, \cT_n \in E_2} 
\\&= \prob{\abs{\hat{\cT_n}(k+1)} \geq M > \abs{\hat{\cT_n}(k)}, \hat{\cT_n} \in E_2}\\
&= \frac{q}{q+1} \left[\Big. \prob{\abs{\cT_n(k+1)} \geq M > \abs{\cT_n(k)}, \cT_n \in E_2} + \prob{\abs{\cT_n(k)} \geq M > \abs{\cT_n(k+1)}, \cT_n \in E_3} \right] \,,
\end{align*}
so that $\prob{\abs{\cT_n(k+1)} \geq M > \abs{\cT_n(k)}, \cT_n \in E_2} = q \prob{\abs{\cT_n(k)} \geq M > \abs{\cT_n(k+1)}, \cT_n \in E_3}$.

Finally, we have $\prob{\abs{\cT_n(k+1)} \geq M > \abs{\cT_n(k)}, \cT_n \in E_3} = \prob{\abs{\cT_n(k)} \geq M > \abs{\cT_n(k+1)}, \cT_n \in E_2}=0$.
Using the fact that 
\[ 
    \prob{\abs{\cT_n(k+1)} \geq M} - \prob{\abs{\cT_n(k)} \geq M} = \prob{\abs{\cT_n(k+1)} \geq M > \abs{\cT_n(k)}} - \prob{\abs{\cT_n(k)} \geq M > \abs{\cT_n(k+1)}}
\]
together with the previous inequalities, we get the result.
\end{proof}

In what follows, we let ${\rm par}(r)$ denote the label of the parent of $v_r$.

\begin{lemma}
\label{lem:distribution of the small tree}
Fix $i,m$ with $m \geq i \geq 1$, and any function $g: \bN \rightarrow \bN$ such that $g(n) \rightarrow \infty$ as $n \rightarrow \infty$. As $n \rightarrow \infty$, uniformly for all $r \in \llbracket g(n), n-g(n) \rrbracket$, we have
\begin{align*}
    \prob{\abs{\cT_n(r)}=m, \rk(r)=i, {\rm par}(r) < r} \underset{n \rightarrow \infty}{\sim} \frac{r/n}{r/n+q(1-r/n)} f_{m,i}^{(q)}\left(\frac{r}{n}\right),
\end{align*}
and 
\begin{align*}
    \prob{\abs{\cT_n(r)}=m, \rk(r)=i, {\rm par}(r) > r} \underset{n \rightarrow \infty}{\sim} \frac{q(1-r/n)}{r/n+q(1-r/n)}f_{m,i}^{(q)}\left(\frac{r}{n}\right).
\end{align*}
%Here, recall that $x_0 := q^{\frac{q}{1-q}}$ is the radius of convergence of $q \mapsto A(x,q)$.
\end{lemma}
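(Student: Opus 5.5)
The plan is an exact count followed by the coefficient asymptotics \eqref{eq: asymptotic coefficient A for fixed q}. We count trees in $\bT_n$ with $|T(r)|=m$, $\rk_{T(r)}(r)=i$ and a prescribed relative order of ${\rm par}(r)$ and $r$, weighted by $q^{\des{T}}$. Such a tree splits uniquely into three pieces:
\begin{itemize}
\item the subtree $T(r)$, which uses a label set $S\ni r$ with $|S|=m$ and whose rank-$i$ label is $r$;
\item the remaining tree $T\setminus T(r)$, which has $n-m$ vertices and labels $[n]\setminus S$;
\item the attachment vertex ${\rm par}(r)$, chosen in $T\setminus T(r)$.
\end{itemize}
Descents split additively, plus one extra descent exactly when ${\rm par}(r)>r$. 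Both pieces are weighted by the relabeling-invariant quantity $q^{\des{}}$ after proper relabeling. So the weight factorizes as
\[
\Big(\textstyle\sum_{t\in\bU_{m,\cdot,i}} q^{\des{t}}\Big)\cdot\binom{r-1}{i-1}\binom{n-r}{m-i}\cdot Z_{n-m,q}\cdot N_{\lessgtr}\cdot q^{\One{{\rm par}(r)>r}}.
\]
Here $\binom{r-1}{i-1}\binom{n-r}{m-i}$ counts the choices of $S$, with $i-1$ labels below $r$ and $m-i$ above, and $N_{\lessgtr}$ counts attachment vertices in the complement with label below, resp.\ above, $r$. There are exactly $r-i$ of them below and $n-r-(m-i)$ above.

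The cleaner way is to sum over the complement tree first: $N_{<}=r-i$ does not depend on that tree, so the count is exact. We then get
\[
\prob{\abs{\cT_n(r)}=m,\rk(r)=i,{\rm par}(r)<r}=\frac{Z_{n-m,q}}{Z_{n,q}}\,(r-i)\binom{r-1}{i-1}\binom{n-r}{m-i}\sum_d\abs{\bU_{m,d,i}}q^d,
\]
and similarly with $q\,(n-r-m+i)$ in place of $(r-i)$ for the second event.

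The asymptotics come next. By \eqref{eq: asymptotic coefficient A for fixed q}, for fixed $m$,
\[
\frac{Z_{n-m,q}}{Z_{n,q}}=\frac{(n-m)!\,[x^{n-m}]A}{n!\,[x^n]A}\sim n^{-m}x_0^{m}.
\]
Since $r\to\infty$ and $n-r\to\infty$ uniformly on $\llbracket g(n),n-g(n)\rrbracket$, we have $\binom{r-1}{i-1}\sim r^{i-1}/(i-1)!$, $\binom{n-r}{m-i}\sim (n-r)^{m-i}/(m-i)!$, $r-i\sim r$ and $n-r-m+i\sim n-r$, all uniformly. Write $\chi=r/n$. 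The first probability is then asymptotic to
\[
\chi\, x_0^m\frac{\chi^{i-1}}{(i-1)!}\frac{(1-\chi)^{m-i}}{(m-i)!}\sum_d\abs{\bU_{m,d,i}}q^d,
\]
which equals $\frac{\chi}{\chi+q(1-\chi)}f^{(q)}_{m,i}(\chi)$ by \eqref{eq: function fmi}. The second gives the factor $q(1-\chi)$ in the same way.

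The only delicate point is uniformity. Each factor uses just two facts: the ratio $Z_{n-m,q}/Z_{n,q}$ does not depend on $r$, and the binomial approximations have relative errors $O(1/g(n))$ that are uniform in $r$ on that range. The one subtlety is that these are ratio asymptotics, so $\sim$ must be read with $\chi$ near $0$ or $1$ still allowed. This is fine because every factor that can vanish is kept exactly up to $1+O(1/g(n))$.
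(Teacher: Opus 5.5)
Your proposal is correct and follows the paper's proof essentially step for step. It uses the same exact decomposition, giving $(r-i)\binom{r-1}{i-1}\binom{n-r}{m-i}\,Z_{n-m,q}\sum_{d}\abs{\bU_{m,d,i}}q^d$ for the first event and $q\,(n-r-m+i)$ in place of $r-i$ for the second, and the same asymptotics via $[x^{n-m}]A/[x^n]A \to x_0^m$ from \eqref{eq: asymptotic coefficient A for fixed q} plus uniform binomial approximations. You also spell out the second case and the uniformity in $r$, which the paper leaves as ``analogous''.
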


Hence, we get the following corollary:

\begin{corollary}
\label{cor:sums at 1}
Fix $i,m$ with $m \geq i \geq 1$, and any function $g: \bN \rightarrow \bN$ such that $g(n) \rightarrow \infty$ as $n \rightarrow \infty$. As $n \rightarrow \infty$, uniformly for all $r \in \llbracket g(n), n-g(n) \rrbracket$, we have
\begin{itemize}
    \item[(i)]  $\prob{\abs{\cT_n(r)}=m, \rk(r)=i} \underset{n \rightarrow \infty}{\sim} f_{m,i}^{(q)}\left(\frac{r}{n}\right)$;
    \item[(ii)] $\lim\limits_{M \rightarrow \infty}\sum\limits_{m=1}^M\sum\limits_{i=1}^m f_{m,i}^{(q)}\left( \frac{r}{n} \right) = 1 + o(1)$.
    \item[(iii)] For any given $\chi\in[0,1]$, $\sum_{m\ge i\ge 1} f_{m,i}^{(q)}(\chi) = 1$.
\end{itemize}
\end{corollary}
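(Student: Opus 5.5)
(i) follows by summing the two asymptotics of \Cref{lem:distribution of the small tree}. For fixed $(m,i)$ the events $\{{\rm par}(r)<r\}$ and $\{{\rm par}(r)>r\}$ are disjoint. Their union is the complement of $\{v_r \text{ is the root}\}$. The probability that the root has label exactly $r$ tends to $0$ uniformly for $r\in\llbracket g(n),n-g(n)\rrbracket$: by the swap argument of the lemma above (or directly from the generating function $[x^{n-1}]e^{A}$ compared to $n[x^n]A$), the root label has an asymptotically stochastically bounded law under fixed $q$, so each individual value far from the edges has vanishing mass. Adding the two equivalents, the prefactors $\frac{r/n}{r/n+q(1-r/n)}$ and $\frac{q(1-r/n)}{r/n+q(1-r/n)}$ sum to $1$, giving $f_{m,i}^{(q)}(r/n)$. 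Uniformity is inherited from the lemma. The sum of two quantities each equivalent to $c_j f$, with $c_1+c_2=1$ and $c_j\ge0$, is equivalent to $f$ provided $f>0$. This holds because $\chi+q(1-\chi)\ge q>0$ and $\abs{\bU_{m,d,i}}>0$ for some $d$; the factors $\chi^{i-1}(1-\chi)^{m-i}$ stay bounded below since $r/n$ is bounded away from $0$ and $1$ only up to $g(n)/n$. If positivity fails near the endpoints, one should instead state the relation as $\prob{\cdots}=f_{m,i}^{(q)}(r/n)+o(1)$ uniformly, which is all that is needed for (ii).

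For (ii), sum (i) over $m\le M$ and $i\le m$, a finite set for fixed $M$. This gives $\prob{\abs{\cT_n(r)}\le M}=\sum_{m\le M}\sum_{i\le m}f_{m,i}^{(q)}(r/n)+o(1)$ uniformly in $r$. By \Cref{prop:stochastic bound for size subtree}, $\sup_r\prob{\abs{\cT_n(r)}>M}\le\varepsilon_M+o(1)$ with $\varepsilon_M\to0$. Hence the double sum lies in $[1-\varepsilon_M-o(1),1]$, and letting $M\to\infty$ yields (ii). The one point needing care is the order of limits: the $o(1)$ is in $n$ for fixed $M$, so the statement should be read as $\limsup_n\sup_r$ of the deviation tending to $0$ as $M\to\infty$.

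For (iii), fix $\chi\in(0,1)$ and choose $r=r_n$ with $r_n/n\to\chi$, which lies in $\llbracket g(n),n-g(n)\rrbracket$ for a suitable slowly growing $g$. Each $f_{m,i}^{(q)}$ is continuous in $\chi$, so $f_{m,i}^{(q)}(r_n/n)\to f_{m,i}^{(q)}(\chi)$. The argument of (ii) then gives $1-\varepsilon_M\le\sum_{m\le M,i\le m}f_{m,i}^{(q)}(\chi)\le1$, and letting $M\to\infty$ gives total mass $1$, since all terms are nonnegative. The endpoints $\chi\in\{0,1\}$ require a separate check. At $\chi=0$ only $i=1$ survives, with $f_{m,1}^{(q)}(0)=q\,x_0^m\abs{\bU_{m,\cdot,1}}_q/(m-1)!$. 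Here $\abs{\bU_{m,\cdot,1}}_q$ denotes $\sum_d\abs{\bU_{m,d,1}}q^d$, which equals $m!\,[x^m]A^{(1)}=(m-1)!\,[x^{m-1}]e^{A}$ by \Cref{lem:trees with smallest root}. The sum over $m$ is therefore $qx_0e^{A(x_0,q)}=1$, the same identity used in part (i) of the previous lemma. The case $\chi=1$ is symmetric, using the relabeling $i\mapsto n+1-i$ (which turns $q$ into $1/q$), or simply by monotone continuity. I expect the main obstacle to be controlling the root-label term and the endpoint uniformity in (i); the rest is bookkeeping.
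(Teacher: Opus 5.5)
Your plan is the paper's. The paper proves this corollary in one line, by summing the two cases of Lemma~\ref{lem:distribution of the small tree} and invoking the uniform tightness of Proposition~\ref{prop:stochastic bound for size subtree}, with (iii) by continuity. Two of your steps, however, are argued incorrectly, and in each case the correct argument is simpler than yours.

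\emph{The root term in (i).} You claim that for fixed $q$ the root label is asymptotically stochastically bounded. This is false; that is the $q_n=a/n$ regime of Lemma~\ref{lem: asymptotic probability of V1}. For $q=1$ the root label is uniform on $[n]$. For fixed $q\in(0,1)$ one has $\prob{\text{root label}=1}=[x^{n-1}]e^{A}/(n[x^n]A)\sim 1/(qn)$.

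More importantly, no estimate is needed. On $\{\abs{\cT_n(r)}=m\}$ with $n>m$, $v_r$ cannot be the root, since otherwise $\cT_n(r)=\cT_n$ would have size $n$. So $\{{\rm par}(r)<r\}$ and $\{{\rm par}(r)>r\}$ partition the event exactly.

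Positivity of $f_{m,i}^{(q)}$ is also irrelevant. If $P_j=c_jf\,(1+\varepsilon_j)$ with $c_1,c_2\ge0$ and $c_1+c_2=1$, then $P_1+P_2=f\,(1+c_1\varepsilon_1+c_2\varepsilon_2)$. Hence (i) holds in the stated multiplicative form, uniformly on all of $\llbracket g(n),n-g(n)\rrbracket$. Your fallback to an additive $o(1)$ would be a real weakening exactly where $f_{m,i}^{(q)}(r/n)\to0$, namely as $r/n\to0$ or $1$.

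\emph{The endpoints in (iii), and (ii).} ``Monotone continuity'' at $\chi=1$ is not justified. Fatou only gives $\sum_{m\ge i\ge1} f_{m,i}^{(q)}(1)\le1$, and you have no domination to get equality.

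The symmetry route can be completed: $A(x,1/q)=qA(x/q,q)$ and $x_0(1/q)=q\,x_0(q)$, so $f^{(q)}_{m,m}(1)=f^{(1/q)}_{m,1}(0)$, and your $\chi=0$ computation then goes through for the parameter $1/q$. You did not carry this out, though, and it needs the value of $A$ at its singularity for a parameter $>1$, which the paper never states.

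The easy fix is that both inputs are uniform over the whole range. Your argument for $\chi\in(0,1)$ therefore runs verbatim with $r_n=g(n)$ or $r_n=n-g(n)$. This gives $\chi=0$ and $\chi=1$ with no separate computation. Your explicit $\chi=0$ check via $q\,x_0e^{A(x_0,q)}=1$ is correct.

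Finally, once (iii) is known on $(0,1)$, the literal statement (ii) is immediate: the full series at $r/n$ equals $1$ exactly. So you do not need to reinterpret (ii) as a $\limsup_n\sup_r$ statement. Note also that your upper bound for the truncated sum should read $1+o(1)$, not $1$.
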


\begin{proof}[Proof of Corollary \ref{cor:sums at 1}]
This follows immediately from Proposition \ref{prop:stochastic bound for size subtree} and Lemma \ref{lem:distribution of the small tree}.    
\end{proof}

Let us now prove Lemma \ref{lem:distribution of the small tree}.

\begin{proof}[Proof of Lemma \ref{lem:distribution of the small tree}]
Let $\bV^>_{n,d,r,m,i}$ (resp., $\bV^<_{n,d,r,m,i}$) denote the subset of $\bV_{n,d,r,m,i}$ of trees in which ${\rm par}(r)>r$ (resp., ${\rm par}(r)<r$).
We have, for all $n,r,m$,
\begin{align*}
\sum_{d \geq 0} |\bV^<_{n,d,r,m,i}| q^d = (r-i)  \binom{r-1}{i-1} \binom{n-r}{m-i} \sum_{d' \geq 0} |\bT_{n-m,d'}| q^{d'} \sum_{d_0 \geq 0} |\bU_{m,d_0,i}|q^{d_0} \,.
\end{align*}
Indeed, the first factor comes from the choice of the parent of $v_r$, and the binomial terms account for the choice of the labels of the vertices in the subtree grafted at $v_r$. 
In particular, for fixed $i$ and $m$, and uniformly for $g(n) \leq r \leq n-g(n)$, we have
\begin{align*}
\sum_{d \geq 0} \frac{|\bV^<_{n,d,r,m,i}|}{n!} q^d &= (1+o(1))\frac{(n-m)!}{n!} r \frac{r^{i-1}}{(i-1)!} \frac{(n-r)^{m-i}}{(m-i)!} \sum_{d' \geq 0} \frac{|\bT_{n-m,d'}|}{(n-m)!} q^{d'} \sum_{d_0 \geq 0} |\bU_{m,d_0,i}|q^{d_0} \\
&= (1+o(1)) \frac{r}{n}  \frac{\left( \frac{r}{n} \right)^{i-1}}{(i-1)!} \frac{\left( 1 - \frac{r}{n} \right)^{m-i}}{(m-i)!} \sum_{d' \geq 0} \frac{|\bT_{n-m,d'}|}{(n-m)!} q^{d'} \sum_{d_0 \geq 0} |\bU_{m,d_0,i}|q^{d_0}.
\end{align*}
This entails, using \eqref{eq: asymptotic coefficient A for fixed q} for the last step,
\begin{align*}
\prob{\abs{\cT_n(r)}=m, \rk(r)=i, {\rm par}(r)<r} &=\prob{\cT_n \in \bigcup_{d \geq 0}\bV^<_{n,d,r,m,i}}\\
&= (1+o(1)) \frac{r}{n} \frac{\left( \frac{r}{n} \right)^{i-1}}{(i-1)!} \frac{\left( 1 - \frac{r}{n} \right)^{m-i}}{(m-i)!} \sum_{d_0 \geq 0} |\bU_{m,d_0,i}|q^{d_0} \frac{[x^{n-m}] A(x,q)}{[x^n]A(x,q)}\\
&=(1+o(1)) x_0^m\frac{r}{n} \frac{\left( \frac{r}{n} \right)^{i-1}}{(i-1)!} \frac{\left( 1 - \frac{r}{n} \right)^{m-i}}{(m-i)!} \sum_{d_0 \geq 0} |\bU_{m,d_0,i}|q^{d_0}.
\end{align*}
The second part of Lemma \ref{lem:distribution of the small tree} is obtained in an analogous way.
\end{proof}

Finally, consider a tree $T$ with $n$ vertices and $r_0 \in [n]$. 
We define the $p$-ancestral line of $v_{r_0}$, denoted by $\AncestralLine_p(r_0)$, as the sequence $((r_j,m_j,i_j))_{0 \leq j \leq p}$ that is given as follows.
For $1 \leq j \leq p$, $v_{r_j}$ is the parent of $v_{r_{j-1}}$, $T'(r_j)=T(r_j) \backslash T(r_{j-1})$, $m_j = \abs{T'(r_j)}$ and $i_j = \rk'(r_j)=\rk_{T'(r_j)}(r_j)$. Moreover, we define $T'(r_0)=T(r_0)$, $m_0 = \abs{T'(r_0)}$ and $i_0 = \rk'(r_0)=\rk_{T(r_0)}(r_0)$.
If $j$ is greater than the depth of $v_{r_0}$, we set $(r_j,m_j,i_j) := (0,0,0)$ by convention.
See \Cref{fig: ancestral line} for an illustration.

\begin{figure}
    \centering
    \includegraphics[width=.4\linewidth]{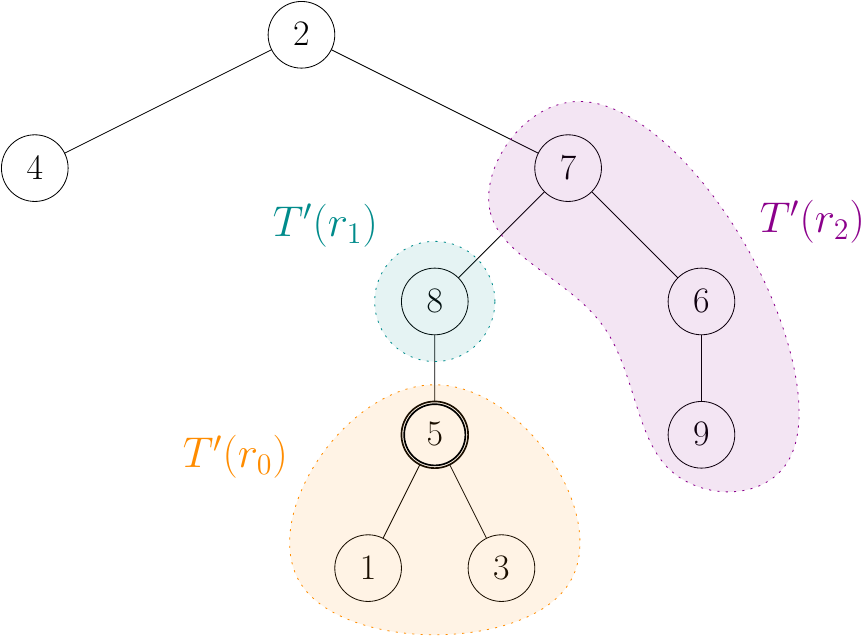}
    \caption{A tree $T\in\bT_9$, where the subtrees involved in the $2$-ancestral line of $v_5$ are highlighted.
    %$T'(r_0)$, $T'(r_1)$, $T'(r_2)$ are respectively in orange, cyan, magenta.
    We get $\AncestralLine_2(5) = ((5,3,3), (8,1,1), (7,3,2))$.
    For $p \ge3$, the next terms of the ancestral line would be $(2,2,1)$, and then all $(0,0,0)$'s.
    }
    \label{fig: ancestral line}
\end{figure}

\begin{proposition}
\label{prop:ancestral line local limit fixed q}
Fix $p \geq 0$, and fix $p+1$ pairs of integers $(m_j,i_j) \in \bN^2$ such that $m_j \geq i_j \geq 1$ for $0 \leq j \leq p$. 
Then, as $n \rightarrow \infty$, uniformly for all distinct $r_0,\ldots,r_p \in \llbracket 1,n \rrbracket$ such that $\sqrt{n} \leq r_j \leq n-\sqrt{n}$ for all $0 \leq j \leq p$, 
we have
\begin{align*}
\prob{\AncestralLine_p(r_0) = ((r_j,m_j,i_j))_{0 \leq j \leq p}} 
\:\underset{n \rightarrow\infty}{\sim}\:
\prod_{j=0}^p f_{m_j,i_j}^{(q)}\left( \frac{r_j}{n} \right) \prod_{j=0}^{p-1} \frac{\One{r_{j+1} < r_j} + q \One{r_{j+1} > r_j}}{r_j + q(n-r_j)}.
\end{align*}
\end{proposition}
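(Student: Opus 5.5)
We will prove the statement by an exact enumeration of the trees with a prescribed $p$-ancestral line, generalizing the counting in the proof of Lemma~\ref{lem:distribution of the small tree}. Fix distinct labels $r_0,\dots,r_p$ and pairs $(m_j,i_j)$. A tree $T\in\bT_n$ with $\AncestralLine_p(r_0)=((r_j,m_j,i_j))_{0\le j\le p}$ decomposes uniquely into three kinds of pieces. The first are the pieces $T'(r_j)$ for $0\le j\le p$, each a tree of size $m_j$ whose root has rank $i_j$. The second are the edges $v_{r_{j-1}}\to v_{r_j}$ for $1\le j\le p$. The third is the remaining tree $T_{\rm rest}:=T\setminus T(r_p)$, of size $n-\sum_j m_j$, together with the choice of the parent of $v_{r_p}$ in it. The number of descents is additive:
\[
\des{T}=\sum_{j}\des{T'(r_j)}+\des{T_{\rm rest}}+\#\{j\le p-1: r_{j+1}>r_j\}+\One{{\rm par}(r_p)>r_p}.
\]
We will write the weighted count as a sum over the label sets of the pieces. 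For each $j$, we choose $i_j-1$ labels below $r_j$ and $m_j-i_j$ labels above $r_j$ for $T'(r_j)$, so that its root has rank $i_j$. Relabeled properly, $T'(r_j)$ contributes $\sum_d\abs{\bU_{m_j,d,i_j}}q^d$. Each edge $r_j\to r_{j+1}$ contributes $\One{r_{j+1}<r_j}+q\One{r_{j+1}>r_j}$. The remaining tree contributes $\sum_{d}\abs{\bT_{n-\sum m_j,d}}q^d$. Finally, the choice of the parent of $v_{r_p}$ contributes $\#\{\text{labels}<r_p\}+q\,\#\{\text{labels}>r_p\}$ among the labels of $T_{\rm rest}$.

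The first main step is to evaluate the number of label allocations asymptotically. The label sets of the different $T'(r_j)$ must be disjoint and must avoid all the $r_k$. Since $p$ and the $m_j$ are fixed and $r_j\in[\sqrt n,n-\sqrt n]$, the number of ways to choose them is
\[
\prod_j \binom{r_j-1}{i_j-1}\binom{n-r_j}{m_j-i_j}\,(1+\cO(1/\sqrt n)),
\]
because the overlap constraints only remove $\cO(1)$ labels from sets of size at least $\sqrt n$. Similarly, the parent-choice factor becomes $r_p+q(n-r_p)$ up to a factor $1+o(1)$. Next we normalize by $Z_{n,q}=n![x^n]A(x,q)$ and use \eqref{eq: asymptotic coefficient A for fixed q}. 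This gives
\[
\frac{(n-\sum m_j)!\,[x^{n-\sum m_j}]A}{n!\,[x^n]A}\sim n^{-\sum m_j}x_0^{\sum m_j}.
\]
Each binomial product then supplies $n^{m_j-1}\frac{(r_j/n)^{i_j-1}}{(i_j-1)!}\frac{(1-r_j/n)^{m_j-i_j}}{(m_j-i_j)!}$.

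The second step is to collect the powers of $n$ and recognize the functions $f^{(q)}_{m_j,i_j}$ from \eqref{eq: function fmi}. Each $f$ carries a prefactor $\chi+q(1-\chi)$ with $\chi=r_j/n$. Thus $\prod_j f_{m_j,i_j}(r_j/n)$ differs from our count by the factor $\prod_{j=0}^p (r_j+q(n-r_j))/n$. Of these, the factor with $j=p$ is exactly the parent-choice factor divided by $n$. The remaining $p$ factors are matched by the denominators $r_j+q(n-r_j)$ with $0\le j\le p-1$ in the claimed formula. For the powers of $n$, the count carries $n^{-p-1}$ from the $p+1$ binomial products and $n$ from the parent choice, which leaves $n^{-p}$. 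This is consistent with dividing by the $p$ factors $r_j+q(n-r_j)\asymp n$.

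The main obstacle is bookkeeping rather than analysis. We must check that the decomposition is indeed a bijection. This includes the case where the depth of $v_{r_0}$ is exactly $p$, so that $v_{r_p}$ is the root. That case has to be shown negligible, or to be absorbed into the prescribed tuple. It is negligible by Lemma~\ref{lem: asymptotic probability of V1}-type bounds: the root label is $\cO_\bP(1)$ for fixed $q$, by the root local limit of \cite{Thevenin_Wagner_2023}, and is therefore not in $[\sqrt n,n-\sqrt n]$. We also need the $o(1)$ errors to be uniform in $r_0,\dots,r_p$. This holds because every approximation depends only on the ratios $r_j/n$, the condition $\min(r_j,n-r_j)\ge\sqrt n$, and the fixed parameters.
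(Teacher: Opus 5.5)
Your proposal is correct and is essentially the paper's proof: an exact weighted count of the trees with the prescribed ancestral line, followed by $Z_{n-M_p,q}/Z_{n,q}\sim n^{-M_p}x_0^{M_p}$ and a regrouping into the functions $f^{(q)}_{m_j,i_j}$. The count uses the same decomposition as the paper, into the pieces $T'(r_j)$, the spine edges, the remaining tree of size $n-M_p$ with $M_p=\sum_j m_j$, and the choice of the parent of $v_{r_p}$. The one thing to fix is the case where $v_{r_p}$ is the root, which needs no probabilistic argument: it forces $|T(r_p)|=M_p=n$, so it cannot occur once $n>M_p$ (and the $\cO_\bP(1)$ bound on the root label you invoke would only give an $o(1)$ error, not the $o(n^{-p})$ required here).
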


In particular, summing over all possible values of $((r_j,m_j,i_j))_{0 \leq j \leq p}$ and using Corollary \ref{cor:sums at 1} (ii) shows that the size of the tree $T_p$ rooted at the $p$-th ancestor of a uniform vertex is stochastically bounded, in the sense that
\begin{align*}
\limsup_{n \rightarrow \infty}\prob{|T_p| \geq K} \cv{K\to\infty} 0 \,.
\end{align*}

\begin{proof}[Proof of Proposition \ref{prop:ancestral line local limit fixed q}]
Consider $\ba := ((r_j,m_j,i_j))_{0 \leq j \leq p}$ and let $M_p := \sum_{j=0}^p m_j$.
Let $\bV_{n,d,\ba}$ denote the set of trees with $n$ labeled vertices and $d$ descents such that $\AncestralLine_p(r_0)=\ba$. We have, in the same way as in the proof of Lemma~\ref{lem:distribution of the small tree},
\begin{multline*}
\sum_{d \geq 0} \abs{\bV_{n,d,\ba}} q^d 
\\= (1+o(1)) \left[ r_p+q(n-r_p) \right] 
\left( \prod_{j=0}^p \frac{r_j^{i_j-1}}{(i_j-1)!} \frac{(n-r_j)^{m_j-i_j}}{(m_j-i_j)!} \sum_{d_j \geq 0} \abs{\bU_{m_j,d_j,i_j}}q^{d_j} \right)
\sum_{d' \geq 0} |\bT_{n-M_p,d'}| q^{d'}  q^{d(\sigma_p)},
\end{multline*}
where $d(\sigma_p):= \abs{\{ j \in [p], r_j<r_{j-1}\}}$ is the number of descents in the permutation induced by $\{r_p,r_{p-1},\ldots,r_0\}$.
This can be rewritten as follows:
\begin{align*}
&\sum_{d \geq 0} \frac{\abs{\bV_{n,d,\ba}}}{n!} q^d 
\\&= (1+o(1)) n^{-p} \left[ \frac{r_p}{n} + q \left( 1-\frac{r_p}{n} \right) \right] 
\left( \prod_{j=0}^p \frac{\left(\frac{r_j}{n}\right)^{i_j-1}}{(i_j-1)!} \frac{\left(1-\frac{r_j}{n}\right)^{m_j-i_j}}{(m_j-i_j)!} \sum_{d_j \geq 0} \abs{\bU_{m_j,d_j,i_j}}q^{d_j} \right)
[x^{n-M_p}]A(x,q) q^{d(\sigma_p)}
\\&= (1+o(1)) n^{-p} 
\left( \prod_{j=0}^p x_0^{-m_j} f_{m_j,i_j}^{(q)}\left( \frac{r_j}{n} \right) \right)
\left( \prod_{j=0}^{p-1} \left( \frac{r_j}{n} + q\left( 1-\frac{r_j}{n}\right) \right)^{-1} \right)
[x^{n-M_p}]A(x,q) q^{d(\sigma_p)} \,.
\end{align*}
Therefore, we get
\begin{align*}
\prob{\big. \AncestralLine_p(r_0) = ((r_j,m_j,i_j))_{0 \leq j \leq p}} &= \frac{1}{[x^n]A(x,q)} \sum_{d \geq 0} \frac{\abs{\bV_{n,d,\ba}}}{n!} q^d\\
&= (1+o(1)) n^{-p} \prod_{j=0}^p f_{m_j,i_j}^{(q)}\left( \frac{r_j}{n} \right) \prod_{j=0}^{p-1} \left( \frac{r_j}{n} + q\left( 1-\frac{r_j}{n}\right) \right)^{-1} q^{d(\sigma_p)}\\
&= (1+o(1)) \prod_{j=0}^p f_{m_j,i_j}^{(q)}\left( \frac{r_j}{n} \right) \prod_{j=0}^{p-1} \frac{\One{r_{j+1} < r_j} + q \One{r_{j+1} > r_j}}{r_j + q(n-r_j)}
\end{align*}
as announced.
\end{proof}

We can finally prove \Cref{th: BS local limit unified} when $q_n \equiv q\in(0,1]$ is constant.
\medskip

\begin{proof}[Proof of \Cref{th: BS local limit unified}, case $q_n \equiv q > 0$]
Fix $p \geq 0$. 
Let $R_0 := R^{(n)}_0$ be a uniformly random integer in $[n]$, and let $\AncestralLine_p(R_0) := (R_j,M_j,I_j)_{0 \leq j \leq p}$ be its $p$-ancestral line (with $(R_j,M_j,I_j)=(0,0,0)$ if $h(v_{R_0})<j$). 
By our \Cref{prop: Rayleigh limit of typical depth when q fixed} (or, alternatively, by \cite[Proposition 24]{Thevenin_Wagner_2023} and Markov's inequality), we know that $\prob{h(v_{R_0}) \leq p} = o(1)$, where $h(v_{R_0})$ denotes the distance in $\cT_n$ between the root and the vertex labeled $R_0$.

Let $F: ([0,1] \times \bN \times \bN)^{p+1} \rightarrow \bR$ be bounded and continuous. 
By \Cref{prop:ancestral line local limit fixed q} and dominated convergence, we have
\begin{align*}
\bE&\left[ F\left(\left(\frac{R_j}{n},M_j,I_j\right)_{0 \leq j \leq p}\right) \right]\\ 
& = o(1)+\sum_{(r_j,m_j,i_j) \in ([n] \times \bN^2)^{p+1}} F\big( (r_j/n,m_j,i_j)_{0 \leq j \leq p} \big) \prob{R_0=r_0} \prob{\AncestralLine_p(r_0) := ((r_j,m_j,i_j))_{0 \leq j \leq p}}\\
& = o(1) + \frac{1}{n}\sum_{(r_j,m_j,i_j) \in ([n]\times \bN^2)^{p+1}} F\big( (r_j/n,m_j,i_j)_{0 \leq j \leq p} \big)\prod_{j=0}^p f_{m_j,i_j}^{(q)}\left( \frac{r_j}{n} \right) \prod_{j=0}^{p-1} \frac{\One{r_{j+1} < r_j} + q \One{r_{j+1} > r_j}}{r_j + q(n-r_j)}\\
& \cv{n\to\infty} \int_{[0,1]^{p+1}} \sum_{(m_j,i_j) \in \bN^{2p+2}} F(x_j,m_j,i_j) \prod_{j=0}^p f_{m_j,i_j}^{(q)}(x_j) \prod_{j=0}^{p-1} \frac{\One{x_{j+1}<x_j}+q\One{x_{j+1}>x_j}}{x_j+q(1-x_j)} \prod_{j=0}^p \mathrm dx_j\\
&=\bE\left[F\left( \left(X_j,\cM_j,\cI_j\right)_{0 \leq j \leq p} \right)\right]
\end{align*}
In the last step, $(X_j)_{0 \leq j \leq p}$ is distributed like the Markov chain of Definition \ref{def:t etoile q} and $(\cM_j,\cI_j)_{0 \leq j \leq p}$ are independent pairs conditionally on $(X_j)_{0 \leq j \leq p}$, with $(\cM_j,\cI_j)$ being distributed as $Y_{q,X_j}$ (see Definition \ref{def:grafted trees}). 
Finally, observe that clearly, for all $n$, for all $r \in [n]$, and all $(M,i) \in \bN^2$, conditionally on $|\cT_n(r)|=M$ and $\rk(r)=i$, the tree $\cT_n(r)$ is distributed as $\cT_{M,i}^{(q)}$. 
This completes our proof.
\end{proof}

\subsection{Local limit for $q_n \rightarrow 0$}

When $q_n \rightarrow 0$, it turns out that the Benjamini--Schramm limit of $\cT_n^{(q_n)}$ is the same as when $q_n=0$ for all $n$, that is, when $\cT_n^{(q_n)}$ is a random recursive tree of size $n$.

The local limit $\cT_*^\rec$ (in the sense of Definition \ref{def:bs local limit}) of random recursive trees $\cT_n^{(0)}$ was characterized by Aldous \cite[Section 4]{Aldous_1991} (see also \cite{Holmgren_Janson_2017}, \cite[Section 2.2]{Contat_Laulin_2025}). 
The description that we give now, which is more suited to our result, is due to Dadedzi and the third author \cite[Appendix A]{Dadedzi_Wagner_2024}. We will indeed show that the tree of Definition \ref{def:t etoile q} when $q=0$ has the same distribution as the one in \cite{Dadedzi_Wagner_2024}.

\begin{definition}[The local limit $\cT_*^\rec$, \cite{Dadedzi_Wagner_2024}]
\label{def:local limit dadedzi wagner}
The local limit $(\cT_*^\rec, \rho_*^\rec)$ of a random recursive tree can be constructed as follows:
\begin{itemize}
    \item Start with an infinite one-ended path of vertices $v_0,v_1,v_2,\ldots$.
    \item Define positive integer random variables $Z_0,Z_1,Z_2,\ldots$ such that $\prob{Z_0=k}=\frac{1}{k(k+1)}$ for $k \geq 1$ and, for all $r \geq 1$, given $Z_0,Z_1,\ldots,Z_{r-1}$, for all $k \geq 1$,
    \begin{align*}
        \prob{Z_r=k} =\frac{Y_{r-1}+1}{(k+Y_{r-1})(k+Y_{r-1}+1)},
    \end{align*}
    where $Y_{r-1} := Z_0+\ldots+Z_{r-1}$.
    \item For every $i \geq 0$, graft a random recursive tree onto $v_i$ that has $Z_i$ vertices ($v_i$ included). All these trees are taken to be independent.
    Finally, set $\rho_*^\rec = v_0$.
\end{itemize}
\end{definition}

In order to prove \Cref{th: BS local limit unified} when $q_n\to0$, we show the following.

\begin{proposition}
\label{prop:local limit when q_n goes to 0}
Let $(q_n)_{n \geq 1}$ be a sequence of elements of $[0,1]$ such that $q_n \rightarrow 0$ as $n \rightarrow \infty$. 
Then, in the sense of Definition \ref{def:bs local limit}, we have the following convergence in distribution:
\begin{align*}
    \cT_n^{(q_n)} \overset{(d)}{\cv{n\to\infty}} (\cT_*^\rec , \rho_*^\rec) \,.
\end{align*}
\end{proposition}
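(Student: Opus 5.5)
The plan is to mimic the proof of the fixed-$q$ case, using the $p$-ancestral line $\AncestralLine_p(R_0)$ of a uniform label $R_0$, and to show two things. First, the ancestral-line statistics converge to those of the chain $(X_j)$ with kernel $\kappa_0$ together with the pairs $Y_{0,X_j}$. Second, the resulting tree $\cT_*^{(0)}$ of \Cref{def:t etoile q} has the same law as $\cT_*^\rec$ of \Cref{def:local limit dadedzi wagner}.

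\textbf{Step 1: uniform counting formula.} The counting formula from the proof of \Cref{prop:ancestral line local limit fixed q} is exact up to the $(1+o(1))$ coming from $r_j^{i_j-1}$ versus falling factorials, so it holds for arbitrary $q=q_n$. The only inputs that depended on $q$ being fixed are the ratios $[x^{n-M_p}]A(x,q_n)/[x^n]A(x,q_n)$ for fixed $M_p$. Here I need
\[
[x^{n-m}]A(x,q_n)/[x^n]A(x,q_n)\to x_0(q_*)^{m}=1 \quad\text{when } q_n\to0 .
\]
I would prove this by comparing $Z_{n,q}=\prod_{k=1}^{n-1}(n-k+kq)$ directly. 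The ratio becomes $\frac{n!}{(n-m)!}\,\frac{Z_{n-m,q}}{Z_{n,q}}$, which is an explicit product. Its logarithm is $\sum_k\log\frac{n-k+kq}{\ldots}$, and it tends to $1$ because each factor $(n-m-k+kq)/(n-k+kq)$ telescopes against $n!/(n-m)!$ up to $O(\sum_k m q/(n-k+kq)^{\ldots})$. I must check this carefully, since for $q\to0$ the terms with $k$ close to $n$ are of size $kq$. Using the ordering $m_j$ fixed, I expect an error $O(mq_n\log n\cdot\ldots)$. If this is delicate, I would instead compare with $q$-independent bounds via monotonicity in $q$.

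This yields, uniformly for $r_j\in[\sqrt n,n-\sqrt n]$,
\[
\prob{\AncestralLine_p(r_0)=\ba}\sim\prod_j f^{(q_n)}_{m_j,i_j}(r_j/n)\prod_{j<p}\frac{\One{r_{j+1}<r_j}+q_n\One{r_{j+1}>r_j}}{r_j+q_n(n-r_j)} .
\]

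\textbf{Step 2: passing to the limit in the Riemann sum.} As $q_n\to0$, $f^{(q_n)}_{m,i}(\chi)\to f^{(0)}_{m,i}(\chi)=\chi(1-\chi)^{m-1}\One{i=1}$, using that $\abs{\bU_{m,0,i}}=(m-1)!\One{i=1}$. The kernel density converges to $\kappa_0$ away from $\chi=0$. Summing over the $r_j$ gives a Riemann sum as before.

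The main obstacle is uniformity near $r_j/n\approx0$, where $1/(r_j+q_n(n-r_j))$ blows up. I need tightness: with high probability the ancestral labels $r_j/n$, for $j\le p$, stay above some $\varepsilon$, and the subtree sizes are bounded. For the labels I would bound the contribution from $r_j\le\varepsilon n$ by integrating the same explicit bound, using $\int_0^\varepsilon$ of the limiting density, which is integrable since the chain started uniform gives $X_p$ a density of order $\log^p(1/x)$. For the sizes I would use Corollary-type sums, $\sum_{m,i} f\to1$, but with $q_n$ varying. Tightness of $|\cT_n(k)|$ uniformly in $k$ should follow from the swap argument of part (ii) of the lemma and from $\prob{|\cT_n(1)|=M}$, which does not depend on fixed $q$. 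Depth $>p$ with high probability follows from \Cref{prop: asymptotic coefficient typical depth in transition regime}-type bounds, or directly from the above tightness.

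\textbf{Step 3: identifying the limit.} Given Step 2, the local limit is $\cT_*^{(0)}$: conditional on the sizes and root rank, the subtrees are $\cT^{(q_n)}_{M,i}$, which converge to recursive trees. It remains to show $\cT_*^{(0)}\overset{(d)}{=}\cT_*^\rec$. Under $q=0$, $X_0$ is uniform, $X_{j+1}$ is uniform on $[0,X_j]$, and $Z_j\sim\mathrm{Geom}(X_j)$ independently. I would check the Markov transition directly. Writing $X_j=U_0\cdots U_j$, the law of $Z_0$ is $\int_0^1x(1-x)^{k-1}dx=\frac1{k(k+1)}$. For the conditional law of $Z_r$ given $Z_0,\dots,Z_{r-1}$, the posterior of the $U$'s given earlier geometric values gives a Beta integral equal to $\frac{Y+1}{(k+Y)(k+Y+1)}$. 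I expect this to reduce to the identity that, given $Y_{r-1}=y$, the posterior of $X_{r-1}$ is proportional to $x^{r}(1-x)^{y-r}\cdots$. The cleanest check is to compute the joint law of $(Z_0,\dots,Z_r)$ by iterated Beta integrals and compare products.
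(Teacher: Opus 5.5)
Your architecture (ancestral-line counting at $q=q_n$, a Riemann sum, then identifying $\cT_*^{(0)}$ with $\cT_*^{\rec}$ by iterated integrals) can be made to work. Step~3 is essentially the paper's Lemma~\ref{lem:definitions of local limits agree}. The genuine gap is in Step~2: you control everything outside the good region with tightness estimates that do not hold.

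Uniform tightness of $\abs{\cT_n^{(q_n)}(k)}$ over $k$ is false when $q_n\to0$. The formula you want to reuse gives, for each fixed $M$,
\[
\prob{\abs{\cT_n^{(q_n)}(1)}=M}=q_n\,\frac{n-M}{n}\,[x^{M-1}]e^{A(x,q_n)}\,\frac{[x^{n-M}]A(x,q_n)}{[x^{n}]A(x,q_n)}=O(q_n)\to0 .
\]
So $\abs{\cT_n(1)}\to\infty$ in probability (for $q_n=0$, vertex $1$ is the root), and the swap monotonicity then gives only a trivial bound. Subtree sizes are tight only at labels of order $n$, so you would also need the $p$ ancestral labels to stay above $\varepsilon n$. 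But your asymptotic formula is proved only for fixed $(m_j,i_j)$ and $r_j\in[\sqrt n,n-\sqrt n]$. Integrating it over $r_j\le\varepsilon n$ therefore cannot bound the mass of configurations with small labels and unbounded hanging subtrees, which is exactly where mass could escape. The depth estimate you cite is also only available for $q_n=a/n$.

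The missing idea is to replace upper bounds by a total-mass argument. Step~1 already gives lower bounds on good configurations: labels in $[\varepsilon n,(1-\varepsilon)n]$, decreasing along the spine, with bounded sizes. Their Riemann-sum limit has total mass tending to $1$ as $\varepsilon\to0$ and the size cutoff grows, because $\sum_{m}\chi(1-\chi)^{m-1}=1$ and $U_0\cdots U_p$ has no atom at $0$. Tightness, depth $>p$ and the absence of descents then all follow for free.

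This is exactly how the paper concludes, but in a leaner form that never tracks labels. It counts pairs $(t,v)$ such that the subtree $T_v^r$ of the $(r-1)$-st ancestor has prescribed sizes and creates no descent. It sums out the spine labels exactly using $\sum_{\ell}\binom{n-\ell}{y-1}(\ell-1)=\binom{n}{y+1}$, and obtains $\frac{Z_{n-y^r,q_n}}{Z_{n,q_n}}\frac{(n-1)!}{(n-y^r-1)!}\frac{1}{(y^r+1)y^r\cdots y^1}$. The limit of this is directly the Dadedzi--Wagner law, Fatou then forces equality, and no Riemann sum, Markov chain or identification step is needed.

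Finally, in Step~1 an error of order $q_n\log n$ would not suffice for slowly decaying $q_n$. The paper splits $q_n\sum_k\frac{1}{k+(n-k)q_n}$ at $k=\lceil nq_n\rceil$ to get $O(\lceil nq_n\rceil/n+q_n\log(1/q_n))$. Your monotonicity fallback also works: each factor $1-\frac{q}{n-k+kq}$ of $Z_{n-1,q}/Z_{n,q}$ is nonincreasing in $q$. You can therefore sandwich between $q=0$ and a fixed small $q=\varepsilon$, where singularity analysis gives $x_0(\varepsilon)^m$, and this tends to $1$ as $\varepsilon\to0$.
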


In other words, the local limit of $\cT_n^{(q_n)}$ is the same as the local limit of $\cT_n^{(0)}$, as soon as $q_n \rightarrow 0$.
Before proving Proposition~\ref{prop:local limit when q_n goes to 0}, we show that Definition~\ref{def:local limit dadedzi wagner} agrees with Definition~\ref{def:t etoile q} when $q_*=0$.

\begin{lemma}
\label{lem:definitions of local limits agree}
The tree $(\cT_*^{(0)},\rho_*^{(0)})$ obtained from Definition~\ref{def:t etoile q} with $q_*=0$ has the same distribution as the local limit $(\cT_*^\red, \rho_*^\rec)$ of random recursive trees from Definition~\ref{def:local limit dadedzi wagner}.
\end{lemma}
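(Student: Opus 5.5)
The plan is to identify both constructions at the level of (a) the joint law of the sizes of the trees grafted along the spine, and (b) the conditional law of those grafted trees given their sizes. Both descriptions share the same spine $v_0,v_1,\dots$ and graft trees at each $v_i$, so matching (a) and (b) gives equality in distribution.

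\textbf{Step 1: specialise $f^{(q)}_{m,i}$ to $q=0$.} As $q\to0$ we have $x_0(q)=q^{q/(1-q)}\to1$, and I take this limit as the value $x_0(0)=1$ in \eqref{eq: function fmi}. With $q=0$ only the $d=0$ term survives in the sum. A tree with no descents has increasing labels along every branch, so its root has label $1$. Hence $\abs{\bU_{m,0,i}}=(m-1)!\,\One{i=1}$, since these are exactly the recursive trees of size $m$. This gives
\[
f^{(0)}_{m,i}(\chi)=\chi(1-\chi)^{m-1}\One{i=1}.
\]
So $Y_{0,\chi}=(M,1)$ with $M\sim\mathrm{Geom}(\chi)$. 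Moreover $\cT^{(0)}_{M,1}$ is a $0$-descent-biased tree of size $M$, i.e.\ a uniform recursive tree. The kernel $\kappa_0(\mathrm dy|x)$ is uniform on $[0,x]$. This is exactly the description in Remark~\ref{rk:kernels}, and it settles (b): given the sizes, the grafted trees are independent random recursive trees in both models.

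\textbf{Step 2: marginalise out the Markov chain.} Write $Z_j$ for the size of the tree grafted at $v_j$. The joint density of $(X_0,\dots,X_r)$ is $\prod_{j<r}x_j^{-1}$ on $\{0<x_r<\dots<x_0<1\}$. Multiplying by the geometric weights $\prod_{j\le r}x_j(1-x_j)^{z_j-1}$, the factors $x_j^{-1}$ cancel every $x_j$ except $x_r$. Therefore
\[
\prob{Z_0=z_0,\dots,Z_r=z_r}=\int_{0<x_r<\dots<x_0<1} x_r\prod_{j=0}^r(1-x_j)^{z_j-1}\,\mathrm dx_0\cdots\mathrm dx_r .
\]
I integrate successively in $x_0,x_1,\dots,x_{r-1}$. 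Writing $Y_j=z_0+\dots+z_j$, the first step gives $\int_{x_1}^1(1-x_0)^{z_0-1}\mathrm dx_0=(1-x_1)^{Y_0}/Y_0$. By induction, after integrating out $x_{j}$ the accumulated factor is $(1-x_{j+1})^{Y_j}/(Y_0\cdots Y_j)$. The last integral is the Beta integral $\int_0^1x_r(1-x_r)^{Y_r-1}\mathrm dx_r=1/(Y_r(Y_r+1))$. This yields
\[
\prob{Z_0=z_0,\dots,Z_r=z_r}=\frac{1}{(Y_r+1)\prod_{j=0}^rY_j}.
\]

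\textbf{Step 3: compare with Definition~\ref{def:local limit dadedzi wagner}.} For $r=0$ the formula gives $1/(z_0(z_0+1))$. For $r\ge1$, dividing the formula at $r$ by the formula at $r-1$ gives
\[
\probcond{Z_r=k}{Z_0,\dots,Z_{r-1}}=\frac{Y_{r-1}+1}{(k+Y_{r-1})(k+Y_{r-1}+1)},
\]
which is exactly the transition in Definition~\ref{def:local limit dadedzi wagner}. Together with Step 1, the two rooted trees coincide in law.

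The only delicate point is Step 1: one must justify the value $x_0(0)=1$ and the convention for $\bU_{m,d,i}$ at $q=0$, since the literal definition involves $q^{q/(1-q)}$. Once that is fixed, the rest is the telescoping Beta-integral computation above.
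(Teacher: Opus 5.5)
Your proof is correct and follows essentially the same route as the paper. You reduce to $\mathrm{Geom}(X_j)$ sizes driven by the uniform-on-$[0,x]$ chain, then integrate out $x_0,\dots,x_{r-1}$ successively and finish with a Beta integral to get $1/\bigl(Y_0\cdots Y_r(Y_r+1)\bigr)$. The minor differences are that you match Definition~\ref{def:local limit dadedzi wagner} by taking ratios to recover its transition probabilities, whereas the paper quotes this joint mass function from Dadedzi--Wagner's proof, and that you verify the $q=0$ form $f^{(0)}_{m,i}(\chi)=\chi(1-\chi)^{m-1}\One{i=1}$ explicitly rather than taking it from Remark~\ref{rk:kernels}.
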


Clearly, \Cref{th: BS local limit unified} for $q_n \rightarrow 0$ is a consequence of \Cref{prop:local limit when q_n goes to 0} and \Cref{lem:definitions of local limits agree}.

\begin{proof}[Proof of Lemma \ref{lem:definitions of local limits agree}]
Recall from Remark~\ref{rk:kernels} that the tree in Definition~\ref{def:t etoile q} is obtained as in Definition~\ref{def:local limit dadedzi wagner}, by grafting independent size-conditioned random recursive trees onto $v_0,v_1,\ldots$. 
Their sizes are determined as follows:
\begin{itemize}
    \item let $X_0 \sim \Unif{[0,1]}$;
    \item For $r \geq 1$, conditionally on $X_{r-1}$, let $X_r \sim \Unif{[0,X_{r-1}]}$;
    \item For $i \geq 0$, conditionally on $(X_0,X_1,\ldots,X_i)$, the tree grafted at $v_i$ has size $G_i \sim \mathrm{Geom}(X_i)$.
\end{itemize}
Hence, we only need to prove that, for any $r \geq 0$,
\begin{equation}
(G_0,\ldots,G_r) \sim (Z_0,\ldots,Z_r),
\end{equation}
where the sequence $(Z_i)_{i \geq 0}$ is as in Definition~\ref{def:local limit dadedzi wagner}.
To this end, fix $k_0, \ldots, k_r \in \mathbb{N}_0$. It is proved in \cite[Proof of Theorem A.1]{Dadedzi_Wagner_2024} that
\begin{align*}
    \prob{(Z_0,\ldots,Z_r)=(k_0,\ldots,k_r)} = \frac{1}{K_0 K_1 \cdots K_r(K_r+1)},
\end{align*}
where $K_j := \sum_{i=0}^j k_i$ for $0 \leq j \leq r$.
Let us show that this agrees with $\prob{E_r}$, where $E_r$ is the event $\{(G_0,\ldots,G_r)=(k_0,\ldots,k_r)\}$. 
By definition, we can write
\begin{align*}
\prob{E_r}&= \int_0^1 \mathrm dx_0 \prob{\mathrm{Geom}(x_0)=k_0} 
\int_0^{x_0} \frac{\mathrm dx_1}{x_0} \prob{\mathrm{Geom}(x_1)=k_1}
\ldots
\int_0^{x_{r-1}} \frac{\mathrm dx_r}{x_{r-1}} \prob{\mathrm{Geom}(x_r)=k_r} \\
&= \int_0^1 \mathrm dx_0 (1-x_0)^{k_0-1} x_0 
\int_0^{x_0} \mathrm dx_1 \frac{(1-x_1)^{k_1-1} x_1}{x_0} 
\ldots
\int_0^{x_{r-1}} \mathrm dx_r \frac{(1-x_r)^{k_r-1} x_r}{x_{r-1}} \\
&= \int_0^1 \mathrm dx_0 (1-x_0)^{k_0-1} 
\int_0^{x_0} \mathrm dx_1 (1-x_1)^{k_1-1} 
\ldots 
\int_0^{x_{r-1}} \mathrm dx_r (1-x_r)^{k_r-1} x_r \\
&= \int_0^1 \mathrm dx_r\, x_r(1-x_r)^{k_r-1}  
\int_{x_r}^1 \mathrm dx_{r-1} (1-x_{r-1})^{k_{r-1}-1}  
\ldots 
\int_{x_1}^1 \mathrm dx_0 (1-x_0)^{k_0-1} \,.
\end{align*}
The last integral is equal to $\int_0^{1-x_1} y_0^{k_0-1} \mathrm dy_0 = \frac{(1-x_1)^{k_0}}{k_0}$,
so we have
\begin{align*}
\prob{E_r}
&=
\frac{1}{k_0} \int_0^1 \mathrm dx_r\, x_r(1-x_r)^{k_r-1} 
\ldots 
\int_{x_2}^1 \mathrm dx_1 (1-x_1)^{k_1+k_0-1} \,.
\end{align*}
By immediate induction, we get
\begin{align*}
\prob{G_0=k_0,\ldots,G_r=k_r}
&= \frac{1}{k_0(k_0+k_1) \cdots (k_0+\ldots+k_{r-1})} \int_0^1 x_r(1-x_r)^{\sum_{i=0}^r k_i-1} \mathrm dx_r \\
&= \frac{1}{K_0 K_1 \ldots K_r (K_r+1)}.
\end{align*}
This completes the proof, as the distribution of $(G_0,\ldots,G_r)$ characterizes the tree $\cT_*^{(0)}$.
\end{proof}

We can now prove Proposition \ref{prop:local limit when q_n goes to 0}. Recall that $Z_{n,q} := n! \cdot [x^n] A(x,q) = \sum_{t\in \bT_n} q^\des{t}$.

\begin{lemma}\label{lem: ratio of partition functions when q to 0}
    Let $(q_n)_{n\ge1}$ be a sequence of parameters in $[0,1]$ such that $q_n\to0$.
    Then
    \[
    \frac{Z_{n-1,q_n}}{Z_{n,q_n}} \sim \frac1n 
    \]
    as $n\to\infty$.
\end{lemma}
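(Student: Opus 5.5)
We use the explicit product formula of E\u{g}ecio\u{g}lu and Remmel, $Z_{n,q} = \prod_{k=1}^{n-1}(n-k+kq)$, so the claim reduces to an elementary estimate of a ratio of two finite products. Reindexing both products by $j=n-k$ (the first term of each factor), we get
\begin{equation*}
    Z_{n,q} = \prod_{j=1}^{n-1}\big(j+(n-j)q\big),
    \qquad
    Z_{n-1,q} = \prod_{j=1}^{n-2}\big(j+(n-1-j)q\big).
\end{equation*}
Taking the ratio and pulling out the term $j=n-1$ of $Z_{n,q}$, which equals $n-1+q$, gives
\begin{equation*}
    \frac{Z_{n-1,q}}{Z_{n,q}} = \frac{1}{n-1+q}\prod_{j=1}^{n-2}\frac{j+(n-1-j)q}{j+(n-j)q}
    = \frac{1}{n-1+q}\prod_{j=1}^{n-2}\left(1-\frac{q}{j+(n-j)q}\right).
\end{equation*}
Since $q_n\to0$, the prefactor $\frac{1}{n-1+q_n}$ is asymptotic to $\frac1n$. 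It therefore remains to show that
\begin{equation*}
    P_n:=\prod_{j=1}^{n-2}\left(1-\frac{q_n}{j+(n-j)q_n}\right)\cv{n\to\infty}1.
\end{equation*}

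Every factor of $P_n$ lies in $[0,1]$, so $P_n\le 1$. Moreover, $\frac{q}{j+(n-j)q}\le q<1$ whenever $q\le 1/2$. Using $\log(1-x)\ge -x/(1-x)$, it is enough to show that
\begin{equation*}
    S_n:=\sum_{j=1}^{n-2}\frac{q_n}{j+(n-j)q_n}\to0.
\end{equation*}
This is the main obstacle, because a naive bound of each term by $q_n/j$ only gives $S_n\le q_n\log n$, which need not tend to $0$. Instead, we split the sum at $j=n/2$.

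For $j\le n/2$, we have $n-j\ge n/2$. The denominator is then at least $j+nq_n/2$, so this part is bounded by
\begin{equation*}
    \sum_{j\le n/2}\frac{q_n}{j+nq_n/2}\le q_n\log\!\Big(1+\frac{n/2}{nq_n/2}\Big)+\cO(q_n)=q_n\log(1+1/q_n)+\cO(q_n)\to0,
\end{equation*}
using the comparison of a decreasing sum with an integral. For $j>n/2$, each term is at most $2q_n/n$, so this part contributes at most $q_n$.

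Hence $S_n\to0$, so $P_n\to1$, which yields $Z_{n-1,q_n}/Z_{n,q_n}\sim 1/n$.
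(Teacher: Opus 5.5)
Your proof is correct and follows the paper's argument: the product formula $Z_{n,q}=\prod_{k=1}^{n-1}(n-k+kq)$ reduces the claim to showing $S_n=q_n\sum_{k}\frac{1}{k+(n-k)q_n}\to0$, and this is done by splitting the sum. The only differences are cosmetic. The paper splits at $\lceil nq_n\rceil$, bounding the two pieces by $2\lceil nq_n\rceil/n$ and $q_n\log(1/q_n)$, whereas you split at $n/2$ and compare with an integral; you also make the step from $S_n\to0$ to $P_n\to1$ explicit via $\log(1-x)\ge -x/(1-x)$, where the paper uses a uniform first-order expansion of the logarithm.
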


\begin{proof}
Recall the explicit formula $Z_{n,q} = \prod_{k=1}^{n-1} (n-k+kq)$ from \cite{Thevenin_Wagner_2023} (obtained by specializing the results of \cite{ER86}). 
Thus, for $n \geq 3$,
\begin{align*}
    \frac{Z_{n-1,q}}{Z_{n,q}} &= \frac{\prod_{k=1}^{n-2}(n-1-k+kq)}{\prod_{k=1}^{n-1}(n-k+kq)}\\
    &= \frac{\prod_{k=2}^{n-1}(n-k+(k-1)q)}{\prod_{k=1}^{n-1}(n-k+kq)}\\
    &= \frac{1}{n-1} \prod_{k=1}^{n-1} \frac{n-k+(k-1)q}{n-k+kq}\\
    &= \frac{1}{n-1} \prod_{k=1}^{n-1}\left(1- \frac{q}{n-k+kq}\right) \,.
\end{align*}
Hence we only need to prove that the product above converges to $1$ as $n\to\infty$ and $q_n \to 0$. 
To do so, observe that, for $1 \leq k \leq n-1$,
\begin{align*}
\log \left( 1- \frac{q_n}{n-k+kq_n} \right) =  -\frac{q_n}{n-k+kq_n}(1+o(1)),
\end{align*}
where the $o$ is uniform over all $k$. 
Therefore, we only need to prove that $S_n(q_n) \rightarrow 0$, where $q_n > 0$ and
\begin{align*}
    S_n(q_n) := q_n \sum_{k=1}^{n-1} \frac{1}{k+(n-k)q_n}.
\end{align*}
To do so, consider $K_n := \lceil n q_n \rceil$, and split the sum at $K_n$. 
Using the fact that $K_n \leq n/2$ if $n$ is large enough, we get
\begin{align*}
   q_n \sum_{k=1}^{K_n} \frac{1}{k+(n-k)q_n} 
   \leq q_n \sum_{k=1}^{K_n} \frac{1}{(n-k)q_n}
   \leq q_n \sum_{k=1}^{K_n} \frac{2}{nq_n}
   = \frac{2K_n}{n}.
\end{align*}
Since $K_n =o(n)$ by definition, this goes to $0$. 
On the other hand, we have
\begin{align*}
    q_n \sum_{k=K_n+1}^{n-1} \frac{1}{k+(n-k)q_n} 
    \leq q_n \sum_{k=K_n+1}^{n-1} \frac{1}{k}
    \leq q_n \log\left( \frac{n}{K_n} \right)
    \leq q_n \log \left( \frac{1}{q_n} \right).
\end{align*}
This also goes to $0$ as $n \rightarrow \infty$, and the result follows.
\end{proof}

If $v$ is a vertex in $\cT_n$, we let $v^r$ be its $(r-1)$st ancestor, i.e.~$v^1:=v$, and $v^{r+1}$ is the parent of $v^r$ for each $r\ge1$.
If $v^r$ is the root of $\cT_n$, then so is $v^{r+1}$ by convention.
Let $T_v^r$ be the subtree of $\cT_n$ rooted at $v^r$ that consists of $v^r$ and all its descendants.
Define $y_v^r := \abs{T_v^r}$, as well as $x_v^1 := y_v^1$ and $x_v^{r+1} := y_v^{r+1} - y_v^r$ for each $r\ge1$.
The ancestral line on $v$ can therefore be described by a spine with vertices $v^r$ for $r\ge1$, to which subtrees of size $x_v^r$ are grafted.

We say that a subtree $T$ of $\cT_n$ \enquote{creates no descent} if there is no descent in $T$ and the root of $T$ does not form a descent with its parent (if there is one).

\begin{lemma}\label{lem: local limit when no descent and q to 0}
    Let $(q_n)_{n\ge1}$ be a sequence in $[0,1]$ such that $q_n \to 0$.
    Fix $r\ge1$ and a sequence $x^1, \dots, x^r$ of nonnegative integers.
    Define $y^j := x^1+\dots+x^j$ for each $j\in [r]$.
    Let $u_n$ denote a uniformly random vertex of $\cT_n^{(q_n)}$.
    Then,
    \[
    \prob{ x_{u_n}^1 = x^1 , \dots , x_{u_n}^r = x^r ,\, T_{u_n}^r \text{ creates no descent } }
    \cv{n\to\infty} \frac{1}{(y^r+1) y^r \cdots y^1} \,.
    \]
\end{lemma}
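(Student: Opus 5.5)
The plan is an exact count, reducing everything to Lemma~\ref{lem: ratio of partition functions when q to 0}. I first fix the vertex labels and then sum over them.

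Since $x^1=y^1\ge1$ is forced (otherwise the probability is $0$ and the formula must be read with $y^1\ge1$), write $y:=y^r$. A tree $t\in\bT_n$ together with a marked vertex $v$ such that $x_v^j=x^j$ for $j\le r$ and $T_v^r$ creates no descent decomposes as follows. First, $T_v^r$ is a tree of size $y$ with no descent; after relabeling it increasingly, it is a recursive tree. Second, the remaining tree $t_0$ of size $n-y$ is arbitrary, with the root of $T_v^r$ attached to a vertex $w\in t_0$ whose label is smaller than the labels of the root of $T_v^r$. Third, there is the choice of label set. In this decomposition $\des{t}=\des{t_0}$.

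The first key step is to count increasing (recursive) trees of size $y$ with a marked vertex $v$ whose ancestral decomposition has sizes $x^1,\dots,x^r$, with $v^r$ the root. I claim this number equals $(y-1)!\cdot \frac{1}{y^1\cdots y^{r-1}}\cdot \frac{y}{y}$ up to the exact bookkeeping. The way to get it is to peel the spine: $T_v^{j}$ is a subtree of $T_v^{j+1}$ rooted at a child of $v^{j+1}$. In a random recursive tree of size $m$, the probability that a given child subtree has a prescribed structure gives factors of the form $1/(\text{size})$. Concretely, I will prove by induction on $r$ that the number of pairs (recursive tree of size $y^r$, vertex $v$ at depth $r-1$ with these sizes) is $(y^r-1)!\,\prod_{j=1}^{r-1}\frac{1}{y^j}$. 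I will verify this with the standard decomposition: the subtree rooted at the $j$-th child is determined by a label set and a recursive structure. The case $r=1$ gives $(y-1)!$ recursive trees with $v$ the root.

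The second step is to assemble. The number of choices is
\[\binom{n}{y}\,(y-1)!\prod_{j<r}\frac{1}{y^j}\]
for the top part. We then sum over $t_0$ labeled by the complementary set and over attachment vertices $w$ whose label is less than the minimum label $m$ of the subtree. The weight is $\sum_{t_0}q_n^{\des{t_0}}$ times the number of admissible $w$. Summing over label sets, the number of admissible $w$ is, on average, comparable to the rank. Let me instead compute the sum over label sets $S$ of $\#\{w\notin S: w<\min S\}$. This equals $\sum_{m}(m-1)\binom{n-m}{y-1}=\binom{n}{y+1}$. The root-of-tree case, when $T_v^r$ is the whole tree, is negligible.

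Hence the total weight is
\[\binom{n}{y+1}(y-1)!\prod_{j<r}\frac{1}{y^j}\,Z_{n-y,q_n}.\]
Dividing by $nZ_{n,q_n}$ and applying Lemma~\ref{lem: ratio of partition functions when q to 0} $y$ times, for fixed $y$, gives $Z_{n-y,q_n}/Z_{n,q_n}\sim n^{-y}$. The result is $\frac{n^{y+1}}{(y+1)!}(y-1)!\prod_{j<r}\frac1{y^j}\cdot\frac{1}{n\cdot n^{y}}\to \frac{1}{(y+1)y\,y^1\cdots y^{r-1}}$, as claimed.

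The main obstacle is the exact count in the first step, which requires making sure that the spine decomposition of recursive trees yields exactly $\prod_j 1/y^j$ with the right indices. I would check it via the known fact that in a random recursive tree the probability that the vertex labeled $\min$ of a child subtree roots a subtree of size $s$ is computed by the Pólya urn.
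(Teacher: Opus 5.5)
Your proposal is correct and follows essentially the same route as the paper. Both arguments use the decomposition into the descent-free subtree $T_{u_n}^r$, which contributes $(y^r-1)!\prod_{j<r}\frac{1}{y^j}$ structures on each label set, together with the arbitrary complement weighted by $Z_{n-y^r,q_n}$. Both then apply the identity $\sum_m (m-1)\binom{n-m}{y^r-1}=\binom{n}{y^r+1}$ for the choice of attachment vertex, and conclude with Lemma~\ref{lem: ratio of partition functions when q to 0}. The only difference is that the paper cites \cite[Appendix~A]{Dadedzi_Wagner_2024} for the spine count, whereas you derive it from the recursion $N_{j+1}=\binom{y^{j+1}-1}{y^j}(y^{j+1}-y^j-1)!\,N_j$ with $N_1=(y^1-1)!$. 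That recursion does give exactly the claimed formula, so no P\'olya urn argument is needed.
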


\begin{proof}
    In order to compute this probability, we need to count how many pairs $(t,v)$, where $t$ is a tree of size $n$ and $v$ is a vertex of $t$, satisfy the desired condition.
    This count, weighted by $q^\des{t}$, is denoted by $R_{n,q}\left( x^1,\dots,x^r \right)$.

    Such a pair is uniquely characterized by the following:
    choose a decreasing sequence of labels $\ell^1 > \dots > \ell^r$ for $v^1, \dots, v^r$, then a recursive tree $t^j$ of size $x^j$ rooted at $v^j$ for each $j\in[r]$, along with sets of labels, then choose a labeled tree $\check t$ of size $n-y^r$ with the remaining labels, and finally a vertex of $\check t$ with label greater than $\ell^r$ to which we graft $v^r$.

    The choice of the trees $t^j$ for $j\in[r]$ along with their labels has been enumerated in \cite[Appendix~A]{Dadedzi_Wagner_2024}.
    If we fix the label $\ell^r$ of $v^r$, we have
    \[
    \binom{n-\ell^r}{y^r-1} \frac{(y^r-1)!}{y^{r-1} y^{r-2} \cdots y^1} 
    \]
    combinations.
    Then, note that the labels $1$ to $\ell^r-1$ are necessarily in the tree $\check t$.
    Therefore, we have $\ell^r-1$ choices for the parent of $v^r$, and we can use the identity
    \[
    \sum_{\ell=1}^n \binom{n-\ell}{y^r-1} (\ell-1) = \binom{n}{y^r+1}
    \]
    as in \cite{Dadedzi_Wagner_2024}.
    Since the weighted enumeration of $\check t$ is $Z_{n-y^r,q}$, we finally obtain
    \[
    R_{n,q}\left( x^1,\dots,x^r \right) 
    = \binom{n}{y^r+1} \frac{(y^r-1)!}{y^{r-1} y^{r-2} \cdots y^1} Z_{n-y^r,q}   
    = Z_{n-y^r,q} \frac{n!}{(n-y^r-1)!} \frac{1}{(y^r+1) y^r y^{r-1} y^{r-2} \cdots y^1} \,.
    \]
Therefore, the probability that we wish to compute is
\begin{align*}
    \frac{R_{n,q}\left( x^1,\dots,x^r \right)}{n Z_{n,q}}
    = \frac{Z_{n-y^r,q}}{Z_{n,q}} \frac{(n-1)!}{(n-y^r-1)!} \frac{1}{(y^r+1) y^r y^{r-1} y^{r-2} \cdots y^1} \,.
\end{align*}
Since $\frac{Z_{n-y,q}}{Z_{n,q}} \sim n^{-y}$ for any fixed $y\ge0$ by \Cref{lem: ratio of partition functions when q to 0}, this converges to the announced limit.
\end{proof}

We can finally prove Proposition \ref{prop:local limit when q_n goes to 0}.

\begin{proof}[Proof of Proposition \ref{prop:local limit when q_n goes to 0}]
    Fix $r\ge1$ and a sequence $x^1, \dots, x^r$ of nonnegative integers.
    Define $y^j := x^1+\dots+x^j$ for each $j\in [r]$.
    Let $u_n$ denote a uniformly random vertex of $\cT_n^{(q_n)}$.
    Then, by \Cref{lem: local limit when no descent and q to 0},
    \begin{equation}\label{eq: liminf local limit when q to 0}
    \liminf_{n\to\infty} \prob{ x_{u_n}^1 = x^1 , \dots , x_{u_n}^r = x^r } 
    \ge \frac{1}{(y^r+1) y^r \cdots y^1} \,.
    \end{equation}
    The right-hand side defines a probability distribution, so by Fatou's lemma,
    \begin{align*}
        1
        = \sum_{x^1,\dots,x^r\ge0} \frac{1}{(y^r+1) y^r \cdots y^1}
        \le \liminf_{n\to\infty} \sum_{x^1,\dots,x^r\ge0} \prob{ x_{u_n}^1 = x^1 , \dots , x_{u_n}^r = x^r } 
        \le 1 \,.
    \end{align*}
    Hence, the $\liminf$ in \eqref{eq: liminf local limit when q to 0} is actually a limit and the inequality is an equality.
    Furthermore, this means that the tree $T_{u_n}^r$ has no descent with high probability and has the same limit distribution as in the recursive case.
    This concludes the proof.
\end{proof}

\appendix

\section{Real trees, dendrons, and Gromov--Prokhorov convergence}
\label{sec: Gromov Prokhorov}

\subsection{The deterministic setting}

We only consider rooted structures here, i.e.~with a distinguished point, but will often omit the term ``rooted''.

\emph{Real trees} are loopless geodesic spaces.
Formally, a real tree is a metric space $(T,d)$ which is complete, separable, and such that for any $(u,v)\in T^2$, there is an isometric path from $u$ to $v$ and all simple paths from $u$ to $v$ have the same range.
See, e.g., \cite{LeGall_2006}.
If $(T,d)$ is a real tree and $x\in T$, then $T\setminus\{x\}$ consists of a disjoint union of open connected components, which we call \emph{branches}.
We will always consider \emph{rooted} real trees $(T,\rho,d)$, that is, real trees $(T,d)$ with a distinguished point $\rho\in T$ called the root.
A \emph{measured real tree} is a real tree $(T,\rho,d)$ together with a Borel probability measure $\mu$ on $T$.
This measure is commonly assumed to be non-atomic and supported on the leaves of $T$, but we do not need any such assumption here.
Following \cite{Janson21}, the following generalizations of measured real trees are called \emph{dendrons}.

\begin{definition}[Rooted dendron]
\label{def:dendron}
A dendron $D=(T,\rho,d,\nu)$ is a real tree $(T,\rho,d)$ together with a Borel probability measure $\nu$ on $A_D := T \times [0,\infty)$, which satisfies in addition that $\nu(B \times [0,\infty))>0$ for any branch $B$ of the tree $T$.
We call $T$ the \emph{base tree} of the dendron $D$.
We say that two dendrons $D := (T,\rho,d,\nu)$ and $D' := (T',\rho',d',\nu')$ are isomorphic if there exists an isometry $f: (T,d) \to (T',d')$ such that $f(\rho)=\rho'$ and the mapping $(x,t) \mapsto (f(x),t)$ is measure-preserving from $(A_D,\nu)$ to $(A_{D'},\nu')$.
A dendron $D$ being given, we define the function $d_D: D^2 \rightarrow \mathbb{R}_+$ by
\begin{align*}
    d_D((x,a),(y,b)) := d(x,y)+a+b \text{ for all } (x,a),(y,b) \in D.
\end{align*}
\end{definition}

We emphasize that these objects are called \textit{long dendrons} in \cite{ET22} and \cite{Janson21}. 
However, as mentioned in \cite[Remark 3.8]{Janson21}, it is more natural to call them simply dendrons.
Note that any measured real tree $(T,\rho,d,\mu)$ can be seen as a dendron $(T,\rho,d,\nu)$, by letting $\nu$ be the push-forward of $\mu$ via $x\in T \mapsto (x,0) \in T\times[0,\infty)$.
Conversely, any dendron $D=(T,\rho,d,\nu)$ such that $\nu$ is supported on $T\times\{0\}$ is \enquote{actually just} a measured real tree.
For this reason, we only deal with dendrons below.

Each rooted discrete tree $(T,\rho)$ can be seen as a measured real tree $(T,\rho,d,\mu)$ where $d$ is the graph distance and $\mu$ is the uniform measure on vertices.
In the former, we keep the notation $(T,\rho,d)$ to denote the pointed metric space containing all vertices of $T$, and a path of unit length between each pair of neighbors.
Then, each discrete tree can be seen as a dendron.
The convergence of discrete trees towards a dendron (or towards a measured real tree) is therefore to be interpreted in the (pointed) \emph{Gromov--Prokhorov} (GP) topology introduced in \cite{Greven_Pfaffelhuber_Winter_2009} (called Gromov--weak therein).
In our case, it can be restated as follows:

\begin{definition}[Pointed convergence to a dendron]
\label{def: convergence to a fixed dendron}
    Let $(T_n,\rho_n)_{n\ge1}$ be a sequence of rooted finite trees, $(c_n)_{n\ge1}$ be positive numbers, and $D=(T,\rho,d,\nu)$ be a dendron.
    We say that the rescaled trees $c_n \cdot T_n := (T_n, \rho_n, c_n d_n, \mu_n)$ converge to the dendron $D$ if, for each $r\ge1$, we have the convergence in distribution of the following matrices:
    \begin{equation}\label{eq: cv of distance matrix for dendron convergence}
        \left( c_n d_n{\big( x_i^{(n)}, x_j^{(n)} \big)} \right)_{0\le i,j\le r}
        \cv{n\to\infty}
        \left( d_D{\left(z_i, z_j\right)} \One{i\ne j} \right)_{0\le i,j\le r}
    \end{equation}
    where $(z_i)_{i\ge1}$ are i.i.d.~random points in $A_D$ and $z_0 := \rho$, and for each $n\ge1$, $\big(x_i^{(n)}\big)_{i\ge1}$ are i.i.d.~random vertices in $T_n$ and $x_0^{(n)} := \rho_n$.
\end{definition}

%If the dendron $D$ is actually just a measured real tree, then this coincides with the standard GP convergence of trees.

\subsection{The random setting}

It is often the case that the sequence of discrete trees and the limit dendron (or measured real tree) are \emph{random}.
In that setting, one may wish to prove convergence \emph{in distribution} for the GP topology.
Technically, this would require the convergence \eqref{eq: cv of distance matrix for dendron convergence} to hold in distribution for the random laws of the random matrices (where the randomness of the law comes from the randomness of the space, and the randomness of the matrix comes from the randomness of the points therein).
Such a convergence, where randomness is dealt with in two steps, is usually called \emph{quenched}.
Proving a quenched convergence can be quite technical due to this two-steps procedure.

It is usually simpler to prove an \emph{annealed} convergence, that is, the convergence in distribution \eqref{eq: cv of distance matrix for dendron convergence} where the randomness of the spaces and of the points are considered \emph{simultaneously}.
Fortunately, it turns out that annealed convergence is sufficient to characterize convergence in distribution for the GP topology.

\begin{proposition}[Pointed convergence to a random dendron]
\label{prop: convergence to a random dendron}
    Let $(\cT_n, \rho_n)_{n\ge1}$ be a sequence of random rooted finite trees, $(c_n)_{n\ge1}$ be positive numbers, and $\cD=(\cT,\rho,d,\nu)$ be a random dendron.
    Then $c_n \cdot \cT_n$ converges in distribution to $\cD$ if and only if, for each $r\ge1$, we have the convergence in distribution of the following matrices:
    \begin{equation*}
        \left( c_n d_n{\big( x_i^{(n)}, x_j^{(n)} \big)} \right)_{0\le i,j\le r}
        \cv{n\to\infty}
        \left( d_\cD{\left(z_i, z_j\right)} \One{i\ne j} \right)_{0\le i,j\le r}
    \end{equation*}
    where $(z_i)_{i\ge1}$ are i.i.d.~random points in $A_\cD$ and $z_0 := \rho$, 
    %(conditionally given $\cD$), 
    and for each $n\ge1$, $\big(x_i^{(n)}\big)_{i\ge1}$ are i.i.d.~random vertices in $\cT_n$ and $x_0^{(n)} := \rho_n$.
    %(conditionally given $\cT_n$).
    That is, for each $r\ge1$ and each bounded continuous function $\phi$ on $\bR^{r\times r}$, we have
    \begin{equation*}
        \expec{ \expecond{ \phi\left(\left( c_n d_n{\big( x_i^{(n)}, x_j^{(n)} \big)} \right)_{0\le i,j\le r}\right) }{ \cT_n } }
        \cv{n\to\infty}
        \expec{ \expecond{ \phi\left(\left( d_\cD{\left(z_i, z_j\right)} \One{i\ne j} \right)_{0\le i,j\le r}\right) }{ \cD } } \,.
    \end{equation*}
\end{proposition}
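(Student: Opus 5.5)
The plan is to reduce both directions to the deterministic characterization of Gromov--Prokhorov convergence through distance-matrix distributions (Greven--Pfaffelhuber--Winter), as adapted to dendrons in \cite{ET22,Janson21}. For a dendron $D$, write $\Psi_r(D) \in \cM_1(\bR^{(r+1)\times(r+1)})$ for the law of the matrix $\big(d_D(z_i,z_j)\One{i\ne j}\big)_{0\le i,j\le r}$, with $z_0=\rho$ and the remaining $z_i$ i.i.d.\ with law $\nu$. I would first recall, as the deterministic input, that the map $D \mapsto (\Psi_r(D))_{r\ge1}$ is a homeomorphism onto its image. Injectivity is the dendron analogue of Gromov's reconstruction theorem, proved in \cite{ET22,Janson21}, and the topology on dendrons is by definition the one induced by this map, which is exactly \Cref{def: convergence to a fixed dendron}. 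Convergence in distribution of random dendrons therefore means $\Law{(\Psi_r(c_n\cdot\cT_n))_r} \to \Law{(\Psi_r(\cD))_r}$ weakly on $\prod_r \cM_1(\bR^{(r+1)^2})$.

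For the ``only if'' direction, take $\phi$ bounded and continuous. The functional $D\mapsto \int\phi\,\mathrm d\Psi_r(D)$ is bounded and continuous on the space of dendrons, so the quenched convergence directly gives convergence of its expectation. That expectation is precisely the annealed quantity in the statement.

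The ``if'' direction is where the work lies, and the key device is the standard ``polynomial'' (moment) trick. For bounded continuous $\phi_1,\dots,\phi_m$ on $\bR^{(r+1)^2}$, consider the product $\prod_{k=1}^m \int \phi_k\,\mathrm d\Psi_r(D)$. It equals $\int \Phi\,\mathrm d\Psi_{rm}(D)$, where $\Phi$ applies $\phi_k$ to the submatrix indexed by $\{0\}\cup\{(k-1)r+1,\dots,kr\}$. This identity holds because those blocks of sampled points are conditionally independent given $D$ and share only the root. Annealed convergence at level $rm$ therefore yields $\expec{\prod_k \int\phi_k\,\mathrm d\Psi_r(c_n\cdot\cT_n)} \to \expec{\prod_k\int\phi_k\,\mathrm d\Psi_r(\cD)}$, so all mixed moments of the random vectors $\big(\int\phi_k\,\mathrm d\Psi_r\big)_k$ converge.

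These vectors are bounded, so moment convergence gives convergence in distribution of every finite family of such integrals. Since the sets $\{\int\phi\,\mathrm d\Psi_r\}$ determine the weak topology on $\cM_1$, this identifies finite-dimensional limits. It remains to show tightness of $\Law{\Psi_r(c_n\cdot\cT_n)}$ in $\cM_1(\bR^{(r+1)^2})$, which is the step I expect to need the most care. A family of random measures is tight as soon as its intensity measures are tight, and here the intensity is exactly the annealed law $\Law{(c_nd_n(x_i^{(n)},x_j^{(n)}))_{i,j}}$, which converges and is hence tight. With tightness established, any subsequential limit has the same joint integrals against the $\phi$'s as $\Psi_r(\cD)$. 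These integrals form a convergence-determining algebra for laws of random measures, by Stone--Weierstrass on a compactification or by the argument in \cite{Greven_Pfaffelhuber_Winter_2009}, so the limit is unique and equals $\Law{\Psi_r(\cD)}$. Working jointly over $r$ and applying the homeomorphism property then concludes the proof.
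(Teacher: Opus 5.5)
Your proof is correct, and its core is the same as the paper's. The \enquote{only if} direction is immediate. For the converse you use that a product of sampling functionals is again a sampling functional at a higher level (disjoint blocks of i.i.d.\ points sharing the root), so annealed convergence gives convergence of all mixed moments of bounded random variables.

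Where you differ is the step back from these functionals to convergence in distribution of the random dendrons. The paper uses test functions $\phi\in\cC_K$. Vague convergence of probability measures to a probability measure is weak convergence, and $\cC_K$ is separable. So the space of dendrons effectively embeds homeomorphically into $\bR^{\bN}$ through countably many bounded functionals. Convergence in distribution then reduces to finite-dimensional convergence, and tightness never has to be discussed.

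You use all bounded continuous $\phi$ instead. You therefore close with an explicit tightness-plus-uniqueness argument for the random measures $\Psi_r(c_n\cdot\cT_n)$. Tightness comes, correctly, from tightness of their intensity measure, which is exactly the convergent annealed law.

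Your route is slightly longer but self-contained, and it has one concrete advantage. Your block-product test function is automatically bounded continuous. In the paper's version, the claim $\phi_3\in\cC_K$ is not literally true. The cross-block entries are unconstrained by the supports of $\phi_1,\phi_2$, so they have to be cut off using the triangle inequality through the root on realizable matrices.
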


This is a well-known result for measured real trees; see e.g.~\cite[Theorem~3(i)]{Aldous_1993} for a similar statement with a different formalism.
The ``leaf-tight property'' required there is automatically satisfied if the limit $\left( d_\cT{\left(z_i, z_j\right)} \right)_{1\le i,j\le r}$ is the random distance matrix of a random measured real tree, as can be seen with \cite[Theorem~3(ii)]{Aldous_1993}.

\begin{proof}
    If $D=(T,\rho,d,\nu)$ is a rooted dendron, write $\cM_D^r$ for the random matrix $\left( d_D{\big( z_i, z_j \big)} \One{i\ne j} \right)_{1\le i,j\le r}$, where $(z_i)_{i\ge2}$ are i.i.d.~random points in $A_D$ and $z_1 := \rho$. 
    Let $\cC_K(\bR^{r\times r})$ be the space of continuous functions with bounded support on $r\times r$ matrices, endowed with the topology of uniform convergence.
    A \emph{compact polynomial function} is any function $\Phi$ on the space of dendrons of the form
    \begin{equation*}
        \Phi(D) = \expec{ \phi{\left( \cM_D^r \right)} }\,,
    \end{equation*}
    where $r\ge1$ and $\phi \in \cC_K(\bR^{r\times r})$.
    By Definition~\ref{def: convergence to a fixed dendron}, a fixed sequence of finite trees $(T_n, \rho_n, c_n d_n, \mu_n)$ --- identified with a sequence of dendrons $D_n$ --- converges to a fixed dendron $D$ if and only if for each $r\ge1$, the random matrices $\cM_{D_n}^r$ converge weakly to the random matrix $\cM_D^r$.
    Since weak and vague convergence towards a given random variable are equivalent \cite[Lemma~6.21]{Kallenberg_2006}, this is equivalent to
    \[
        \Phi(D_n) \cv{n\to\infty} \Phi(D)
    \]
    for all compact polynomial functions $\Phi$.
    
    Now, recall that the spaces $\cC_K(\bR^{r\times r})$ are separable.
    Thus, a sequence of random finite trees $(\cT_n, \rho_n, c_n d_n, \mu_n)$ --- identified with a sequence of random dendrons $\cD_n$ --- converges in distribution to a random dendron $\cD$ if and only if we have the convergence in distribution
    \[
        \Phi(\cD_n) \cv{n\to\infty} \Phi(\cD)
    \]
    finitely jointly over compact polynomial functions $\Phi$.
    By boundedness, this is equivalent to the convergence of all joint moments, that is,
    \[
        \expec{ \prod_{i=1}^k \Phi_i(\cD_n) }
        \cv{n\to\infty}
        \expec{ \prod_{i=1}^k \Phi_i(\cD) }
    \]
    for each finite family $\Phi_1, \dots, \Phi_k$ of compact polynomial functions, with repetitions allowed.

    The last key ingredient is that the space of compact polynomial functions is multiplicatively stable.
    Indeed, if $D$ is a fixed dendron and $\phi_1 \in \cC_K(\bR^{r_1\times r_1})$, $\phi_2 \in \cC_K(\bR^{r_2\times r_2})$, then
    \begin{align*}
        \Phi_1(D) \Phi_2(D)
        &= \expec{ \phi_1{\left( \cM_D^{r_1} \right)} } \expec{ \phi_2{\left( \cM_D^{r_2} \right)} }
        \\&= \expec{ \phi_1{\left(\left( d_D{\big( z_i, z_j \big)} \One{i\ne j} \right)_{i,j\in \{1, 2, \dots, r_1\}}\right)} } \expec{ \phi_2{\left(\left( d_D{\big( z_i, z_j \big)} \One{i\ne j} \right)_{i,j\in \{1, r_1+1, \dots, r_1+r_2-1\}}\right)} }
        \\&= \expec{ \phi_1{\left(\left( d_D{\big( z_i, z_j \big)} \One{i\ne j} \right)_{i,j\in \{1, 2, \dots, r_1\}}\right)} \phi_2{\left(\left( d_D{\big( z_i, z_j \big)} \One{i\ne j} \right)_{i,j\in \{1, r_1+1, \dots, r_1+r_2-1\}}\right)} }
        \\&= \expec{ \phi_3{\left( \cM_D^{r_3} \right)} }\,,
    \end{align*}
    where $(z_i)_{i\ge2}$ are i.i.d.~random points on $A_D$, $z_1$ is the (deterministic) root of $D$, $r_3 := r_1 + r_2 - 1$, and $\phi_3\in \cC_K(\bR^{r_3\times r_3})$ is naturally defined to make the last equality hold.
    Therefore, for each finite family $\Phi_1, \dots, \Phi_k$ of compact polynomial functions (with repetitions allowed), the product $\Phi_1 \dots \Phi_k$ is a compact polynomial function $\Phi$.
    Hence, a random sequence of finite trees $(\cT_n, \rho_n, c_n d_n, \mu_n)$ converges to a random dendron $\cD$ if and only if 
    \[
        \expec{ \Phi(\cD_n) }
        \cv{n\to\infty} 
        \expec{ \Phi(\cD) }
    \]
    for each compact polynomial function $\Phi$.
    Observing that 
    \[
        \expec{ \Phi(\cD) }
        = \expec{ \expecond{ \phi{\left( \cM_\cD^r \right)} }{\cD} }
        = \expec{ \phi{\left( \cM_\cD^r \right)} }
    \]
    and likewise for $\cT_n$, this concludes the proof.
\end{proof}

This result was already observed in \cite[Corollary~2.8]{Loehr_2013} for metric-measure spaces; see also \cite[Section~2]{Bassino_Bouvel_Feray_Gerin_Pierrot_2022} for a similar discussion.
Although dendrons are not metric-measure spaces (because the function $d_D$ is not a metric), the proof is analogous.
This property is typical of topologies defined by the convergence of finite\footnote{
    What we mean here by ``finite'' is ``generated by a finite number of points''
}
substructures;
see, e.g., \cite[Theorem~3.1]{diaconis2008graph} for graphons and \cite[Corollary~4.6]{Bassino_Bouvel_Feray_Gerin_Pierrot_2018} for permutons.

\bibliographystyle{alphaurl}
\bibliography{bibli}

\end{document}